\definecolor{mathgreen}{HTML}{009b77}
\title{{Differential cohomology theories as sheaves of spectra}}
\author{Ulrich Bunke\thanks{Fakult\"at f\"ur Mathematik,
Universit{\"a}t Regensburg,
93040 Regensburg,
GERMANY, ulrich.bunke@mathematik.uni-regensburg.de} , Thomas Nikolaus\thanks{Fakult\"at f\"ur Mathematik,
Universit{\"a}t Regensburg,
93040 Regensburg,
GERMANY,  thomas.nikolaus@mathematik.uni-regensburg.de} and Michael V\"olkl\thanks{Fakult\"at f\"ur Mathematik,
Universit{\"a}t Regensburg,
93040 Regensburg,
GERMANY, michael.voelkl@mathematik.uni-regensburg.de}
}
\newcommand{\Fib}{{\tt Fib}}
\newcommand{\bH}{\mathbf{H}}
 \newcommand{\hol}{\mathrm{hol}}
\newcommand{\BCH}{\mathbf{BCH}}
\newcommand{\BCS}{\mathbf{\widetilde{BCH}}}
 \newcommand{\cR}{\mathcal{R}}
\newcommand{\Pure}{\Fun^{desc,pure}}
\newcommand{\cK}{\mathcal{K}}
\renewcommand{\sp}{\mathrm{sp}}
\newcommand{\Iso}{\mathrm{Iso}}
\newcommand{\CAlg}{{\mathbf{CAlg}}}
\newcommand{\bs}{\mathbf{s}}
\newcommand{\cZ}{{\mathcal{Z}}}
\newcommand{\const}{{\tt const}}
\newcommand{\cU}{{\mathcal{U}}}
 \newcommand{\Cone}{{\tt Cone}}
 \newcommand{\fib}{{\tt Fib}}
 \newcommand{\Vect}{{\tt Vect}}
 \newcommand{\CommMon}{{\mathbf{CMon}}}
 \newcommand{\Cat}{{\mathbf{Cat}}}
\newcommand{\Group}{\mathbf{Grp}}
\newcommand{\Mor}{{\tt Mor}}
\newcommand{\ku}{{\mathbf{ku}}}
\newcommand{\cE}{{\mathcal{E}}}
\newcommand{\tot}{\mathrm{tot}}
\newcommand{\Mfo}{\Mf^{op}}
\newcommand{\Comm}{\mathbf{C}}
\newcommand{\Grp}{\mathbf{Grp}}
\newcommand{\bku}{\mathbf{ku}}
\newcommand{\iso}{\xymatrix{\ar^\sim[r]&}}
\renewcommand{\leftrightarrows}{\xymatrix{ \ar@<0.4ex>[r] & \ar@<0.5ex>[l]}}
\newcommand{\Groupoids}{\mathbf{Grpd}}
\newcommand{\Ad}{\mathrm{Ad}}
\begin{document}
\maketitle
\begin{abstract}
We show that every sheaf on the site of smooth manifolds with values in a stable $(\infty,1)$-category (like spectra or chain complexes)
gives rise to a ``differential cohomology diagram'' and a homotopy formula, which  are common features of all classical examples of differential cohomology theories. 
These structures are naturally derived from a canonical decomposition of a sheaf into a homotopy invariant part and a piece which has a trivial evaluation on a point. In the classical examples the latter
is the contribution of differential forms. 
This decomposition suggest a natural scheme to analyse new sheaves
by determining these pieces and the {gluing} data.  We perform this analysis for a variety of classical
and not so classical examples.
 \end{abstract}
\tableofcontents

\section{Introduction}
A differential cohomology theory is a refinement of a cohomology theory in the sense  of homotopy theory to smooth manifolds.  {In particular a differential cohomology theory assigns to any smooth manifold a (graded) abelian group of differential cohomology classes}. In most examples  {these groups} combine homotopical information with local differential form data  {on the manifold}. 

In the present paper a differential cohomology theory is a sheaf on the category of manifolds with values  {in the category of chain complexes or the category of spectra. More precisely we consider these categories as higher categories and more generally allow sheaves with values in an arbitrary stable and presentable $(\infty,1)$-category $\bC$}.  Our goal is to analyse  these sheaves from a general point of view.  

\bigskip

{The main structures of a differential cohomology theory are a curvature map, the underlying class map, and a map delivering the homotopies between differential cohomology classes with the same underlying {class}. A common notation for these maps is $R$, $I$, and $a$. These}
 structures of a differential cohomology are nicely encoded in the differential cohomology diagram  \eqref{ghb1} {which was} popularized in this form by Simons-Sullivan \cite{MR2365651}.
 {A} differential cohomology  {theory} furthermore comes with 
  a homotopy formula \eqref{ghb3} which quantifies the fact that  it is not homotopy invariant  in general.
  
{These structures} are usually derived as consequences of the specific construction of the differential cohomology theory. Our main observation is that  {they are present}    in complete generality
for any $\bC$-valued sheaf.  We explain the details of  {these abstract constructions} in Section \ref{ttz13}.  

{
In the literature most examples of differential cohomology theories are given as functors, denoted like $\hat E^{k}$ or similar,  from smooth manifolds to abelian groups or as collections of such functors indexed by $\Z$ or $\Z/2\Z$ reflecting
a $\Z$ or $\Z/2\Z$-grading of an underlying generalized cohomology theory $E^{k}$.}
Axiomatizations for differential cohomology  {presented in this form were}  given in \cite{MR2365651}, \cite{MR2608479}.
First examples were constructed by cycles and relations, e.g.  the refinements of integral cohomology 
\cite{MR827262} or differential $K$-theory \cite{MR2664467}, see  {also} \cite{2010arXiv1011.6663B} for an overview and more references.

  {Given a sheaf of spectra $\hat E$ on the category of smooth manifolds one gets a differential cohomology  {theory} in the classical form by  defining $$\hat E^{k}(M):=\pi_{-k}(\hat E(M))\ .$$
 A general construction of differential refinements of  arbitrary generalized cohomology theories satisfying
 the axioms proposed in \cite{MR2608479} was given in \cite{MR2192936}. We will review this   Hopkins-Singer construction  in Example \ref{ttz12}. Note that the axioms \cite{MR2608479} prescribe the image of the curvature map as closed differential forms with coefficients in some graded vector space.
The target of the curvature map for a general differential cohomology theory 
given by an arbitrary sheaf of spectra $\hat E$ will not necessarily be related with differential forms, see the extreme Example \ref{uuzzii60}. In particular, the groups $\hat E^{k}$ will not satisfy the axioms  as stated  {in} \cite{MR2608479}.     }

The main technical tool for our analysis of  {sheaves on} the category of manifolds are the left and right-adjoints of the functor which maps 
an object of $\bC$ to a constant sheaf. While  the right-adjoint exists  {for all categories of sheaves}, the left-adjoint, called homotopification,  is a  speciality of the category of smooth manifolds. 
The fact that  manifolds are locally contractible implies that the constant sheaf functor identifies $\bC$ with the full subcategory of homotopy invariant sheaves. We will explain all these functors in detail in Section  \ref{ttz14}. 

 {A related (non-stable) setup} of a category of sheaves with these {adjoint} functors {was}  axiomatized as a cohesive $(\infty,1)$-topos in the work of Urs Schreiber, {see also Remark \ref{rtrtrt}. 
}

 {In this paper we} use these adjoints in order to deconstruct a given sheaf into its underlying homotopy invariant 
part, cycle data  {and a characteristic map which contains the information how the homotopy invariant part and the cycle data is glued together, see Definition \ref{uuzzii70}.}  Proposition  \ref{wie4} suggests a natural way for the analysis of a differential cohomology theory by calculating its underlying homotopy invariant part,
its cycle data, and the characteristic map.

\bigskip
 
{The remainder of the paper consists of a collection of examples of sheaves of chain complexes or spectra obtained by natural geometric constructions which we analyse according to the lines indicated above.
}

In Section \ref{cl12} we perform this analysis for  the differential cohomology theories given by the classical (Hopkins-Singer type) construction.   In Section \ref{ttz30} we analyse sheaves of spectra represented by abelian Lie groups and show that they are equivalent to sheaves obtained from the classical construction. {Further we} consider the universal differential cohomology theories receiving characteristic classes for $G$-principal bundles with and without connection. We explain {how} the  Cheeger-Simons construction of differential refinements of characteristic classes for $G$-bundles with connection can be formulated in the language developed in the present paper.

Finally, in Section \ref{ttz15} we introduce a version of differential $K$-theory which is  universal
for additive characteristic classes of complex vector bundles with or without connections.
A complete understanding of its data is an open problem which might be of some interest. 
We discuss the relation of this universal differential $K$-theory with the usual Hopkins-Singer type differential $K$-theory and the more recent loop differential $K$-theory introduced in \cite{2012arXiv1201.4593T}.
Moreover we give a Snaith-type construction of a differential refinement of periodic complex $K$-theory.

Most of the calculations are direct consequences of the calculation of the left and right-adjoint of the constant sheaf functor applied to truncated twisted de Rham complexes or sheaves obtained by the Yoneda embedding. We collect these and some other technical results in Section \ref{gfd1}.

\bigskip

{\em Acknowledgement: The authors have benefited from various discussion with Urs Schreiber whose approach to differential cohomology via cohesive $\infty$-topoi  has a notable overlap with our work.
A great portion of the basic ideas used in the present papers grew out of our  cooperation with David Gepner.  We would also like to thank Peter Teichner and David Carchedi for helpful comments.
U.B and M.V thank the organizers of the GAP 2013 (Pittsburgh) conference for hospitality and providing the motivation for writing this note.}

\section{Sheaves on manifolds}\label{ttz14}

In the present paper we freely use the {theory and} language of $(\infty,1)$-categories. We try to work as model independent as possible but for concreteness we use quasi-categories. We mostly adopt the notational conventions of \cite{HTT} and  \cite{HA} and provide more specific references where necessary. 
 
We consider  an ordinary category ${\bC}$ as an $(\infty,1)$-category by taking the nerve. In order to simplify notation we also write $\bC$ for the $(\infty,1)$-category  and hope that this does not lead to confusion.

We let $\Mf$ denote the category of smooth manifolds with corners.  A typical object of $\Mf$ is the standard $n$-simplex $\Delta^{n}$, $n\in \nat$. In general, an $n$-dimensional manifold with corners  is locally modeled on the quadrant $[0,\infty)^{n}\subset \R^{n}$. An $\R$-valued function defined on an open subset  $U\subseteq [0,\infty)^{n}$ is smooth, if there exists an open subset $\tilde U\subseteq \R^{n}$ such that $U=\tilde U\cap [0,\infty)^{n}$ and a smooth extension of this function to $\tilde U$.  A map into a manifold with corners is smooth if its composition with the charts considered as maps to $\R^{n}$ are smooth. In particular, for a smooth map between manifolds with corners there are no restrictions of the kind that faces have to map to faces.
 
\begin{ddd} 
{Let $\bC$ be a general  $(\infty,1)$-category.}
The $(\infty,1)$-category  of presheaves on manifolds with values in $\bC$ is the    
 $(\infty,1)$-category  of functors $\Fun(\Mf^{op},\bC)$.
  \end{ddd}

 \begin{ex}{\rm
Every object $C\in \bC$ gives rise to a constant presheaf $\underline{C}\in \Fun(\Mf^{op},\bC)$
which associates to every manifold the object $C$, and to every smooth map the identity map.
In fact, we have an adjunction
 $$\underline{(-)}:\bC\leftrightarrows \Fun(\Mf^{op},\bC):\ev_{*}\ ,$$
 where $\ev_{*}$ is the evaluation at the final object in  $\Mf$, the point $*=\Delta^{0}$.
}\end{ex}

 The category $\Mf$ has a Grothendieck topology  which is determined by the collection of coverings which are  surjective submersions with discrete fibres. To a covering  $U\to M$ we can associate   the simplicial   smooth manifold (also called the \v{C}ech nerve of the covering) which will be denoted by
$U^{\bullet}\in \Fun(\Delta^{op},\Mf)$. Its evaluation at $[n]\in \Delta^{op}$ is given by   $$U^{\bullet}([n]):=\underbrace{U\times_{M}\dots\times_{M} U}_{n\times}\ ,\quad n\in \nat\ .$$ In particular, for $i\in \{0,\dots n\}$ the $i$'th face map
$\partial_{i}^{*}:U^{\bullet}([n+1])\to U^{\bullet}([n])$  is  the obvious projection leaving out
the $i$-th factor.
If we evaluate a presheaf 
 $F\in \Fun(\Mf^{op},\bC)$ on the simplicial manifold $U^{\bullet}$, then  we get a cosimplicial object $F(U^{\bullet})\in \Fun(\Delta,\bC)$ of $\bC$.

 \bigskip
 From now on we assume that $\bC$ is a (locally) presentable $(\infty,1)$-category. This in particular implies that $\bC$ has all colimits and limits. See \cite[Chapter 5]{HTT} for details. The category of presheaves with values in $\bC$ is then also presentable since the category $\Mf$ is essentially small.
 
\begin{ddd}
We say that $F\in \Fun(\Mf^{op},\bC)$ is a sheaf, if for any manifold $M$ and covering $U\to M$ the canonical map
$$F(M)\to \lim_{\Delta} F(U^{\bullet})$$
is an equivalence. We  denote the full subcategory of sheaves by $$\Fun^{desc}(\Mf^{op},\bC)\subseteq \Fun(\Mf^{op},\bC)\ .$$ 
\end{ddd}

 It has been shown in \cite[6.2.2.7]{HTT} that the category of sheaves is also presentable and that 
there exists an adjunction
$$L: \Fun(\Mf^{op},\bC)\leftrightarrows  \Fun^{desc}(\Mf^{op},\bC):\mathrm{inclusion}\ .$$
The left-adjoint $L$ is called \emph{sheafification}.

\begin{ddd}
A presheaf $F\in \Fun(\Mf^{op},\bC)$ is called homotopy invariant, if for all manifolds $M\in \Mf$ the canonical map
$F(M)\to F(\Delta^{1}\times  M)$ induced by the projection $\Delta^{1}\times M\to M$
is an equivalence. We let $$ \Fun^{h}(\Mf^{op},\bC)\subseteq \Fun(\Mf^{op},\bC)$$
be the full subcategory of homotopy invariant presheaves.
\end{ddd}

Using standard techniques one checks that the inclusion is an accessible reflective subcategory. Therefore the $(\infty,1)$-category of homotopy invariant presheaves is also presentable.
The inclusion of homotopy invariant presheaves into all presheaves not only  preserves limits but also colimits and therefore has both adjoints. In particular
 there exists an adjunction
\begin{equation}\label{hdef123}\cH^{ {\text{pre}}}: \Fun(\Mf^{op},\bC)\leftrightarrows  \Fun^{h}(\Mf^{op},\bC):\text{inclusion}\ .\end{equation}
 The left-adjoint $\cH^{ {\text{pre}}}$   is called homotopification.  
\begin{ddd}
We let $\Fun^{desc,h}(\Mf^{op},\bC)$ be the full subcategory of sheaves which are   homotopy invariant.
\end{ddd}
Now we list some properties which are crucial for the rest of the paper. 
\begin{prop}\label{propo}$\ $ 
\begin{enumerate}
\item
The functor $\const:=L\circ (\underline{-})$
induces  an equivalence  $$\const: \bC\iso \Fun^{desc,h}(\Mf^{op},\bC)$$
with inverse given by evaluation $\ev_*$ at $*\in \Mf$. 
\item
For a homotopy invariant presheaf  $F$ the sheafification $L(F)$ is also homotopy invariant. 
\item
Setting $\cH := L \circ \cH^{\text{pre}}\mid_{\Fun^{desc}(\Mf^{op},\bC)}$ we obtain an adjunction
 $$\cH: \Fun^{desc}(\Mf^{op},\bC)\leftrightarrows  \Fun^{desc,h}(\Mf^{op},\bC):\mathrm{inclusion}\ .$$
In general, the functor $\cH$, also called homotopification, does  not admit a further left adjoint. However in good cases, e.g. for $\bC$ stable or for $\bC$ the $(\infty,1)$-category of spaces, the functor $\cH$ preserves finite {products}. 
\item
Homotopification commutes with sheafification, i.e. {we have an equivalence} $L \circ\cH^{pre} \simeq \cH \circ L$.
\end{enumerate}
\end{prop}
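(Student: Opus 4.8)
The plan is to avoid any direct manipulation of the sheafification unit $F\to L(F)$ and instead to recognize both functors $L\circ\cH^{pre}$ and $\cH\circ L$ as left adjoints to one and the same inclusion; the asserted equivalence will then be forced by the essential uniqueness of adjoints. First I would record that both composites actually take values in $\Fun^{desc,h}(\Mf^{op},\bC)$. For $\cH\circ L$ this is immediate from the definition of $\cH$ in item 3. For $L\circ\cH^{pre}$ it follows from item 2: the presheaf $\cH^{pre}(F)$ is homotopy invariant, and hence so is its sheafification.

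The key intermediate step is to show that sheafification restricts to a functor $L\colon \Fun^{h}(\Mf^{op},\bC)\to \Fun^{desc,h}(\Mf^{op},\bC)$ which is left adjoint to the inclusion. The restriction is well defined by item 2, and for $F\in\Fun^{h}(\Mf^{op},\bC)$ and $G\in\Fun^{desc,h}(\Mf^{op},\bC)$ one has, using that all the subcategories involved are full and using the defining adjunction of $L$,
\[
\mathrm{Map}_{\Fun^{desc,h}}(L(F),G)\simeq \mathrm{Map}_{\Fun^{desc}}(L(F),G)\simeq \mathrm{Map}_{\Fun}(F,G)\simeq \mathrm{Map}_{\Fun^{h}}(F,G)\ .
\]
Thus $L|_{\Fun^{h}}$ is left adjoint to the inclusion $\Fun^{desc,h}(\Mf^{op},\bC)\hookrightarrow\Fun^{h}(\Mf^{op},\bC)$.

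With this in hand the argument closes quickly by composing adjunctions. On one side, $\cH^{pre}$ is left adjoint to $\Fun^{h}\hookrightarrow\Fun$ by \eqref{hdef123}, and $L|_{\Fun^{h}}$ is left adjoint to $\Fun^{desc,h}\hookrightarrow\Fun^{h}$, so $L\circ\cH^{pre}$ is left adjoint to the composite inclusion $\Fun^{desc,h}\hookrightarrow\Fun^{h}\hookrightarrow\Fun$. On the other side, $L$ is left adjoint to $\Fun^{desc}\hookrightarrow\Fun$ and $\cH$ is left adjoint to $\Fun^{desc,h}\hookrightarrow\Fun^{desc}$ by item 3, so $\cH\circ L$ is left adjoint to the composite inclusion $\Fun^{desc,h}\hookrightarrow\Fun^{desc}\hookrightarrow\Fun$. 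Both composite inclusions are literally the full inclusion $\Fun^{desc,h}(\Mf^{op},\bC)\hookrightarrow\Fun(\Mf^{op},\bC)$, so the two functors are left adjoint to the same functor and are therefore canonically equivalent.

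The step I expect to require the most care is the middle one: checking that $L$ genuinely restricts to a left adjoint between the two subcategories, rather than merely landing objectwise in the right place. Concretely one must ensure that the chain of equivalences of mapping spaces above is natural in both variables and assembles into an adjunction of $(\infty,1)$-categories. The cleanest route is to phrase this via reflective localizations in the sense of \cite[Chapter 5]{HTT}: sheafification exhibits $\Fun^{desc}$ as a reflective subcategory of $\Fun$, an object of $\Fun^{h}$ is local for this localization precisely when it is a sheaf, and item 2 guarantees that the reflector maps $\Fun^{h}$ into $\Fun^{desc,h}$; this exactly exhibits $\Fun^{desc,h}$ as reflective in $\Fun^{h}$ with reflector $L|_{\Fun^{h}}$. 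Everything else is bookkeeping with the uniqueness of adjoints.
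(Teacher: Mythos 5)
Your argument for item 4 is correct and is exactly the ``completely formal'' route the paper gestures at: both $L\circ\cH^{pre}$ and $\cH\circ L$ are left adjoint to the full inclusion $\Fun^{desc,h}(\Mf^{op},\bC)\hookrightarrow\Fun(\Mf^{op},\bC)$, hence canonically equivalent. The chain of mapping-space equivalences and the reflective-localization packaging are both sound.

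The gap is that this proves only one of the four assertions, and it explicitly consumes the other three as hypotheses. Items 1 and 2 are the non-formal content of the Proposition and are where the geometry of manifolds actually enters: the paper proves item 1 (following Dugger) by taking a good open cover of $M$ and combining descent with homotopy invariance to obtain $F(M)\simeq F(*)^{M_{top}}\simeq\const(F)(M)$, and item 2 by writing $L(F)(M)$ as an iterated colimit over good covers of limits of the form $\lim_{\Delta}F(U^{\bullet})$ and observing that each such limit is equivalent to $F(*)^{M_{top}}$, so the diagram is constant. Without some such argument your key intermediate step --- that $L$ restricts to a reflector $\Fun^{h}(\Mf^{op},\bC)\to\Fun^{desc,h}(\Mf^{op},\bC)$ --- has no foundation, since that restriction being well defined \emph{is} item 2. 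Likewise item 3 (the adjunction for $\cH$, the failure of a further left adjoint in general, and the preservation of finite products for stable $\bC$ or for spaces) is quoted rather than proved. So as a proof of the Proposition as stated the proposal is incomplete; as a proof of item 4 granting items 1--3 it is correct and coincides with the paper's intended argument.
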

\begin{proof}
The first statement is basically due to Dugger \cite{dugger} who shows it in a slightly different setting and only for $\bC$ the $(\infty,1)$-category of spaces. {The idea is as follows.} Let $F$ be a homotopy invariant sheaf and $M$ be a manifold.  {Then using a good open cover of $M$ and descent and homotopy invariance of  $F$  one obtains  equivalences  $$F(M) \simeq F(*)^{M_{top}}\simeq \const(F)(M)\ . $$   For more details see \cite[Sec. 6.5]{2013arXiv1306.0247B}.} 

For the second claim we use a similar argument. Assume  that $F$ is  a homotopy invariant  presheaf. We see from  \cite[6.2.2.7]{HTT} that we can write the evaluation $L(F)(M)$  of the sheafification of $F$ at a manifold $M$  as an iterated  colimit over good open covers of limits of the form of the form
$$ {{\lim}_{\Delta}} F(U^\bullet)\ .$$ In
 view of    the equivalence ${{\lim}_{\Delta}} F(U^\bullet) \simeq F(*)^{M_{top}}$  the diagram is constant.
 
  The last two statements are completely formal, see e.g. \cite[Problem. 4.33]{skript} for more details about $4.$
  \end{proof}

The functor $\const$ is the left-adjoint of an adjunction
$$\const:\bC\leftrightarrows   \Fun^{desc}(\Mf^{op},\bC):\ev_{*} .$$ 
   Consequently, we have an adjunction
  $$\mathrm{inclusion}:\Fun^{desc,h}(\Mf^{op},\bC)\leftrightarrows \Fun^{desc}(\Mf^{op},\bC):{\cS}\ ,$$
   where 
   \begin{equation}\label{vm1} {\cS:=\const\circ \ev_{*}}\ .\end{equation}
The functor $\cS$ is also the left-adjoint in an adjunction
$$\cS: \Fun^{desc}(\Mf^{op},\bC)\leftrightarrows  \Fun^{desc,h}(\Mf^{op},\bC):\mathcal{G}\ ,$$
where $\cG$ is the Godement functor given on objects by
\begin{equation}\label{mikoe}\cG(\hat E)(M):=  \hat E(*)^{M^{\delta}}\ ,\end{equation}
where $M^{\delta}$  is the topological space obtained by endowing the manifold  $M$ with the discrete topology.

\begin{rem}\label{rtrtrt}{\rm 
In summary, we have a quadruple adjunction
$$ 
(\cH \dashv \const \dashv \cS \dashv \cG ):
\Fun^{desc}(\Mf^{op},\bC) \mathrel{\substack{\stackrel{\cH}{\textstyle\rightarrow}\\
\stackrel{\const}{\textstyle\hookleftarrow}\\
\stackrel{\cS}{\textstyle\rightarrow} \\
\stackrel{\cG}{\textstyle\hookleftarrow}}} 
\Fun^{desc,h}(\Mf^{op},\bC) $$
where the two functors $\const$ and $\cG$ are full embeddings. In addition,  if $\bC$ is stable or the category of spaces $\sSet[W^{-1}]$, then the functor $\cH$ preserves finite products.
For $\bC$ the $(\infty,1)$-category of spaces the existence of  {such an adjoint quadruple {$(\cH,\const,\cS,\cG)$}} is part of the axiomatics of a cohesive topos as introduced by Schreiber \cite[Remark 3.4.2.]{ddddddd}. }\end{rem}

For later reference we fix  some notation.
\begin{ddd}\label{ijoisojsvs}
We consider the following functors $\Fun^{desc}(\Mf^{op},\bC)\to \bC$
$$U:=\ev_{*}\circ \cH\ , \qquad S:=\ev_{*}\ .$$
\end{ddd}

\section{Structures in the stable case}\label{ttz13}

This section presents the main general result of this note which essentially  consists in  a straightforward construction using the functors introduced in the previous section and taking functorial (co)fibres.
 From now on we assume that $\bC$ is a presentable and stable $(\infty,1)$-category. Stability implies in particular that the homotopy category $\mathrm{Ho}(\bC)$ is canonically triangulated. Typical examples are the $(\infty,1)$-categories of spectra or of chain complexes. For details we refer the reader to \cite[Chapter 1]{HA}.
We will use stability for the functorial construction and properties of fibre and cofibre sequences.
\begin{ddd}\label{ghb5}
We define the functors
  $\cA,\cZ:\Fun^{desc}(\Mf^{op},\bC)\to \Fun^{desc}(\Mf^{op},\bC)$
 to fit into the fibre sequences
 $$\cA\stackrel{a}{\to} \id\stackrel{I}{\to} \cH\to\Sigma \cA \qquad \text{and} \qquad \Sigma^{{-1}} \cZ\to \cS\to \id\stackrel{R}{\to} \cZ$$
using the unit and the counit of the adjunctions  {discussed in the last section}.
We furthermore define {a} functor $Z:  \Fun^{desc}(\Mf^{op},\bC)\to \bC$ by
$$Z:=\ev_{*}\circ\cH\circ\cZ\ \ . $$
\end{ddd} 

We consider a $\bC$-valued sheaf
 $\hat E\in \Fun^{desc}(\Mf^{op},\bC)$. Combining the  functors introduced  above and in Definition \ref{ijoisojsvs}   we see that it naturally fits into a diagram of
  vertical and horizontal fibre sequences
\begin{equation}\label{heut300010}
\xymatrix{
\Sigma^{-1}\const(Z(\hat E))\ar[r]\ar[d]&\cA(\hat E)\ar[r]^{d}\ar[d]^{a}&\cZ(\hat E)\ar@{=}[d]\\\ \const(S(\hat E))\ar[d]\ar[r]&\hat E\ar[d]^{I}\ar[r]^-{R}&\cZ(\hat E)\\
\const(U(\hat E))\ar[r]^-{{\simeq}}& {\cH(\hat E)}&}
\end{equation}
 such that the upper left square is cartesian.    By construction we have 
\begin{equation}\label{vk20}\cH(\cA(\hat E))\simeq 0 \quad \text{and} \quad \ev_{*}(\cZ(\hat E))\simeq 0  \ .\end{equation}

The symbols denoting these functors and maps are chosen to resemble the corresponding objects in standard examples of differential cohomology.  We have the following interpretations:
\begin{enumerate}
\item $U$ takes the underlying cohomology theory and $I$ maps a class to its underlying cohomology class.
\item $\cZ$ represents the differential cycles (often related with closed differential forms) and $R$ is the curvature map.
\item $S$ represents the  secondary cohomology theory (often given by the flat classes).
\item $\cA$ classifies the group of differential deformations (often differential forms up to the image of the de Rham differential).
\end{enumerate}
This will become clearer in the examples discussed in Section \ref{cl12}, see in particular \eqref{ghb100}.
If we let $\bC$ be the category of spectra or chain complexes, evaluate the sheaves on a smooth manifold $M$ and take the homotopy group $\pi_{-m}$ or the cohomology $H^{m}$, respectively, then we get the differential cohomology diagram
\begin{equation}\label{ghb1}\xymatrix{&{\cA(\hat E)^{m}(M)}\ar[rr]^{d}\ar[dr]^{a}&&{\cZ(\hat E)^{m}(M)}\ar[dr]&\\
H^{m-1}(M;Z(\hat E))\ar[ur]\ar[dr]&&{\hat E^{m}(M)}\ar[ur]^{R}\ar[dr]^{I}&&H^{m}(M;Z{(\hat E)})\\
&H^{m}(M;S(\hat E))\ar[ur]\ar[rr]&&H^{m}(M;U(\hat E)) \ar[ur]}\end{equation}
where the upper and the lower parts are pieces of long exact sequences, and the
two middle diagonal sequences are exact, too.
Here we use the notation $H^{m}(M;E):=\const(E)^{m}(M)$ for the application of the cohomology theory represented by $E$ to the manifold $M$ since the usual notation $E^{m}(M)$ could be confused with
the notation used for sheaves.

\bigskip
 
Now we want to derive an additional piece of structure that is present in differential cohomology theories, namely the homotopy formula. 
In order to do this we
 define the endofunctor $$I:~\Fun^{desc}(\Mf^{op},\bC)\to \Fun^{desc}(\Mf^{op},\bC)$$ such that
$$(IF)(M):=F(\Delta^{1}\times M)\ .$$
This is a special case of a more general functor which we introduce in \eqref{heut1000} and which uses a slightly different notation.

\begin{theorem}[Homotopy formula]\label{ghb2}
There exists a natural transformation (called the integration map)
\begin{equation}\label{heut1001}\int: I\cZ(\hat E)\to {\cA(\hat E)}\end{equation} such that we have  the homotopy formula
\begin{equation}\label{ghb3}\partial_{1}^{*}-\partial_{0}^{*} \simeq a\circ\int\circ R\end{equation}
as transformations $I\to \id$.
Moreover we have an equivalence $$\int\circ\pr^*\simeq0 \ .$$
\end{theorem}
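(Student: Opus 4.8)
The plan is to exhibit the integration map as factoring through the projection onto a ``reduced cylinder'' and then to observe that $\pr^*$ is annihilated by that projection. Throughout fix the sheaf $\hat E$ and abbreviate $\cZ:=\cZ(\hat E)$, $\cA:=\cA(\hat E)$, $\cS:=\cS(\hat E)$. Write $I$ for the cylinder functor, and recall that the endpoint restrictions $\partial_0^*,\partial_1^*\colon I\to\id$ and the projection $\pr^*\colon\id\to I$ satisfy $\partial_0^*\circ\pr^*\simeq\id\simeq\partial_1^*\circ\pr^*$. For any sheaf $F$ set $\bar I F:=\fib(\partial_0^*\colon IF\to F)$; since $\bC$ is stable and $\pr^*$ is a section of $\partial_0^*$, there is a natural splitting $IF\simeq F\oplus\bar I F$ in which $\pr^*$ is the inclusion of the first summand. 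Let $q\colon IF\to\bar I F$ be the complementary projection, so that by construction
\[ q\circ\pr^*\simeq 0 . \]

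The key point is that the integration map $\int\colon I\cZ\to\cA$ constructed above factors as $\int\simeq\int_2\circ q$ through the projection $q\colon I\cZ\to\bar I\cZ$. Recall that $\int$ arises by lifting $\epsilon:=\partial_1^*|_{\bar I\hat E}\colon\bar I\hat E\to\hat E$ along $a\colon\cA\to\hat E$ — this is possible because $\bar I\hat E$ has vanishing homotopification, so $\epsilon$ dies after the unit $\hat E\to\cH(\hat E)$ — and then transporting along $R\colon\hat E\to\cZ$. Now the secondary piece $\cS=\const(\ev_*\hat E)$ is homotopy invariant, hence $\partial_0^*\colon I\cS\to\cS$ is an equivalence and $\bar I\cS\simeq 0$. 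Applying the exact functor $\bar I$ to the fibre sequence $\cS\to\hat E\xrightarrow{R}\cZ$ shows that $\bar I(R)\colon\bar I\hat E\to\bar I\cZ$ is an equivalence; consequently the lift $\hat\epsilon\colon\bar I\hat E\to\cA$ of $\epsilon$ descends to $\int_2:=\hat\epsilon\circ\bar I(R)^{-1}\colon\bar I\cZ\to\cA$, and one sets $\int:=\int_2\circ q$. (That this recovers the homotopy formula is checked from $\partial_1^*-\partial_0^*\simeq\epsilon\circ q_{\hat E}$ together with the naturality square $q\circ I(R)\simeq\bar I(R)\circ q_{\hat E}$.)

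With this presentation the final statement is immediate:
\[ \int\circ\pr^*\simeq\int_2\circ q\circ\pr^*\simeq\int_2\circ 0\simeq 0 . \]
The only genuine obstacle is the factorization $\int\simeq\int_2\circ q$ of the previous paragraph: the lift of $\epsilon$ along the fibre sequence $\cA\xrightarrow{a}\hat E\to\cH(\hat E)$ is not unique — its lifts form a torsor under $\pi_0\mathrm{Map}(\bar I\hat E,\Sigma^{-1}\cH(\hat E))$ — so one must make the choice functorially in $\hat E$ and, more importantly, verify that it is compatible with $R$ so that the descended map $\int_2$ genuinely exists. This is exactly the role of the vanishing $\bar I\cS\simeq 0$: it makes $\bar I(R)$ invertible, which both produces $\int_2$ and pins $\int$ down as a map out of the reduced cylinder $\bar I\cZ$ rather than merely out of $I\cZ$. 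It is this extra rigidity — not the homotopy formula alone — that forces $\int\circ\pr^*$ to vanish.
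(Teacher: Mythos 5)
Your proof is correct, but it reorganizes the argument in a genuinely different way from the paper. The paper first builds $\alpha:I\cZ(\hat E)\to\hat E$ out of the presentation of $I\cZ(\hat E)$ as the cofibre of $I\cS(\hat E)\to I\hat E$, using the preferred nullhomotopy of $\partial_1^*-\partial_0^*$ on $I\cS(\hat E)$ coming from homotopy invariance of $\cS(\hat E)$ (diagram \eqref{kp21}), lifts $\alpha$ into $\cA(\hat E)$ essentially as you do, and then proves $\int\circ\pr^*\simeq 0$ by a second, separate diagram chase in which the fillers of the enlarged diagram must be matched against those used to define $\alpha$. You instead split off the reduced cylinder $\bar IF=\Fib(\partial_0^*:IF\to F)$, observe that $\bar I(R):\bar I\hat E\to\bar I\cZ(\hat E)$ is an equivalence because $\bar I\cS(\hat E)\simeq 0$, and arrange for $\int$ to factor through the projection $q:I\cZ(\hat E)\to\bar I\cZ(\hat E)$, so that $\int\circ\pr^*\simeq 0$ holds by construction and the final diagram chase disappears. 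The two arguments consume the same inputs --- homotopy invariance of $\cS(\hat E)$, exactness of $I$ and $\cH$, and the vanishing $\cH(\bar I\hat E)\simeq 0$ (for which you implicitly need Lemma \ref{thop} with $S=\Delta^{1}$, giving that $\cH(\partial_0^*)$ is an equivalence) --- so nothing essentially new is required; what your packaging buys is that the last assertion becomes formal, while what the paper's more explicit cofibre construction buys is that it is the version that gets unwound, filler by filler, in the proof of Lemma \ref{heu100}, part 7, to identify $\int$ with honest integration of forms over $\Delta^{1}$. Your closing worry about the non-uniqueness of the lift $\hat\epsilon$ is resolved exactly as you indicate: the nullhomotopy of $\bar I\hat E\to\hat E\to\cH(\hat E)$ is canonical because it factors through the zero object $\cH(\bar I\hat E)$, which is the same mechanism as the paper's ``preferred homotopy'' and its use of $\alpha_{\cH(\hat E)}\simeq 0$ in \eqref{heut2000}.
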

\proof
At first we construct a map $\alpha:I\cZ(\hat E)\to \hat E$ by considering the following diagram:
\begin{equation}\label{kp21}
\xymatrixcolsep{5pc}
\xymatrix{
 & I\cS(\hat E)\ar[dd]_{\id}\ar[r]\ar[dl]_{\partial_0^*}^{\simeq}\ar@/^2pc/[ddd]^{0}
 & I\hat E\ar[ddd]^{\partial_1^*-\partial_0^*}\ar[r]^{R} & I\cZ(\hat E)\ar@{.>}[dddl]^{\alpha}\\
 \cS(\hat E)\ar[dr]^{\pr^*}_{\simeq}\ar@/_2pc/[ddr]^{0} & & &\\
 & I\cS(\hat E)\ar[d]_{\partial_1^*-\partial_0^*} & &\\
 & \cS(\hat E) \ar[r] & \hat E &\\
  \save[]+<2.1cm,4.4cm>*\txt<8pc>{\textcircled{1}}\restore
   \save[]+<1.8cm,2.6cm>*\txt<8pc>{\textcircled{2}}\restore
   \save[]+<3.8cm,3.6cm>*\txt<8pc>{\textcircled{3}}\restore
} \end{equation}
The left upper triangle \textcircled{1} is filled by a preferred homotopy
because the sheaf {$\cS(\hat E)$} is homotopy-invariant.
Since $I\cS(\hat E)$ is a functor the lower left triangle \textcircled{2} commutes (again up to a preferred homotopy).  The composition of these two homotopies yields the homotopy of the middle triangle \textcircled{3} . Finally, the middle pentagon commutes by the naturality of the counit $\cS(-) \to \id(-)$. It follows that the middle square commutes by a preferred homotopy. 
 The map $\alpha$ is now induced by the universal property of the cofibre. 
\\
Below we will use the notation $\alpha_{\hat E}$ in order to indicate the natural dependence of $\alpha$ on $\hat E$.   In general, 
if a sheaf $\hat F$ is homotopy invariant, then we have a natural equivalence $R\simeq 0$ and therefore $\alpha_{\hat F}\simeq 0$ by composition. Applying this to the homotopy invariant sheaf  $\cH(\hat E)$ we get $\alpha_{\cH(\hat E)}\simeq 0$ and 
  can define a lift of $\alpha_{\hat E}$ to the integration map $\int:I\cZ(\hat E)\to \cA(\hat E)$:
\begin{equation}\label{heut2000}
\xymatrixcolsep{5pc}
\xymatrix{
 & I\cZ(\hat E)\ar[r]^I\ar[d]^{\alpha_{\hat E}}\ar@{.>}[dl]_{\int} 
 & I\cZ(\cH(\hat E))\ar[d]^{\alpha_{\cH(\hat E)}}\\
 \cA(\hat E)\ar[r]^{a}& \hat E\ar[r]^{I}& \cH(\hat E)
}\ ,\end{equation}
again using  the universal property of the fibre of $I$.
\\
The equivalence  $$ a\circ\int\circ R \simeq \alpha_{{\hat E}} \circ R \simeq \partial_{1}^{*}-\partial_{0}^{*}$$ now follows immediately
  from the constructions of $\alpha$ and $\int$. 
\\
Finally we prove that $\int \circ \pr^* \simeq 0$. To this end we first consider the following diagram:
$$
\xymatrixcolsep{8pc}
\xymatrix{
  & \cS(\hat E)\ar[d]_{\pr^*}\ar@/_2pc/[ddl]_{\id} \\ 
  & I\cS(\hat E)\ar[dd]_{\id}\ar[dl]_{\partial_0^*}^{\simeq}   \\
  \cS(\hat E)\ar[dr]^{\pr^*}_{\simeq} & \\
  & I\cS(\hat E)
}$$
The upper triangle is filled by functoriality and the lower one by homotopy invariance of the sheaf {$\cS(\hat E)$}. The outer triangle has two fillers: One is the composition of the two inner fillers described before and the other is induced from the left and right unitors. But by using homotopy invariance for a second time one sees that these fillers are homotopic.

Next we extend diagram \eqref{kp21}:
$$
\xymatrixcolsep{8pc}
\xymatrix{
  &\cS(\hat E)\ar[d]_{\pr^*}\ar[r]\ar[ddl]_{\id}
  & \hat E\ar[d]_{\pr^*}\ar[r]^{R}
  & \cZ(\hat E)\ar[d]_{\pr^*}\ar@{.>}@/^8pc/[ddddl]^{\beta}\\
  & I\cS(\hat E)\ar[dd]_{\id}\ar[r]\ar[dl]_{\partial_0^*}^{\simeq} \ar@/^2pc/[ddd]^{0}
  & I\hat E\ar[ddd]_{\partial_1^*-\partial_0^*}\ar[r]^{R} & I\cZ(\hat E)\ar@{.>}[dddl]^{\alpha}\\
  \cS(\hat E)\ar[dr]^{\pr^*}_{\simeq}\ar@/_2pc/[ddr]^{0} & & &\\
  & I\cS(\hat E)\ar[d]_{\partial_1^*-\partial_0^*} & &\\
  & \cS(\hat E) \ar[r] & \hat E &\\
  \save[]+<6.5cm,3.5cm>*\txt<8pc>{\textcircled{1}}\restore
  \save[]+<6.5cm,6.5cm>*\txt<8pc>{\textcircled{2}}\restore
  \save[]+<5.1cm,3.7cm>*\txt<8pc>{\textcircled{3}}\restore
  }$$
Again, the map $\beta$ is constructed by the universal property of the cofibre using the fillers of the two middle squares \textcircled1 and \textcircled2 which are induced by functoriality. Then clearly $\beta\simeq \alpha\circ \pr^{*}$.
\\
In the construction of $\alpha$ we used the filler of the inner triangle \textcircled3.
Now one checks using the above remarks that the composition of this filler with $\pr^*$ is homotopic to the filler of the outer left triangle induced by functoriality.
\\
So, up to homotopy, we can construct $\beta$ using the inner square (consisting out of \textcircled1, \textcircled2 and \textcircled3). But here both vertical compositions are homotopic to the trivial maps by functoriality. Hence we get $\beta\simeq 0$. {\hB}

\begin{rem}{\rm 
We will see in
Example \ref{gjl123} that the morphism $\int$ is related to {the} integration of differential forms over the interval $\Delta^{1}$. This example motivates the notation.}
\end{rem}

We now discuss a canonical decomposition of a sheaf   into its homotopy invariant part and cycle data. In this way we obtain  a sort of classification of differential cohomology theories.
 \begin{ddd} 
 We call $\hat E\in \Fun^{desc}(\Mf,\bC)$ \emph{pure} if $\hat E(*)\simeq 0$. We let $$\Pure(\Mf^{op},\bC)\subset \Fun^{desc}(\Mf^{op},\bC)$$ be the full subcategory of pure sheaves.
\end{ddd}

For example, the sheaf $\cZ(\hat E)$ of cycles of $\hat E$ (Definition \ref{ghb5}) is pure for any sheaf $\hat E$.
We moreover have an {adjunction}
$$\cZ:\Fun^{desc}(\Mf^{op},\bC)\leftrightarrows \Pure(\Mfo,\bC):{\mathrm{inclusion}}$$
which exhibits the functor $\cZ$ as the universal way to turn a sheaf into a pure sheaf.

\begin{prop}\label{wie4}
For any sheaf $\hat E \in \Fun^{desc}(\Mf^{op},\bC)$ the canonical diagram
  \begin{equation}\label{tsep22}\xymatrix{\hat E\ar[rr]^R\ar[d]&&\cZ(\hat E)\ar[d]\\
\cH(\hat E)\ar[rr]^-{\cH(R)}&&\cH(\cZ(\hat E))} \end{equation} 
is a pullback diagram. We moreover have a pullback square of $(\infty,1)$-categories:
$$\xymatrix{ \Fun^{desc}(\Mf^{op},\bC)\ar[r]^-{\cZ}\ar[d]^{U\to Z}&\Pure(\Mfo,\bC)\ar[d]^{U}\\
\Mor(\bC)\ar[r]^{{target}}&\bC}
$$
\end{prop}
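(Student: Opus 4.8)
The plan is to prove the two assertions in turn, after recording that the functors $\cH$ and $\cZ$ are exact. The functor $\cH$ is a left adjoint (Proposition \ref{propo}(3)), hence colimit-preserving, and a colimit-preserving functor between stable $(\infty,1)$-categories is automatically exact. The functor $\cZ$ is exact because it is the cofibre of the natural transformation $\cS\to\id$ of exact functors: here $\cS=\const\circ\ev_*$ is exact since $\const=L\circ\underline{(-)}$ is a composite of the exact functor $\underline{(-)}$ and the sheafification $L$. In particular $\cH$ and $\cZ$ commute with finite limits, hence with the pullbacks below. I will also use repeatedly that $\cH$ is the identity on homotopy invariant sheaves and $\cZ$ the identity on pure sheaves, together with the vanishing $\cZ\circ\const\simeq 0$ (as $\cS\circ\const\simeq\const$) and $\ev_*\circ\cZ\simeq 0$.

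For the square of sheaves \eqref{tsep22}, recall that in a stable category a commuting square is cartesian exactly when the induced map on horizontal fibres is an equivalence. The fibre of the top map $R$ is $\cS(\hat E)$, by the defining fibre sequence of $\cZ$. Since $\cH$ is exact, the fibre of the bottom map $\cH(R)$ is $\cH(\cS(\hat E))$; but $\cS(\hat E)=\const(\ev_*\hat E)$ is homotopy invariant, so $\cH(\cS(\hat E))\simeq\cS(\hat E)$ and the comparison of fibres is an equivalence. Hence \eqref{tsep22} is cartesian.

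For the square of categories, write $\mathcal P:=\Mor(\bC)\times_{\bC}\Pure(\Mfo,\bC)$ for the pullback; an object of $\mathcal P$ is a morphism $f\colon X\to Y$ in $\bC$ together with a pure sheaf $\hat P$ and an equivalence $Y\simeq U(\hat P)$. The functor in the statement is the comparison functor $\Phi$ sending $\hat E$ to $\bigl(U(\hat E)\to Z(\hat E),\ \cZ(\hat E),\ Z(\hat E)\simeq U(\cZ(\hat E))\bigr)$, and the claim is that $\Phi$ is an equivalence. I would produce an explicit inverse $\Psi$ by gluing: given $(f,\hat P)$ as above, set
\[
\Psi(f,\hat P):=\const(X)\times_{\const(U(\hat P))}\hat P,
\]
where $\const(X)\to\const(U(\hat P))$ is $\const$ of the composite $X\xrightarrow{f}Y\simeq U(\hat P)$ and $\hat P\to\const(U(\hat P))$ is the homotopification unit read through the identification $\cH(\hat P)\simeq\const(U(\hat P))$ of Proposition \ref{propo}(1). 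That $\Psi\circ\Phi\simeq\id$ is precisely the first assertion, since by \eqref{tsep22} we have $\hat E\simeq\cH(\hat E)\times_{\cH(\cZ(\hat E))}\cZ(\hat E)$ with $\cH(\hat E)\simeq\const(U(\hat E))$ and $\cH(\cZ(\hat E))\simeq\const(Z(\hat E))$.

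For $\Phi\circ\Psi\simeq\id$ I would compute the invariants of $\hat E:=\Psi(f,\hat P)$ by commuting the exact functors $\cZ$ and $\cH$ past the pullback. Using $\cZ\circ\const\simeq 0$ and $\cZ(\hat P)\simeq\hat P$ gives $\cZ(\hat E)\simeq\hat P$; using $\cH\circ\const\simeq\const$ and the fact that $\cH$ of the unit $\hat P\to\cH(\hat P)$ is an equivalence gives a cartesian square in which the map out of $\cH(\hat P)$ is an equivalence, whence $\cH(\hat E)\simeq\const(X)$ and $U(\hat E)\simeq X$. The main obstacle is the remaining bookkeeping: one must check that the morphism $U(\hat E)\to Z(\hat E)$ produced by $\Phi$ is identified with the original $f$ under these equivalences, which is a statement about maps rather than objects. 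The clean way is to use that $R$ is the unit of the localization $\cZ\dashv\mathrm{incl}$: naturality of $R$ along the projection $\hat E\to\hat P$ identifies $R_{\hat E}$ with this projection under $\cZ(\hat E)\simeq\hat P$ (because $R_{\hat P}$ is an equivalence for pure $\hat P$), and applying $\cH$ and $\ev_*$ then returns exactly $f$. Once this is in place, $\Phi$ and $\Psi$ are mutually inverse and the square of categories is cartesian.
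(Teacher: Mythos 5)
Your proof is correct and follows essentially the same route as the paper: the first square is handled identically (exactness of $\cH$ plus homotopy invariance of $\cS(\hat E)$), and your explicit mutually inverse pair $(\Phi,\Psi)$ is the paper's argument reorganized — the paper routes through the intermediate category of pullback squares with pure upper-right and homotopy invariant lower corners, and your $\Psi$ is precisely the composite of forming such a square from the gluing data and evaluating at the upper-left corner. Your extra care in identifying the characteristic map via naturality of the unit $R$ fills in a point the paper dispatches by appeal to universal properties.
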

\begin{proof}
For the first assertion we note that the fibre of $R: \hat E \to \cZ(\hat E)$ is by definition given by $\cS(\hat E)$. Since the functor $\cH$ preserves fibre sequences the 
fibre of $\cH(R): \cH(\hat E) \to \cH(\cZ(\hat E))$ is given by $\cH(\cS(\hat E))$. But since $\cS(\hat E)$ is already homotopy invariant, we have an equivalence of the fibres. This implies that the diagram is a pullback.

\medskip 

To prove the second claim we first abbreviate $\cE := \Fun^{desc}(\Mf^{op},\bC)^{\Delta[1] \times \Delta[1]}$ for the category of commuting squares in sheaves. We let $\cE_0$ be the full subcategory of those squares which have the property:
\begin{itemize}
\item the square is a pullback
\item the upper right object is pure
\item the lower objects are homotopy invariant.
\item the right vertical morphism is homotopification
\end{itemize}
Then we have an equivalence $\cE_0 \simeq \Mor(\bC) \times_{\bC} \Pure(\Mfo,\bC)$ since that category of homotopy invariant sheaves is equivalent to $\bC$ and since 
the squares are pullback squares.
Therefore we are left to show that $\cE_0$ is equivalent to $\Fun^{desc}(\Mf^{op},\bC)$.
This can be seen by noting that there are functors both ways: the functor 
$\eta: \cE_0 \to \Fun^{desc}(\Mf^{op},\bC)$ given by evaluating at the upper left corner and a functor
$\Phi: \Fun^{desc}(\Mf^{op},\bC) \to \cE_0$ 
which is defined as follows. First we assemble the natural transformations $R: \id \to \cZ$ and $I: \id \to \cH$ into a functor
$$\Fun^{desc}(\Mf^{op},\bC) \to \cE.$$

The first part of this lemma then implies that this functor factors through $\cE_0 \subset \cE$ and thus defines $\Phi$. 
We clearly have $\eta \circ \Phi {\simeq} \id$. 

Thus it remains to show that $\Phi \circ \eta \simeq \id$. For a pullback square of the form
$$\xymatrix{
\hat E \ar[r]\ar[d] & P \ar[d]\\
H \ar[r] & Q
}$$
in $\cE_0$ we can conclude that $\cZ(\hat E) \simeq P$ and $\cH(\hat E) \simeq H$ and 
$\cH (\cZ (\hat E)) \simeq Q$ using the fact that all three functors preserve pullbacks. The naturality of these equivalences follows from the fact that they come from the respective universal properties and thus can be assembled together in the required natural transformation. 
\end{proof}

\begin{ddd}
A differential refinement of $E\in \bC$ is a sheaf $\hat E\in  \Fun^{desc}(\Mf^{op},\bC)$
together with an equivalence $U(\hat E)\simeq E$.
 \end{ddd}

Therefore in order to determine a differential refinement $\hat E$ of $E$ we must choose a pure sheaf $\cZ{\in \Pure(\bC)}$ and a morphism ${\phi}:E\to U(\cZ)$.
\begin{ddd} \label{uuzzii70}We call $\cZ$ the cycle data and $\phi$ the characteristic map of the differential refinement $\hat E$ of $E$.
\end{ddd}
As we see from Proposition \ref{wie4} for a 
given cycle data and characteristic map, we can construct $\hat E$ by the pull-back
  \begin{equation}\label{heute1}\xymatrix{\hat E\ar[rr]\ar[d]&&\cZ\ar[d]\\
\const(E)\ar[rr]^-{\const(\phi)}&&\const( U(\cZ))}\ . \end{equation}
Furthermore, we have equivalences
$$\cA(\hat E)\simeq \cA(\cZ)\ , \quad  \cZ(\hat E)\simeq \cZ $$ 
and $S(\hat E)$ fits into the fibre sequence
$$S(\hat E)\to E\stackrel{\phi}{\to} U(\cZ)\to \Sigma S(\hat E)\ . $$

Let $\hat F\in \Fun^{desc}(\Mf^{op},\bC)$ be a differential refinement of $F\in \bC$ and $c:E\to F$ be a morphism in $\bC$. Then we can construct a sheaf $\hat E\in \Fun^{desc}(\Mf^{op},\bC)$ by forming the pull-back
$$\xymatrix{ 
   \hat E\ar[rr] \ar[d] && \hat F\ar[d]\\
   \const(E)\ar[rr]^-{\const(c)}&&\const(F)} \ .
$$
Since many of  our  examples will be constructed in this way let us express the main data of $\hat E$ in terms of the data of $\hat F$. The following Lemma will be used repeatedly without further notice  in calculations.
\begin{lem}\label{heut1003}$\ $ 
\begin{enumerate}
\item $U(\hat E)\simeq E$. In particular $\hat E$ is a differential refinement of $E$.  
\item $\cA(\hat E)\simeq \cA(\hat F)$.
\item $\cZ(\hat E)\simeq \cZ(\hat F)$.
\item $Z(\hat E)\simeq Z(\hat F)$
\item $S(\hat E)$ is given by the pull-back $$\xymatrix{S(\hat E)\ar[r]\ar[d]&S(\hat F)\ar[d]\\E\ar[r]^{c}&F}\ .$$
\item The characteristic map $\phi_{\hat E}$ is the composition  $E\stackrel{c}{\longrightarrow} F\stackrel{\phi_{\hat F}}{\longrightarrow} Z(\hat F)$ of $c$ and the characteristic map $\phi_{\hat F}$ of $\hat F$.
\item Under the equivalences $2.$ and $3.$  the integration maps \eqref{heut1001} for $\hat E$ and $\hat F$ coincide.
\end{enumerate}
\end{lem}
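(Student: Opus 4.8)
The plan is to exploit stability: since $\bC$ is stable, all the functors in sight — $\cH$, $\const$, $\ev_*$, $\cS$, and hence $\cA$ and $\cZ$, together with $U=\ev_*\circ\cH$, $S=\ev_*$ and $Z=\ev_*\circ\cH\circ\cZ$ — are exact and therefore preserve pullback squares. Indeed $\cH$ is a left adjoint between stable $(\infty,1)$-categories (Proposition \ref{propo}) and the inclusion of homotopy invariant sheaves preserves both limits and colimits, while $\const$ and $\ev_*$ are exact; the functors $\cA$ and $\cZ$ are then the fibre and cofibre of natural transformations between exact functors, hence exact. The whole argument consists in applying these exact functors to the defining pullback square of $\hat E$ and simplifying. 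The two facts I will use repeatedly are: (i) a constant sheaf $\const(X)$ is homotopy invariant, so that $\cA(\const(X))\simeq 0$ and $\cZ(\const(X))\simeq 0$ (on homotopy invariant sheaves both $I$ and the counit $\cS\to\id$ are equivalences); and (ii) $\ev_*\circ\const\simeq\id_\bC$, together with the hypothesis that $\hat F$ is a differential refinement, i.e. $U(\hat F)\simeq F$.

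For the first five statements I apply the relevant exact functor to the square defining $\hat E$. Applying $\cA$ and using (i) turns the bottom row into $0\to 0$, so that the top map $\cA(\hat E)\to\cA(\hat F)$ is forced to be an equivalence; this is statement $2$, and statement $3$ follows identically with $\cZ$. Statement $4$ is obtained by applying $\ev_*\circ\cH$ to the equivalence $\cZ(\hat E)\simeq\cZ(\hat F)$ of statement $3$, since $Z=\ev_*\circ\cH\circ\cZ$. For statement $1$ I apply $U$: by (ii) the bottom row becomes $c\colon E\to F$ and the right vertical becomes the diff-refinement equivalence $U(\hat F)\simeq F$, whence the left vertical $U(\hat E)\to E$ is an equivalence. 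Statement $5$ is just $\ev_*$ applied to the square, which gives precisely the asserted pullback because $\ev_*(\const(E))\simeq E$, $\ev_*(\const(F))\simeq F$ and $\ev_*(\const(c))\simeq c$.

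For statements $6$ and $7$ I pass from objects to morphisms and invoke naturality. Write $g\colon\hat E\to\hat F$ for the top horizontal map of the defining square; the computation of statement $1$ identifies $U(g)$ with $c$ under $U(\hat E)\simeq E$ and $U(\hat F)\simeq F$. Recalling, from \eqref{tsep22} and the discussion after Proposition \ref{wie4}, that the characteristic map is intrinsically $\phi_{\hat E}\simeq U(R_{\hat E})$ with $R\colon\id\to\cZ$ the natural curvature map, naturality of $R$ along $g$ followed by $U$ produces a commuting square whose verticals are $\phi_{\hat E}$ and $\phi_{\hat F}$, whose top is $U(g)\simeq c$, and whose bottom is the equivalence $Z(\hat E)\simeq Z(\hat F)$; this yields $\phi_{\hat E}\simeq\phi_{\hat F}\circ c$, which is statement $6$. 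Statement $7$ is the same argument applied to the integration map: since $\int$ is natural in its argument (Theorem \ref{ghb2}), naturality along $g$ gives a commuting square relating $\int_{\hat E}$ and $\int_{\hat F}$ whose horizontal maps are exactly the equivalences $I\cZ(g)$ and $\cA(g)$ underlying statements $3$ and $2$, so the two integration maps agree under these identifications.

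The routine part is the exactness bookkeeping. The one point requiring genuine care is the identification of the \emph{morphisms}, not merely the objects, in statements $1$, $6$ and $7$: in particular fixing the intrinsic description $\phi_{\hat E}\simeq U(R_{\hat E})$, and checking that the right vertical map $\hat F\to\const(F)$ of the defining square is $I_{\hat F}$ composed with the diff-refinement equivalence, so that applying $U$ genuinely returns the equivalence $U(\hat F)\simeq F$ rather than some other self-map of $F$. I expect this map-level naturality to be the main thing to pin down carefully, whereas every object-level equivalence falls out mechanically from exactness and facts (i)–(ii).
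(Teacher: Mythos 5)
Your proof is correct and matches the paper's intent: the paper's own proof is the one-line remark that ``all these properties follow from the definitions in a straightforward manner,'' and your argument --- applying the exact functors $\cA$, $\cZ$, $U$, $S$ to the defining pullback square, using that $\cA$ and $\cZ$ kill constant (hence homotopy invariant) sheaves, and then handling statements 6 and 7 by naturality of $R$ and $\int$ along the top map $g\colon\hat E\to\hat F$ --- is exactly the straightforward verification being alluded to. Your identification of the subtle points (that $\phi_{\hat E}\simeq U(R_{\hat E})$ and that the right vertical of the defining square is $I_{\hat F}$ followed by the refinement equivalence) is also accurate and is indeed where the only real care is needed.
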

\proof All these properties follow from the definitions in a straightforward manner. \hB  

\section{Examples related with differential forms}\label{cl12}

In the following we present a series of examples of sheaves which we analyse by calculating the invariants introduced in Section  \ref{ttz13}. 
In this section the target category $\bC$ will be the category of chain complexes {in most examples.} More precisely, we denote by $\Ch$ the category of chain complexes of abelian groups and take $\bC:=\Ch[W^{-1}]$, the category obtained from $\Ch$  by formally inverting the quasi-isomorphisms. This category is a stable $(\infty,1)$-category (shown e.g. in \cite[Chapter 1]{HA}). The canonical localization functor is denoted by $\iota:\Ch\to \Ch[W^{-1}]$.

The calculations in this section mainly rely on  Lemma \ref{heut1003} and the Lemmas \ref{uuzzii1} and \ref{rrr4} which we prove in an appendix. 
Using these lemmas the proofs are straightforward, we will only give the details for some less obvious assertions. 

\subsection{Closed forms}\label{gjl123}

We consider the real {de} Rham complex as an object $\Omega\in \Fun^{desc}(\Mf^{op},\Ch)$.
For all $m\in\Z $ we define  $$\Diff^{m}(\R[0]):=\iota(\sigma^{\ge m}\Omega)\in \Fun^{desc}(\Mf^{op},\Ch[W^{-1}])\ ,$$ 
where $\sigma^{\ge m}\Omega$ is the truncation that discards all forms of degree $< m$, see Definition \ref{heute10}.

\begin{rem}\label{uuzzii10}{\rm  If $A\in \Fun^{desc}(\Mf^{op},\Ch)$ is a sheaf of chain complexes, then for $m\in \Z$ we let $Z^{m}(A)\in \Fun^{desc}(\Mf^{op},\Ab)$ be the abelian group valued sheaf of $m$-cycles. Furthermore, if $G\in  \Fun^{desc}(\Mf^{op},\Ab)$ is a sheaf of abelian groups, then  $G[k]\in  \Fun^{desc}(\Mf^{op},\Ch)$   denotes the sheaf of chain complexes with
$G$  in degree $-k$ for $k\in \Z$.
With this notation for $m\in \nat$  we have an equivalence $$ \Diff^{m}(\R[0])\simeq 
L(\iota(Z^{m}(\Omega)[-{m}]))$$ since the inclusion $$  Z^{m}(\Omega)[-{m}] \to 
 \sigma^{\ge m}(\Omega) $$ is a quasi-isomorphism  on stalks by the de Rham lemma. 
 In other words, the differential extension $\Diff^{m}(\R[0])$ of $\iota(\R[0])$ is represented by the sheaf of closed $m$-forms considered as  {a} one-component complex in degree $m$.
}\end{rem}

\begin{lem} \label{heu100}If we assume that $m\ge 1$, then we have
\begin{enumerate}
\item $\cZ(\Diff^{m}(\R[0]))\simeq \iota(\sigma^{\ge m}\Omega)$
\item $\cA(\Diff^{m}(\R[0]))\simeq \iota(\sigma^{\le m-1}(\Omega)[-1])$ 
\item $S(\Diff^{m}(\R[0]))\simeq 0$
\item $Z(\Diff^{m}(\R[0]))\simeq \iota(\R[0])$ 
\item $U(\Diff^{m}(\R[0]))\simeq \iota(\R[0])$
\item $\phi \simeq \id_{ \iota(\R[0])} $. 
\item {The integration map in degree $m$-cohomology
can be identified with the integration  of differential forms along $\Delta^{1}$
$$\int_{\Delta^{1}} : Z^{m}(\Omega(\Delta^{1}\times M))\to \Omega^{m-1}(M)/\im(d) .$$ }
\end{enumerate}
\end{lem}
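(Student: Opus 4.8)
Parts 1--6 I would deduce directly from the appendix computations (Lemmas~\ref{uuzzii1} and~\ref{rrr4}) of the homotopification, cycles and deformations of the truncated de Rham complex, together with Lemma~\ref{heut1003}; these are the routine assertions, so I concentrate on the integration map of part 7. The plan there is to make the abstractly-defined map of Theorem~\ref{ghb2} explicit on the chain-level models of parts 1 and 2 and to recognise it as the classical Cartan homotopy operator. First I would fix the source and target in degree-$m$ cohomology. By part 1, $\cZ(\Diff^{m}(\R[0]))\simeq\iota(\sigma^{\ge m}\Omega)$, and since $H^{m}(\sigma^{\ge m}\Omega(-))=Z^{m}(\Omega(-))$, evaluation on $\Delta^{1}\times M$ gives the closed $m$-forms $Z^{m}(\Omega(\Delta^{1}\times M))$. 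By part 2, $\cA(\Diff^{m}(\R[0]))\simeq\iota(\sigma^{\le m-1}\Omega[-1])$, whose degree-$m$ cohomology is $H^{m-1}(\sigma^{\le m-1}\Omega(M))=\Omega^{m-1}(M)/\im(d)$. These are exactly the groups named in the statement.

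Next I would exploit the degeneration forced by parts 3 and 5. Since $S(\Diff^{m}(\R[0]))\simeq 0$, the fibre sequence $\cS(\hat E)\to\hat E\xrightarrow{R}\cZ(\hat E)$ shows that $R$ is an equivalence, so that the auxiliary map constructed in \eqref{kp21} collapses to $\alpha\simeq\partial_{1}^{*}-\partial_{0}^{*}$. By part 5, $\cH(\Diff^{m}(\R[0]))\simeq\const(\iota(\R[0]))$; at the level of chain complexes this homotopy-invariant sheaf is modelled by the full de Rham complex $\iota(\Omega)$, which is already homotopy invariant because de Rham cohomology is, and under this model the map $I$ is the inclusion $\sigma^{\ge m}\Omega\hookrightarrow\Omega$. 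Consequently $\cA(\Diff^{m}(\R[0]))$ is the homotopy fibre of this inclusion, which recovers $\sigma^{\le m-1}\Omega[-1]$, and the map $a$ is the de Rham differential $\Omega^{m-1}\to\Omega^{m}$.

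The key step is then to run the lift \eqref{heut2000} in this explicit model. The map $\int$ is the lift of $\alpha\simeq\partial_{1}^{*}-\partial_{0}^{*}$ through $a$, which requires a nullhomotopy of $I\circ\alpha$, that is, of $\partial_{1}^{*}-\partial_{0}^{*}$ regarded as a map into the homotopy-invariant complex $\iota(\Omega)$. For this I would use the classical witness of homotopy invariance of the de Rham complex, namely the fibre-integration operator $K=\int_{\Delta^{1}}$ satisfying Cartan's formula $dK+Kd=\partial_{1}^{*}-\partial_{0}^{*}$. With this nullhomotopy the lift into the homotopy fibre (realised by the mapping cocone of the inclusion) sends a closed form $\omega$ to the pair $(\partial_{1}^{*}\omega-\partial_{0}^{*}\omega,\ \int_{\Delta^{1}}\omega)$, whose component in the $\Omega^{m-1}$-summand is the fibre integral; passing to $H^{m}$ this reads $\omega\mapsto[\int_{\Delta^{1}}\omega]\in\Omega^{m-1}(M)/\im(d)$, which is the asserted identification. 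The remaining clause $\int\circ\pr^{*}\simeq 0$ then matches the elementary fact that $\int_{\Delta^{1}}\pr^{*}\eta=0$, since a pulled-back form has no component in the interval direction.

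I expect the main obstacle to be the legitimacy of the chosen nullhomotopy. The construction of Theorem~\ref{ghb2} produces a \emph{canonical} nullhomotopy of $I\circ\alpha$ out of the homotopy invariance of $\cH(\Diff^{m}(\R[0]))$ together with naturality, whereas I want to compute with Cartan's operator; a priori a different homotopy would change the lift by a class in $\ker(a)=H^{m-1}_{dR}(M)$, so the homotopy formula alone does not pin $\int$ down. The real content is therefore to set up the chain-level models of $\cZ$, $\cA$ and $\cH$ from parts 1--2 coherently enough that the $\infty$-categorical homotopy supplied by the adjunction is represented, in the de Rham model, by the fibre-integration operator. Since the homotopy inverse of $\pr^{*}$ on $\iota(\Omega)$ is classically given precisely by $\int_{\Delta^{1}}$, this compatibility holds once the models are chosen consistently; everything else is bookkeeping with the de Rham differential and the mapping-cocone model of the homotopy fibre.
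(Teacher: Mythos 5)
Your proposal is correct and follows essentially the same route as the paper: parts 1--6 are read off from Lemmas \ref{heut1003}, \ref{uuzzii1} and \ref{rrr4}, and for part 7 the paper likewise uses that $\cS(\Diff^{m}(\R[0]))\simeq 0$ to reduce $\alpha$ to $\partial_{1}^{*}-\partial_{0}^{*}$, fixes the partial fibre-integration operator $\bH$ (with $d\bH+\bH d=\id-\pr^{*}\partial_{0}^{*}$) as the witness of homotopy invariance of $\iota(\Omega)$, and reads the formula off an explicit mapping-cone model of the lift \eqref{heut2000}. The choice-of-nullhomotopy issue you flag at the end is exactly the point the paper isolates in a remark (the space of contractions of the cone of the equivalence $\pr^{*}$ is contractible, so the cohomological action is independent of the choice, and the Cartan operator is one such choice), so your resolution matches theirs.
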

\proof
We only show $7.$ by 
giving 
a very detailed specialization of the general construction of the integration map in order to show how the general construction leads to a concrete map involving the integration of forms.
We need to calculate the maps $\alpha_{\Diff^{m}(\R[0])}$ and
$\alpha_{\cH(\Diff^{m}(\R[0]))}$ and the filler in the square of \eqref{heut2000}.

We first fix our notation for the cone of a map of complexes $f:A\to B$. We set
$\Cone(f):=B\oplus A[1]$ with differential $d(b,a)=(db-f(a),-da)$.
Given a diagram
$$\xymatrix{A\ar[r]^{f}\ar[dr]^{0}&B\ar[d]^{\phi}\ar[r]&\Cone(f)\ar@{.>}[dl]^{\alpha}\\&C&}$$
the map $\alpha$ is given in terms of the filler $H$ with $dH+Hd=\phi\circ f$ of the left triangle  by
\begin{equation}\label{alphafiller}\alpha(b,a)=\phi(b)-H(a)\ .\end{equation} Similarly,
given a diagram
$$\xymatrix{&C\ar[dr]^{0}\ar[d]^{\phi}\ar@{.>}[dl]_{{\beta}}&\\
\Cone(f)[-1]\ar[r]&A\ar[r]^{f}&B}$$
the map $\beta$ is given in terms of the filler $L$ with $dL+Ld=f\circ \phi$ of the right triangle by
\begin{equation}\label{betafiller}\beta(c)=(L(c),\phi(c))\ .\end{equation}

Finally, if $f$ is an equivalence, and $Q$ is a contraction of the cone, written in components as $Q(b,a)=(Q_{0}(b,a), Q_{1}(b,a))$, {then the map} $g:B\to A$ {defined by} \begin{equation}\label{q1def}g(b):=-Q_{1}(b,0)\end{equation} is an inverse of $f$ and \begin{equation}\label{p1def}P:b\mapsto Q_{0}(b,0)\end{equation} is the homotopy such that
$dP+P d=\id-f\circ g$.

We now make the relevant piece of the diagram \eqref{heut300010} for $\Diff^{m}(\R[0])$ explicit:\begin{equation}\label{heut3000101}\xymatrix{&\iota(\Cone(\sigma^{\ge m}\Omega\to \Omega)[-1])\ar[d]&\\  0\ar[r]^{s_{\Diff^{m}(\R[0])}}&\iota(\sigma^{\ge m}\Omega) \ar[d]^{I}\ar[r]^-{R}&\iota(\Cone(0\to \sigma^{\ge m}\Omega))\\
& \iota(\Omega)&}\end{equation}
where we write the target of $R$ as a cone on purpose. 
The map $\alpha_{\iota(\sigma^{\ge m}\Omega)}$ is now given by the diagram
\begin{equation}\label{jaja30}\xymatrix{ 0\ar[r]\ar[d]\ar@/^{-1cm}/[d] &I\iota(\sigma^{\ge m}\Omega)\ar[r]\ar[d]^{\partial_{1}^{*}-\partial_{0}^{*}}&I\iota(\Cone(0\to \sigma^{\ge m}\Omega))\ar[dl]^{\alpha_{\iota(\sigma^{\ge m}\Omega)}}\\0\ar[r] 
&\iota(\sigma^{\ge m}\Omega)&}\ .\end{equation}
In explicit terms using \eqref{alphafiller} we get $\alpha_{\iota(\sigma^{\ge m}\Omega)}(b,a)=(\partial_{1}^{*}-\partial_{0}^{*})b$.

We now  consider the corresponding part of diagram  \eqref{heut300010}  for $\iota(\Omega)$:\begin{equation}\label{heut3000102}\xymatrix{  
\iota(\Omega)\ar[r]^{s_{\iota(\Omega)}}&\iota(\Omega)\ar[r]^-R\ar[d]^{I}&\iota(\Cone(\Omega\to \Omega))\\ 
& \iota(\Omega)&}\ .\end{equation} 
The map $\alpha_{\iota(\Omega)}$ is given by  
\begin{equation}\label{jaja31}\xymatrix{I\iota(\Omega)\ar[r]\ar[d]^{\partial_{1}^{*}-\partial_{0}^{*}}\ar@/^{-1cm}/[d]^{0} &I\iota(\Omega)\ar[r]\ar[d]^{\partial_{1}^{*}-\partial_{0}^{*}}&\iota(\Cone(I\Omega\to I\Omega))\ar[dl]^{\alpha_{\iota(\Omega)}}\\ \iota(\Omega)
\ar[r]&\iota(\Omega)&}\ .\end{equation}
In order to obtain the filler $H_{\iota(\Omega)}$ of the left two-gon we must find the preferred homotopy in \textcircled1 of \eqref{kp21}.
For this
we consider the map $\pr^{*}:\Omega\to I\Omega$. This map is an equivalence
and we can choose a contraction of its cone.

\begin{rem}{\rm There is a contractible space of 
choices for such contractions. It is at this point that we fix a particular choice related to the integration of forms.
If we would make a different choice here, then we would get another formula for the integration map, possibly not related to integration of forms. The action of the integration map in cohomology does not depend on the choices.
} \end{rem} 

We will take the contraction $Q_{\iota(\Omega)}$ of the cone of $\pr^{*}$ given by $Q_{\iota(\Omega)}(b,a):=(\bH(b),-\partial_{0}^{*}b)$,
where
$$\bH:I\Omega\to I\Omega \ , \quad \bH(\omega)(t):=\pr^{*}\int_{[0,t]}\omega_{|[0,t]\times M}\ , \quad  \omega\in \Omega(\Delta^{1}\times M)$$
is such that $d\bH+\bH d=\id-\pr^{*}\circ \partial_{0}^{*}$. By \eqref{q1def} and \eqref{p1def}
  it provides the required {preferred} homotopy $P_{\iota(\Omega)}:b\mapsto \bH(b)$ such that $dP_{\iota(\Omega)}+P_{\iota(\Omega)}d=\id-\pr^{*}\circ \partial_{0}^{*}$ and exhibits $\partial_{0}^{*}$ as the inverse of $\pr^{*}$.

 By composition it induces the homotopy
 $H_{\iota(\Omega)}:= (\partial_{1}^{*}-\partial_{0}^{*})\circ \bH$ such that $dH_{\iota(\Omega)}+H_{\iota(\Omega)}d=\partial_{1}^{*}-\partial_{0}^{*}$ which fills the left two-gon in {\eqref{jaja31}} as required. From \eqref{alphafiller} we get $\alpha_{\iota(\Omega)}(b,a)=(\partial_{1}^{*}-\partial_{0}^{*})(b)-H_{\iota(\Omega)}(a)$.

Note that if we apply the same construction to $\iota(\sigma^{\ge m}\Omega)$, then $P_{\iota(\sigma^{\ge m}\Omega)}$, $Q_{\iota(\sigma^{\ge m}\Omega)}$ and $H_{\iota(\sigma^{\ge m}\Omega)}$ naturally are the zero maps on the zero spaces.

We now observe that the map of morphisms
$(0\stackrel{s_{\iota(\sigma^{\ge m})\Omega}}{\longrightarrow} \sigma^{\ge m}\Omega) \to (\Omega\stackrel{s_{\iota(\Omega)}}{\to} \Omega)$ naturally induces maps of all diagrams used in the preceding constructions which also preserve the respective homotopies.
As a result we see that the natural filler of the  square {\textcircled{1}} in 
$$\xymatrix{&I\iota(\Cone(0\to \sigma^{\ge m}\Omega))\ar@{..>}[ld]_{\tilde \int}\ar[d]^{\alpha_{\iota(\sigma^{\ge m}\Omega)}}\ar[r]&I\iota(\Cone(\Omega\to \Omega))\ar[d]_{\alpha_{\iota(\Omega)}}\ar@/^{1cm}/[d]^{0}\\\iota(\Cone(\sigma^{\ge m}\Omega\to \Omega)[-1])\ar[r]&
\iota(\sigma^{\ge m}\Omega)\ar[r]&\iota(\Omega)
\save[]+<-2.1cm,0.8cm>*\txt<8pc>{\textcircled{1}}\restore
\save[]+<0.5cm,0.8cm>*\txt<8pc>{\textcircled{2}}\restore
}$$
is realized by the zero map.

In order to construct the map $\tilde \int$  we have to use the filler of the right two-gon {\textcircled{2}}
induced by the homotopy invariance of $\iota(\Omega)$. {Equivalently} we can use the contraction of the cone at the right upper corner given by 
$C(b,a):=(0,-b)$ which gives by composition with $\alpha_{\iota(\Omega)}$ the homotopy
$\tilde H:(b,a)\mapsto H_{\iota(\Omega)}(b)$ such that $d\tilde H+\tilde H d=\alpha_{\iota(\Omega)}$. The filler $L$ of the whole lower square (\textcircled1 and \textcircled2) is now given by 
$L(b,0):=H_{\iota(\Omega)}(b)$. From \eqref{betafiller} we get for the integration
$$\tilde \int(b,0)=(H_{\iota(\Omega)}(b),(\partial_{1}^{*}-\partial_{0}^{*})b)\ . $$
We finally trace this map through the diagram
$$\xymatrix{
\iota(\sigma^{\ge m}\Omega)\ar[d]^{\int}\ar[rr]^-{b\mapsto (b,0)}_-{\simeq} &&
\iota(\Cone(0\to \sigma^{\ge m}\Omega))\ar[d]^{\tilde \int} \\
\iota(\sigma^{\le m-1}\Omega) &&
\iota(\Cone(\sigma^{\ge m}\Omega\to \Omega)[-1])\ar[ll]_-{[b]_{\le m-1}\mapsfrom (b,a)}^-\simeq
}\ .$$
So the final formula for the integration comes out as
$$\int:\iota(\sigma^{\ge m} \Omega)\to \iota(\sigma^{\le m-1}\Omega)[-1]\ , \quad  b\mapsto [(\partial_{1}^{*}-\partial_{0}^{*})\circ \bH(b)]_{\le m-1}\ .$$
If we insert the definition of $\bH$, then we get the assertion. {Hence we completed the proof of Lemma \ref{heu100}.} \hB 

Observe that $\Diff^{m}(\R[0])$ is a pure differential refinement  of $\iota(\R[0])$ if and only if $m\ge 1$.

\subsection{Closed forms with values in a complex}  

For a chain complex $C\in \Ch_{{\R}}$ of real vector spaces    and $m\in \nat$ we define
\begin{equation}\label{ggsdhauzduiedeuwdef}\Diff^{m}(C):={\iota(\sigma^{\ge m}(\Omega\otimes_{\R}C))}\ .\end{equation}

\begin{lem}\label{heu101}We have
\begin{enumerate}
\item $\cZ(\Diff^{m}(C))\simeq {\iota\big(\sigma^{\ge m}(\Omega\otimes_{\R}\sigma^{\leq m-1}C)\big)}$ 
\item $\cA(\Diff^{m}(C))\simeq \iota\big(\sigma^{\le m-1}(\Omega\otimes_{\R} C){[-1]}\big)$
\item $S(\Diff^{m}(C))\simeq {\iota(\sigma^{\ge m}(C))}$
\item $Z(\Diff^{m}(C))\simeq {\iota(\sigma^{\le m-1}(C))}$
\item $U(\Diff^{m}(C))\simeq \iota(C)$
\item ${\phi}\simeq{\iota(C\to \sigma^{\le m-1}(C))} $. 
\item The integration map in degree $m$ is given by 

\begin{equation}\label{heu200}\int\beta=\left[\int_{\Delta^{1}} \beta\right]\ ,\end{equation}
where
$$\beta \in(\Omega\otimes_{\R}\sigma^{\leq m-1}C)^{m}(\Delta^{1}\times M)$$ represents an element of $H^{m}(\cZ(\Diff^{m}(C))(\Delta^{1}\times M)$, and the result is interpreted in
$(\Omega\otimes_{\R}C)^{m-1} (M)/\im(d)$.
\end{enumerate}
\end{lem}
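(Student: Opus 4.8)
The plan is to reduce everything to the single brutal-truncation cofibre sequence of sheaves of complexes
$$\Diff^m(C) \longrightarrow \iota(\Omega\otimes_\R C)\longrightarrow \iota(\sigma^{\le m-1}(\Omega\otimes_\R C))$$
together with the computation of the homotopification of bottom-truncated (twisted) de Rham complexes supplied by the appendix Lemmas \ref{uuzzii1} and \ref{rrr4}. The sheaf $\iota(\Omega\otimes_\R C)$ is homotopy invariant, since $\Omega$ is (homotopy invariance of de Rham cohomology) and tensoring over $\R$ with the fixed complex $C$ preserves stalkwise quasi-isomorphisms. The key input is that the homotopification of the bottom piece vanishes, $\cH(\iota(\sigma^{\le m-1}(\Omega\otimes_\R C)))\simeq 0$, which is exactly what the appendix lemmas provide. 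Applying $\cH$, which preserves cofibre sequences (as used in the proof of Proposition \ref{wie4}), then yields $\cH(\Diff^m(C))\simeq \iota(\Omega\otimes_\R C)$, and this drives the remaining computations.

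Granting this, parts 2 and 5 are immediate: by Definition \ref{ghb5} one has $\cA(\Diff^m(C))=\mathrm{fib}(\Diff^m(C)\to\cH(\Diff^m(C)))$, and the rotated brutal-truncation sequence identifies this fibre with $\iota(\sigma^{\le m-1}(\Omega\otimes_\R C)[-1])$, giving 2; evaluating $\cH(\Diff^m(C))\simeq\iota(\Omega\otimes_\R C)$ at the point gives $U(\Diff^m(C))\simeq\iota(C)$, which is 5. Part 3 is purely local: since $\Omega^p(*)=0$ for $p>0$ we have $(\Omega\otimes_\R C)(*)\simeq C$ and brutal truncation commutes with $\ev_*$, so $S(\Diff^m(C))=\ev_*(\Diff^m(C))\simeq\iota(\sigma^{\ge m}C)$.

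For the cycle data I would compute $\cZ=\mathrm{cofib}(\cS\to\id)$ directly on complexes. Using part 3 and the first part of Proposition \ref{propo}, the homotopy invariant sheaf $\cS(\Diff^m(C))=\const(\iota(\sigma^{\ge m}C))$ is represented by $\iota(\Omega\otimes_\R\sigma^{\ge m}C)$, and the counit is the inclusion $\Omega\otimes_\R\sigma^{\ge m}C\hookrightarrow\sigma^{\ge m}(\Omega\otimes_\R C)$ (well defined because a form of degree $p\ge 0$ against a coefficient of degree $q\ge m$ lands in total degree $\ge m$). This inclusion is degreewise injective, so its cofibre is the quotient complex, whose degree-$n$ part for $n\ge m$ is $\bigoplus_{p+q=n,\,q\le m-1}\Omega^p\otimes C^q$; this is precisely $\sigma^{\ge m}(\Omega\otimes_\R\sigma^{\le m-1}C)$, proving 1. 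Feeding $D:=\sigma^{\le m-1}C$ back into the engine gives $\cH(\cZ(\Diff^m(C)))\simeq\iota(\Omega\otimes_\R\sigma^{\le m-1}C)$, hence $Z(\Diff^m(C))=\ev_*\cH\cZ(\Diff^m(C))\simeq\iota(\sigma^{\le m-1}C)$, which is 4. Part 6 then follows since by Proposition \ref{wie4} the characteristic map is $\phi\simeq U(R)$, and $R$ is induced by the projection $C\to\sigma^{\le m-1}C$, so $\phi\simeq\iota(C\to\sigma^{\le m-1}C)$.

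Finally, for the integration map 7 the plan is to rerun the explicit construction carried out in the proof of Lemma \ref{heu100}(7) verbatim, replacing $\Omega$ everywhere by $\Omega\otimes_\R C$ and the contracting homotopy $\bH$ by $\bH\otimes\id_C$ (fibre integration over $[0,t]$ acts only on the form factor). Tracing the universal-property fillers through the diagrams \eqref{kp21} and \eqref{heut2000} then produces $\int\beta=[\int_{\Delta^1}\beta]$ on cohomology. I expect the genuinely non-formal obstacle to be concentrated in two places: the homotopification vanishing $\cH(\iota(\sigma^{\le m-1}(\Omega\otimes_\R C)))\simeq 0$, which is the analytic heart delegated to the appendix (Poincar\'e lemma with coefficients together with a Dugger-type realization argument), and the bookkeeping that isolates the coefficient truncation $\sigma^{\le m-1}C$ in the cycle data; the remaining identifications are formal consequences of stability and the adjunctions of Section \ref{ttz14}, modulo the standard caveat that all these truncated complexes are to be read as their sheafifications, to which they are stalkwise quasi-isomorphic by the de Rham lemma.
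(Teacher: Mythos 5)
Your proposal is correct and follows essentially the route the paper intends: the paper itself only remarks that items 1--6 are straightforward from Lemmas \ref{heut1003}, \ref{uuzzii1} and \ref{rrr4} and that item 7 is ``similar to'' Lemma \ref{heu100}, and your reduction to the brutal-truncation cofibre sequence, the vanishing $\cH(\iota(\sigma^{\le m-1}(\Omega\otimes_{\R}C)))\simeq 0$, and the degreewise-split computation of $\mathrm{cofib}(\cS\to\id)$ is exactly that argument spelled out. The only cosmetic point is that homotopy invariance of $\iota(\Omega\otimes_{\R}C)$ is cleanest phrased as the equivalence $\iota(\Omega\otimes_{\R}C)\simeq\const(\iota(C))$ coming from the stalkwise quasi-isomorphism $\underline{C}\to\Omega\otimes_{\R}C$ together with Proposition \ref{propo}, rather than via ``tensoring preserves stalkwise quasi-isomorphisms.''
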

\proof
The proof of $7.$ is similar to the corresponding assertion of Lemma \ref{heu100}. \hB 

This Lemma in particular shows that  $\Diff^{m}(C)$ is a differential refinement of $\iota(C)$.
For $k\in \Z$ we  have an equivalence  $$\Diff^{m}(C[k]) \simeq \Diff^{m+k}(C)[k]\ .$$

 \subsection{Generalized Deligne cohomology}\label{hjw1} 
 
  For a chain  complex  $C\in \Ch$   we define 
 $$\Diff^{m}(C\to C\otimes \R)\in \Fun^{{desc}}(\Mf^{op},\Ch[W^{-1}])$$ by the pull-back   \begin{equation}\label{heute2}\xymatrix{\Diff^{m}(C\to C\otimes \R)\ar[r]\ar[d]&\Diff^{m}(C\otimes \R)\ar[d]\\
\const(\iota(C))\ar[r]&\const(\iota(C\otimes \R))}\ . \end{equation}
\begin{lem}\label{rrq1}We have
\begin{enumerate}
\item $\cZ(\Diff^{m}(C\to C\otimes \R))\simeq {\iota\big(\sigma^{\ge m}(\Omega\otimes \sigma^{\leq m-1}C)\big)}$ 
\item $\cA(\Diff^{m}(C\to C\otimes \R))\simeq \iota\big(\sigma^{\le m-1}(\Omega\otimes C){[-1]}\big)$
\item {$S(\Diff^{m}(C\to C\otimes \R))$ is given by the pull-back
$$\xymatrix{S(\Diff^{m}(C\to C\otimes \R))\ar[d]\ar[r]& \iota(\sigma^{\ge m}(C\otimes \R))\ar[d]\\\iota(C)\ar[r]&\iota(C\otimes \R)
}\ .$$ }
\item $Z(\Diff^{m}(C\to C\otimes \R))\simeq \iota({\sigma^{\le m-1 }(C\otimes \R)})$
\item $U(\Diff^{m}(C\to C\otimes \R))\simeq \iota(C)$
\item ${\phi}\simeq{\iota(C\to C\otimes \R\to \sigma^{\le m-1 }(C\otimes \R))}$ is the natural map. 
\item The formula for the integration map is the same as in Lemma \ref{heu101}.
\end{enumerate}
\end{lem}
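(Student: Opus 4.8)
The plan is to observe that the defining pullback \eqref{heute2} is exactly an instance of the general construction carried out just before Lemma \ref{heut1003}. I would set $F:=\iota(C\otimes\R)$, $E:=\iota(C)$, let $c:\iota(C)\to\iota(C\otimes\R)$ be the natural map induced by $C\to C\otimes\R$, and take $\hat F:=\Diff^{m}(C\otimes\R)$. By Lemma \ref{heu101} (item $5$) this $\hat F$ is a differential refinement of $\iota(C\otimes\R)$, so that $\cH(\hat F)\simeq\const(\iota(C\otimes\R))$ and the right vertical map of \eqref{heute2} is the canonical map $I$; hence \eqref{heute2} identifies $\Diff^{m}(C\to C\otimes\R)$ with the sheaf $\hat E$ produced by that construction. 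Every assertion then follows by feeding the relevant invariant of $\hat F$, computed in Lemma \ref{heu101}, into the corresponding item of Lemma \ref{heut1003}.

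Concretely I would match the items as follows. Assertion $5$ is item $1$ of Lemma \ref{heut1003}. Assertions $2$ and $1$ follow from items $2$ and $3$ of Lemma \ref{heut1003} combined with items $2$ and $1$ of Lemma \ref{heu101} applied with $C\otimes\R$ in place of $C$; assertion $4$ is item $4$ of both lemmas. Assertion $3$ is item $5$ of Lemma \ref{heut1003}, whose upper right corner $S(\hat F)\simeq\iota(\sigma^{\ge m}(C\otimes\R))$ is supplied by item $3$ of Lemma \ref{heu101}. Assertion $6$ is item $6$ of Lemma \ref{heut1003}, composing $c$ with the characteristic map $\phi_{\hat F}=\iota(C\otimes\R\to\sigma^{\le m-1}(C\otimes\R))$ of item $6$ of Lemma \ref{heu101}. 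Assertion $7$ is immediate from item $7$ of Lemma \ref{heut1003}, which identifies the integration map of $\hat E$ with that of $\hat F$ under the equivalences of $\cA$ and $\cZ$; the explicit formula is then the one recorded in Lemma \ref{heu101}.

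The only work beyond citing these two lemmas is notational bookkeeping: the output of Lemma \ref{heu101} for $\hat F$ is phrased with $\sigma^{\le m-1}(C\otimes\R)$ and $\Omega\otimes_\R(C\otimes\R)$, whereas the assertions use $\sigma^{\le m-1}C$ and $\Omega\otimes C$. I would reconcile these using that realification $-\otimes_\Z\R$ is exact and applied degreewise, hence commutes with the truncation functor, so that $\sigma^{\le m-1}(C\otimes\R)\simeq(\sigma^{\le m-1}C)\otimes\R$, together with the convention $\Omega\otimes C:=\Omega\otimes_\R(C\otimes\R)$. With these identifications assertion $1$ reads $\cZ\simeq\iota(\sigma^{\ge m}(\Omega\otimes\sigma^{\le m-1}C))$ as stated. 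I expect no genuine obstacle: once \eqref{heute2} is recognized as the setup of Lemma \ref{heut1003} the statement is formal, and the only care needed is to keep these flatness-based identifications consistent across all seven items.
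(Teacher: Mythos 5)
Your proposal is correct and is precisely the route the paper intends: the paper gives no explicit proof of Lemma \ref{rrq1}, stating instead that the calculations of Section \ref{cl12} follow by feeding Lemma \ref{heu101} (applied to $C\otimes\R$) into Lemma \ref{heut1003}, exactly as you do. Your bookkeeping remark that the stupid truncation commutes with $-\otimes\R$ degreewise is the only identification needed, and you handle it correctly.
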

We see that $\Diff^{m}(C\to C\otimes \R)$ is a differential refinement of $\iota(C)$.

 \subsection{The Hopkins-Singer example}\label{ttz12}

Let $\Sp$ denote the stable $(\infty,1)$-category of spectra. This category is related to the $(\infty,1)$-category of chain complexes by
 the Eilenberg-MacLane functor
$$H:\Ch[W^{-1}]\stackrel{\sim}{\to} \Mod_\Sp(H\Z)\to \Sp\ $$
which preserves limits and hence  {restricts to a functor on sheaves} by objectwise application.
To  a spectrum $E$, a chain complex $C\in \Ch_\R$ of real vector spaces, a map $c:E\to H(\iota(C))$ and an integer $m\in\Z$ we associate the sheaf of spectra
$$\Diff^{m}(E,C,c)\in \Fun^{{desc}}(\Mf^{op},\Sp)$$ which is defined by the pull-back
\begin{equation}\label{phj1}\xymatrix{\Diff^{m}(E,C,c)\ar[r]^{\hat \cR}\ar[d]&H(\Diff^{m}(C))\ar[d]\\
\const(E)\ar[r]^-{\const(c)}&\const(H(\iota(C)))}\ .\end{equation}
This is essentially the Hopkins-Singer construction \cite{MR2192936} and was introduced in this form in \cite{2013arXiv1306.0247B}.
\begin{rem}{\rm 
The diagrams \eqref{heute2} and \eqref{phj1} must not be confused with the diagram \eqref{heute1}. In \eqref{heute2} and \eqref{phj1} the right upper corner is not pure in general and therefore does not represent the cycle data for the respective  left upper corner. One should also not confuse the maps $\hat \cR$ and $R$ which
have different targets.}
\end{rem}  

\begin{lem}\label{loi13}We have
\begin{enumerate}
\item $\cZ(\Diff^{m}(E,C,c))\simeq {H\iota\big(\sigma^{\ge m}(\Omega\otimes_{\R}\sigma^{\leq m-1}C) \big)} $.
\item $\cA(\Diff^{m}(E,C,c))\simeq H\big(\iota(\sigma^{\le m-1}(\Omega\otimes_{\R} C){[-1]})\big)$
 \item
 {$S(\Diff^{m}(E,C,c))$ is given by the pull-back
$$\xymatrix{S(\Diff^{m}(E,C,c))\ar[d]\ar[r]& H\iota(\sigma^{\ge m}(C))\ar[d]\\ E\ar[r]&H \iota(C)
}\ .$$ }
\item $Z(\Diff^{m}(E,C,c))\simeq H(\iota{(\sigma^{\le m-1 }(C))})$
\item $U(\Diff^{m}(E,C,c))\simeq {E}$ 
\item ${\phi\simeq(E\stackrel{c}{\to} H(\iota(C)) \to H(\iota(\sigma^{\le m-1}C)))}$. 
\item {The formula for the integration map is the same as in Lemma \ref{heu101}.}
  \end{enumerate}
\end{lem}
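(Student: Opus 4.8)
The defining pull-back \eqref{phj1} is exactly an instance of the situation of Lemma \ref{heut1003}, with $\hat F:=H(\Diff^{m}(C))$, $F:=H(\iota(C))$ and the given map $c:E\to H(\iota(C))$. Hence, provided $\hat F$ is a sheaf which is a differential refinement of $F$ and provided its invariants are obtained by applying $H$ to the invariants of $\Diff^{m}(C)$ recorded in Lemma \ref{heu101}, all seven assertions follow formally from Lemma \ref{heut1003}: parts \ref{heut1003}.1, .2, .3, .4 combined with Lemma \ref{heu101}.5, .2, .1, .4 yield assertions 5, 2, 1, 4 respectively; part \ref{heut1003}.5 with Lemma \ref{heu101}.3 yields the pull-back square of assertion 3; part \ref{heut1003}.6 with Lemma \ref{heu101}.6 yields assertion 6; and part \ref{heut1003}.7 with Lemma \ref{heu101}.7 yields assertion 7. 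So the whole proof reduces to understanding how $H$ interacts with the invariant functors.

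\textbf{Key step.} I would show that the Eilenberg--MacLane functor $H$, applied objectwise, commutes with $\cH$ and $\cS$ on sheaves; since $\cA$ and $\cZ$ are the fibre and cofibre of $\id\to\cH$ and $\cS\to\id$, and $U,S,Z$ are obtained from these by evaluation at $*$, this gives commutation with every invariant functor used above. The relevant property of $H$ is that it is not only continuous (as noted before the lemma) but also \emph{exact} and \emph{cocontinuous}: writing $H:\Ch[W^{-1}]\stackrel{\sim}{\to}\Mod_{\Sp}(H\Z)\to\Sp$, the forgetful functor from $H\Z$-modules has both a left adjoint (extension of scalars) and a right adjoint (coinduction), so it preserves all limits and colimits. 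Objectwise application therefore preserves all limits and colimits of presheaves and sends sheaves to sheaves.

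\textbf{Commutation.} The functor $\cS=\const\circ\ev_{*}$ commutes with $H$ because $\ev_{*}$ is computed objectwise and, by the cotensor formula $\const(A)(M)\simeq A^{M_{top}}$ from the proof of Proposition \ref{propo}, $H$ commutes with $\const$ (a cotensor is a limit). For $\cH$ I would use $\cH(\hat E)\simeq\const(U(\hat E))$ together with the colimit computation $U(\hat E)\simeq\operatorname{colim}_{[n]\in\Delta^{op}}\hat E(\Delta^{n})$ of homotopification at the point; since $H$ is cocontinuous it commutes with $U$, and since it commutes with $\const$ it then commutes with $\cH$. Exactness of $H$ upgrades this to $H\circ\cA\simeq\cA\circ H$ and $H\circ\cZ\simeq\cZ\circ H$, and evaluation at $*$ gives the analogous statements for $U,S,Z$. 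In particular $U(\hat F)\simeq H(U(\Diff^{m}(C)))\simeq H(\iota(C))=F$ by Lemma \ref{heu101}.5, so $\hat F$ really is a differential refinement of $F$.

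\textbf{Integration and the main obstacle.} For assertion 7 one combines Lemma \ref{heut1003}.7 --- which identifies the integration maps of $\Diff^{m}(E,C,c)$ and of $\hat F$ under the equivalences of $\cA$ and $\cZ$ --- with the observation that the integration map of Theorem \ref{ghb2} is natural and assembled entirely from fibres, cofibres and chosen contracting homotopies, all preserved by the exact functor $H$; thus it is $H$ applied to the integration map of $\Diff^{m}(C)$, whose explicit form \eqref{heu200} is given in Lemma \ref{heu101}.7. The genuinely technical point, which I expect to be the main obstacle, is the compatibility of $H$ with homotopification --- equivalently, the colimit computation of $U$ (or, if one prefers $\cH=L\circ\cH^{\text{pre}}$, the compatibility of $H$ with both $\cH^{\text{pre}}$ and sheafification $L$). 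Once this naturality is secured, everything else is a formal assembly through Lemma \ref{heut1003}.
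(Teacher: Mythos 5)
Your proposal is correct and follows essentially the same route as the paper, which gives no explicit proof of Lemma \ref{loi13} and instead declares it a straightforward consequence of Lemma \ref{heut1003} applied to the defining pull-back \eqref{phj1}, with the invariants of $H(\Diff^{m}(C))$ read off from Lemma \ref{heu101}. The only ingredient the paper leaves implicit --- that $H$ preserves colimits as well as limits (via the two-sided adjoints of the forgetful functor from $H\Z$-modules) and hence commutes with $\cH$, $\cS$, $\cA$, $\cZ$ and their evaluations at the point --- is exactly the point you identify and justify correctly.
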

{\rm
Therefore $\Diff^{m}(E,C,c)$ is a differential refinement of $E$. Differential refinements of this form have been throughly investigated in \cite{skript}.  {From the definition of 
$\Diff^{m}(E,C,c)$ as a pull-back we get a commuting diagram
{\scriptsize \begin{equation}\label{ghb100}\hspace{-0.5cm}\xymatrix{&(\Omega\otimes_{\R}C)^{m-1}(M)/\im(d)\ar[rr]^{d}\ar[dr]^{a}&&Z^{m}((\Omega\otimes_{\R}C)(M))\ar[dr]&\\
 H^{m-1}(M;C)\ar[ur]\ar[dr]&&\Diff^{m}(E,C,c)^{m}(M)\ar[ur]^{\hat \cR}\ar@{>>}[dr]^{I}&& H^{m}(M;C)\\
&H^{m}(M;S)\ar@{^{(}->}[ur]\ar[rr]&&H^{m}(M;E)\ar[ur]^{c}}\ ,\end{equation}}where $d$ is the de Rham differential.     
This diagram differs from the differential cohomology diagram  \eqref{ghb1}  at the 
right upper corner.  {There is a {canonical} projection map}
$$R_{\Diff^{m}(C)}:Z^{m}((\Omega\otimes_{\R}C)(M))\to Z^{m}({(\Omega \otimes_\R \sigma^{\leq m-1} C})(M))$$ which in turn induces a map from \eqref{ghb100} to the  {abstract} differential cohomology diagram constructed in \eqref{ghb1}
{\scriptsize \begin{equation}\label{ghb100nnn}\hspace{-0.5cm}\xymatrix{&(\Omega\otimes_{\R}C)^{m-1}(M)/\im(d)\ar[rr]\ar[dr]^{a}&&
Z^{m}({\Omega \otimes_\R \sigma^{\leq m-1} C})(M)\ar[dr] &\\
 H^{m-1}(M;C)\ar[ur]\ar[dr]&&\Diff^{m}(E,C,c)^{m}(M)\ar[ur]^{R}\ar@{>>}[dr]^{I}&& H^{m}(M;C)\\
&H^{m}(M;S)\ar@{^{(}->}[ur]\ar[rr]&&H^{m}(M;E)\ar[ur]^{c}}\ ,\end{equation}}
Finally we want to deduce the usual homotopy formula for $\Diff^{m}(E,C,c)$ (see \cite[(1)]{MR2608479}) from the general one given by Lemma \ref{loi13}, $7.$ 
\begin{lem} \label{sam300} If $x\in \Diff^{m}(E,C,c)^{m}(\Delta^{1}\times M)$, then we have
\begin{equation}\label{heu300}\partial_{1}^{*}x-\partial_{0}^{*}x=a(\int_{\Delta^{1}}\hat \cR(x))\ .\end{equation}
\end{lem}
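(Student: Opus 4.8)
The plan is to deduce the concrete formula \eqref{heu300} from the abstract homotopy formula of Theorem \ref{ghb2}, using the identification of the integration map provided by Lemma \ref{loi13}, $7.$ and the comparison of the two curvature maps $\hat{\cR}$ and $R$. Specializing Theorem \ref{ghb2} to $\hat E=\Diff^{m}(E,C,c)$, evaluating on $\Delta^{1}\times M$ and passing to $\pi_{-m}$, I obtain for every $x\in\Diff^{m}(E,C,c)^{m}(\Delta^{1}\times M)$ the identity $\partial_{1}^{*}x-\partial_{0}^{*}x=a(\int R(x))$. Here $R$ is the \emph{abstract} curvature with target $\cZ(\Diff^{m}(E,C,c))^{m}(\Delta^{1}\times M)=Z^{m}((\Omega\otimes_{\R}\sigma^{\le m-1}C)(\Delta^{1}\times M))$, and $\int$ is the abstract integration map, which by Lemma \ref{loi13}, $7.$ is integration of forms along $\Delta^{1}$. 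Thus $\int R(x)=[\int_{\Delta^{1}}\beta]$ in $(\Omega\otimes_{\R}C)^{m-1}(M)/\im(d)$ for any representative $\beta$ of $R(x)$.

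Next I would make the passage from $R$ to $\hat{\cR}$ explicit. By the construction \eqref{phj1} the Hopkins--Singer curvature $\hat{\cR}(x)$ is a closed form in $Z^{m}((\Omega\otimes_{\R}C)(\Delta^{1}\times M))$, and by the comparison of diagram \eqref{ghb100} with diagram \eqref{ghb100nnn} the abstract curvature $R$ factors as $R=R_{\Diff^{m}(C)}\circ\hat{\cR}$, where $R_{\Diff^{m}(C)}$ is the projection induced by $C\to\sigma^{\le m-1}C$. Hence $\beta:=R_{\Diff^{m}(C)}(\hat{\cR}(x))$ is a legitimate representative of $R(x)$, and the abstract formula becomes $\partial_{1}^{*}x-\partial_{0}^{*}x=a([\int_{\Delta^{1}}R_{\Diff^{m}(C)}(\hat{\cR}(x))])$.

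It then remains to compare $\int_{\Delta^{1}}R_{\Diff^{m}(C)}(\hat{\cR}(x))$ with $\int_{\Delta^{1}}\hat{\cR}(x)$. In total degree $m$ the projection $R_{\Diff^{m}(C)}$ discards exactly the one component of $\hat{\cR}(x)$ lying in $\Omega^{0}\otimes_{\R}C^{m}$: among the summands $\Omega^{p}\otimes_{\R}C^{q}$ with $p+q=m$ and $p\ge 0$, the truncation $\sigma^{\le m-1}C$ removes precisely $q=m$, i.e. $p=0$. The decisive observation is that fiber integration over $\Delta^{1}$ lowers the de Rham degree by one and therefore annihilates this degree-zero component, which maps into $\Omega^{-1}(M)=0$. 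Consequently $\int_{\Delta^{1}}R_{\Diff^{m}(C)}(\hat{\cR}(x))=\int_{\Delta^{1}}\hat{\cR}(x)$ already at the level of forms, and substituting this into the previous display yields \eqref{heu300}.

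The only genuinely delicate point is the bookkeeping in the last step: one must fix the grading conventions for $\Omega\otimes_{\R}C$ and for the brutal truncation $\sigma^{\le m-1}$ precisely enough to confirm that the summand killed by $R_{\Diff^{m}(C)}$ is exactly the de Rham degree-zero part, which is what makes its fiber integral vanish. Everything else is a direct specialization of the general machinery. Alternatively, one can reach the identification of $\int$ with integration of forms by first invoking Lemma \ref{heut1003}, $7.$ to reduce the integration map of the pullback $\Diff^{m}(E,C,c)$ to that of $H(\Diff^{m}(C))$, and then applying the $H$-image of Lemma \ref{heu101}, $7.$; the comparison of curvatures and the vanishing of the degree-zero fiber integral are still required to conclude.
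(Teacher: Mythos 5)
Your proposal is correct and follows essentially the same route as the paper's proof: specialize the abstract homotopy formula \eqref{ghb3}, use $R=R_{\Diff^{m}(C)}\circ\hat\cR$, and identify $\int$ with integration of forms via \eqref{heu200}. The paper silently passes from $\int_{\Delta^{1}}R_{\Diff^{m}(C)}(\hat\cR(x))$ to $\int_{\Delta^{1}}\hat\cR(x)$; your explicit observation that the discarded summand is exactly the de Rham degree-zero part $\Omega^{0}\otimes_{\R}C^{m}$, which is annihilated by fibre integration, correctly fills in that step.
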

\proof
We have
$ \hat \cR(x)\in Z^{m}((\Omega\otimes_{\R}C)(M))$. Then
$R(x)=R_{\Diff^{m}(C)}(\hat \cR(x))$.
The formula \eqref{heu200} gives $\int\circ R(x)=[\int_{\Delta^{1}} \hat \cR(x)]$. The general homotopy formula 
\eqref{ghb3}  now gives
$\partial_{1}^{*}x-\partial_{0}^{*}x=a(\int_{\Delta^{1}} \hat \cR(x))$. This is the assertion. \hB
\begin{rem}{\rm Note that the usual argument for  {the} homotopy formula \eqref{heu300} (see e.g. \cite{MR2608479})
just uses the diagram \eqref{ghb100}. Further note the difference in the right-hand sides of 
\eqref{heu300} and {\eqref{ghb3}}.}\end{rem}
}}

\subsection{The geometric suspension construction} \label{suspension}
In this example we present a construction which produces new differential refinements from given ones.
Let $\bC$ again be an arbitrary stable, presentable $\infty$-category.
For a sheaf $\hat F\in \Fun^{desc}(\Mf^{op},\bC)$ we define  {(compare with \eqref{heut1000})}
$${I_{S^{1}}}\hat  F\in \Fun^{desc}(\Mf^{op},\bC) \qquad \mathrm{by} \qquad {(I_{S^{1}}\hat F)}(M):=\hat F(S^{1}\times M).$$
Let $1\in S^{1}$
be the base point and  $i_{1}:M \to S^{1}\times M$ the corresponding embedding. Then we define
$I_{\widetilde S^{1}}\hat F\in \Fun^{desc}(\Mf^{op},\bC)$
   by the fibre sequence 
$$I_{\widetilde S^{1}}\hat F \to {I_{S^{1}}}\hat F \to \hat F$$
If $\hat F$ is homotopy invariant, then
{by Proposition \ref{propo} it is of the form $\const(F)$ for   $F:=\ev_{*}(\hat F)$.   From this we easily get
   a natural equivalence
  $\Sigma I_{\widetilde S^{1}}\hat F\simeq  \hat F$.   
 Furthermore, 
 for a general $\hat F$,
 we get an equivalence
 \begin{equation} \label{t19okt}
  \cH(I_{\widetilde S^{1}}\hat F) \simeq I_{\widetilde S^{1}} \cH(\hat F)
 \end{equation}
 as a consequence of Lemma \ref{thop}.
 
 \begin{kor}\label{tui1}
 If $\hat F$ is a differential refinement of $F\in \bC$, then
 $\Sigma I_{\widetilde S^{1}}\hat F$ is also a differential refinement of $F$.
 \end{kor}

\begin{lem}\label{rrq1333} 
\begin{enumerate}
 \item $\cZ(I_{\widetilde S^{1}}\hat F)\simeq 
 {\Fib(\cZ(I_{S^1}\hat F)\to \cZ(\hat F))}$.

\item $\cA(I_{\widetilde S^{1}}\hat F)\simeq I_{\widetilde S^{1}} \cA(\hat F) $
\item $S(I_{\widetilde S^{1}}\hat F))\simeq {\Fib(\hat F(S^1)\to \hat F(*)) }$
\item $Z(I_{\widetilde S^{1}}\hat F)\simeq  \Cone({S(I_{\widetilde S^{1}} \hat F)} \to { \Sigma^{-1}} U(\hat F)) $ 
 \item $U(I_{\widetilde S^{1}}\hat F)\simeq \Sigma^{-1}U(\hat F)$
\item $ \phi:\Sigma^{-1}U(\hat F)\to \Cone({S(I_{\widetilde S^{1}} \hat F)}\to {\Sigma^{-1} U(\hat F)})$ is the natural map. 
\end{enumerate}
\end{lem}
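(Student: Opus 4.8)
The plan is to derive all six assertions formally, from the exactness of the functors involved together with the single substantial input \eqref{t19okt}. Since $\bC$ is stable, the functors $\cZ$, $\cA$, $\cH$, $\cS$, $\ev_*$ and $\const$ all preserve fibre sequences; the functor $I_{S^{1}}$ is exact because precomposition with $S^{1}\times-$ preserves limits, and therefore so is $I_{\widetilde S^{1}}$, being the fibre of the natural transformation $I_{S^{1}}\to \id$. I would record two preliminary facts at the outset. First, for a homotopy invariant sheaf $\const(G)$ the equivalence $\Sigma I_{\widetilde S^{1}}\hat F\simeq \hat F$ stated before Corollary \ref{tui1} rewrites as $I_{\widetilde S^{1}}\const(G)\simeq \Sigma^{-1}\const(G)$. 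Second, evaluating the left-hand column of \eqref{heut300010} at the point yields a natural fibre sequence $S(\hat E)\to U(\hat E)\to Z(\hat E)$ valid for every sheaf $\hat E$, in which the second map is the characteristic map $\phi$.

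For assertions (1) and (3) I would simply apply an exact functor to the defining fibre sequence $I_{\widetilde S^{1}}\hat F\to I_{S^{1}}\hat F\to \hat F$. Applying $\cZ$ gives the fibre sequence $\cZ(I_{\widetilde S^{1}}\hat F)\to \cZ(I_{S^{1}}\hat F)\to \cZ(\hat F)$, which is (1). Applying $S=\ev_*$ and using $\ev_*(I_{S^{1}}\hat F)=\hat F(S^{1})$ together with $\ev_*(\hat F)=\hat F(*)$ gives (3).

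For (5) I would combine \eqref{t19okt} with the homotopy invariant computation: $\cH(I_{\widetilde S^{1}}\hat F)\simeq I_{\widetilde S^{1}}\cH(\hat F)\simeq \Sigma^{-1}\cH(\hat F)$, since $\cH(\hat F)$ is homotopy invariant, and evaluating at the point yields $U(I_{\widetilde S^{1}}\hat F)\simeq \Sigma^{-1}U(\hat F)$. Assertion (2) then follows by applying the exact functor $I_{\widetilde S^{1}}$ to the fibre sequence $\cA(\hat F)\to\hat F\to\cH(\hat F)$ and comparing it with the sequence $\cA(I_{\widetilde S^{1}}\hat F)\to I_{\widetilde S^{1}}\hat F\to\cH(I_{\widetilde S^{1}}\hat F)$ defining $\cA$; the identification \eqref{t19okt} matches the two right-hand terms, so the fibres agree. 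I expect this comparison to be the main obstacle: one must know that \eqref{t19okt} is compatible with the unit $\id\to\cH$, i.e. that under this equivalence the unit of $I_{\widetilde S^{1}}\hat F$ corresponds to $I_{\widetilde S^{1}}$ applied to the unit of $\hat F$, so that the two fibre sequences genuinely carry the same map. This is the one point that is not a pure diagram chase and should be extracted from the construction of \eqref{t19okt} in Lemma \ref{thop}.

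Finally, for (4) and (6) I would specialize the general fibre sequence $S(\hat E)\to U(\hat E)\to Z(\hat E)$ to $\hat E=I_{\widetilde S^{1}}\hat F$ and substitute (3) and (5). This exhibits $Z(I_{\widetilde S^{1}}\hat F)$ as the cofibre of $S(I_{\widetilde S^{1}}\hat F)\to\Sigma^{-1}U(\hat F)$, which is (4), and simultaneously identifies the characteristic map $\phi$, which is the map $U\to Z$ of this sequence, with the canonical map into the cone, giving (6).
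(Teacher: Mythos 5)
Your proof is correct; the paper states Lemma \ref{rrq1333} with no proof at all, and your derivation — exactness of $\cZ$, $S$, $I_{\widetilde S^{1}}$ applied to the defining fibre sequence, equation \eqref{t19okt} plus $I_{\widetilde S^{1}}\const(G)\simeq\Sigma^{-1}\const(G)$ for (5) and (2), and the fibre sequence $S(\hat E)\to U(\hat E)\stackrel{\phi}{\to}Z(\hat E)$ for (4) and (6) — is exactly the formal argument the authors leave to the reader. The one compatibility you flag in (2) closes itself: $I_{\widetilde S^{1}}\cH(\hat F)$ is homotopy invariant, $\cH(I_{\widetilde S^{1}}\cA(\hat F))\simeq I_{\widetilde S^{1}}\cH(\cA(\hat F))\simeq 0$ by \eqref{t19okt} and \eqref{vk20}, and any fibre sequence $A\to\hat E\to G$ with $\cH(A)\simeq 0$ and $G$ homotopy invariant is canonically identified with $\cA(\hat E)\to\hat E\to\cH(\hat E)$ (apply $\cH$ and use naturality of the unit), so the two sequences you compare agree without unwinding Lemma \ref{thop}.
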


\bigskip

By Definition \eqref{ggsdhauzduiedeuwdef} we have $\Diff^{m}(C) = \iota(\sigma^{\ge m}(\Omega\otimes_{\R} C))$.
Integration over $S^{1}$ induces a natural  map of complexes
$$\int_{S^{1}\times M/M}:(\sigma^{\ge m}(\Omega\otimes_{\R} C)(S^{1}\times M))[1]\to \sigma^{\ge m-1}(\Omega\otimes_{\R} C)(M)\ .$$   It induces the integration map 
 \begin{equation}\label{pjz40}\int:I_{\widetilde S^{1}}\Diff^{m}(C)[1] \to  \Diff^{m-1}(C)\ .\end{equation}

More generally, let us consider a spectrum $E\in \Sp$, a complex $C\in \Ch$ of real vector spaces, and a map $c:E\to H(\iota(C))$. 
Then we obtain a new differential refinement $\Sigma I_{\widetilde S^{1}}\Diff^{m}(E,C,c)$ of $E$.
It is again related with the Hopkins-Singer example $\Diff^{m-1}(E,C,c)$ (note the shift in the superscript) by an  integration map   
$$\int:\Sigma I_{\widetilde S^{1}}\Diff^{m}(E,C,c)\to \Diff^{m-1}(E,C,c)$$ 
(induced by \eqref{pjz40}). This integration played an important role in the axiomatic characterization of differential cohomology} \cite{MR2608479}.

\bigskip 

We have seen that for homotopy invariant sheaves $F$ we have an equivalence  $\Sigma I_{\widetilde S^{1}}F\simeq F$. One could conversely ask
which consequences an equivalence of this type has for a general sheaf.  
We first claim that there is always a canonical morphism $ F \to \Sigma I_{\widetilde S^{1}}F$  
 which is natural in $F$. {Such a morphism} can equivalently be described as a map $  {\Sigma^{-1}} F \to I_{\widetilde S^{1}}F$ by stability of $\bC$. 
{We take the upper morphism in the map of fibre sequences.
$$\xymatrix{{\Sigma^{-1}} F\ar[r]\ar[d]&I_{\widetilde S^{1}}F\ar[d]\\
F^{S^{1}_{top}}\ar[r]^{\eqref{natnatnat}}\ar[d]&{I_{S^1}}F\ar[d]\\
F\ar@{=}[r]&F }\ ,$$
where the lower vertical maps are induced by the inclusion  of the point $1\in S^{1}$. 

\begin{prop}\label{sushom}
 For a sheaf $F\in \Fun^{desc}(\Mf^{op},\bC)$  the canonical morphism $ F \to \Sigma I_{\widetilde S^{1}}F$ is an equivalence if and only if $F$ is 
 homotopy invariant
\end{prop}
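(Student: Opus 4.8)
The plan is to reduce everything to the comparison map \eqref{natnatnat}, which I denote $m_F\colon F^{S^1_{top}}\to I_{S^1}F$. By construction the canonical morphism $F\to\Sigma I_{\widetilde S^1}F$ is the suspension of the map induced on vertical fibres in the diagram defining it, whose lower horizontal arrow (restriction along $1\in S^1$) is the identity of $F$. In a stable category a map of fibre sequences with invertible base map is invertible on fibres exactly when it is on total objects, so the canonical morphism is an equivalence if and only if $m_F$ is. This already settles the "if" direction: for homotopy invariant $F$ Proposition \ref{propo} gives $F(N)\simeq(\ev_*F)^{N_{top}}$, whence $F(S^1\times M)\simeq F(M)^{S^1_{top}}$, and $m_F$ is precisely this identification.

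For the converse I would organise everything around the pushout $S^1\simeq\Delta^1/\partial\Delta^1$. Feeding this pushout into the contravariant cotensor $F^{(-)_{top}}$ and into the evaluation $I_{(-)}F$ produces two cospans and a natural transformation $\Phi$ from the first to the second which is the identity on the legs indexed by $\partial\Delta^1$ and $*$ and is $\pr^*\colon F\to F(\Delta^1\times-)$ on the leg indexed by $\Delta^1$. Since $\Delta^1$ and $*$ are contractible the cotensor cospan has pullback $F^{S^1_{top}}$; writing $W$ for the pullback of the evaluation cospan, the induced map $\phi\colon F^{S^1_{top}}\to W$ has fibre $\Fib(\pr^*)=:D(F)$, which is null exactly when $F$ is homotopy invariant. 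There is also the canonical map $can\colon I_{S^1}F\to W$ coming from the collapse $\Delta^1\to S^1$, and one checks $\phi\simeq can\circ m_F$. The composite therefore yields a fibre sequence $\Fib(m_F)\to D(F)\to T(F)$, where $T(F):=\Fib(can)$ measures the failure of $F$ to send the pushout to a pullback.

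Consequently the canonical morphism is an equivalence if and only if $D(F)\to T(F)$ is an equivalence, and the task is to see that this forces $D(F)\simeq0$, i.e. homotopy invariance. The hard part is controlling the pushout--descent term $T(F)$: for a general sheaf $can$ is genuinely non-invertible (already for the sheaf of closed $1$-forms), so $T(F)\neq0$, and one must explain why $D(F)\to T(F)$ can be an equivalence only when both sides vanish. I would settle this by the explicit two--arc Mayer--Vietoris cover of $S^1$ -- the same circle--descent computation underlying \eqref{t19okt} -- which rewrites $I_{S^1}F$ and $W$ through the values $F(\R\times M)$ together with the maps $\pr^*,\partial_0^*,\partial_1^*$, and pins $\Fib(m_F)$ to the homotopy--invariance defect; its vanishing then propagates to invertibility of $\pr^*$.

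It streamlines this last step to first reduce to a pure sheaf. As $\Sigma I_{\widetilde S^1}$ is exact, commutes with $\cH$ by \eqref{t19okt}, and restricts to the identity (through the canonical morphism) on homotopy invariant sheaves, applying it to the pullback square of Proposition \ref{wie4} and invoking naturality shows that the canonical morphism is an equivalence for $F$ if and only if it is for the pure sheaf $\cZ(F)$; and $F$ is homotopy invariant exactly when $\cZ(F)\simeq0$. For a pure sheaf $P$ evaluation at the point gives $\ev_*(\Sigma I_{\widetilde S^1}P)\simeq\Sigma P(S^1)$, so invertibility of the canonical morphism already forces $P(S^1)\simeq0$, and the Mayer--Vietoris argument has only to propagate this vanishing across a good cover to conclude $P\simeq0$.
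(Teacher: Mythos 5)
Your ``if'' direction and your reduction of the ``only if'' direction to the case of a pure sheaf (via exactness of $\Sigma I_{\widetilde S^{1}}$, the fibre sequence $\cS(F)\to F\to \cZ(F)$, and the pullback square of Proposition \ref{wie4}) match the paper's proof. The gap is in the final step. For a pure sheaf $P$ you correctly deduce $P(S^{1})\simeq 0$ from $\ev_{*}(\Sigma I_{\widetilde S^{1}}P)\simeq \Sigma P(S^{1})$ and purity, but the claim that this vanishing ``propagates across a good cover to conclude $P\simeq 0$'' is false. Vanishing of a sheaf on a fixed $1$-dimensional manifold only propagates, by the retract-and-good-cover argument used in the proof of Lemma \ref{test_equi}, to manifolds of dimension $\le 1$: the members and iterated intersections of a good cover of an $n$-manifold are copies of $\R^{n}$, and nothing controls $P(\R^{n})$ for $n\ge 2$. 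Concretely, the pure sheaf $\Diff^{2}(\R[0])$ of closed $2$-forms satisfies $P(*)\simeq 0$ and $P(S^{1})\simeq 0$ (there are no nonzero $2$-forms on a $1$-manifold) yet is nonzero, so no argument can pass from $P(S^{1})\simeq 0$ alone to $P\simeq 0$.

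What is missing is an iteration of the hypothesis: since $P\simeq \Sigma I_{\widetilde S^{1}}P$ and $P(S^{1})\simeq 0$, evaluating the equivalence on $S^{1}$ instead of the point gives $P(S^{1}\times S^{1})\simeq 0$, and inductively $P(\underbrace{S^{1}\times\dots\times S^{1}}_{n})\simeq 0$ for all $n$. Only then can one invoke Lemma \ref{test_equi}, whose hypothesis explicitly requires test manifolds of unbounded dimension; this is exactly how the paper concludes. (Your preliminary analysis via $m_F$, $D(F)$ and $T(F)$ is left incomplete --- you acknowledge that controlling $T(F)$ is ``the hard part'' and never carry it out --- but since you then discard that route in favour of the pure reduction, the torus induction is the one substantive omission.)
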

\begin{proof}
{We must show that the fact that the canonical map   $F \to \Sigma I_{\widetilde S^{1}}F$ is an equivalence} implies that $F$ is homotopy invariant.
{We} first assume that $F$ is  pure. In this case we conclude that 
$$ F(S^1) \simeq ({I_{S^1}} F)(*)  \simeq  (I_{\widetilde S^{1}}F)(*) \simeq (\Sigma^{-1} F)(*) \simeq 0 \ ,$$
 {where we have used that $F$ is pure for the second equivalence. By induction we get $$F(\underbrace{S^1 \times ... \times S^1}_{n\times})\simeq 0$$ for all $n\in \nat$.}  Now we apply Lemma \ref{test_equi} from the appendix to conclude that $F \simeq 0$. 
In particular $F$ is homotopy invariant. {We thus  have} proven the Proposition for pure sheaves.

Now let $F$ be an arbitrary sheaf {such that the canonical map   $ F \to \Sigma I_{\widetilde S^{1}}F$ is an equivalence}. Then it suffices to show that $\cZ(F)$ is homotopy invariant. {Indeed, the presentation of $F$    {by} the pullback diagram \eqref{tsep22} in Proposition \ref{wie4} implies that  $F$ is then       homotopy invariant, too.} In order to show that $\cZ(F)$ is homotopy invariant it suffices to show that 
the canonical morphism induces an equivalence $ \cZ(F) \simeq \Sigma I_{\widetilde S^{1}}\cZ(F)$ since $\cZ(F)$ is pure.
We use that the functor
$$ \Sigma I_{\widetilde S^{1}} :  \Fun^{desc}(\Mf^{op},\bC) \to \Fun^{desc}(\Mf^{op},\bC) $$
is exact. Using the sequence $\cS(F) \to F \to \cZ(F)$ {and the naturality of the canonical morphism} we are thus reduced to show that 
$  \cS(F) \to \Sigma I_{\widetilde S^{1}} \cS(F)$ is an equivalence. But since $\cS(F)$ is homotopy invariant, this is true.

\end{proof}
\begin{rem}{\rm
An integration map like \eqref{pjz40} is a differential replacement of a desuspension map in stable homotopy theory. The stable homotopy category embeds into the category of   sheaves of spectra on $\Mf$ as the full subcategory of homotopy invariant sheaves. Proposition \ref{sushom} asserts that this subcategory is characterized by the property that the suspension is an equivalence. Conversely the Proposition also shows that for real differential refinements one {cannot} hope that the integration map is an equivalence}
\end{rem}

\begin{rem}{\rm One can equivalently define the 
  category of homotopy invariant sheaves   as the localization of the category of all sheaves
at the morphisms of the form ${y(\Delta^{1})} \otimes F \to * \otimes F = F$, where $y(M)$ is the sheaf of spaces represented by $M$ (see \eqref{sdf1}), and   $F$  is a sheaf with values in $\bC$. Proposition \ref{sushom} now basically shows that we obtain the same category if we localize at the morphisms of the form ${y(S^1)} \otimes F\to S^1_{top} \otimes F $ . The same proof also shows that this is true for any {positive-dimensional} manifold $M$: the localization of the category of sheaves at the morphisms ${y(M)} \otimes F \to M_{top}\otimes F$ is equivalent to the category of homotopy invariant sheaves.}
\end{rem}

\subsection{Forms on the loop space}\label{uuzzii30} 

Here is a more exotic example derived from equivariantly closed forms on loop spaces.
 We consider the presheaf of chain complexes $F\in \Fun(\Mf^{op},\Ch)$ given by
$$F:M\mapsto (\Omega_{\C}(LM)[b,b^{-1}]^{S^{1}}\ , d+ b^{-1}i_{\xi})\ ,$$
 where $LM$ is the smooth loop space of $M$, $i_{\xi}$ inserts the fundamental vector field $\xi$ of the $S^{1}$-action on $LM$ by rotation of loops, and $\deg(b)=-2$.  
 The cohomology of $F(M)$ is a version of equivariant cohomology of $LM$. 
 We will also consider the variant given by
$$
F^{{\prime}}(M):=\Omega_{\C}(LM)[b^{-1}][[b]]^{S^{1}}
$$

\begin{lem}
The {presheaves} $\iota(F^{\sharp}) \in \Fun(\Mf^{op},\Ch[W^{-1}])$, $\sharp\in \{-, {\prime}\}$, are homotopy invariant.
\end{lem}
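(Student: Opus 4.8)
The plan is to show that the presheaves $\iota(F^{\sharp})$ are homotopy invariant by exhibiting, for each manifold $M$, an explicit contracting homotopy relating $F^{\sharp}(M)$ and $F^{\sharp}(\Delta^{1}\times M)$ (equivalently $F^{\sharp}(\R\times M)$). Since homotopy invariance of a presheaf of chain complexes after localization means precisely that the projection $\pr:\Delta^{1}\times M\to M$ induces a quasi-isomorphism $F^{\sharp}(M)\to F^{\sharp}(\Delta^{1}\times M)$, it suffices to produce a chain homotopy inverse to $\pr^{*}$, and this in turn reduces to the standard Poincar\'e-type homotopy after one understands the loop-space geometry. First I would analyze how the functor $M\mapsto LM$ interacts with the product: there is a natural diffeomorphism $L(\Delta^{1}\times M)\cong \Delta^{1}\times LM$ (a loop in $\Delta^{1}\times M$ is a pair of a constant-direction path in $\Delta^{1}$, i.e. a point of $\Delta^{1}$ since loops are closed, together with a loop in $M$), which is $S^{1}$-equivariant for the rotation action acting trivially on the $\Delta^{1}$-factor.

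Given this identification, the complex computing $F^{\sharp}(\Delta^{1}\times M)$ becomes the equivariantly-closed forms on $\Delta^{1}\times LM$, and I would use the usual fibre-integration/Poincar\'e operator along the $\Delta^{1}$-direction. Concretely, I would build the operator
\begin{equation}\label{plan:homotopy}
P(\omega)(t):=\int_{[0,t]}\iota_{\partial_{t}}\omega
\end{equation}
integrating along the interval coordinate, and check that $dP+Pd=\id-\pr^{*}\circ i_{0}^{*}$ where $i_{0}$ is restriction to $\{0\}\times M$ and $\pr^{*}$ is pullback. The key point that makes this work in the \emph{equivariant} de Rham complex (with differential $d+b^{-1}i_{\xi}$ rather than just $d$) is that the interval direction is orthogonal to the $S^{1}$-action: the fundamental vector field $\xi$ of loop rotation is tangent to the $LM$-factor and carries no component in the $\Delta^{1}$-direction, so the contraction $\iota_{\partial_{t}}$ along $\partial_{t}$ commutes appropriately with $i_{\xi}$. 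One must verify that $P$ descends to the $S^{1}$-invariant subcomplex and is compatible with the Bott element $b$ (it is $b$-linear since $b$ is a formal variable of degree $-2$), and that it respects whichever completion or Laurent structure distinguishes $F^{-}$ from $F^{\prime}$; the two cases $\sharp\in\{-,\prime\}$ differ only in the topology on the $b$-variable and the same operator handles both.

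The main obstacle I anticipate is twofold. First, one must carefully check that the proposed homotopy $P$ is genuinely a chain homotopy for the \emph{total} differential $D:=d+b^{-1}i_{\xi}$, not merely for $d$; this requires the commutation identity $[\iota_{\partial_{t}}, i_{\xi}]=0$ (Lie-derivative bookkeeping), which holds precisely because $\partial_{t}$ and $\xi$ are commuting vector fields after the product decomposition of $L(\Delta^{1}\times M)$. Second, one must ensure that all the algebraic operations -- integration along the interval, the Cartan-type contraction, passage to $S^{1}$-invariants, and the formal-variable completions defining $F^{-}$ and $F^{\prime}$ -- are mutually compatible and that the resulting maps land in the stated invariant subcomplexes. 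I expect that once the diffeomorphism $L(\Delta^{1}\times M)\cong\Delta^{1}\times LM$ is in hand, these verifications are formal but must be carried out with some care for the $b$-adic completion in the $F^{\prime}$ case; this compatibility with the completion is the step most likely to hide a subtlety. The conclusion then follows immediately: $\pr^{*}$ is a quasi-isomorphism, hence $\iota(F^{\sharp})$ is homotopy invariant.
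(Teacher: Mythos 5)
There is a genuine gap, and it sits at the very foundation of your argument: the claimed diffeomorphism $L(\Delta^{1}\times M)\cong \Delta^{1}\times LM$ is false. A smooth loop in $\Delta^{1}\times M$ is a pair $(\sigma,\gamma)$ where $\sigma:S^{1}\to\Delta^{1}$ is an arbitrary smooth loop in the interval and $\gamma$ is a loop in $M$; being closed does not force $\sigma$ to be constant (a null-homotopic loop is not a constant loop). Hence $L(\Delta^{1}\times M)\cong L\Delta^{1}\times LM$, which is very far from $\Delta^{1}\times LM$. As a consequence there is no interval coordinate $t$ on $L(\Delta^{1}\times M)$ along which your Poincar\'e operator $P(\omega)(t)=\int_{[0,t]}\iota_{\partial_{t}}\omega$ could be defined, and the assertion that the rotation action is trivial "in the $\Delta^{1}$-direction" also fails: rotating a loop $(\sigma,\gamma)$ rotates the component $\sigma$ as well, so $\xi$ has a nontrivial component along the $L\Delta^{1}$-factor. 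Everything downstream of the identification (the commutator $[\iota_{\partial_{t}},i_{\xi}]=0$, the descent to invariants) therefore has no content as stated.

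The paper repairs exactly this point by replacing the (nonexistent) product decomposition with an explicit smooth contraction of the loop space: the map $\phi:\Delta^{1}\times L(\Delta^{1}\times M)\to L(\Delta^{1}\times M)$, $(t,\sigma,\gamma)\mapsto(t\sigma,\gamma)$, which at $t=1$ is the identity and at $t=0$ collapses $\sigma$ to the constant loop at $0$, i.e.\ realizes $L(\partial_{0}\circ\pr)$. Fibre integration of $\phi^{*}\omega$ over the homotopy parameter $t$ (not over any coordinate intrinsic to the loop space) gives an operator $\bH$ with $[d,\bH]=\id-\pr^{*}\partial_{0}^{*}$, and since $\phi$ is $S^{1}$-equivariant for the trivial action on the $t$-parameter, $\bH$ preserves invariants and graded-commutes with $i_{\xi}$, so it is a contracting homotopy for the full differential $d+b^{-1}i_{\xi}$ on both $F^{-}$ and $F^{\prime}$. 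Your instinct to reduce everything to a Cartan-type homotopy compatible with $i_{\xi}$ and the $b$-structure is the right one, but you must first build the homotopy from a genuine deformation retraction of $L(\Delta^{1}\times M)$ rather than from a product structure it does not have.
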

\proof The argument is the same in both cases. We consider the   case $\sharp=-$. 
We must show that
$\pr:\Delta^{1}\times M\to M$  induces an equivalence
$F(M)\to F(\Delta^{1}\times M)$ {for all manifolds $M$.}
We consider the inclusion $\partial_{0}:M\to \Delta^{1}\times M$.
Then
$\pr\circ \partial_{0}=\id$ and therefore the composition
$$F(M)\stackrel{\pr^{*}}{\to} F(\Delta^{1}\times M) \stackrel{\partial_{0}^{*}}{\to} F(M)$$
is the identity. It remains to show that
$$F(\Delta^{1}\times M)\stackrel{\partial_{0}^{*}}{\to} F(M)\stackrel{\pr^{*}}{\to} F(\Delta^{1}\times M)$$
is an equivalence, too. To this end we consider the map
$$\phi:\Delta^{1}\times L(\Delta^{1}\times M)\to L(\Delta^{1}\times M)\ , \quad
(t,\sigma,\gamma)\mapsto (t\sigma,\gamma)\ , $$ where we write a loop in $\Delta^{1}\times M$ as a pair of loops $(\sigma,\gamma)$.
For $i=0,1$ the compositions
$$L(\Delta^{1}\times M)\xrightarrow{\partial_{i}\times \id_{L(\Delta^{1}\times M)}} \Delta^{1}\times L(\Delta^{1}\times M)\stackrel{\phi}{\to} L(\Delta^{1}\times M)\ .$$
  are the composition
$\partial_{0}\circ \pr$ and $\id$, respectively.
We define a map
\begin{equation}\label{heut401}
\bH:\Omega_{\C}(L(\Delta^{1}\times M))\to \Omega_{\C}(L(\Delta^{1}\times M))[-1] \text{ by }\bH(\omega):=\int_{\Delta^{1}} \phi^{*}\omega \ .
\end{equation}
Then we have
$$[d,\bH](\omega)=\omega-\pr^{*}\partial_{0}^{*}\omega\ .$$
Now we observe that $\bH$ preserves the subspace of $S^{1}$-invariants   and (graded) commutes
with $i_{\xi}$. Therefore, $\bH$  {yields} a homotopy
from $\pr^{*}\circ \partial_{0}^{*}$ to $\id$ on $F$.
\hB 

\begin{lem}\label{uli1232}
For  $\sharp\in \{-, {\prime} \}$ the restriction to constant loops induces an equivalence
$$L(\iota(F^{\sharp}))\stackrel{\sim}{\to}\iota (\Omega_{\C}[b,b^{-1}]) \ .$$
 \end{lem}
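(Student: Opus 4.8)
The plan is to realize the asserted equivalence as the sheafification of a map of presheaves coming from restriction to constant loops, and then to check that this map of \emph{homotopy invariant} sheaves is an equivalence by evaluating at the point. First I would construct the comparison map. The constant loops form an $S^{1}$-equivariant copy of $M$ inside $LM$ (with trivial $S^{1}$-action on $M$), and the fundamental vector field $\xi$ vanishes along constant loops, so $i_{\xi}$ restricts to $0$ there. Hence restriction of forms carries the twisted differential $d+b^{-1}i_{\xi}$ to the ordinary de Rham differential $d$ and yields maps of presheaves of chain complexes
$$r:F\to \Omega_{\C}[b,b^{-1}]\ ,\qquad r':F^{\prime}\to \Omega_{\C}[b^{-1}][[b]]\ .$$
I would also record that the de Rham complex satisfies descent and is homotopy invariant (Poincar\'e lemma), so that $\iota(\Omega_{\C}[b,b^{-1}])$ and $\iota(\Omega_{\C}[b^{-1}][[b]])$ are homotopy invariant sheaves which agree with their own sheafifications.

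Next I would apply $L$ to $r^{\sharp}$. By the preceding Lemma the presheaf $\iota(F^{\sharp})$ is homotopy invariant, so by the second part of Proposition \ref{propo} the sheafification $L(\iota(F^{\sharp}))$ is again a homotopy invariant sheaf. Thus both the source and the target of $L(r^{\sharp})$ are homotopy invariant sheaves, and by the first part of Proposition \ref{propo} the functor $\ev_{*}$ is an equivalence on such sheaves; consequently $L(r^{\sharp})$ is an equivalence if and only if it is one after evaluation at $*$. Moreover, for a homotopy invariant presheaf the sheafification unit is an equivalence on the stalk at $*$ (the colimit over good covers computing $L(\iota(F^{\sharp}))(*)$ is constant, exactly as in the proof of the second part of Proposition \ref{propo}), so $\ev_{*}(L(r^{\sharp}))$ is identified with $\ev_{*}(r^{\sharp})$. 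Since the loop space of a point is a single point, $F^{\sharp}(*)$ is $\C[b,b^{-1}]$ (resp. $\C[b^{-1}][[b]]$) with vanishing differential, and $r^{\sharp}$ is the identity in each degree; hence $\ev_{*}(r^{\sharp})$ is a quasi-isomorphism.

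For $\sharp=-$ this already proves the Lemma. For $\sharp={\prime}$ it remains to identify the power-series target of $r^{\prime}$ with the Laurent target appearing in the statement: on a finite-dimensional manifold $M$ the de Rham complex is concentrated in form degrees $0,\dots,\dim M$, so in each fixed total degree only finitely many powers of $b$ can occur, and the natural inclusion $\Omega_{\C}[b,b^{-1}]\to \Omega_{\C}[b^{-1}][[b]]$ is therefore a degreewise isomorphism of sheaves; inverting it identifies the two targets compatibly with $r^{\prime}$ and its evaluation at $*$. This boundedness of form degrees — which fails on the infinite-dimensional loop space, and is precisely why the two completions $F$ and $F^{\prime}$ are genuinely different objects there — is the only real subtlety, and I expect getting this degreewise identification (and its compatibility with restriction) right to be the main point of the argument; the rest is a formal reduction to the point via Proposition \ref{propo}.
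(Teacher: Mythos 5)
Your proof is correct and follows essentially the same route as the paper: both arguments reduce the claim to an evaluation at the point by exploiting homotopy invariance of $\iota(F^{\sharp})$ and of the de Rham target. The only difference is cosmetic packaging — the paper checks stalks at $\R^{n}_{0}$ via Lemma \ref{lll24} and uses homotopy invariance to pass to $n=0$, whereas you sheafify first and invoke the equivalence $\ev_{*}:\Fun^{desc,h}(\Mf^{op},\bC)\simeq \bC$ from Proposition \ref{propo}; your extra care with the construction of the restriction map (vanishing of $i_{\xi}$ on constant loops) and with the degreewise identification $\Omega_{\C}[b^{-1}][[b]]\cong\Omega_{\C}[b,b^{-1}]$ on finite-dimensional manifolds spells out what the paper compresses into ``we obviously get an equivalence.''
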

\proof
Using Lemma \ref{lll24}   it suffices to check an equivalence of stalks 
$$\iota(F^{\sharp})(\R^{n}_{0})\stackrel{\sim}{\to}  \iota(\Omega_{\C}[b,b^{-1}])(\R^{n}_{0})\ ,\quad \forall n\in \nat\ .$$
Since both presheaves are homotopy invariant it actually suffices to consider the case $n=0$.
In this case we obviously get an equivalence. \hB 
}

Since  $\iota(\Omega_{\C}[b,b^{-1}])\simeq \const(\iota(\C[b,b^{-1}]))$ we see that
$L(\iota(F^{\sharp}))$ is {the}  constant sheaf on $ \iota(\C[b,b^{-1}])$. 
If $M$ is simply connected, then by \cite[Cor. V.3.3]{MR793184} or \cite[p. 342]{MR1113683} we have
$\iota(\C[b,b^{-1}])\simeq \iota(F(M))$, i.e. $\iota(F)$ (without sheafification)
is almost a constant presheaf.

\bigskip

{ For $\sharp\in \{-,{\prime}\}$
we define the sheaf
\begin{equation}\label{heute100}\hat F_{loop}^{\sharp}:=L(\iota( \sigma^{\ge 0}(F^{\sharp})))\in\Fun^{desc}(\Mf^{op},\Ch[W^{-1}])\ .\end{equation}
{\begin{rem}\label{remloop}{\rm
Note that sheafification here is a rather drastic operation, since there is no reason why differential forms on the loop space of a manifold $M$ should have some locality behaviour with respect to open sets in $M$. But we have to sheafify here in order to be able to proceed in the framework of the present paper. It is clear that in a more suitable language one should take another locality behaviour into account which is related to field theories and fusion products on loop spaces, see e.g.\cite{waltra}. }
\end{rem}}

 \begin{lem}\label{uuzzii2} For $\sharp\in \{-,{\prime}\}$
 the sheaf $\hat F_{loop}^{\sharp}$ is a differential refinement of $\iota(\C[b,b^{-1}])$.
\end{lem}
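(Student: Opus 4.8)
The plan is to check the defining condition of a differential refinement directly, i.e. to produce an equivalence $U(\hat F_{loop}^{\sharp})\simeq \iota(\C[b,b^{-1}])$ for $U=\ev_{*}\circ\cH$ (Definition \ref{ijoisojsvs}). Since $\cH$ preserves fibre sequences (as used in the proof of Proposition \ref{wie4}) and the localisation $\iota$ and the sheafification $L$ do so as well, I would start from the short exact sequence of presheaves of chain complexes coming from the truncation of Definition \ref{heute10},
$$\sigma^{\ge 0}(F^{\sharp})\longrightarrow F^{\sharp}\longrightarrow \sigma^{\le -1}(F^{\sharp})\ ,$$
where $\sigma^{\ge 0}(F^{\sharp})$ is the subcomplex of summands of non-negative total degree and $\sigma^{\le -1}(F^{\sharp})$ is the quotient. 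Applying $L\circ\iota$ gives a fibre sequence of sheaves
$$\hat F_{loop}^{\sharp}\longrightarrow L(\iota(F^{\sharp}))\longrightarrow L(\iota(\sigma^{\le -1}(F^{\sharp})))\ .$$
By Lemma \ref{uli1232} the middle term is the constant sheaf $\const(\iota(\C[b,b^{-1}]))$; in particular it is homotopy invariant and $U(L(\iota(F^{\sharp})))\simeq \iota(\C[b,b^{-1}])$.

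Applying the exact functor $U$ to this fibre sequence reduces the entire statement to the single vanishing claim $U\big(L(\iota(\sigma^{\le -1}(F^{\sharp})))\big)\simeq 0$, for then the sequence identifies $U(\hat F_{loop}^{\sharp})$ with $U(L(\iota(F^{\sharp})))\simeq\iota(\C[b,b^{-1}])$. This vanishing is the heart of the matter and the main obstacle. It is the loop-space analogue of the phenomenon underlying Lemma \ref{heu101}: there the truncation $\sigma^{\ge m}$ of the presheaf of forms $\Omega\otimes_{\R}C$ leaves $U\simeq\iota(C)$ unchanged, which is equivalent to $U(L(\iota(\sigma^{\le m-1}(\Omega\otimes_{\R}C))))\simeq 0$. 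The mechanism is that the truncation is performed on the presheaf of differential forms \emph{before} sheafification, while homotopification lowers the cohomological degree by integration over the simplices $\Delta^{\bullet}$; thus even the non-negative summands retained by $\sigma^{\ge 0}$ reproduce, after homotopification, all of $\C[b,b^{-1}]$, whereas the discarded negative part contributes nothing.

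Concretely I would deduce the vanishing from the truncation computations of the appendix (Lemmas \ref{uuzzii1} and \ref{rrr4}), applied to $F^{\sharp}$ in place of $\Omega\otimes_{\R}C$; the only property these require is that $\iota(F^{\sharp})$ be homotopy invariant, which has already been established via the degree $-1$ integration operator $\bH$ of \eqref{heut401}, together with the stalkwise reduction of Lemma \ref{lll24}. The reason this cannot be shortcut — and the true obstacle — is precisely that truncation and sheafification do not commute (cf.\ Remark \ref{remloop}): one may \emph{not} replace $L(\iota(\sigma^{\le -1}(F^{\sharp})))$ by $\sigma^{\le -1}$ of the constant sheaf $L(\iota(F^{\sharp}))=\const(\iota(\C[b,b^{-1}]))$, since the latter has non-trivial $U$. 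For the same reason the case $\sharp={\prime}$ genuinely relies on Lemma \ref{uli1232} and not on the value $\ev_{*}(\iota(F^{\prime}))\simeq\iota(\C[b^{-1}][[b]])$ of the presheaf at the point, the sheafification being exactly what replaces the completed ring $\C[b^{-1}][[b]]$ by $\C[b,b^{-1}]$.
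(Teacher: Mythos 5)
Your overall strategy --- split off the truncation, observe that the middle term $L(\iota(F^{\sharp}))$ is the constant sheaf on $\iota(\C[b,b^{-1}])$ by Lemma \ref{uli1232}, and reduce everything to the vanishing of $\cH$ on the discarded negative part --- is the same reduction the paper performs (the paper phrases it as the equivalence $\cH(\iota(\sigma^{\ge 0}L(F)))\simeq\cH(\iota(L(F)))$ via Lemma \ref{uuzzii1} rather than as the vanishing statement of Corollary \ref{rrr4}, but the two are interchangeable). The problem is the justification you give for that vanishing. You assert that ``the only property these [Lemmas \ref{uuzzii1} and \ref{rrr4}] require is that $\iota(F^{\sharp})$ be homotopy invariant.'' That is not their hypothesis, and homotopy invariance is neither necessary nor sufficient here: a constant presheaf $\underline{C}$ with $C$ concentrated in negative degrees is homotopy invariant, yet $\cH(\iota(\sigma^{\le -1}\underline{C}))\simeq\const(\iota(C))\not\simeq 0$. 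What Lemmas \ref{uuzzii1} and \ref{rrr4} actually require is that the input be a \emph{sheaf} of complexes whose components are sheaves of $\cC^{\infty}$-modules; the vanishing ultimately comes from Lemma \ref{zzz2}, i.e.\ from partitions of unity, not from homotopy invariance. So the key step of your argument rests on the wrong input.

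The missing material is exactly what the paper's proof supplies. One regards $F^{\sharp}$ as a presheaf of $\cC^{\infty}$-modules via evaluation at $1\in S^{1}$, forms the $1$-categorical sheafification $L(F^{\sharp})$ --- a sheaf of complexes of $\cC^{\infty}$-modules, to which Lemma \ref{hj12} and then Lemma \ref{uuzzii1} and Corollary \ref{rrr4} genuinely apply --- and checks on stalks, using Lemma \ref{lll24} and the fact that $\iota$ preserves filtered colimits (Lemma \ref{heut1}), that $L(\iota(F^{\sharp}))\simeq\iota(L(F^{\sharp}))$ and $L(\iota(\sigma^{\ge 0}F^{\sharp}))\simeq\iota(\sigma^{\ge 0}L(F^{\sharp}))$. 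Only after these identifications does the truncation lemma for sheaves of $\cC^{\infty}$-modules yield $\cH(\hat F^{\sharp}_{loop})\simeq\cH(\iota(L(F^{\sharp})))\simeq\const(\iota(\C[b,b^{-1}]))$, which is the assertion. You correctly flag that truncation and sheafification do not commute in general, but you then neither establish the required compatibility nor supply the $\cC^{\infty}$-module input that makes the whole mechanism run; as written, the central vanishing claim $U\big(L(\iota(\sigma^{\le -1}(F^{\sharp})))\big)\simeq 0$ is unproved.
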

\proof
The argument is the same in both cases so that we discuss the case $\sharp=-$.
We consider $F$ as a presheaf of $\cC^{\infty}$-modules via the evaluation at $1\in S^{1}$.
By Lemma \ref{hj12} we have $\iota(L(F))\in \Fun^{desc}(\Mf^{op},\Ch[W^{-1}])$ since $L(F)$ is a sheaf of complexes  whose components are sheaves of $\cC^{\infty}$-modules. We now claim that 
the natural map  induces an  equivalence
\begin{equation}\label{uli1231}L(\iota(F))\stackrel{\sim}{\to}  \iota(L(F))\ .\end{equation}  
By Lemma \ref{lll24} this can be checked on stalks, i.e. we must show that
$\iota(F)(\R^{n}_{0})\to \iota(L(F))(\R^{n}_{0})$ is an equivalence for all $n\in \nat$.
Since $\iota:\Ch\to \Ch[W^{-1}]$ preserves filtered colimits (Lemma \ref{heut1}) it suffices to check that
$F(\R^{n}_{0})\to L(F)(\R^{n}_{0})$ is an equivalence in $\Ch$ which is clearly the case. By a similar argument we get the first equivalence in the following chain
\begin{equation}\label{uli2340}L(\iota( \sigma^{\ge 0}(F )))\simeq L(\iota( \sigma^{\ge 0}(L(F )))) \stackrel{Lemma \ref{hj12}}{\simeq}  \iota(\sigma^{\ge 0} (L(F)))\ .\end{equation}
By Lemma \ref{uuzzii1} we get $\cH(\iota(\sigma^{\ge 0} L(F)))\simeq \cH(\iota(L(F)))$. 
This implies  the second equivalence in 
\begin{eqnarray*}\hat F_{loop}= L(\iota( \sigma^{\ge 0}(F )))\stackrel{\eqref{uli2340}}{ {\simeq} } \cH(\iota(\sigma^{\ge 0} L(F))) {\simeq}  \cH(\iota(L(F)))\stackrel{\eqref{uli1231}}{ {\simeq} } \cH(L(\iota(F)))&&\\
\stackrel{Lemma \ref{uli1232}}{ {\simeq} }\cH(\iota(\Omega_{\C}[b,b^{-1}])) {\simeq}  \const(\iota(\C[b,b^{-1}]))&&\ . \end{eqnarray*} 
 \hB
}

The sheaf $L(F^{\sharp})$ has an increasing filtration by $b$-degree
such that
$\cF^{p}L(F^{\sharp})$ is the sheafification of the presheaf
$M\mapsto b^{p}{\Omega_{\C}(LM)[b^{-1}]^{S^{1}}}$. 
 \begin{lem}\label{uuzzii4}$\ $ 
\begin{enumerate}
\item $\cZ(\hat  F^{\sharp}_{loop}) {\simeq}  \iota(\Cone( \cF^{{0}}L(F^{\sharp }) \to {L(}\sigma^{\ge 0}(F^{\sharp} ))))$        
\item $\cA(\hat  F^{\sharp}_{loop}) {\simeq}  \iota(\sigma^{\le -1 }(L( F^{\sharp}))[-1])$
\item  $S(\hat F^{\sharp}_{loop}) {\simeq}  \iota(\C[b^{-1}])$ 
\item $Z(\hat F^{\sharp}_{loop}) {\simeq}  \iota(b\C[b])$ 
\item $U(\hat F^{\sharp}_{loop}) {\simeq}  \iota(\C[b,b^{-1}])$
\item $\phi \simeq \iota( \C[b,b^{-1}] \to b\C[b])$ is the natural projection.
\item 
The integration in degree $m$-cohomology is given by
   $\int\beta= [\partial_{1}^{*}\bH(\beta)]$,
   where $\beta\in \cZ(\hat  F^{\sharp}_{loop})^{0}(\Delta^{1}\times M)$
 , $\bH$ is as in \eqref{heut401}, and
the result is interpreted in $F^{\sharp,-1}(M)/\im(d)$. 
\end{enumerate}
\end{lem}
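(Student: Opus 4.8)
The plan is to reduce everything to the explicit model $\hat F^{\sharp}_{loop}\simeq \iota(\sigma^{\ge 0}L(F^{\sharp}))$ established in \eqref{uli2340}, together with the fact proved inside Lemma \ref{uuzzii2} that, writing $G:=L(F^{\sharp})$, the sheaf $\iota(G)$ is homotopy invariant and equal to $\const(\iota(\C[b,b^{-1}]))$. The first observation is that, after applying $\iota$, the inclusion of the brutal truncation $\sigma^{\ge 0}G\hookrightarrow G$ realizes the unit $I:\hat F^{\sharp}_{loop}\to\cH(\hat F^{\sharp}_{loop})$: indeed $\cH(\iota(\sigma^{\ge 0}G))\simeq\cH(\iota(G))\simeq\iota(G)$ by Lemma \ref{uuzzii1} and homotopy invariance of $\iota(G)$, and the inclusion is the corresponding map into a homotopy invariant target. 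Applying $U=\ev_{*}\circ\cH$ gives statement $5.$, and $S=\ev_{*}$ gives statement $3.$ once one notes that $L*=*$ and that sheafification does not alter the value at $*$, so $G(*)=F^{\sharp}(*)=\C[b,b^{-1}]$ and hence $(\sigma^{\ge 0}G)(*)=\C[b^{-1}]$.

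Next I would read off $\cA$ and $\cZ$ from their defining (co)fibre sequences. Statement $2.$ is immediate: $\cA(\hat F^{\sharp}_{loop})=\Fib(\id\to\cH)$ is the fibre of $\iota(\sigma^{\ge 0}G)\to\iota(G)$, which by the short exact sequence $0\to\sigma^{\ge 0}G\to G\to\sigma^{\le -1}G\to 0$ and the cone conventions fixed in Lemma \ref{heu100} equals $\iota(\sigma^{\le -1}G[-1])$. For statement $1.$ I would use $\cZ=\Cone(\cS\to\id)$ and identify $\cS(\hat F^{\sharp}_{loop})\simeq\iota(\cF^{0}G)$: the homotopy operator $\bH$ of \eqref{heut401} preserves the filtration by $b$-degree, so $\iota(\cF^{0}F^{\sharp})$ is homotopy invariant, whence $\iota(\cF^{0}G)\simeq L(\iota(\cF^{0}F^{\sharp}))$ is homotopy invariant with value $\C[b^{-1}]$ at $*$; thus $\iota(\cF^{0}G)\simeq\const(\iota(\C[b^{-1}]))\simeq\cS(\hat F^{\sharp}_{loop})$, and the subcomplex inclusion $\iota(\cF^{0}G)\hookrightarrow\iota(\sigma^{\ge 0}G)$, being a map out of a homotopy invariant sheaf which is an equivalence on $\ev_{*}$, agrees with the counit $\cS(\hat F^{\sharp}_{loop})\to\hat F^{\sharp}_{loop}$. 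Taking cofibres and using $L(\sigma^{\ge 0}F^{\sharp})\simeq\sigma^{\ge 0}G$ yields statement $1.$ Statements $4.$ and $6.$ then follow formally by applying the exact functor $U$ to the sequence $\cS(\hat F^{\sharp}_{loop})\to\hat F^{\sharp}_{loop}\to\cZ(\hat F^{\sharp}_{loop})$: since $U\cS(\hat F^{\sharp}_{loop})=S(\hat F^{\sharp}_{loop})=\iota(\C[b^{-1}])$ and $U(\hat F^{\sharp}_{loop})=\iota(\C[b,b^{-1}])$, the cofibre is $Z(\hat F^{\sharp}_{loop})\simeq\iota(b\C[b])$ and $\phi$ is the natural projection.

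The main obstacle is statement $7.$, the explicit integration map. Here I would repeat the detailed trace through diagrams \eqref{kp21} and \eqref{heut2000} carried out in the proof of Lemma \ref{heu100}$(7.)$, now taking the contracting homotopy witnessing homotopy invariance of $\cS(\hat F^{\sharp}_{loop})$ to be $\bH$ from \eqref{heut401}, which satisfies $[d,\bH]=\id-\pr^{*}\partial_{0}^{*}$ and respects the $b$-filtration. Feeding $\bH$ into the cone-level formulas \eqref{alphafiller} and \eqref{betafiller} produces, for a cycle $\beta\in\cZ(\hat F^{\sharp}_{loop})^{0}(\Delta^{1}\times M)$, the representative $(\partial_{1}^{*}-\partial_{0}^{*})\bH(\beta)$ of $\int\beta$ in $\cA(\hat F^{\sharp}_{loop})\simeq\iota(\sigma^{\le -1}G[-1])$. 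The computational heart is to verify that, after passing to the quotient $F^{\sharp,-1}(M)/\im(d)$ in which the class lives, the $\partial_{0}^{*}$-contribution drops out — it is exact, just as $\partial_{0}^{*}\bH$ vanishes on the relevant image in the de Rham case — leaving the asserted formula $\int\beta=[\partial_{1}^{*}\bH(\beta)]$. The remaining work is bookkeeping: tracking the brutal truncations and keeping straight which interval each restriction refers to (the contraction parameter appearing in $\phi$ versus the homotopy-formula interval $\Delta^{1}$). This is routine but is where all the care is needed. \hB
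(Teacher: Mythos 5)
Your proof is correct and follows the route the paper intends: the paper gives no details for items 1.--6.\ (they are read off from the explicit model $\hat F^{\sharp}_{loop}\simeq\iota(\sigma^{\ge 0}L(F^{\sharp}))$ established in the proof of Lemma \ref{uuzzii2}, together with Lemma \ref{uuzzii1} and the homotopy invariance of the $b$-filtration pieces, exactly as you argue), and for item 7.\ it only remarks that the argument is the same as in Lemma \ref{heu100}, which is precisely the diagram chase you carry out with $\bH$ from \eqref{heut401} as the contracting homotopy. The only cosmetic point is that the $\partial_{0}^{*}$-contribution in item 7.\ is not merely exact but identically zero, since $\phi\circ(\id_{\Delta^{1}}\times L\partial_{0})$ does not depend on the interval variable and hence $\partial_{0}^{*}\bH=0$ on the nose, just as in the de Rham case.
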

\proof The argument for $7.$ is similar to the corresponding one in the proof of \eqref{heu100}.   \hB

\subsection{A no forms example}\label{uuzzii60} 

In view of the examples presented so far one could get the impression that the $\cA$- or $\cZ$-piece of a sheaf is always given by some construction with differential forms or at least has some sort of $\R$-module structure. This is, of course, not the case.
Take
$\hat E:=\cG(H\Z/2\Z)$, where $\cG$ is the Godement functor \eqref{mikoe}.
This is an example of a sheaf $\hat E\in \Fun^{desc}(\Mf^{op},\Sp)$
such that $\cA(\hat E)\not\simeq 0$, but $\cA(\hat E)\wedge H\Q\simeq 0$.

\section{Examples from  Lie groups} \label{ttz30}

\newcommand{\bbB}{\mathbb{B}}
\newcommand{\gaaa}{\mathfrak{g}}
\newcommand{\bz}{\mathbf{z}}
\newcommand{\Sing}{{\mathrm{Sing}}}

The Grothendieck topology of $\Mf$ is subcanonical which means that   the Yoneda embedding sends a smooth manifold $M\in \Mf$ to a sheaf of sets 
$$Y(M)\in \Fun^{desc}(\Mf^{op},\Set)\ .$$  We consider the functor 
\begin{equation}\label{vm10}i:\Set\to \sSet\stackrel{\iota}{\to} \sSet[W^{-1}]\end{equation} 
which maps a set to the corresponding constant simplicial set.
This functor preserves limits. 
Consequently we get a sheaf of simplicial sets
 \begin{equation}\label{sdf1}y(M) :=i(Y(M))\in \Fun^{desc}(\Mf^{op},\sSet[W^{-1}])\ ,\end{equation}
i.e. the Grothendieck topology on $\Mf$ is subcanonical in the  $(\infty,1)$-{categorical} sense.  

Further note for later use that 
  for a simplicial set
  $A\in \Fun(\Delta^{op},\Set) = \sSet $ we have \begin{equation}\label{zur12}\colim_{\Delta^{op}} i(A) {\simeq}  \iota(A)\ .\end{equation} 
 
We  let  $M^{\delta}\in \sSet[W^{-1}]$ denote the underlying set of $M$ considered as a discrete simplicial set.

\begin{lem}\label{wie2}
We have natural equivalences
$${U(y(M))\simeq {M_{top}}\ {\text{ and }} \quad {S(y(M))} {\simeq}  M^{\delta}\ .}$$
\end{lem}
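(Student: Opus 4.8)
The plan is to treat the two equivalences separately: the statement for $S$ is a direct computation, while the statement for $U$ I would deduce formally, without ever computing the homotopification explicitly, by identifying $U(y(M))$ through its universal property.

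For $S$, recall that $S=\ev_{*}$ by Definition \ref{ijoisojsvs}. Hence
$S(y(M))=y(M)(*)=i(Y(M)(*))=i(\operatorname{Hom}_{\Mf}(*,M))$.
Since a smooth map from the point $*=\Delta^{0}$ to $M$ is the same as a point of $M$, the set $\operatorname{Hom}_{\Mf}(*,M)$ is the underlying set of $M$, and applying $i$ (the constant/discrete simplicial set functor of \eqref{vm10}) produces exactly $M^{\delta}$. This is immediate, and the equivalence is visibly natural in $M$.

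For $U=\ev_{*}\circ\cH$, I would test $U(y(M))$ against an arbitrary object $D\in\bC$. Since $\cH(y(M))$ is a homotopy invariant sheaf, Proposition \ref{propo}(1) gives $\cH(y(M))\simeq\const(U(y(M)))$ with $\const$ fully faithful; combining this with the adjunction $\cH\dashv\mathrm{inclusion}$ yields, naturally in $D$,
$$\operatorname{Map}_{\bC}(U(y(M)),D)\simeq \operatorname{Map}(\cH(y(M)),\const(D))\simeq \operatorname{Map}(y(M),\const(D)).$$
Because $y(M)=i(Y(M))$ is the $(\infty,1)$-categorical representable sheaf attached to $M$ and sheaves form a full subcategory of presheaves, the $\infty$-Yoneda lemma identifies the last term with $\const(D)(M)$. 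The final, substantive step is the Dugger-type formula recorded in the proof of Proposition \ref{propo}(1): for a homotopy invariant sheaf $F$ one has $F(M)\simeq F(*)^{M_{top}}$. Applying it to $F=\const(D)$, whose value at $*$ is $D$, gives $\const(D)(M)\simeq D^{M_{top}}\simeq\operatorname{Map}_{\bC}(M_{top},D)$. Chaining the equivalences produces $\operatorname{Map}_{\bC}(U(y(M)),D)\simeq\operatorname{Map}_{\bC}(M_{top},D)$ naturally in $D$, so Yoneda in $\bC$ yields $U(y(M))\simeq M_{top}$, with naturality in $M$ inherited from each step.

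I expect the only genuinely nontrivial input to be the identification $\const(D)(M)\simeq D^{M_{top}}$, i.e. that the constant sheaf on $D$ evaluated at $M$ computes the mapping space out of the homotopy type of $M$; this is precisely the content drawn from Dugger's theorem in Proposition \ref{propo}, and once it is granted both equivalences are formal. A minor point I would check is that $y(M)$ as defined in \eqref{sdf1} really is the $\infty$-categorical representable, so that the Yoneda lemma applies; this holds because $\Mf$ is an ordinary category, so its mapping spaces in the nerve are discrete and coincide with $i(Y(M))$.
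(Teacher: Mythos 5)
Your proof is correct, but it takes a different route from the paper's for the $U$-part. The paper computes $U(y(M))$ directly: it invokes the explicit formula $\cH\simeq L\circ \bs$ from Lemma \ref{lll25}, identifies $\bs(y(M))(*)\simeq \sing(M)=M_{top}$ via \eqref{dsf12} (this is just the observation that $\colim_{\Delta^{op}}\Hom_{\Mf}(\Delta^{\bullet},M)$ is the smooth singular complex), and finishes with $L(F)(*)\simeq F(*)$. You instead characterize $U(y(M))$ corepresentably: you never touch $\bs$, and reduce everything to the adjunctions $\cH\dashv \mathrm{incl}$, full faithfulness of $\const$, the $\infty$-Yoneda lemma, and the Dugger-type identity $\const(D)(M)\simeq D^{M_{top}}$ already recorded in the proof of Proposition \ref{propo}(1). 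Both arguments ultimately rest on Dugger's theorem; the paper's version has the advantage of producing the equivalence $U(y(M))\simeq \sing(M)$ by an explicit colimit computation (which is then reused elsewhere, e.g.\ in Lemma \ref{loi1}), while yours is more formal and makes transparent that the statement is forced by the universal properties alone once one knows that homotopy invariant sheaves satisfy $F(M)\simeq F(*)^{M_{top}}$. Your treatment of the $S$-part coincides with the paper's, and your side remarks (that the mapping spaces of sheaves and presheaves agree on the full subcategory, and that $y(M)=i(Y(M))$ is the genuine $\infty$-categorical representable because $\Mf$ is an ordinary category) correctly plug the only places where the Yoneda argument could have gone wrong.
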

\proof
We use the equivalence $\cH(y(M))\simeq L \circ \bs(y(M))$ shown in Lemma \ref{lll25}, where $\bs$ is defined in \eqref{poi1}. By \eqref{dsf12} we have an equivalence.
$${M_{top}:=}\sing(M){\simeq}\bs(y(M))(*)\in \sSet[W^{-1}]\ .$$ 
We also note that for a presheaf $F$ we have $F(*) \simeq L(F)(*) $. Together this gives the first equivalence.
    The second equivalence immediately follows from \eqref{vm1}.    
\hB

We let $\Groupoids$ denote the (one-)category of groupoids and $W\subset \Mor(\Groupoids)$ be the equivalences. {The nerve functor $\Groupoids\to \sSet$ maps equivalences to homotopy equivalences  and therefore induces a functor denoted by the same symbol $\Nerve:\Groupoids[W^{-1}]\to \sSet[W^{-1}]$.}  We  consider a Lie group $G$  and  form the {sheaves}
$$\Bun(G)\ , \Bun(G)^{\nabla}\in \Fun^{desc}(\Mf,\Groupoids[W^{-1}])$$
which associate to a manifold $M$  the groupoids of $G$-principal bundles 
and $G$-principal bundles with connection.
 We consider the objects
$$\bbB G:=\Nerve(\Bun(G))\ ,\quad \bbB G^{\nabla}:=\Nerve(\Bun(G)^{\nabla}) $$ in  $\Fun^{desc}(\Mf^{op},\sSet[W^{-1}])$. 
The nerve of the Lie groupoid $\big(G{\rightrightarrows} *\big)$ formed internally to $\Mf$ is a simplicial manifold
$BG^{\bullet}$. The simplicial set
$$BG:=\colim_{\Delta^{op}} \sing(BG^{\bullet})\in \sSet[W^{-1}]$$
realizes the homotopy type of the classifying space of $G$.

  \begin{lem}\label{loi1}
We have natural equivalences $${U(\bbB G )\simeq   U(\bbB G^{\nabla})\simeq BG} $$
and
 $${S(\bbB G)}\simeq {S(\bbB G^{\nabla})}\simeq {BG^{\delta}} \ . $$
 \end{lem}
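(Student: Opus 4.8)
The plan is to split the four equivalences into the two that concern $S=\ev_{*}$, which are computed by evaluating directly at the point, and the two that concern $U=\ev_{*}\circ\cH$, which require identifying the homotopification of the stacks $\bbB G$ and $\bbB G^{\nabla}$ with the classifying space $BG$.

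First I would settle the $S$-statements. Since $S(\bbB G)=\Nerve(\Bun(G)(*))$, I observe that a $G$-principal bundle over the point is a $G$-torsor, that all such are isomorphic to $G$, and that the set of $G$-equivariant automorphisms of $G$ is the underlying discrete group $G^{\delta}$. Hence $\Bun(G)(*)$ is the one-object groupoid with automorphism group $G^{\delta}$, and its nerve is $BG^{\delta}$. For the version with connections, a connection on a bundle over $*$ is a $\mathfrak{g}$-valued one-form on the point, hence is zero; so the forgetful functor $\Bun(G)^{\nabla}(*)\to\Bun(G)(*)$ is an isomorphism of groupoids and $S(\bbB G^{\nabla})\simeq BG^{\delta}$ follows at once.

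For $U(\bbB G)\simeq BG$ I would use the bar presentation of the classifying stack. Because $G$-bundles are locally trivial, descent identifies $\bbB G$, in the $\infty$-topos of sheaves of spaces, with the colimit $\colim_{\Delta^{op}} y(BG^{\bullet})$ of the nerve $BG^{\bullet}$ of $(G\rightrightarrows *)$. The functor $\cH$ is a left adjoint (Proposition \ref{propo}) and hence preserves colimits, and $\ev_{*}$ restricts to the equivalence $\Fun^{desc,h}(\Mf^{op},\sSet[W^{-1}])\simeq\sSet[W^{-1}]$ of Proposition \ref{propo}; therefore $U$ preserves this colimit. Using the levelwise equivalence $U(y(BG^{\bullet}))\simeq\sing(BG^{\bullet})$ provided by Lemma \ref{wie2} (which computes $U(y(M))\simeq M_{top}=\sing(M)$), I obtain
$$U(\bbB G)\simeq \colim_{\Delta^{op}} U(y(BG^{\bullet}))\simeq \colim_{\Delta^{op}}\sing(BG^{\bullet})=BG .$$

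Finally, for $U(\bbB G^{\nabla})\simeq BG$ I would show that the forgetful map $\bbB G^{\nabla}\to\bbB G$ is an equivalence after homotopification. The decisive input is that connection data forms a torsor over the sheaf $\Omega^{1}\otimes\mathfrak{g}$ of $\mathfrak{g}$-valued one-forms, and that this sheaf is contractible after homotopification: the scaling homotopy sending $\omega$ to $t\,\pr^{*}\omega$, with $t$ the coordinate on $\Delta^{1}$, interpolates between the zero map and the identity and survives $\cH$, so that $\cH(\Omega^{1}\otimes\mathfrak{g})\simeq\const(*)$. Presenting $\bbB G^{\nabla}$ by a simplicial presheaf mapping to $y(BG^{\bullet})$ by forgetting the connection forms, each of whose fibres is such a form sheaf, one concludes that this forgetful map is a levelwise $\cH$-equivalence; since $\cH$ commutes with the realization over $\Delta^{op}$, this yields $\cH(\bbB G^{\nabla})\simeq\cH(\bbB G)$ and hence $U(\bbB G^{\nabla})\simeq BG$. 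The hard part will be this last step: setting up a simplicial model of $\bbB G^{\nabla}$ compatible with the bar model of $\bbB G$, and verifying that the pointwise contractibility of the form sheaves genuinely assembles, through sheafification and geometric realization, into an equivalence of homotopifications rather than a merely fibrewise statement. Naturality in $G$ is automatic throughout, since every identification arises from a universal property.
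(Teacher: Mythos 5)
Your proof is correct and follows essentially the same route as the paper: direct evaluation at the point for the $S$-statements, the atlas/bar presentation $\bbB G\simeq \colim_{\Delta^{op}} y(BG^{\bullet})$ combined with $\cH$ preserving colimits for $U(\bbB G)$, and levelwise contractibility (after homotopification) of the sheaf of connection forms for $U(\bbB G^{\nabla})$. The ``hard part'' you flag is resolved in the paper by presenting $\Bun(G)^{\nabla}$ as the sheafified action groupoid $(Y(G)\times \Omega^{1}\otimes\mathfrak{g}\rightrightarrows \Omega^{1}\otimes\mathfrak{g})$, whose nerve is levelwise the product $Y(G)^{\times n}\times(\Omega^{1}\otimes\mathfrak{g})$, so the comparison with the nerve of $(Y(G)\rightrightarrows *)$ follows from $\cH$ preserving finite products together with $\cH(i(\Omega^{1}\otimes\mathfrak{g}))\simeq *$.
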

 \proof  
 The second assertion is {a straightforward consequence of \eqref{vm1}} so that we concentrate on the first.
We first recall the presentation of the  {sheaf} $\Bun(G)$ using the atlas
 $*\to \Bun(G)$. It gives rise to a sheaf of groupoids
$$(Y(G){\rightrightarrows} *)\in \Fun^{desc}(\Mf^{op},\Groupoids)\ .$$  
The sheafification of {its image in the localization}
$$ L(\iota(Y(G){\rightrightarrows} *))\in \Fun^{{desc}}(\Mf,\Groupoids[W^{-1}])$$ 
is equivalent to  $\Bun(G)$.
Since we can write the sheafification $L$ as an  {iterated} filtered colimit {of limits},
 and the nerve functor $\Nerve:\Groupoids[W^{-1}]\to \sSet[W^{-1}]$   commutes with limits and filtered colimits,  it commutes with sheafification.
 We therefore have an equivalence
$$\bbB G\simeq L(\Nerve(\iota(Y(G){\rightrightarrows} *)))\ .$$
We now observe that 
$$\Nerve(\iota(Y(G){\rightrightarrows} *)) \simeq \iota(Y(BG^{\bullet}))\ .$$
 Therefore from \eqref{zur12}   we get
$$\Nerve(\iota(Y(G){\rightrightarrows} *))\simeq \colim_{\Delta^{op}} y(BG^{\bullet})\ .$$
Since $\cH$ commutes with colimits we see that
$$\cH(\Nerve(\iota(Y(G){\rightrightarrows} *)))\simeq  \colim_{\Delta^{op}} \cH(y(BG^{\bullet}))\simeq \const(\colim_{\Delta^{op}} \sing( BG^{\bullet}))\simeq \const( BG )\ .$$

We now consider the  {sheaf} $\bbB G^{\nabla}$.
The sheaf of groups $Y(G)$ acts on the sheaf of sets $\Omega^{1}\otimes \gaaa$ by 
$$g \cdot\omega=\Ad(g)\omega-dgg^{-1}\ .$$ We obtain the action groupoid
$$(Y(G)\times \Omega^{1}\otimes \gaaa{\rightrightarrows} \Omega^{1}\otimes \gaaa)\in\Fun^{desc}(\Mf^{op},\Groupoids)\ .$$ {The sheafification of its localization}
$$L(\iota(Y(G)\times \Omega^{1}\otimes \gaaa{\rightrightarrows} \Omega^{1}\otimes \gaaa))\in \Fun^{{desc}}(\Mf,\Groupoids[W^{-1}])$$   is
equivalent to $\Bun(G)^{\nabla}$. We now argue similarly as in the case of $\bbB G$ using in addition that
$\cH(i(\Omega^{1}\otimes \gaaa))\simeq *$ by a similar argument as for Lemma \ref{zzz2}. \hB

\subsection{Examples from abelian Lie groups}

A commutative Lie group $A$ gives rise to a sheaf of  grouplike commutative monoids $y(A)$. We start with explaining this notion and its relation to spectra.
In the following 
we use the cartesian symmetric monoidal structure in order to talk about commutative monoids in various $(\infty,1)$-categories.
In the case  of spaces $\sSet[W^{-1}]$ we call
  a commutative monoid   $M\in \Comm\Mon(\sSet[W^{-1}])$ grouplike, if the monoid $\pi_{0}(M)$ is a group. 
We let $$\Comm\Grp(\sSet[W^{-1}])\subseteq \Comm\Mon(\sSet[W^{-1}])$$ be the full subcategory of grouplike monoids. 
It is well-known that the  infinite loop space functor induces an equivalence of $(\infty,1)$-categories
$\Omega^{\infty}:\Sp^{\ge{0}}\stackrel{\sim}{\to} \Comm\Grp(\sSet[W^{-1}])$, where $\Sp^{\ge 0}\subset \Sp$ denotes the full subcategory of connective spectra.
We consider the composition
\begin{equation}\label{wie3}\sp:\Comm\Grp(\sSet[W^{-1}])\stackrel{(\Omega^{\infty})^{-1}}{\longrightarrow} \Sp^{\ge 0}~ {\subset}~\Sp\ .\end{equation} 
In particular,
if $A$ is an {ordinary} abelian group, then we have an equivalence
\begin{equation}\label{wie1}\sp(i(A))\simeq H(\iota(A[0])) .\end{equation}
Here the right hand side denotes the Eilenberg-MacLane spectrum on $A$.
Note that  $\sp$ does not preserve limits. Consequently, if we apply this functor to a sheaf of grouplike monoids, we must sheafify the result again. 

Let now $A$ be an abelian Lie group. Then $y(A)\in \Fun^{desc}(\Mf^{op},\Comm\Grp(\sSet[W^{-1}]))$.
{We} define
$$A_{\infty}:=L(\sp(y(A)))\in  \Fun^{desc}(\Mf^{op},\Sp)\ .$$ 
Note that ${A_{top}}\in \Comm\Grp(\sSet[W^{-1}])$ so that we can apply \eqref{wie3}.
\begin{lem}
 We have equivalences
 $$U(A_{\infty})\simeq \sp({A_{top}})\ {\text{ and }} \quad S(A_{\infty})\simeq H(\iota(A^{\delta}[0]))\ .$$
\end{lem}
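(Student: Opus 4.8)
The plan is to prove the two equivalences separately; the second is essentially formal, while the first requires commuting $\sp$ past the homotopification, which is the delicate point. For the flat part $S(A_\infty)=\ev_*(A_\infty)$ I would use that the value of a presheaf at the point is unchanged by sheafification, $\ev_*\circ L\simeq \ev_*$ (as already used in the proof of Lemma \ref{wie2}), together with the fact that $\sp$ is applied objectwise. Thus
$$S(A_\infty)=\ev_*\big(L\sp(y(A))\big)\simeq \ev_*\big(\sp(y(A))\big)=\sp\big(y(A)(*)\big)=\sp\big(i(A^\delta)\big).$$
Here $y(A)(*)=i(Y(A)(*))=i(C^\infty(*,A))=i(A^\delta)$ is the underlying abelian group of $A$, regarded as a discrete grouplike commutative monoid, and the equivalence $\sp(i(A^\delta))\simeq H(\iota(A^\delta[0]))$ is precisely \eqref{wie1}, yielding the second claim.

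For $U(A_\infty)=\ev_*\circ\cH(A_\infty)$ the essential difficulty is that $\sp$ does not preserve limits, so it commutes neither with sheafification $L$ nor with the sheaf-level homotopification $\cH$; indeed $\sp(\const(A_{top}))$ need not even be homotopy invariant, since cotensoring a connective spectrum by a space need not stay connective. The remedy is to pass to the presheaf level and to evaluate at $*$ only at the very end. Using $\cH\circ L\simeq L\circ\cH^{pre}$ (Proposition \ref{propo}, 4.) and $\ev_*\circ L\simeq\ev_*$, I would first rewrite
$$U(A_\infty)=\ev_*\,\cH\,L\,\sp(y(A))\simeq \ev_*\,L\,\cH^{pre}\sp(y(A))\simeq \ev_*\,\cH^{pre}\sp(y(A)).$$

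The key step is the natural equivalence $\cH^{pre}\circ\sp\simeq\sp\circ\cH^{pre}$ of functors on presheaves. Two facts make this work: first, $\sp$ preserves objectwise equivalences, and homotopy invariance is exactly an objectwise-equivalence condition, so $\sp$ carries homotopy invariant presheaves to homotopy invariant presheaves; second, $\sp=(\Omega^\infty)^{-1}$ followed by the inclusion $\Sp^{\ge 0}\subset\Sp$ preserves all colimits. Since $\cH^{pre}$ is the reflective localization onto homotopy invariant presheaves, computed by the singular construction $\bs$ (cf. the proof of Lemma \ref{wie2}), which is built entirely from colimits, these two properties force $\sp$ to commute with $\cH^{pre}$; concretely, at the point, $\bs(\sp F)(*)=\colim_{\Delta^{op}}\sp\big(F(\Delta^\bullet)\big)\simeq\sp\big(\colim_{\Delta^{op}}F(\Delta^\bullet)\big)=\sp\big(\bs(F)(*)\big)$. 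Using this and that $\sp$ is objectwise I obtain
$$U(A_\infty)\simeq \ev_*\,\sp\,\cH^{pre}(y(A))=\sp\big(\ev_*\,\cH^{pre}(y(A))\big)\simeq \sp\big(U(y(A))\big)\simeq \sp(A_{top}),$$
where the last equivalences use $\ev_*\cH^{pre}\simeq\ev_*\cH$ on the sheaf $y(A)$ and Lemma \ref{wie2}, the grouplike commutative monoid structure being carried along throughout.

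The main obstacle is precisely this non-commutation of $\sp$ with limits: one cannot argue via $\cH\sp\simeq\sp\cH$ on sheaves, so the proof must isolate the purely colimit-theoretic identity $\cH^{pre}\sp\simeq\sp\cH^{pre}$ at the presheaf level and invoke $\ev_*\circ L\simeq\ev_*$ to discard the sheafification only after evaluating at the point. That every stage of $\cH^{pre}$ is a colimit, which $\sp$ preserves, is what makes the argument robust regardless of the precise (possibly iterated) presentation of $\cH^{pre}$.
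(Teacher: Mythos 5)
Your proof is correct and follows essentially the same route as the paper, whose entire argument is to assert that $U$ commutes with $L\circ \sp$ and to combine this with $U(y(A))\simeq A_{top}$, Lemma \ref{wie2} and \eqref{wie1}; what you add is precisely the justification of that commutation, by decomposing $U$ as $\ev_{*}\circ L\circ \cH^{pre}$, using $\ev_{*}\circ L\simeq \ev_{*}$ to discard the sheafifications at the point, and using that $\cH^{pre}\simeq \bs$ is built from colimits, which $\sp$ preserves. (One motivational aside is slightly off: objectwise application of $\sp$ \emph{does} preserve homotopy invariance, since that is an objectwise-equivalence condition, as you yourself observe two sentences later --- the genuine failure is that $\sp$ of the constant sheaf on $A_{top}$ need not satisfy descent, hence need not be the constant sheaf on $\sp(A_{top})$ --- but this does not affect the argument.)
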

\proof
We use that {$U$} commutes with $L\circ \sp$ and that $U(y(A))\simeq A_{top}$ in order to deduce the first equivalence. For the second we
use the second equivalence in Lemma \ref{wie2} and  \eqref{wie1}. \hB

The most interesting example is the group $U(1)$. In the following Lemma we observe that the sheaf $U(1)_{\infty}$ is not new but equivalent to a special case of Example \ref{hjw1}.
\begin{lem}
We have an equivalence
$$U(1)_{\infty}\simeq H(\Diff^{{0}}(\Z[1] \to \R[1]))\ .$$ 
\end{lem}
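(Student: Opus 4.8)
The plan is to realise both sides as total fibres of comparable fibre sequences coming from the short exact sequence of abelian Lie groups $0\to\Z\to\R\to U(1)\to 0$, and then to pin down the fibre by a de~Rham argument. First I would produce the basic fibre sequence. Since $\R\to U(1)$ is a surjective submersion it is a covering in the site $\Mf$, so the induced map $y(\R)\to y(U(1))$ is a sheaf epimorphism and $y(\Z)\to y(\R)\to y(U(1))$ is a fibre sequence of sheaves of grouplike commutative monoids (it is \emph{not} objectwise exact, because a smooth map $M\to U(1)$ lifts to $\R$ only locally — this is exactly why the sheafification $L$ is indispensable below). The equivalence $\sp$ onto connective spectra preserves this fibre sequence, and it remains a fibre sequence after including into all spectra because the cokernel sheaf of $C^{\infty}(-,\R)\to C^{\infty}(-,U(1))$ vanishes, so the potential negative homotopy of the $\Sp$-fibre is zero. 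Applying the exact functor $L$ and using that $y(\Z)$ is homotopy invariant, so that $L(\sp(y(\Z)))\simeq\const(\sp(\Z^{\delta}))\simeq\const(H\Z)$ by Proposition~\ref{propo} and \eqref{wie1}, I obtain in $\Fun^{desc}(\Mf^{op},\Sp)$ a fibre sequence
$$\const(H\Z)\to \R_{\infty}\to U(1)_{\infty}\ ,\qquad \R_{\infty}:=L(\sp(y(\R)))\ .$$

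Since $\Sp$ is stable I may rotate this to a fibre sequence $\R_{\infty}\to U(1)_{\infty}\to\const(\Sigma H\Z)\simeq\const(H(\iota(\Z[1])))$, using the identification $\Sigma H\Z\simeq H(\iota(\Z[1]))$. As $\R$ is contractible we have $U(\R_{\infty})\simeq\sp(\R_{top})\simeq 0$, hence $\cH(\R_{\infty})\simeq 0$; applying $\cH$ therefore identifies the right-hand map above with the underlying-class map $I$ of $U(1)_{\infty}$. On the other side, by Definition \eqref{heute2} and the fact that $H$ preserves limits, $H(\Diff^{0}(\Z[1]\to\R[1]))$ is the pullback of $H(\Diff^{0}(\R[1]))$ and $\const(H\iota(\Z[1]))$ over $\const(H\iota(\R[1]))$, so it sits in a fibre sequence
$$\Fib\!\big(H\Diff^{0}(\R[1])\to\const(H\iota(\R[1]))\big)\to H\Diff^{0}(\Z[1]\to\R[1])\to\const(H\iota(\Z[1]))$$
whose right-hand map is again the underlying-class map (its fibre is pure by Lemma~\ref{rrq1}). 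Thus it suffices to produce an equivalence $\R_{\infty}\simeq\Fib(H\Diff^{0}(\R[1])\to\const(H\iota(\R[1])))$ compatible with the two maps to $\const(H\iota(\Z[1]))$: the two fibre sequences then coincide and give the claim. As a reassuring check, both fibres have vanishing $U$ and have $S\simeq H(\iota(\R[0]))$ — for $\R_{\infty}$ by the formulas $U(A_{\infty})\simeq\sp(A_{top})$, $S(A_{\infty})\simeq H(\iota(A^{\delta}[0]))$ of the preceding lemma, and for the right-hand side from Lemma~\ref{rrq1} together with $\sigma^{\ge 0}(\R[1])\simeq 0$.

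Finally, the identification of $\R_{\infty}$ is where the real content lies. Applying \eqref{wie1} objectwise gives $\sp(y(\R))\simeq H(\iota(\Omega^{0}[0]))$, the Eilenberg--MacLane object of the sheaf of smooth functions placed in degree $0$; since $H$ preserves finite limits and filtered colimits it commutes with sheafification (exactly as the nerve does in the proof of Lemma~\ref{loi1}), so $\R_{\infty}\simeq H(L\iota(\Omega^{0}[0]))$. Now I would use the Poincar\'e lemma: the augmented de~Rham complex is a stalkwise resolution, yielding the de~Rham equivalence $L\iota(\Omega\otimes_{\R}V)\simeq\const(\iota(V))$ for a real vector space $V$. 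Sheafifying the brutal-truncation triangle $\sigma^{\ge 1}\Omega\to\Omega\to\Omega^{0}[0]$ then gives a cofibre sequence $\iota(\sigma^{\ge 1}\Omega)\to L\iota(\Omega)\to L\iota(\Omega^{0}[0])$, and tracing the shift $\Diff^{0}(\R[1])=\iota(\sigma^{\ge 1}\Omega)[1]$ against $\const(\iota(\R[1]))\simeq L\iota(\Omega)[1]$ identifies $\Fib(\Diff^{0}(\R[1])\to\const(\iota(\R[1])))$ with $L\iota(\Omega^{0}[0])$, hence after $H$ with $\R_{\infty}$; compatibility of the maps to $\const(H\iota(\Z[1]))$ follows since both are the $\cH$-projection. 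I expect this last step — controlling the interaction of sheafification with $\sp$, $\iota$ and $H$, and extracting the differential forms of $\R_{\infty}$ from the sheafified sheaf of smooth functions via the Poincar\'e lemma — to be the main obstacle; the surrounding manipulation of the two fibre sequences is formal.
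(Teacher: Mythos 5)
Your construction of the fibre sequence $\R_\infty\to U(1)_\infty\to\const(H(\iota(\Z[1])))$ from the Lie-group extension, the identification $\R_\infty\simeq H(\iota(\Omega^{0}[0]))$ via the sheaf of smooth functions, and the de Rham computation of $\Fib\big(H(\Diff^{0}(\R[1]))\to\const(H(\iota(\R[1])))\big)$ all agree with the paper's proof and are correct. The gap is in the final step, where you conclude that the two fibre sequences ``coincide'' because the fibres agree and both projections are the $\cH$-unit. Two sheaves $\hat E$, $\hat F$ with $\cA(\hat E)\simeq\cA(\hat F)$ and $\cH(\hat E)\simeq\cH(\hat F)$ need not be equivalent: the sequence $\cA\to\id\to\cH$ is classified by a connecting map $\cH(\hat E)\to\Sigma\cA(\hat E)$, and that map must be matched as well. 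Here the relevant mapping space $\Map\big(\const(H(\iota(\Z[1]))),\Sigma\R_\infty\big)\simeq\Map_{\Sp}(H\Z,H\R)$ is discrete with $\pi_0\cong\R$, so there is a whole line of pairwise inequivalent extensions with your fibre and base. The split one, $\const(H(\iota(\Z[1])))\oplus\R_\infty$, evaluates on the point to $\Sigma H\Z\oplus H\R$ and is certainly not $U(1)_\infty$, whose value on the point is $H(\iota(U(1)^{\delta}[0]))$ with $\pi_0\cong\R/\Z$; your argument as written cannot distinguish these.

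What is missing is precisely the identification of the extension class as the one induced by the inclusion $\Z\hookrightarrow\R\hookrightarrow\Omega^{0}$, equivalently of the characteristic map $\phi\colon H(\iota(\Z[1]))\to H(\iota(\R[1]))$ as the inclusion of coefficients. This is the datum the paper's proof computes: it determines $\cZ(U(1)_\infty)\simeq H(\iota(\sigma^{\ge 1}\Omega)[1])$ from the sequence $\cS\to\id\to\cZ$ applied to $\cA(U(1)_\infty)\simeq\R_\infty$, reads off $Z(U(1)_\infty)\simeq H(\iota(\R[1]))$ and $\phi$ as the map induced by $\Z\subset\R$, and only then invokes the classification of Proposition \ref{wie4} together with item 6 of Lemma \ref{rrq1}, which records the same $\phi$ for $\Diff^{0}(\Z[1]\to\R[1])$. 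To repair your argument you would either trace the boundary map of the Lie-group extension through your identifications to pin down the connecting map, or construct a direct comparison map $U(1)_\infty\to H(\Diff^{0}(\Z[1]\to\R[1]))$ into the defining pullback (for instance via $\frac{1}{2\pi i}d\log$, as is done for $BU(1)_\infty$ in Lemma \ref{pqw1}) and check that it induces equivalences on fibres and bases.
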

\proof
We show the Lemma by analysing the data required in Proposition \ref{wie4} in order to characterize $U(1)_{\infty}$ and compare {it} with the corresponding data for $H(\Diff^{{0}}(\Z[1]\to \R[1])))$.
 The short {exact} sequence {of Lie groups} 
$$0\to \Z\to \R\to U(1)\to 0$$
yields a fibre sequence
$$\Z_{\infty}\to \R_{\infty}\to U(1)_{\infty}\stackrel{\partial}{\to} \Sigma\Z_{\infty}\ .$$
{Note that $\Sigma^k\Z_\infty$ is homotopy invariant and equivalent to $\const(H(\iota(\Z[k])))$.}
We apply the homotopification $\cH$ {to the fibre sequence} and use that $\cH(\R_{\infty})\simeq 0$ by Lemma \ref{zzz2} in order to conclude that
$$\cA(U(1)_{\infty})\simeq \R_{\infty}\simeq H(\iota(\sigma^{\le 0}\Omega))\ ,$$
{and that   map $\partial$ induces an equivalence $$\cH(U(1)_{\infty})\simeq \const(H(\iota(\Z[1])))\ .$$ Hence $U(1)_{\infty}$ is a differential refinement of $H(\iota(\Z[1]))$.}
{Moreover,} $\cZ(U(1)_{\infty})$ is determined by the sequence
$${\cS(H(\iota(\sigma^{\le 0}\Omega)))}\to H(\iota(\sigma^{\le 0}\Omega))\to \cZ(U(1)_{\infty})\to {\cS(\Sigma 
H(\iota(\sigma^{\le 0}\Omega))}\ .$$
Using that $${\cS(\iota(\sigma^{\le 0}\Omega))}\simeq \const(H(\iota(\R[0])))\simeq H(\iota(\Omega))$$ we get
$$\cZ(U(1)_{\infty})\simeq H(\iota(\sigma^{\ge 1}\Omega)[1])\ .$$ This implies that
$Z(U(1)_{\infty})\simeq H(\iota(\R[1]))$ and
${\phi}:H(\iota(\Z[1]))\to H(\iota(\R[1]))$ is induced by  inclusion of $\Z$ into $\R$. The result now follows from comparison with 
Lemma \ref{rrq1}. \hB

If $A$ is an abelian Lie group, then  the tensor product induces a symmetric monoidal structure on the  {sheaves} $\Bun(A)$ and $\Bun(A)^{\nabla}$,
 i.e. we have
$$\Bun(A)\ , \Bun(A)^{\nabla}\in \Fun^{desc}(\Mf,\Comm\Mon(\Groupoids[W^{-1}]))\ .$$
Since every $A$-principal bundle (with connection) is tensor invertible we conclude that
$$\bbB A\ ,\bbB A^{\nabla}\in \Fun^{desc}(\Mf,\Comm\Grp(\sSet[W^{-1}]))\ .
$$
 
We define the objects
\begin{equation}\label{uuzzii21}BA_{\infty}:=L(\sp(\bbB A))\ , \quad BA_{\infty}^{\nabla}:=L(\sp(\bbB A^{\nabla}))\end{equation}
in $\Fun^{desc}(\Mf^{op},\Sp)$.
From Lemma \ref{loi1} we conclude:
\begin{kor}\label{klh12}
We have equivalences
$$U(BA_{\infty})\simeq U(BA_{\infty}^{\nabla})\simeq {\Sigma} \sp({A_{top}})\ {\text{ and }} \quad {S}
(BA_{\infty})\simeq {S}(BA_{\infty}^{\nabla})\simeq H(\iota(A^{\delta}[1]))\ .$$
\end{kor}

We again analyse the case of $U(1)$ further.
\begin{lem}\label{pqw1}
We have equivalences
$$BU(1)_{\infty}\simeq H(\Diff^{-1}(\Z[2]\to \R[2]))\ {\text{ and }} \quad  BU(1)_{\infty}^{\nabla}\simeq H(\Diff^{0}(\Z[2]\to \R[2]))\ .$$
\end{lem}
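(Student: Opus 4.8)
The plan is to imitate the proof of the preceding Lemma on $U(1)_{\infty}$: I would characterise $BU(1)_{\infty}$ and $BU(1)^{\nabla}_{\infty}$ by the invariants appearing in Proposition \ref{wie4} and compare them with the data of $\Diff^{-1}(\Z[2]\to\R[2])$ and $\Diff^{0}(\Z[2]\to\R[2])$ recorded in Lemma \ref{rrq1}. The short exact sequence of abelian Lie groups $0\to\Z\to\R\to U(1)\to 0$ is exact as a sequence of sheaves of abelian groups; applying $\bbB(-)$ together with $L\circ\sp$ and its connection variant (both of which send fibre sequences of connective sheaves of grouplike monoids to fibre sequences of sheaves of spectra, since $\sp$ and sheafification are exact on the relevant subcategories) produces fibre sequences
$$B\Z_{\infty}\to B\R_{\infty}\to BU(1)_{\infty} \qquad\text{and}\qquad B\Z_{\infty}\to B\R^{\nabla}_{\infty}\to BU(1)^{\nabla}_{\infty}$$
in $\Fun^{desc}(\Mf^{op},\Sp)$; here the $\nabla$-version of $B\Z_{\infty}$ agrees with $B\Z_{\infty}$ because $\Z$ is discrete, and as in the preceding Lemma $B\Z_{\infty}\simeq\const(H(\iota(\Z[1])))$ is homotopy invariant.

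Next I would apply the exact functors $\cH$ and $\cA$. From $\cA(B\Z_{\infty})\simeq 0$ and the equivalences $\cH(B\R_{\infty})\simeq 0\simeq\cH(B\R^{\nabla}_{\infty})$ — which follow from the contractibility of $\R$, since $U(B\R^{\nabla}_{\infty})\simeq U(B\R_{\infty})\simeq\Sigma\sp(\R_{top})\simeq 0$ by Corollary \ref{klh12} together with $\cH\simeq\const\circ U$ from \eqref{heut300010} — I obtain
$$\cA(BU(1)_{\infty})\simeq B\R_{\infty},\qquad \cA(BU(1)^{\nabla}_{\infty})\simeq B\R^{\nabla}_{\infty},$$
and $\cH(BU(1)^{\nabla}_{\infty})\simeq\cH(BU(1)_{\infty})\simeq\Sigma\,\const(H(\iota(\Z[1])))\simeq\const(H(\iota(\Z[2])))$. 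In particular both sheaves are differential refinements of $H(\iota(\Z[2]))$, matching the underlying object of $H(\Diff^{m}(\Z[2]\to\R[2]))$ from Lemma \ref{rrq1}.

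The decisive step is the explicit identification of $B\R_{\infty}$ and $B\R^{\nabla}_{\infty}$ with truncated de Rham complexes, as this is exactly where the index $m=-1$ versus $m=0$ is determined. Exactly as in the proof of Lemma \ref{loi1}, I would present $\bbB\R$ by the one-object groupoid whose automorphisms are the sheaf of smooth functions $\Omega^{0}$, and $\bbB\R^{\nabla}$ by the action groupoid $(\Omega^{0}\times\Omega^{1}\rightrightarrows\Omega^{1})$, where $\Omega^{0}$ acts through $\theta\mapsto\theta+df$ (the adjoint action being trivial for the abelian group $\R$). Since $\R$ is abelian these are sheaves of grouplike commutative monoids, and applying $L\circ\sp$ and Dold--Kan gives
$$B\R_{\infty}\simeq H(\iota((\sigma^{\le 0}\Omega)[1])),\qquad B\R^{\nabla}_{\infty}\simeq H(\iota((\sigma^{\le 1}\Omega)[1])),$$
the connection $1$-form contributing precisely the extra term $\Omega^{1}$. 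Comparing with Lemma \ref{rrq1}(2) for $C=\Z[2]$ (so that $\Omega\otimes\Z[2]=\Omega[2]$) identifies $\cA(BU(1)_{\infty})$ with $\cA(\Diff^{-1}(\Z[2]\to\R[2]))$ and $\cA(BU(1)^{\nabla}_{\infty})$ with $\cA(\Diff^{0}(\Z[2]\to\R[2]))$.

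Finally I would determine the remaining invariants as in the $U(1)_{\infty}$ case. Using that $\cZ$ annihilates homotopy invariant sheaves, so that $\cZ(BU(1)^{\nabla}_{\infty})\simeq\cZ(\cA(BU(1)^{\nabla}_{\infty}))\simeq\cZ(B\R^{\nabla}_{\infty})$, the cycle data is the cofibre of $\cS(B\R^{\nabla}_{\infty})=\const(S(B\R^{\nabla}_{\infty}))\to B\R^{\nabla}_{\infty}$, which evaluates at the point to give $Z(BU(1)^{\nabla}_{\infty})\simeq H(\iota(\R[2]))$ and a characteristic map $\phi\colon H(\iota(\Z[2]))\to H(\iota(\R[2]))$ induced by $\Z\hookrightarrow\R$, read off from the boundary map of the fibre sequence; the same computation gives the corresponding data for $BU(1)_{\infty}$. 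These agree with the cycle data and characteristic map of $\Diff^{-1}(\Z[2]\to\R[2])$, respectively $\Diff^{0}(\Z[2]\to\R[2])$, listed in Lemma \ref{rrq1}(1),(4),(6), so Proposition \ref{wie4} upgrades the matching of $(\cZ,\phi)$ to the asserted equivalences. I expect the only genuine obstacle to lie in the displayed identification of $B\R^{\nabla}_{\infty}$, i.e.\ in checking that sheafifying the spectrum of the action groupoid reproduces the truncated de Rham complex rather than something larger (the same issue already settled for $\bbB G^{\nabla}$ in Lemma \ref{loi1}); once the $\cA$-pieces are pinned down, all remaining comparisons are formal.
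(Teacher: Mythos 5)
Your argument is correct, but it follows a different route than the paper's own proof of this lemma. The paper does \emph{not} run the $(\cZ,\phi)$-classification here: instead it recognizes $\Diff^{-1}(\Z[2]\to\R[2])$ directly as the pullback $\const(\iota(\Z[2]))\times_{\iota(\Omega[2])}\iota((\sigma^{\ge 1}\Omega)[2])$, matches this square with the pullback built from the smooth exponential sequence via $\tfrac{1}{2\pi i}d\log\colon Y(U(1))\to\sigma^{\ge 1}\Omega$, and then identifies the resulting corner $L(\iota(Y(U(1))[1]))$ with $BU(1)_{\infty}$ using \eqref{pqwe1}; the $\nabla$-case is handled the same way with the two-term complex $Y(U(1))\to\Omega^{1}$ and Lemma \ref{rrr6}. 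Your proof instead transplants the strategy of the preceding lemma on $U(1)_{\infty}$: deloop the exponential sequence, compute $\cA$, $\cH$, $\cZ$, $Z$ and $\phi$ of $BU(1)_{\infty}^{(\nabla)}$ from the explicit de Rham presentations $B\R_{\infty}\simeq H(\iota((\sigma^{\le 0}\Omega)[1]))$ and $B\R^{\nabla}_{\infty}\simeq H(\iota((\sigma^{\le 1}\Omega)[1]))$, and invoke Proposition \ref{wie4} against Lemma \ref{rrq1}. Both arguments ultimately rest on the same input, namely Lemma \ref{rrr6} (equivalently \eqref{pqwe1}) converting the relevant Picard/action groupoids into two-term complexes — you apply it to the groupoids for $\R$, the paper to those for $U(1)$. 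The paper's version is shorter and avoids the classification machinery; yours is more systematic, uniform with the $U(1)_{\infty}$ case, and makes transparent that the shift from $m=-1$ to $m=0$ is recorded entirely in the $\cA$-piece, i.e.\ in the presence of the connection $1$-form. Two small points of care: the step producing the delooped fibre sequences is cleanest if phrased as applying the exact functors $L\circ H\circ\iota((-)[1])$ to the short exact sequence of sheaves $0\to Y(\Z)\to Y(\R)\to Y(U(1))\to 0$ (rather than appealing to exactness of $\sp$, which does not preserve general limits), and the identification of $\cZ(B\R^{\nabla}_{\infty})$ with $\iota((\sigma^{\ge 2}\Omega)[2])$ should be checked on stalks via the Poincar\'e lemma — but neither is a gap, and both are at the level of detail the paper itself adopts in the preceding proof.
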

\proof
According to  Example \ref{hjw1}
the sheaf $\Diff^{-1}(\Z[2]\to \R[2])$  is defined by a pullback  diagram 
$$\xymatrix{
\Diff^{-1}(\Z[2]\to \R[2])\ar[r]\ar[d]& \iota((\sigma^{\ge 1}\Omega)[2])\ar[d]\\
\const(\iota(\Z[2]))\ar[r]&\iota(\Omega[2])\ .
}$$
Since we have a pull-back
$$\xymatrix{
 L(\iota(Y(U(1))[1]))\ar[d]\ar[r]^-{\frac{1}{2\pi i}d\log} & \iota((\sigma^{\ge 1}\Omega)[2])\ar[d]\\
\const(\iota(\Z[2]))\ar[r]&\iota(\Omega[2])
}$$
we conclude that
$$\Diff^{-1}(\Z[2]\to \R[2])\simeq L(\iota(Y(U(1))[1]))\ .$$
In analogy to  \eqref{wie1} we have for an abelian group
$A$  that \begin{equation}\label{pqwe1}\sp(\Nerve(A{\rightrightarrows} *))\simeq H(\iota(A[1]))\ ,\end{equation} where
$\Nerve(A{\rightrightarrows} *)\in \Comm\Group(\sSet[W^{-1}])$ is the nerve of the Picard groupoid $A{\rightrightarrows} *$.

Applying this objectwise to $A=Y(U(1))$ we get
$$L(H(\iota(Y(U(1))[1])))\simeq BU(1)_{\infty}\ ,$$
and therefore the desired equivalence. 
  
The second statement follows by analogous calculations.
Here we compare the definition of $\Diff^{0}(\Z[2]\to \R[2] )$ in  Example \ref{hjw1}
with the sheaf defined by the pull-back square
 $$\xymatrix{
L(\iota((Y(U(1))\stackrel{\frac{1}{2\pi i}d\log}{\longrightarrow} \Omega^1)[1]))\ar[r]\ar[d] & \iota((\sigma^{\ge 2}\Omega)[2])\ar[d]\\
\const(\iota(\Z[2]))\ar[r]&\iota((\Omega[2]))
}\ .$$ 
Using Lemma \ref{rrr6} we identify
$$L(H(\iota\big(Y(U(1))\stackrel{\frac{1}{2\pi i}d\log}{\longrightarrow} \Omega^1\big)[1]))\simeq BU(1)_{\infty}^{\nabla}\ .$$
\hB   

\subsection{The universal differential characteristic class}\label{secfive2}

Let now $G$ be a general Lie group. 
We define $$\Sigma^{\infty}_{+}BG_{\infty}:=L(\Sigma^{\infty}_{+} \bbB G) \quad \text{and} \quad \Sigma^{\infty}_{+}BG^{{\nabla}}_{\infty}:=L(\Sigma^{\infty}_{+} \bbB G^{{\nabla}})\ .$$ 
  
By construction, the differential  extensions $  \Sigma^{\infty}_{+}B G^{\nabla}_{\infty}$ and $\Sigma^{\infty}_{+} B G_{\infty}$  of  $\Sigma^{\infty}_{+}BG$ 
receive   the universal differential cohomology 
class of  $G$-bundles with and without connection. {From Lemma \ref{loi1} we get equivalences
\begin{equation}
 U(\Sigma^{\infty}_{+}BG^{\nabla}_{\infty})\simeq U(\Sigma^{\infty}_{+}BG_{\infty})\simeq \Sigma^{\infty}_{+} BG
\label{underlyingBGinfinity}
\end{equation}
and 
$$S(\Sigma^{\infty}_{+}B G^{\nabla}_{\infty})\simeq S(\Sigma^{\infty}_{+} B G_{\infty})\simeq
\Sigma^{\infty}_{+}BG^{\delta}\ .$$

\bigskip

As an application of our theory we reproduce the  construction of the  differential refinement of an integral characteristic classes first obtained by    Cheeger-Simons \cite{MR827262}.
  We consider an invariant polynomial $z\in I^{p}(\gaaa)^{G}$. It induces a map of {sheaves} of  sets
$$\Omega^{1}\otimes g\stackrel{\omega\mapsto d\omega+[\omega,\omega]}{\longrightarrow} \Omega^{2}\otimes \gaaa\stackrel{z}{\to} Z^{2p}(\Omega)\ .$$
This map is $Y(G)$-invariant, where $Y(G)$ acts trivially  {on} $Z^{2p}(\Omega)$.
We get an induced map of action groupoids
$$(Y(G)\times \Omega^{1}\otimes g{\rightrightarrows}\Omega^{1}\otimes g)\to (Y(G)\times Z^{2p}(\Omega){\rightrightarrows} Z^{2p}(\Omega))$$ and, by applying the nerve functor and sheafification,
$$\bbB G^{\nabla} \to     \bbB G\times  i(Z^{2p}(\Omega))\ ,$$
where $i$ is the map \eqref{vm10}.
We compose with the projection $\bbB G\times  i(Z^{2p}(\Omega))\to i(Z^{2p}(\Omega))$ and the equivalence
$i(Z^{2p}(\Omega))  \simeq  \Omega^{\infty}H(\iota(Z^{2p}(\Omega){[0]} ))$ in order to get the map
\begin{equation}\label{vm20}\bbB G^{\nabla}\to \Omega^{\infty}(H(\iota(Z^{2p}(\Omega){[0]} )))\ .\end{equation}
We apply the homotopification   to the adjoint of  \eqref{vm20}
  and get, using Lemma \ref{rrq1} {and Remark \ref{uuzzii10}}, a commutative diagram 
\begin{equation}\label{klo1}\xymatrix{   \Sigma^{\infty}_{+}B G_{\infty}^{\nabla}\ar[r]\ar[d]& H(\iota(Z^{2p}(\Omega ){[0]}))\ar[d]\\
\const( \Sigma^{\infty}_{+} BG)\ar[r]^{\const(\bz)}&\const(  H\iota(\R[2p]))
}\ .\end{equation}

The upper horizontal map is derived by the construction above from the choice of  the invariant polynomial $z\in I^{p}(\gaaa)^{G}$, and it determines the map $\bz:\Sigma^{\infty}_{+}BG\to H(\iota(\R[2p]))$ via homotopification. We have in particular reproduced the classical map
$I^{p}(\gaaa)^{G}\to H^{2p}(BG;\R)$.

We now assume that we have chosen an integral refinement of the class $\bz$, i.e. a lift
\begin{equation}\label{klo2}\xymatrix{&H\Z[2p]\ar[d]\\\Sigma_{+}^{\infty} BG\ar[r]^{\bz}\ar[ur]^{\bz^{\Z}}
&H\R[2p]}\ .\end{equation}
Then we get the two  squares below.
$$\xymatrix{  \Sigma^{\infty}_{+} B G_{\infty}^{\nabla} \ar@{-->}[r]^-{\hat \bz^{\Z}} \ar[d]&H(\Diff^{0}(\Z[2{p}]\to \R[2{p}]))\ar[r]\ar[d]&H(\iota({Z^{2p}(\Omega){[0]} ))} \ar[d]\\
\const(\Sigma^{\infty}_{+}BG)\ar[r]^{\bz^{\Z}}&\const(H(\iota(\Z[2p])))\ar[r]&\const(H(\iota(\R[2p])))
}$$
 
In order to connect with the usual notation we set $$\widehat{H\Z}^{2p}(M):=H^{0}(\Diff^{0}(\Z[2{p}]\to \R[2{p}])(M))\ .$$
{The  Cheeger-Simons construction \cite{MR827262} associates to a pair of  $z$ and its lift $\bz^{\Z}$}  a differential characteristic class for
$G$-bundles with connection $(P,\omega)$ on $M$
$$\hat z(P,\omega)\in \widehat{H\Z}^{2p}(M)\ .$$
In terms of the map
 $$\hat \bz^{\Z}:   \Sigma^{\infty}_{+}\bbB G_{\infty}^{\nabla}\to H(\Diff^{0}(\Z[2{p}]\to \R[2{p}]))$$ it is given by the induced map in $\pi_{0}$:
 $$\hat \bz^{\Z}([P,\omega])=\hat z(P,\omega)\ ,$$
 where we consider $[P,\omega]\in \pi_{0}(\Sigma^{\infty}_{+}\bbB G_{\infty}^{\nabla}(M))$.
 
 \bigskip
 
 \begin{ex}\label{heute200}{\rm
 {
One motivation to consider differential cohomology theories is the construction of secondary invariants. In the present set-up secondary invariant are obtained by an application of the functor $S$ to differential lifts of primary invariants.  If we apply $S$ to $\hat \bz^{\Z}$, then we get a map
$$ \Sigma^{\infty}_{+}BG^{\delta} \simeq  S(\Sigma^{\infty}_{+}B G^{\nabla}_{\infty})  \stackrel{S(\hat \bz^{\Z})}{\to} S(H(\Diff^{0}(\Z[2{p}]\to \R[2{p}])) \stackrel{Lemma \ref{rrq1}}{\simeq}  H(\iota(\R/\Z[2p-1])\ .  $$  Its homotopy class  is the classical   secondary  characteristic class $$\check{z}\in H^{2p-1}(BG^{\delta};\R/\Z)$$
for flat $G$-bundles associated to the pair  $(z,\bz^{\Z})$ already found by Cheeger-Simons \cite{MR827262}.
}}\end{ex}

\section{\texorpdfstring{$K$}{K}-theory}\label{ttz15}

In this section we will approach algebraic and topological $K$-theory via the  group completion functor which is defined as the left-adjoint in the adjunction 
   $$K:\Comm\Mon(\sSet[W^{-1}])\leftrightarrows \Comm\Grp(\sSet[W^{-1}]):\mathrm{inclusion}$$
{between commutative monoids and groups in the $(\infty,1)$-category of spaces.} We let $\Cat[W^{-1}]$ be the localization of the one-category of categories $\Cat$ at equivalences. Further let $\Nerve:\Cat\to \sSet$ be the functor which takes the nerve of a 1-category. 
Since it maps  equivalences between categories to homotopy equivalences of simplicial sets it induces a functor between $(\infty,1)$-categories (denoted by the same symbol)  $\Nerve:\Cat[W^{-1}]\to \sSet[W^{-1}]$ which extends the functor previously defined on groupoids.
 By $\Iso$ we denote the functor that maps a 1-category to
its maximal subgroupoid. 

\begin{ddd}
We define the functor
$$\cK:\Comm\Mon(\Cat[W^{-1}])\to \Sp$$
as the composition
$$\cK:=\sp\circ K\circ \Nerve\circ \Iso\ .$$
\end{ddd}

This definition is justified by the fact, that if $R$ is a commutative ring and $P(R)$ is the symmetric monoidal category of finitely generated projective $R$-modules with respect to the direct sum, then
$$KR:=\cK(P(R))$$ is a connective spectrum which represents Quillen's {algebraic }$K$-theory of $R$.

\bigskip

We let $$\Vect, \Vect^{\nabla}\in \Fun^{desc}(\Mf^{op},\Comm\Mon(\Cat[W^{-1}]))$$
be the 
{sheaves of symmetric monoidal categories}
 which associate  to a manifold $M$ the symmetric monoidal category of complex vector bundles (with connection) with the symmetric monoidal structure given by the direct sum. We  define the objects
$$\widehat \ku:=L(\cK(\Vect)) \quad \text{and} \quad \widehat{\ku}^{\nabla}:=L(\cK(\Vect^{\nabla}))$$
 in  $\Fun^{desc}(\Mf^{op},\Sp)$.
Since the group completion functor is a left-adjoint it does not preserve sheaves in general. Therefore we must add the sheafification functor in the definition above.

\begin{rem}{\em 
The symmetric monoidal structure on these  {sheaves} given by the tensor product of vector bundles induces on $\widehat\ku$ and $\widehat \bku^{\nabla}$ the structure of sheaves with values in $\CAlg(\Sp)$ 
(i.e. $E_\infty$-ring spectra), see  \cite{GGN} for a discussion {in the setting of $(\infty,1)$-categories.}  
}\end{rem}

\begin{lem}\label{loi2}
We have equivalences
$${  U(\widehat \ku) \simeq {\ku} \quad \text{and} \quad   S(\widehat \ku) \simeq  K\C}\ .$$
\end{lem}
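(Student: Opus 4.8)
The plan is to prove the two equivalences separately, since they isolate respectively the ``flat'' and the ``topological'' information contained in $\Vect$. The equivalence $S(\widehat\ku)\simeq K\C$ is the elementary one. As $S=\ev_*$ and the value of a presheaf at the point agrees with that of its sheafification (so $\ev_*\circ L\simeq\ev_*$, as used in the proof of Lemma \ref{wie2}), I would compute
$$S(\widehat\ku)=\ev_*\big(L(\cK(\Vect))\big)\simeq\ev_*\big(\cK(\Vect)\big)=\cK(\Vect(*)).$$
Now $\Vect(*)$ is the symmetric monoidal category of complex vector bundles over a point, i.e.\ the category of finite-dimensional complex vector spaces, which is precisely the category $P(\C)$ of finitely generated projective $\C$-modules. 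Hence $\cK(\Vect(*))=\cK(P(\C))=K\C$ by the definition of $\cK$ and the identification of $KR$ with Quillen's algebraic $K$-theory recalled just after the definition of $\cK$.

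For $U(\widehat\ku)$ the strategy is to commute $U=\ev_*\circ\cH$ past the construction $L\circ\sp\circ K\circ\Nerve\circ\Iso$ and to reduce to Lemma \ref{loi1}. First I would record the rank decomposition $\Iso(\Vect)\simeq\coprod_{n\ge0}\Bun(GL_n(\C))$ of sheaves of symmetric monoidal groupoids (a rank-$n$ bundle being a $GL_n(\C)$-bundle via its frame bundle, with $\oplus$ corresponding to block sum), so that $\Nerve(\Iso(\Vect))\simeq\coprod_n\bbB GL_n(\C)$ as a sheaf of commutative monoids in spaces. Since $\cH$ preserves finite products (Proposition \ref{propo}) it preserves commutative monoid objects, and since $U$ preserves colimits (both $\cH$ and the equivalence $\ev_*$ do), Lemma \ref{loi1} together with $BGL_n(\C)\simeq BU(n)$ gives
$$U(\Nerve(\Iso(\Vect)))\simeq\coprod_n U(\bbB GL_n(\C))\simeq\coprod_n BGL_n(\C)$$
as a commutative monoid in spaces.

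The heart of the argument is the interchange $U(\widehat\ku)\simeq\sp\big(K(U(\Nerve\Iso\Vect))\big)$, and this is the step requiring most care. Using $\cH\circ L\simeq L\circ\cH^{\mathrm{pre}}$ (Proposition \ref{propo}) and $\ev_*\circ L\simeq\ev_*$, I get $U(\widehat\ku)\simeq\ev_*\cH^{\mathrm{pre}}(\cK(\Vect))$, and the value at the point of the presheaf homotopification is the geometric realization $\lvert[n]\mapsto\cK(\Vect(\Delta^n))\rvert$ (this is the smooth singular functor $\bs$ of Lemma \ref{lll25} evaluated at the point, consistent with $U(y(M))\simeq M_{top}$ of Lemma \ref{wie2}). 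Now $\sp\circ K$ is the composite of the group-completion left adjoint $K$ with the equivalence $\sp=(\Omega^\infty)^{-1}$ followed by the colimit-preserving inclusion $\Sp^{\ge0}\subset\Sp$; hence it preserves geometric realizations and may be pulled out, yielding
$$U(\widehat\ku)\simeq\sp\Big(K\big(\lvert[n]\mapsto\Nerve\Iso\Vect(\Delta^n)\rvert\big)\Big)=\sp\big(K(U(\Nerve\Iso\Vect))\big).$$

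Combining this with the previous paragraph gives $U(\widehat\ku)\simeq\sp\big(K(\coprod_n BGL_n(\C))\big)$. I would finish by invoking the group-completion theorem: the group completion of the commutative monoid $\coprod_n BGL_n(\C)\simeq\coprod_n BU(n)$ under Whitney sum is $\Z\times BU\simeq\Omega^\infty\ku$, so that $\sp\big(K(\coprod_n BU(n))\big)\simeq\ku$ is connective complex topological $K$-theory. This establishes $U(\widehat\ku)\simeq\ku$. The two places demanding genuine care are the preservation of geometric realizations by $\sp\circ K$ (so that the realization and the group completion may be exchanged) and the identification of the rank decomposition as an equivalence of sheaves of \emph{commutative monoids}, so that the symmetric monoidal structures match under $U$; once these are in place the computation is formal.
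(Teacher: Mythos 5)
Your proposal is correct and follows essentially the same route as the paper: the $S$-part via $\ev_*\circ L\simeq \ev_*$ and $\cK(P(\C))\simeq K\C$, and the $U$-part via the rank decomposition $\Iso(\Vect)\simeq\bigsqcup_n\Bun(GL(n,\C))$, Lemma \ref{loi1}, the interchange of $\sp\circ K$ with homotopification (justified, as in the paper, by sifted-colimit compatibility of the relevant forgetful functors), and the group-completion identification $\sp(K(\bigsqcup_n BGL(n,\C)))\simeq\ku$. The two points you flag as requiring care are exactly the ones the paper addresses.
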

\proof
{
We start with the equivalence 
$$\Iso(\Vect)\simeq \bigsqcup_{n\ge 0} \Bun(GL(n,\C))$$ 
in $\Fun^{desc}(\Mf^{op},\CommMon(\Groupoids[W^{-1}]))$, where on the right-hand side we write a coproduct of sheaves of groupoids which has a natural commutative monoid structure.
Since the nerve functor $\Nerve$ preserves products we obtain the equivalence
$$\Nerve(\Iso(\Vect))\simeq \bigsqcup_{n\ge 0}  \bbB GL(n,\C)$$ 
in $\Fun^{desc}(\Mf^{op},\CommMon(\sSet[W^{-1}]))$.}   
{The formula $\cH=L\circ \bs$ shows that $\cH$ can be expressed in terms of  iterated
sifted colimits and limits. Since the forgetful functor from sheaves of commutative monoids
to sheaves of spaces commutes with sifted colimits {(as shown in \cite[Corollary 3.2.3.2.]{HA}) and limits} it follows that the following diagram commutes:
$$\xymatrix{\Fun^{desc}(\Mf^{op},\CommMon(\sSet[W^{-1}]))\ar[r]^-{\mathrm{forget}}\ar[d]^{{\cH^{{\CommMon}}}}&\Fun^{desc}(\Mf^{op},\sSet[W^{-1}]))\ar[d]\ar[d]^{{\cH}}\\
\Fun^{desc,h}(\Mf^{op},\CommMon(\sSet[W^{-1}]))\ar[r]^-{\mathrm{forget}}&\Fun^{desc,h}(\Mf^{op},\sSet[W^{-1}]))}\ .$$
 By   Lemma \ref{loi1}  we have  an  equivalence
\begin{equation}\label{tho3obla}
U(\Nerve(\Iso(\Vect)))\simeq   \bigsqcup_{n\ge 0} BGL(n,\C)\ 
\end{equation}
 in $ \sSet[W^{-1}]$.
If we use this equivalence   in order transport the commutative monoid structure on
$\cH^{\CommMon}(\Nerve(\Iso(\Vect)))(*)$, then we obtain
the classical commutative monoid structure on   $\bigsqcup_{n\ge 0}  BGL(n,\C)$ such that
$$ \sp(K(\bigsqcup_{n\ge 0} BGL(n,\C))) \simeq  \ku\ .$$ 

Since $\sp\circ K$ commutes with $\const$ and $\cH$ 
 we get
 $$\cH(\cK( \Vect))   \simeq  \const(\ku)\ .$$
 
 We get the required equivalence
 $\cH(\widehat \ku) \simeq   \const(\ku)$.
 
 In order to show the second assertion  we use the identity $\ev_{*}(L(F)) \simeq  F(*)$.
 Now $$\ev_{*}(\cK(\Vect)) \simeq \cK(P(\C)) {\simeq}  K\C\ .$$
\hB

Therefore $\widehat{\bku}$ is a differential extension of the spectrum $\bku$.
Moreover the map
$S(\widehat\bku)\to U(\widehat \bku)$ is the canonical map
$K\C\to \bku$ {from algebraic K-theory to complex K-theory} so that  $ {\Sigma^{-1}Z}(\widehat{\bku})$ is the relative  $K$-theory spectrum
$K^{rel}\C$.
It seems to be an interesting problem to understand the sheaf $\cA(\widehat{\bku})$ and $\cZ(\widehat{\bku})$. The latter is a pure differential extension of $\Sigma K^{rel}\C$.

 \bigskip

We now consider vector bundles with connections  and
define
$$\widehat{\bku}^{\nabla}:=  L(\cK(\Vect^{\nabla}))\in \Fun^{desc}(\Mf^{op},\Sp)\ .$$
We have a forgetful map $\Vect^{\nabla}\to \Vect$.

\begin{lem}
The map
$$\bs(\Nerve(\Iso(\Vect^{\nabla})))\to \bs(\Nerve(\Iso(\Vect)))$$
is an equivalence
\end{lem}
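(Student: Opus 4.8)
The plan is to follow the template of the proof of Lemma \ref{loi1}, where the analogous comparison was carried out after the full homotopification $\cH = L\circ\bs$; the point here is to obtain the equivalence already at the level of $\bs$, before sheafification. First I would reduce to a single structure group. Using the equivalence $\Nerve(\Iso(\Vect))\simeq \bigsqcup_{n\ge 0}\bbB GL(n,\C)$ from the proof of Lemma \ref{loi2}, together with its evident refinement $\Nerve(\Iso(\Vect^{\nabla}))\simeq \bigsqcup_{n\ge 0}\bbB GL(n,\C)^{\nabla}$ (a rank-$n$ bundle with connection is a $GL(n,\C)$-bundle with connection), and the fact that $\bs$ is a left adjoint and hence commutes with coproducts, the statement reduces to showing that the forgetful map $\bs(\bbB G^{\nabla})\to\bs(\bbB G)$ is an equivalence for $G=GL(n,\C)$.

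Next I would exploit the action-groupoid presentations already used in Lemma \ref{loi1}. There $\bbB G$ is the sheafification of $\Nerve(\iota(Y(G)\rightrightarrows *))$ and $\bbB G^{\nabla}$ is the sheafification of $\Nerve(\iota(Y(G)\times\Omega^1\otimes\gaaa\rightrightarrows\Omega^1\otimes\gaaa))$, and under these presentations the forgetful map is induced by the projection collapsing the connection factor $\Omega^1\otimes\gaaa$. Writing each nerve as a colimit over $\Delta^{op}$ of the simplicial presheaf with $k$-simplices $i(Y(G^{\times k}))$, respectively $i(Y(G^{\times k}))\times i(\Omega^1\otimes\gaaa)$, and using that $\bs$ commutes with colimits and (in its smooth-singular model) preserves finite products, the comparison is reduced to a levelwise statement. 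Each level then follows from the contractibility
$$\bs(i(\Omega^1\otimes\gaaa))\simeq * ,$$
which holds because $\Omega^1\otimes\gaaa$ is a sheaf of real vector spaces and is therefore smoothly contractible via the scaling homotopy $\omega\mapsto t\omega$; this is exactly the input recorded for $\cH$ in Lemma \ref{zzz2}, and the same fibrewise-affine contraction is already visible for $\bs$ before sheafification. Since a levelwise equivalence of simplicial objects induces an equivalence of realizations, this settles the comparison for the \emph{unsheafified} presentations.

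The main obstacle is precisely the sheafification built into $\bbB G^{\nabla}=L(\dots)$ and $\bbB G=L(\dots)$: the functor $\bs$ does not commute with $L$, and indeed the unsheafified presentations record only trivializable bundles, so the levelwise argument above proves the claim only before applying $L$. I would bridge this gap exactly as in the proofs of Proposition \ref{propo} and Lemma \ref{loi1}, computing $\bs$ on the sheaves through the good-cover/descent description of $L$ (an iterated colimit over good covers of descent limits), so that over each contractible member of a good cover bundles are trivializable and the local comparison is governed solely by the contractibility of the affine space of connections. Geometrically this is the statement that two connections $\nabla_0,\nabla_1$ on a fixed bundle are joined, after applying $\bs$, by the smooth family $(1-t)\nabla_0+t\nabla_1$ over $\Delta^1\times M$, and it is this interpolation that the $\Delta^\bullet$-direction of $\bs$ contracts away. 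Reassembling over the cover with the same colimit-of-limits bookkeeping used for $\cH$ then yields $\bs(\bbB G^{\nabla})\simeq\bs(\bbB G)$ and hence the Lemma. I expect the delicate part to lie exactly in controlling where $\bs$ may be exchanged with the descent limits, which is where the real work will be.
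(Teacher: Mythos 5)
There is a genuine gap, and it sits exactly where you say the ``real work'' will be. Your detour through the presentations $\bbB G\simeq L(\Nerve(\iota(Y(G)\rightrightarrows *)))$ and $\bbB G^{\nabla}\simeq L(\Nerve(\iota(Y(G)\times\Omega^{1}\otimes\gaaa\rightrightarrows\Omega^{1}\otimes\gaaa)))$ reintroduces a sheafification that the statement does not contain, and you then need to move $\bs$ --- a $\Delta^{op}$-colimit, i.e.\ a geometric realization --- past the \v{C}ech descent limits $\lim_{\Delta}(-)(U^{\bullet})$ hidden inside $L$. For space-valued sheaves this exchange is not available: $\sSet[W^{-1}]$ is not stable, so Proposition \ref{propcom} cannot be invoked, and realizations of simplicial spaces commute with finite products but not with totalizations (nor with general pullbacks without fibrancy hypotheses). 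Your unsheafified levelwise computation, which rests on $\bs(i(\Omega^{1}\otimes\gaaa))\simeq *$, is correct but only yields the statement for trivializable bundles; composing it with $L$ gives an identification of $\cH^{pre}$'s, hence of $\cH(\bbB G^{\nabla})$ with $\cH(\bbB G)$, which is strictly weaker than the asserted equivalence of $\bs$'s. So the proposal, as written, does not close.

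The paper avoids the issue entirely by never leaving the honest (already descent-satisfying) sheaves of groupoids $\Iso(\Vect^{\nabla})\to\Iso(\Vect)$, which record \emph{all} bundles, not just trivializable ones. Its proof is a one-line verification: for each nerve level $p$ and each manifold $M$, the map of simplicial sets $\Nerve(\Iso(\Vect^{\nabla}))[p](\Delta^{\bullet}\times M)\to \Nerve(\Iso(\Vect))[p](\Delta^{\bullet}\times M)$ is a (trivial) Kan fibration --- the lifting problems amount to extending connections given on a horn of $\Delta^{n}\times M$ over all of $\Delta^{n}\times M$, which is possible because the space of connections on a fixed bundle is a nonempty affine space (partitions of unity). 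A levelwise equivalence of bisimplicial objects then induces an equivalence on the $\colim_{\Delta^{op}}$ computing $\bs$. The geometric input (contractibility of the space of connections, your interpolation $(1-t)\nabla_{0}+t\nabla_{1}$) is the same as yours; the difference is that by phrasing it as a fibration condition on the un-sheafified-but-already-a-sheaf object, no commutation of $\bs$ with $L$ or with descent limits is ever needed. If you want to salvage your route, you would have to prove such a commutation statement for $1$-truncated space-valued sheaves, which is substantially harder than the lemma itself.
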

\proof
One checks directly that 
$$\Nerve(\Iso(\Vect^{\nabla}))[p](\Delta^{\bullet}\times M)\to \Nerve(\Iso(\Vect))[p](\Delta^{\bullet}\times M)$$
is a Kan fibration for every $p\in \nat$ and manifold $M$.
This implies the assertion.
\hB 

\begin{kor}
We have
$${U(\widehat{\ku}^{\nabla}) \simeq  \bku \quad \text{and} \quad S( \widehat{\bku}^{\nabla}) \simeq  K\C}$$
Consequently, $\widehat{\bku}^{\nabla}$ is a
differential refinement {of} $\bku$.
\end{kor}
 
The differential refinement $\widehat{\bku}^{\nabla}$ has the same underlying map
$K\C\to \bku$ as $\widehat{\bku}$. So $\widehat{\bku}^{\nabla}$ differs from $\widehat{\bku}$ by the choice of the pure differential refinement $\cZ(\widehat{\bku}^{\nabla})$ of $K^{rel}\C$, only.

\bigskip
\newcommand{\Lin}{\mathrm{Line}}

\begin{rem}{\rm
Let $\hat E$ be an  arbitrary sheaf of spectra.
An additive $\hat E$-valued characteristic differential cohomology class for complex vector bundles is  {by definition}
a map $$\Nerve(\Iso(\Vect))\to \Omega^{\infty} \hat E$$
in $\Fun^{desc}(\Mf,\Comm\Mon(\sSet[W^{-1}]))$. By construction, 
${\widehat\bku}$ receives the universal additive {characteristic} differential cohomology class for vector bundles. 
Similarly, $\widehat{\bku}^{\nabla}$ receives the universal additive  characteristic differential
cohomology class for vector bundles with connections. 
}
\end{rem}

\bigskip

\begin{ex}{\rm
We construct the differential first Chern class $$\hat c_1:\widehat{\bku}^{\nabla}\to B\C^\times{}_{\infty}^{\nabla}\ .$$
Let $$\Lin^\nabla \in  \Fun^{desc}(\Mf^{op},\Comm\Mon(\Cat[W^{-1}]))$$ denote the sheaf which associates to a manifold $M$ the symmetric monoidal category of complex line bundles with connection and the tensor product.
The highest exterior power is {a} symmetric monoidal functor 
\begin{equation}\label{uuzzii20}\Lambda^{\text{top}}: \Vect^\nabla\to \Lin^\nabla \ .\end{equation}
The associated line bundle construction yields an equivalence $$\Bun\C^\times{}^\nabla\stackrel{{\simeq}}{\to}  \Iso(\Lin^\nabla)$$  
in  $\Fun^{desc}(\Mf^{op},\Comm\Mon(\Groupoids[W^{-1}]))$.
We now apply the composition $L\circ \sp\circ \cK\circ \Nerve$ to \eqref{uuzzii20} in order to get a map {(see \eqref{uuzzii21} for notation)}
$$\hat c_1:\widehat{\bku}^{\nabla}\to B\C^\times{}_{\infty}^{\nabla} \ .$$
One can check similarly to Lemma \ref{pqw1} that
$$B\C^\times{}_{\infty}^{\nabla} \simeq \Diff^{{0}}(\Z[{2}]\to \C[{2}])\ .$$
By construction, $\pi_{0}(B\C^\times{}_{\infty}^{\nabla}(M))\cong \pi_{0}(\Lin^{\nabla}(M))$,
and the map
$$\pi_{0}(\hat c_{1}):{\widehat{\bku}^{\nabla,0}}\to \pi_{0}(\Lin^{\nabla}(M))$$ extracts from the differential $K$-theory class the isomorphism class of determinant bundle with connection.

Note that this construction goes through without the decoration $\nabla$ and yields a class
$$\widehat{\bku}\to B\C^{\times}{}_{\infty}
\simeq \Diff^{{-1}}(\Z[{2}]\to \C[{2}])\ .$$
If we apply the functor $S$, then using the calculations \ref{klh12} and \ref{loi2}
we obtain a regulator
$$K\C\to H(\iota(\C^{\times}[{1}]))\ . $$ 
In the first homotopy group, if we identify $\pi_{1}(K\C)\cong \C^{\times}$ and
$\pi_{1}(H(\iota(\C^{\times}[{1}])))\cong \C^{\times}$, this regulator is the identity map.

In the next example we construct the higher-degree versions of the regulator.
}
\end{ex}

\subsection{The Hopkins-Singer differential \texorpdfstring{$K$}{K}-theory}

The  usual Hopkins-Singer version of differential connective complex $K$-theory is the sheaf $$ \widehat{\bku}_{HS}:=\Diff^{0}(\bku,\C[b],c)\ ,$$ where $c:\bku\to H(\iota(\C[b]))$ is a complex version of the Chern character  and $b$ is a variable of degree $-2$ {(see Section  \ref{ttz12} for the notation and construction)}. Our goal  in  the present section is to construct a transformation $$\hat r:\widehat{\bku }^{\nabla}\to \widehat{\bku}_{HS} $$
 which we call the cycle map.  On $\pi_{0}$ this map    sends   isomorphism classes of complex vector bundles with connection to their differential $K$-theory classes. In view of the homotopy theoretic definition of $\widehat{\bku}_{HS}$ the construction of the cycle map from a geometric source is not obvious.  The main input of our construction of the cycle map is  
the representation of  the Chern character $c:\bku\to H(\iota(\C[b))$ as the homotopification of a transformation induced by characteristic forms.
But first we  {use Lemma \ref{loi13} to }make the structure of $\widehat{\bku}_{HS}$ explicit.

\begin{lem}\label{uuzzii801}$\ $ 
\begin{enumerate}
 \item  {$\cZ(\widehat{\bku}_{HS})\simeq \sigma^{{\geq 0}} \big(b\Omega_{\C}[b]\big)$}, where $\Omega_{\C}=\Omega\otimes_{\R} \C$ is the complexified de Rham complex.  
 \item $U(\widehat{\bku}_{HS})\simeq \bku$
 \item $S(\widehat{\bku}_{HS})\simeq \Fib(\bku\oplus H(\iota(\C[0])) \to H(\iota(\C[b])))$ 
 \item $Z(\widehat{\bku}_{HS})\simeq H(\iota(b\C[b]))$
 \item $\cA(\widehat{\bku}_{HS})\simeq \Sigma^{-1}H(\iota(\sigma^{\le -1}(\Omega_{\C}[b]))$
 \item $\phi \simeq \left(\bku\stackrel{\ch}{\to} H(\iota(\C[b]))\to H(\iota(b\C[b]))\right)$, where the second map is given by the projection along the constants.
 \item For $x\in \widehat{\bku}_{HS}^{0}(\Delta^{1}\times M)$ we have the homotopy formula (specialization of \eqref{heu300})
\begin{equation}\label{sam100}\partial_{1}^{*}x-\partial_{0}^{*}x=a(\int_{\Delta^{1}} \hat \cR_{HS}(x))\ ,\end{equation} where
  $\hat \cR_{HS}: \widehat{\bku}_{HS}^{0} \to \Diff^{0}(\C[b])^{0}=Z^{0}(\Omega_{\C}[b])$
\end{enumerate}
\end{lem}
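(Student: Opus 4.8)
The plan is to obtain every item as a direct specialization of Lemma \ref{loi13} to the parameters $E = \bku$, $C = \C[b]$, $c = \ch$ and $m = 0$, together with Lemma \ref{sam300} for the homotopy formula in item $7$. Since $\widehat{\bku}_{HS} = \Diff^{0}(\bku, \C[b], \ch)$ is by definition of this form, the only genuine work is to rewrite the abstract truncation formulas of Lemma \ref{loi13} into the explicit complexes appearing in the statement. First I would record the two elementary truncation identities for the coefficient complex. Because $\deg(b) = -2$, the complex $\C[b]$ is concentrated in degrees $0, -2, -4, \dots$, so $\sigma^{\ge 0}(\C[b]) \simeq \C[0]$ (only the constants survive) while $\sigma^{\le -1}(\C[b]) \simeq b\C[b]$ (the constants are discarded). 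I would also use $\Omega \otimes_{\R} \C[b] \simeq \Omega_{\C}[b]$ and $\Omega \otimes_{\R} b\C[b] \simeq b\Omega_{\C}[b]$.

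With these identities in hand, items $2$, $5$ and $6$ are immediate: item $2$ is Lemma \ref{loi13}(5); item $6$ is Lemma \ref{loi13}(6), the projection $\C[b] \to \sigma^{\le -1}(\C[b]) = b\C[b]$ being exactly the projection along the constants; and item $5$ is Lemma \ref{loi13}(2), once one notes that the chain-level shift $[-1]$ corresponds to $\Sigma^{-1}$ after applying $H$. Item $1$ is Lemma \ref{loi13}(1), which reads $\cZ \simeq H\iota(\sigma^{\ge 0}(\Omega \otimes_{\R} \sigma^{\le -1}\C[b]))$, and the inner truncation identity turns the coefficient into $b\Omega_{\C}[b]$. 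Item $4$ is Lemma \ref{loi13}(4) specialized to $\sigma^{\le -1}(\C[b]) = b\C[b]$.

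For item $3$ I would start from the pullback square computing $S$ in Lemma \ref{loi13}(3); specializing its upper right corner via $\sigma^{\ge 0}(\C[b]) \simeq \C[0]$ gives the square with remaining vertices $\bku$, $H(\iota(\C[0]))$ and $H(\iota(\C[b]))$. Since $\bC = \Sp$ is stable, this pullback of $\bku \to H(\iota(\C[b])) \leftarrow H(\iota(\C[0]))$ is the fibre of the difference map $\bku \oplus H(\iota(\C[0])) \to H(\iota(\C[b]))$, which is the asserted formula. Finally, item $7$ is the specialization of the homotopy formula of Lemma \ref{sam300} to the present $E$, $C$ and $m$; here $\hat\cR_{HS}$ is the top horizontal map $\hat\cR$ of the defining pullback \eqref{phj1}, whose target is $H(\Diff^{0}(\C[b]))$ with $\Diff^{0}(\C[b])^{0}(M) = Z^{0}(\Omega_{\C}[b](M))$, and the integration is the one of \eqref{heu200}.

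The argument has no serious obstacle; it is essentially bookkeeping. The one point that deserves care --- and the place where an indexing slip would occur --- is keeping the two gradings straight: the cohomological degree in which $\sigma^{\ge \bullet}$ and $\sigma^{\le \bullet}$ truncate versus the variable degree $\deg(b) = -2$, so that $\sigma^{\ge 0}$ really does isolate the constants of $\C[b]$ while $\sigma^{\le -1}$ isolates $b\C[b]$, and so that the shift $[-1]$ matches $\Sigma^{-1}$ under the Eilenberg--MacLane functor $H$.
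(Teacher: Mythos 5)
Your proposal is correct and follows exactly the route the paper intends: the paper gives no separate proof of this lemma but introduces it with the phrase that one ``uses Lemma \ref{loi13}'' to make the structure of $\widehat{\bku}_{HS}$ explicit, i.e.\ one specializes Lemma \ref{loi13} to $E=\bku$, $C=\C[b]$, $c=\ch$, $m=0$ (and Lemma \ref{sam300} for item $7$). Your bookkeeping of the truncations ($\sigma^{\ge 0}(\C[b])\simeq \C[0]$, $\sigma^{\le -1}(\C[b])\simeq b\C[b]$ since $\deg(b)=-2$) and of the shift $[-1]\leftrightarrow\Sigma^{-1}$ under $H$ is exactly the content that is left implicit there.
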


We now approach the Chern character via characteristic forms. If $(V,\nabla)$ is a complex vector bundle with connection on a manifold $M$, then we consider the curvature
$R^{\nabla}\in \Omega^{2}(M,\End(V))$ and define the Chern character form
\begin{equation}\label{uuzzii6}\ch(V,\nabla):=\Tr \exp( bR^{\nabla})\in  Z^{0}(\Omega_{\C}[b])(M)\ .\end{equation}
We consider the abelian group $Z^{0}(\Omega_{\C}[b])(M)$ as a discrete symmetric monoidal category. 

By the additivity and naturality of the Chern character {form} we get a transformation
$$\ch:\Iso(\vect^{\nabla})\to Z^{0}(\Omega_{\C}[b])$$
 in $\Fun(\Mf^{op},\CommMon(\Cat[W^{-1}]))$.
 If $A$ is an abelian group considered as a discrete symmetric monoidal category, then by \eqref{wie1} we have an equivalence
 $$K(A)\simeq  H(\iota(A[0]))\ .$$
 Applying $L\circ K \circ \Nerve$ to the map $\ch$ we therefore get a map
 $$\widehat{\ch}:\widehat{\bku}^{\nabla}\to H(L(\iota( Z^{0}(\Omega_{\C}[b])[0])))\to H(\iota(\sigma^{\ge 0}(\Omega_{\C}[b]))){=H(\Diff^{0}(\C[b]))} \ .$$
 We now consider the diagram
 $$\xymatrix{
 \widehat{\bku}^{\nabla}\ar@/^1cm/[rr]^{\widehat{\ch}}\ar@{-->}[r]^{\hat r}\ar[d]&\widehat{\bku}_{HS}\ar[r]^-{\hat \cR_{HS}}\ar[d]&{H(\Diff^{0}(\C[b]))}\ar[d]\\
 \const(\bku)\ar@{=}[r]\ar@/_1cm/[rr]_{\cH(\widehat{\ch})}&\const(\bku)\ar[r]^-{\const(c)}&\const(H(\iota(\C[b])))
 }\ .$$
 The lower line is the result of an application of $\cH$ to the upper. Therefore  we get a natural filler of the outer square. By the definition of $\widehat{\bku}_{HS}$ in Example \ref{ttz12} the right square is a pull-back if we define $$c:=\ev_{*}(\cH(\widehat{\ch})):\bku\to H(\iota(\C[b]))\ .$$  The cycle map $\hat r:\widehat{\bku}^{\nabla}\to \widehat{\bku}_{HS}$ is now obtained from the universal property of this pull-back square.  On $\pi_{0}$ it induces a cycle map
 \begin{equation}\label{uuzzii40}\widehat{\bku}^{\nabla,0}(M)\to {\widehat{\bku}}_{HS}^{0}(M)\ , \quad [V,\nabla]\mapsto \hat r(V,\nabla)\end{equation}
 which maps a complex vector bundle with connection to its (Hopkins-Singer version) differential $K$-theory class. We have the relation
 $$\hat \cR_{HS}(\hat r(V,\nabla))={\widehat{\ch}}(V,\nabla)\ .$$
 If ${\tilde\nabla}$ is a connection on $\tilde V:=\pr^{*}V$ over $\Delta^{1}\times M$ connecting two connections
 $\nabla_{i}:=\partial_{i}^{*} \tilde \nabla$, $i=0,1$, then
 $$\tilde \ch(\nabla_{1},\nabla_{0}):=[\int_{\Delta^{1}} \ch(\tilde V,\tilde  \nabla)]\in \Omega_{\C}[b]^{-1}(M)/\im(d)$$ is called the transgression of the Chern character form.  It only depends on the connections $\nabla_{i}$, but not on the choice of the connecting family $\tilde \nabla$.
The homotopy formula \eqref{sam100} gives
\begin{equation}\label{sam101}\hat r(V,\nabla_{1})-\hat r(V,\nabla_{0})=a(\tilde \ch(\nabla_{1},\nabla_{0}))\ . \end{equation}
 
  }
 \bigskip

 \begin{ex}{\em 
 Here we discuss the construction of the regulator for the  algebraic $K$-theory of $\C$  
as a secondary invariant of  the cycle map $\hat r:\widehat{\bku}^{\nabla}\to \widehat{\bku}_{HS}$. To this end
 we  apply the functor $S$ {(evaluation at the point)} to the {cycle} map {$\hat r:\widehat{\bku}^{\nabla}\to \widehat{\bku}_{HS}$} and use the calculation {of} Lemma \ref{loi2}   in order to get  a map
 $$r:K\C\to  S(\widehat{\bku}_{HS})\ .$$
 In homotopy {it} induces the regulator maps 
 $$r_{2i-1}:\pi_{2i-1}(K\C)\to \pi_{2i-1} ( S(\widehat{\bku}_{HS}))\cong \C/\Z$$ for all $i\ge 1$, where we use {part 3.} of Lemma \ref{uuzzii801}  
 for the second isomorphism.
 In \cite{MR750694} an analogous
 map $r_{\Q}:K\C\to \Sigma^{-1} \bku \Q/\Z$ has been used to represent
 $\Sigma^{-1} \bku \Q/\Z$ as a summand of $K\C$. This has been employed in \cite{MR772065}
 in order to show that $r_{2i-1}$ induces an isomorphism 
 $$\pi_{2i-1}(K\C)_{tors}\stackrel{\sim}{\to} \Q/\Z\subset \C/\Z\ , \quad  i\in \nat\setminus \{0\}\ .$$
  }
 \end{ex}

\subsection{Differential loop \texorpdfstring{$K$}{K}-theory}
 
Recently  a differential refinement $K^{0}_{TWZ}$ of complex $K$-theory was proposed in   \cite{2012arXiv1201.4593T} whose differential form data is given by equivariant differential forms on loop spaces. Its construction will be recalled in Example \ref{sam200}. In the present example we introduce a sheaf of spectra $\widehat{\bku}_{loop}$ (see Equation \eqref{uuzzii5})  such that $\widehat{\bku}_{loop}^{0}$ could be considered as {the best}  approximation of $K^{0}_{TWZ}$ fitting in the framework of the present paper {(see the corresponding Remark \ref{remloop})}. We again construct {a} cycle map
$$\hat r_{loop}:\widehat{\bku}^{\nabla}\to \widehat{\bku}_{loop}\ .$$

Below in Example \ref{sam4001} we  show
that $\widehat{\bku}_{loop}^{0}$ captures more geometric information about a  vector bundle with connection  than $\widehat{\bku}^{0}_{HS}$.
Later in Example \ref{sam200}
  we construct a transformation
$$b:K^{0}_{TWZ}\to \widehat{\bku}_{loop}^{0}$$
{and} show that it is in general not injective.

  We start with the Bismut Chern character form of a complex vector bundle with connection $(V,\nabla)$ on a manifold $M$  
 $$\BCH(V,\nabla)\in \Omega_{\C}({LM})[b^{-1}][[b]]^{0}\ , \quad \quad (d+b^{-1}i_{\xi})\BCH(V,\nabla)=0\ , $$
 where we refer to 
  \cite{MR786574}, \cite{MR1113683}, \cite{MR2712304}, \cite{2012arXiv1201.4593T} for definitions and explicit formulas. We just note that its zero form component is given at a loop $\gamma\in LM$ by the trace of the holonomy
  \begin{equation}\label{heut10}\BCH(V,\nabla)^{0}(\gamma)=\Tr\:\hol(V,\nabla)(\gamma)\ .\end{equation}
    The Bismut Chern character {form} is a zero cycle in the complex $F^{\prime}(M)$ introduced in  subsection \ref{uuzzii30}. 
 On constant loops it restricts to the Chern character form $\ch(V,\nabla)$ given by \eqref{uuzzii6}. 
   By {the} naturality and additivity the Bismut Chern character form  provides  a transformation
$$\BCH: \Iso(\Vect^{\nabla})\to Z^{0}(F^{{\prime}})$$ in $\Fun(\Mf^{op},\CommMon(\Cat[W^{-1}]))$. Applying the composition $L\circ K\circ \Nerve $ we get the transformation
$$\widehat{\BCH}:\widehat{\bku}^{\nabla}\to H(L(\iota(Z^{0}(F^{{\prime}})[0])))\to H(\hat F^{{\prime}}_{loop})$$
(see \eqref{heute100} for notation)
such that the following diagram commutes
$$\xymatrix{\widehat{\bku}^{\nabla}\ar[r]^{\widehat{\BCH}}\ar[rd]^{\widehat{\ch}}&H( \hat F^{{\prime}}_{loop})\ar[d]\\&{H(\Diff^{0}(\C[b,b^{-1}]))}}\ ,
  $$ 
  where the vertical arrow is given by evaluation at constant loops. In particular we get an equivalence
  $U(\widehat{\BCH}) \simeq  U(\widehat{\ch})$ of maps between spectra $\bku\to H(\iota(\C[b,b^{-1}]))$. 
We define the sheaf $$\widehat{\bku}_{loop}\in \Fun^{desc}(\Mf^{op},\Sp)$$ as the pull-back
 \begin{equation}\label{uuzzii5}\xymatrix{\widehat{\bku}_{loop}\ar[rr]^{\hat \cR_{loop}}\ar[d]&&H( \hat F^{{\prime}}_{loop})\ar[d]\\
 \const(\bku)\ar[rr]^-{\cH(\widehat{\BCH})}&&\const(H(\iota(\C[b,b^{-1}])))
}\ ,\end{equation}
where the right vertical arrow is given by the unit of the homotopification, {which is determined in}  Lemma \ref{uuzzii2}. 
{Concerning the lower horizontal map we have an equivalence
$$\cH(\widehat{\BCH})\simeq\const\left(\bku\stackrel{c}{\to} H(\iota(\C[b]))\to H(\iota(\C[b,b^{-1}]))\right)\ ,$$ where the second map is the obvious inclusion.}
By construction, the sheaf
$\widehat{\bku}_{loop}$ is a differential refinement of $\bku$.
From Lemma \ref{uuzzii4} we get:
\begin{lem}
\begin{enumerate}
\item $\cZ(\widehat{\bku}_{loop})(M) \simeq H(\iota(\Cone( \cF^{0}L(F^{{\prime}}) \to L(\sigma^{\ge 0}(F^{{\prime}} )))))$
\item $U(\widehat{\bku}_{loop})\simeq \bku$.
\item $Z(\widehat{\bku}_{loop})\simeq H(\iota(b\C[b]))$ 
\item $S(\widehat{\bku}_{loop}) \simeq 
{\Fib\left(\bku\to H(\iota(b\C[b]))\right)}$ 
\item $\cA(\widehat{\bku}_{loop}) \simeq \Sigma^{-1}H(\iota(\sigma^{\le -1 }(L( F^{{\prime}}))))$   
\item $\phi \simeq \left(\bku\stackrel{\ch}{\to} H(\iota(\C[b]))\to H(\iota(b\C[b]))\right)$, where the second map is 
given by the projection along the constants.
\item The integration map is given by the same formula as  Lemma \ref{uuzzii4}, $7.$ 
\end{enumerate}
\end{lem}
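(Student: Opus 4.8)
The plan is to recognize the defining pullback \eqref{uuzzii5} as an instance of the general construction preceding Lemma \ref{heut1003}. I set $\hat F:=H(\hat F^{\prime}_{loop})$, $F:=H(\iota(\C[b,b^{-1}]))$ and $E:=\bku$. By Lemma \ref{uuzzii2} the sheaf $\hat F^{\prime}_{loop}$ is a differential refinement of $\iota(\C[b,b^{-1}])$, so $\hat F$ is a differential refinement of $F$, and the right vertical arrow of \eqref{uuzzii5} is precisely the unit $\hat F\to\cH(\hat F)\simeq\const(F)$. Using the stated identification $\cH(\widehat{\BCH})\simeq\const(c)$ with $c:\bku\xrightarrow{\ch}H(\iota(\C[b]))\hookrightarrow H(\iota(\C[b,b^{-1}]))$, the square \eqref{uuzzii5} is exactly the pullback appearing in Lemma \ref{heut1003}. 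Hence every invariant of $\widehat{\bku}_{loop}$ is computed from the corresponding invariant of $\hat F$ (and from $c$) by that lemma.

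It therefore remains to compute the invariants of $\hat F=H(\hat F^{\prime}_{loop})$, and for this I would transport Lemma \ref{uuzzii4} through $H$. The functor $H$ preserves limits (Section \ref{ttz12}) and, being a functor between stable categories, is exact; consequently it commutes with $\ev_{*}$, with $\cS$, with the formation of fibres and cofibres, and---as already used in the Hopkins--Singer computation of Lemma \ref{loi13}---with the homotopification $\cH$. Thus $H$ commutes with all the functors $\cZ,\cA,S,Z$ of Definition \ref{ghb5} and with the integration map \eqref{heut1001}. Applying $H$ to the relevant parts of Lemma \ref{uuzzii4} gives $\cZ(\hat F)\simeq H(\iota(\Cone(\cF^{0}L(F^{\prime})\to L(\sigma^{\ge 0}(F^{\prime})))))$, $\cA(\hat F)\simeq\Sigma^{-1}H(\iota(\sigma^{\le -1}(L(F^{\prime}))))$ (using that $H\iota$ turns the shift $[-1]$ into $\Sigma^{-1}$), $Z(\hat F)\simeq H(\iota(b\C[b]))$, together with the stated integration formula. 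By parts $1.$--$4.$ and $7.$ of Lemma \ref{heut1003} these invariants, and the underlying object $E=\bku$, are carried unchanged to $\widehat{\bku}_{loop}$, which yields assertions $1.$, $2.$, $3.$, $5.$ and $7.$

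For the remaining two assertions I would argue as follows. The characteristic map satisfies $\phi_{\widehat{\bku}_{loop}}\simeq\phi_{\hat F}\circ c$ by part $6.$ of Lemma \ref{heut1003}; here $\phi_{\hat F}\simeq H(\iota(\C[b,b^{-1}]\to b\C[b]))$ is obtained by transporting part $6.$ of Lemma \ref{uuzzii4} through $H$, and composing with $c=(\text{inclusion})\circ\ch$ collapses the middle maps, so that $\phi\simeq(\bku\xrightarrow{\ch}H(\iota(\C[b]))\to H(\iota(b\C[b])))$, which is assertion $6.$ Finally, for assertion $4.$ I would invoke the fibre sequence $S(\widehat{\bku}_{loop})\to\bku\xrightarrow{\phi}U(\cZ(\widehat{\bku}_{loop}))$ valid after Definition \ref{uuzzii70}; since $U(\cZ(\widehat{\bku}_{loop}))\simeq Z(\widehat{\bku}_{loop})\simeq H(\iota(b\C[b]))$ by assertion $3.$, this identifies $S(\widehat{\bku}_{loop})\simeq\Fib(\bku\to H(\iota(b\C[b])))$.

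The only point that is not pure bookkeeping is the claim that $H$ commutes with the invariant functors, in particular with $\cH$; I expect this to be the main thing to check carefully, but it follows from $H$ preserving limits and being exact (and is implicitly the same fact underlying Lemma \ref{loi13}), so the rest is a routine combination of Lemmas \ref{heut1003} and \ref{uuzzii4}.
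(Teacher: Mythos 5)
Your proposal is correct and follows exactly the route the paper intends: the paper's entire ``proof'' is the phrase ``From Lemma \ref{uuzzii4} we get:'', which implicitly combines Lemma \ref{heut1003} applied to the defining pullback \eqref{uuzzii5} with the invariants of $\hat F^{\prime}_{loop}$ computed in Lemma \ref{uuzzii4}, precisely as you do. Your only addition is to make explicit the (correct, and elsewhere implicitly used) fact that $H$ commutes with $\cZ$, $\cA$, $S$, $Z$ and the integration map, which the paper leaves unstated.
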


\begin{rem}{\rm
We observe by comparison with Lemma \ref{uuzzii801} that this data coincides with that of $\widehat{\bku}_{HS}$ except for the $\cA$ and $\cZ$-parts. The $\cA$ and $\cZ$-parts of $\widehat{\bku}_{loop}$ and $\widehat{\bku}_{HS}$   are related by maps
  induced by the restriction of forms to constant loops.
Since their characteristic maps coincide, both differential cohomologies
$\widehat{\bku}_{HS}^{0}$ and $\widehat{\bku}_{loop}^{0}$ encode the same secondary
invariants.\bigskip
}\end{rem}

In order to define the cycle map  for loop  differential $K$-theory we extend the diagram \eqref{uuzzii5} to 
$$\xymatrix{\widehat{\bku}^{\nabla}\ar@{-->}[r]^{\hat r_{loop}}\ar@/^1cm/[rr]^{\widehat{\BCH}}\ar[d]&\widehat{\bku}_{loop}\ar[r]^{\hat \cR_{loop}}\ar[d]&H( \hat F^{{\prime}}_{loop})\ar[d]\\\const(\bku)\ar@{=}[r]\ar@/_1cm/[rr]_{\cH(\widehat{\BCH})}&
 \const(\bku)\ar[r]^-{\cH(\widehat{\BCH})}&\const({H}(\iota(\C[b,b^{-1}])))
}\ .$$
The outer square naturally commutes and provides the cycle map
$$ \hat r_{loop}:\widehat{\bku}^{\nabla}\to \widehat{\bku}_{loop}\ .$$
It induces the cycle map
$$\hat r_{loop}: \widehat{\bku}^{\nabla,0}(M) \to  \widehat{\bku}^{0}_{loop}(M) \ , \quad [V,\nabla]\mapsto \hat r_{loop}(V,\nabla)$$ by applying $\pi_{0}$.  \bigskip\\
 For further application we need a case of the homotopy formula which is the analog of \eqref{heu300} and involves the map 
 $$\hat \cR_{loop}:\widehat{\bku}_{loop}^{0 }\to \hat F_{loop}^{{\prime},0}\ .$$
We are going to employ the notation from Lemma  \ref{uuzzii4}, $7.$  
\begin{lem}\label{heut60}
If $x\in \widehat{\bku}^{0}_{loop}(\Delta^{1}\times M)$, then $\partial_{1}^{*}x-\partial_{0}^{*}x=\partial_{1}^{*}a(\bH(\hat \cR_{loop}(x)))$. 
\end{lem}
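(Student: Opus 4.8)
The plan is to follow the template of Lemma~\ref{sam300}, feeding the general homotopy formula of Theorem~\ref{ghb2} with the explicit integration map of the loop example and then extracting the outer $\partial_1^*$ by naturality of $a$. First I would apply the homotopy formula \eqref{ghb3} to the sheaf $\widehat{\bku}_{loop}$, evaluated at $M$ and in degree $0$, to obtain
\[
\partial_1^* x - \partial_0^* x \simeq a\bigl(\textstyle\int R(x)\bigr),
\]
where $R:\widehat{\bku}_{loop}\to\cZ(\widehat{\bku}_{loop})$ is the abstract curvature and $\int$ is the integration map of \eqref{heut1001}. Everything then reduces to making $\int R(x)$ explicit and comparing it with $\hat\cR_{loop}(x)$.

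Next I would identify the integration map. By construction \eqref{uuzzii5} the sheaf $\widehat{\bku}_{loop}$ is the pullback of the differential refinement $H(\hat F_{loop}^{\prime})$ of $H(\iota(\C[b,b^{-1}]))$ along the map $\ev_*(\cH(\widehat{\BCH})):\bku\to H(\iota(\C[b,b^{-1}]))$, so it is exactly of the form treated just before Lemma~\ref{heut1003}. Hence by Lemma~\ref{heut1003}, parts~2, 3 and~7, the functors $\cA$ and $\cZ$ and the integration map of $\widehat{\bku}_{loop}$ coincide with those of $H(\hat F_{loop}^{\prime})$, which were computed in Lemma~\ref{uuzzii4}. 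In particular the integration map is $\int\beta=[\partial_1^*\bH(\beta)]$ by Lemma~\ref{uuzzii4}, part~7, with $\bH$ as in \eqref{heut401}.

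The key remaining point is to replace the abstract curvature $R(x)$ by the geometric Bismut--Chern representative $\hat\cR_{loop}(x)$. As in the proof of Lemma~\ref{sam300}, naturality of $R:\id\to\cZ$ applied to $\hat\cR_{loop}:\widehat{\bku}_{loop}\to H(\hat F_{loop}^{\prime})$, together with the fact that $\cZ(\hat\cR_{loop})$ is the equivalence $\cZ(\widehat{\bku}_{loop})\simeq\cZ(H(\hat F_{loop}^{\prime}))$ of Lemma~\ref{heut1003}, part~3, shows that $R(x)$ is represented by $\hat\cR_{loop}(x)$ after the projection to cycles given by the cone description of $\cZ(\hat F_{loop}^{\prime})$ in Lemma~\ref{uuzzii4}, part~1. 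Since $\bH$ is a form-level operator and this projection is annihilated upon passing to $\cA$, one obtains $\int R(x)=[\partial_1^*\bH(\hat\cR_{loop}(x))]$, and therefore
\[
\partial_1^* x - \partial_0^* x = a\bigl([\partial_1^*\bH(\hat\cR_{loop}(x))]\bigr).
\]

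Finally I would pull the restriction $\partial_1^*$ outside. Writing $\iota_1:M\to\Delta^1\times M$ for the inclusion at $1\in\Delta^1$, so that $\partial_1^*=\iota_1^*$, the transformation $a:\cA\to\id$ is a morphism of sheaves and hence commutes with $\iota_1^*$; since $\iota_1^*$ is also compatible with passage to $\cA$-classes, this yields $a([\partial_1^*\bH(\hat\cR_{loop}(x))])=\partial_1^* a(\bH(\hat\cR_{loop}(x)))$, which is exactly the asserted formula. The main obstacle is the middle step: verifying at the level of representatives that $\bH$ commutes with the truncation and cone projections defining $\cZ(\hat F_{loop}^{\prime})$, so that the geometric form $\hat\cR_{loop}(x)$ may legitimately be substituted for $R(x)$ inside the integration formula. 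This is precisely the computation underlying Lemma~\ref{uuzzii4}, part~7, whose detailed analogue is carried out in Lemma~\ref{heu100}.
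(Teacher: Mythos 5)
Your proposal is correct and follows essentially the same route as the paper: apply the general homotopy formula \eqref{ghb3}, identify the integration map of $\widehat{\bku}_{loop}$ with that of $H(\hat F^{\prime}_{loop})$ computed in Lemma \ref{uuzzii4}, part 7, and substitute $\hat\cR_{loop}(x)$ for the abstract curvature via the relation $R_{\widehat{\bku}_{loop}}(x)=R_{\hat F^{\prime}_{loop}}(\hat\cR_{loop}(x))$. You spell out the final interchange of $\partial_1^{*}$ with $a$ explicitly, which the paper leaves implicit, but this is the same argument.
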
 
\proof 
The proof is similar to the one of Lemma \ref{sam300}.
One just uses the integration map for $\hat F_{loop}^{\prime}$ (Lemma \ref{uuzzii4}, 7.) instead of the one for $\Diff^{m}(C)$. Since $R_{\widehat{\bku}_{loop}}(x)=R_{\hat F_{loop}^{{\prime}}}(\hat \cR_{loop}(x))$ we get

 $$\partial_{1}^{*}x-\partial_{0}^{*}x\stackrel{\eqref{ghb3}}{=}a(\int R_{\widehat{\bku}_{loop}}(x))\stackrel{Lemma \:\ref{uuzzii4}, 7.}{=} a(\partial_{1}^{*}\bH( \hat \cR_{loop}(x)))\ .$$
\hB

  If $\nabla,\nabla^{\prime}$ are two connections on a complex vector bundle $V$ on $M$, then we can connect them   by a connection $\tilde \nabla$ on the pull-back of $V$ to  $\Delta^{1}\times M$. We define the Bismut-Chern-Simons form by transgression of $\BCH$. In detail it is given in terms of the homotopy \eqref{heut401}  by
  \begin{equation}\label{heut91}\BCS(\nabla,\nabla^{\prime}):=\partial_{1}^{*} \bH(\BCH(\tilde V,\tilde \nabla))\in F^{{\prime},-1}(M)/\im(d)\ .\end{equation}
  We let $\ell$ denote the map which maps a section of a presheaf to its  image in the associated sheaf.
  From Lemma \ref{heut60} and using the relation $\cR_{loop}(\hat r_{loop}(\tilde V,\tilde \nabla))=\ell(\BCH(\tilde V,\tilde \nabla))$ we get the homotopy formula
 \begin{equation}\label{heut90}\hat r_{loop}(V,\nabla)-\hat r_{loop}(V,\nabla^{\prime})=a(\ell(\BCS(\nabla,\nabla^{\prime})))\ .\end{equation}
If one restricts this formula to constant loops, then one gets \eqref{sam101}. 

\begin{ex}\label{sam4001}{\rm
In the following example we show that
$\widehat{\BCH}$, and therefore $\widehat{\bku}_{loop}^{0}$, captures more geometric information of a bundle than
$\widehat{\ch}$ and the Hopkins-Singer type differential extension
$ \widehat{\bku}_{HS}^{0}$.
We consider the trivial bundle $\R^{2}\times \C^{2}\to \R^{2}$ with the connection $$\nabla:=d+s\left(\begin{array}{cc}0&-1\\1&0\end{array}\right)dt\ ,$$ where $(s,t)$ are the coordinates of $\R^{2}$. Its curvature is
$$R^{\nabla}=\left(\begin{array}{cc}0&-1\\1&0\end{array}\right) ds\wedge dt\ .$$
We see that ${\widehat\ch}(V,\nabla)=2$. In particular, $\widehat{\ch}$ does not distinguish this connection from the trivial connection $\nabla^{triv}$. Moreover,
the transgression Chern form between the two connections {vanishes. In order to see this
we take the connection $$\tilde \nabla:=d+su\left(\begin{array}{cc}0&-1\\1&0\end{array}\right)dt$$
on $ \R^{3}\times \C^{2}\to \R^{3}$, where  $(u,s,t)$ are the coordinates of the base.
It interpolates between the trivial connection $\nabla^{triv}$ at $u=0$ and the connection $\nabla$ at $u=1$. Its curvature is
$$R^{\tilde \nabla}= \left(\begin{array}{cc}0&-1\\1&0\end{array}\right) (uds\wedge dt+sdu\wedge dt)\ .$$
It follows that $\Tr\exp(bR^{\tilde \nabla})=2+b\Tr R=0$. This implies that $\tilde \ch(\nabla,\nabla^{triv})=0$.}

Using {the homotopy} formula \eqref{sam101} 
we see that  $$\hat r(V,\nabla)=\hat r(V,\nabla^{triv})$$ in
$  \widehat{\bku}_{HS}^{0}(\R^{2}) $  .\bigskip

  For a loop $\gamma\in L\R^{2}$ we let $A(\gamma)\in \R$ be the volume encircled by $\gamma$ and measured with respect to the volume form $ds\wedge dt$.
 From \eqref{heut10}  and an explicit calculation of the holonomy we get
$$\BCH(V,\nabla)^{0}(\gamma)=2\cos(A(\gamma))\ .$$ 
Therefore $$\BCH(V,\nabla)=2\cos(A(\dots))+O(b)\ .$$
The closed form $\BCH(V,\nabla)\in Z^{0}(F^{\prime}(L\R^{2}))$ induces an element  
$$\ell(\BCH(V,\nabla))\in H^{0}( \hat F^{\prime}_{loop}(\R^{2}))\ .$$ 
We  argue that its  image in the stalk at $0\in \R^{2}$ is different from the image of the  constant function with value $2$.
In fact, {the} image of the function $2\cos(A(\dots))$
in $ \colim_{0\in U\subseteq \R^{2}}C^{\infty}(LU)$ is different from the germ of the constant function
with value $2$. 
This shows that
$\widehat{\BCH}$ distinguishes $\nabla$ from $\nabla^{triv}$. It follows that
$$\hat r_{loop}(V,\nabla)\not=\hat r_{loop}(V,\nabla^{triv})$$ in $ \widehat{\bku}_{loop}^{0}(\R^{2}) $.
}
\end{ex}

\begin{ex}\label{sam200}{\rm
In this example we construct a map
$$b:\hat K^{0}_{TWZ}\to  \widehat{\bku}_{loop}^{0} \ ,$$
where $\hat K^{0}_{TWZ}$ denotes the loop differential $K$-theory introduced in \cite{2012arXiv1201.4593T}.  Recall from \cite{2012arXiv1201.4593T} that for a manifold $M$ the group $\hat K^{0}_{TWZ}(M)$ is defined by cycles {and} relations.
A cycle   is just a complex vector bundle with connection $(V,\nabla)$. Its class in $\hat K^{0}_{TWZ}(M)$ will be denoted by   $[V,\nabla]_{TWZ} $. We define  
$$b([V,\nabla]_{TWZ}):=\hat r_{loop}(V,\nabla)\ .$$ 

We must show that $b$ is well-defined. 
Two cycles $(V,\nabla)$ and $(V,\nabla^{\prime})$ (with the same underlying bundle) represent the same class in
$ \hat K^{0}_{TWZ}(M)$ if
the Bismut-Chern-Simons form $\BCS(\nabla,\nabla^{\prime})$ defined in \eqref{heut91} vanishes.
By the homotopy formula \eqref{heut90} we get
$\hat r_{loop}(V,\nabla)=\hat r_{loop}(V,\nabla^{\prime})$ as desired.

We now show that the map $b$ is not injective. This is related with the fact that $\ell$ has a kernel. We consider the bundle $(V,\nabla)$ on $S^{1}$ given by
$$S^{1}\times \C^{2} \to {S^{1}} \ , \quad  \nabla=d+ \left(\begin{array}{cc}0&-1\\1&0\end{array}\right)dt\ .$$

We claim that
$$\hat r_{loop}(V,\nabla)=\hat r_{loop}(V,\nabla^{triv})\ .$$
By the homotopy formula we know that $$\hat r_{loop}(V,\nabla)-\hat r_{loop}(V,\nabla^{triv})
=a(\ell(\BCS(\nabla,\nabla^{triv})))\ .$$ 
 We have $$d\ell(\BCS(\nabla,\nabla^{triv}))=\ell(\BCH(\nabla )-\BCH(\nabla^{triv}))=0 \in L(F^{\prime})^{0}(S^{1}) \ .$$  
By  Lemma \ref{uli1232} the cohomology class of $\ell(\BCS(\nabla,\nabla^{triv}))$ can be detected by restriction to the constant loops. The restriction of  $\BCS(\nabla,\nabla^{triv})$
is the form $\tilde \ch(\nabla,\nabla^{triv})\in Z^{-1}(\Omega_{\C}[b,b^{-1}](S^{1}))$. But this form vanishes. Therefore $\ell(\BCS(\nabla,\nabla^{triv}))$ is exact and the claim follows.
 In contrast, we have $\BCH(V,\nabla)^{0}(\gamma)=2\cos(1)\not=2$, where $\gamma=\id:S^{1}\to S^{1}$
(see  \cite[Sec. 6.1]{2012arXiv1201.4593T}). This implies that
$$[V,\nabla]_{TWZ}\not=[V,\nabla^{triv}]_{TWZ}$$
in $K_{TWZ}^{0}(M)$.

}\end{ex}

\subsection{Snaith type differential \texorpdfstring{$K$}{K}-theory}

In this section we want to discuss a sheaf of spectra which refines complex $K$-theory
and which is defined by a smooth version of the Snaith construction of $K$-theory.
\medskip

Recall from Section \ref{suspension} the geometric suspension construction 
$I_{\widetilde{S^n}}F$ for a sheaf $F \in \Fun^{desc}(\Mfo, \bC)$. It follows from equation \eqref{t19okt} by evaluation at the point that $$U(I_{\widetilde{S^n}}F) \simeq U(F)^{(S^n_{top},1)}\ ,$$ where  the right hand side denotes the reduced power object in the stable, presentable $\infty$-category $\bC$. For a sheaf
$F \in \Fun^{desc}(\Mfo, \bC)$  we define the object
\begin{equation}\label{ewhfoqweife} \widetilde F (S^n) := \fib \big(F(S^n) \to F\{1\}\big) = I_{\widetilde{S^n}}{F}(*)\end{equation} of $\bC$.
Instead of taking the fibre of the evaluation at the point $1\in S^{n}$ as in the definition of  $I_{\widetilde{S^n}}F$ we can alternatively consider the fibre of the map which takes the germ at $1$.
\begin{ddd}
We define
$$ I_{\overline{S^n}}F := \colim_{1 \in U\subset S^n}\Big(\fib(I_{S^n}F \to I_U F)\Big)$$
where the colimit runs over {the partially ordered set of all open neighbourhoods $U$}  of $1 \in S^n$. In analogy to \eqref{ewhfoqweife} we define  $\overline F (S^n) := I_{\overline{S^n}}F(*)$.
\end{ddd}
\begin{rem}\label{ufihweifewf}$ ${\rm
\begin{enumerate}\item
There are canonical restriction morphisms 
$I_{\overline{S^n}}F \to I_{\widetilde{S^n}}F \to I_{{S^n}}F$.
\item
The object $\overline F (S^n)$ is equivalent to the evaluation $F_c(\R^n)$ with compact support, discussed in \cite[Definition 4.151.]{skript}, and a similar description can be given for the sheaf $I_{\overline{S^n}}F$.
\item In the definition of $I_{\overline{S^{n}}}$ we could alternatively take the colimit over all closed codimension zero submanifolds of $S^{n}$ which contain $1$ in their interior.
\end{enumerate}}
\end{rem}
\begin{lem}\label{underlzing}
We have equivalences $$U(I_{\overline{S^n}}F) \simeq U(F)^{(S^n_{top},1)} \qquad
\text{and} \qquad
I_{\overline{S^n}} \circ I_{\overline{S^m}} \simeq I_{\overline{S^{n+m}}}\ .$$
\end{lem}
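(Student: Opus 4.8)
The plan is to treat the two equivalences separately; the first is a formal consequence of the computations in Section~\ref{ttz14}, while the second is the genuinely geometric statement and will require an excision argument. For the first equivalence I would argue exactly as in the derivation of \eqref{t19okt}. Since $\cH$ is a left adjoint it commutes with the filtered colimit defining $I_{\overline{S^n}}$, and since it preserves fibre sequences (as used in the proof of Proposition~\ref{wie4}) it commutes with the defining fibres as well. Using Lemma~\ref{thop} to move $\cH$ past the evaluation functors $I_{S^n}$ and $I_U$, exactly as for \eqref{t19okt}, one gets a natural equivalence $\cH(I_{\overline{S^n}}F)\simeq I_{\overline{S^n}}\cH(F)$. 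Evaluating at the point and writing $\cH(F)\simeq\const(U(F))$ (Proposition~\ref{propo}), one is reduced to computing $(I_{\overline{S^n}}\const(E))(*)$ for $E:=U(F)$. By Proposition~\ref{propo} one has $\const(E)(N)\simeq E^{N_{top}}$, so this is $\colim_{1\in U\subset S^n}\fib\big(E^{S^n_{top}}\to E^{U_{top}}\big)$. The contractible neighbourhoods of $1$ are cofinal, and for them $E^{U_{top}}\simeq E$ with structure map restricting to the point $1$; hence the colimit collapses to $\fib(E^{S^n_{top}}\to E)=E^{(S^n_{top},1)}$, the asserted reduced power object.

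For the second equivalence the key tool is the compact support description of Remark~\ref{ufihweifewf}: writing $\R^n=S^n\setminus\{1\}$ and letting $K$ range over the compact subsets of $\R^n$ (so that $U=S^n\setminus K$ runs through the neighbourhoods of $1$), one has $I_{\overline{S^n}}F(M)\simeq\colim_{K}\fib\big(F(S^n\times M)\to F((S^n\setminus K)\times M)\big)$. The heart of the matter is an excision statement: for an open submanifold $V\subseteq X$ and a compact $K\subseteq V$ closed in $X$, the two-element open cover $\{V,\,X\setminus K\}$ of $X$ together with descent for $F$ yields $F(X\times M)\simeq F(V\times M)\times_{F((V\setminus K)\times M)}F((X\setminus K)\times M)$, whence $\fib\big(F(X\times M)\to F((X\setminus K)\times M)\big)\simeq\fib\big(F(V\times M)\to F((V\setminus K)\times M)\big)$. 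Thus the colimit above depends only on the open submanifold $\R^n$ and not on the chosen compactification.

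To compute the composite I would first use that $I_{S^n}$ and $I_U$ are evaluation functors, so they preserve fibres and filtered colimits and satisfy $I_{S^n}\circ I_V\simeq I_{S^n\times V}$ (up to swapping factors). Pushing $I_{\overline{S^n}}$ through the colimit defining $I_{\overline{S^m}}F$ and merging the two filtered colimits exhibits $I_{\overline{S^n}}I_{\overline{S^m}}F$ as the colimit over pairs $(U,V)$ of the total fibre of the square with corners $I_{S^n\times S^m}F$, $I_{S^n\times V}F$, $I_{U\times S^m}F$, $I_{U\times V}F$. Applying descent to the cover $\{U\times S^m,\,S^n\times V\}$ of $W_{U,V}:=(U\times S^m)\cup(S^n\times V)$ identifies this total fibre with $\fib\big(I_{S^n\times S^m}F\to I_{W_{U,V}}F\big)$. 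Since $(S^n\times S^m)\setminus W_{U,V}=(S^n\setminus U)\times(S^m\setminus V)$ and products of compacta are cofinal among all compact subsets of $\R^n\times\R^m=\R^{n+m}$, this presents $I_{\overline{S^n}}I_{\overline{S^m}}F$ as the compact support functor of $\R^{n+m}$ computed through the compactification $S^n\times S^m$. By the excision statement of the previous paragraph this agrees with the same functor computed through the one-point compactification $S^{n+m}$, i.e.\ with $I_{\overline{S^{n+m}}}F$, and all identifications are natural in $F$.

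I expect the main obstacle to be the bookkeeping in this last step: verifying that the colimit, fibre and descent manipulations are legitimate at the level of sheaves (using that sheafification commutes with filtered colimits, and that $\cH$, $I_{S^n}$ and $I_U$ preserve the relevant limits and colimits), and making precise that the compact support object ``$F_c(\R^{n+m})$'' is genuinely independent of the two compactifications by applying the excision equivalence uniformly in the auxiliary manifold $M$, rather than only after evaluation at the point.
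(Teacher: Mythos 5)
Your first equivalence is argued essentially as in the paper (the paper applies $U$ directly, using that it commutes with colimits and finite limits, and then invokes Lemma \ref{thop} on the cofinal family of contractible neighbourhoods; your route through $\cH$ and evaluation at the point is the same computation), and your second equivalence follows the paper's strategy with the order of operations permuted: the paper excises from $S^n$, $S^m$ down to $\R^n$, $\R^m$ first and then glues inside $\R^{n+m}$, whereas you glue inside $S^n\times S^m$ first and then use your excision lemma to compare the two compactifications of $\R^{n+m}$. Both variants rest on the same two-element-cover descent and the same cofinality of product neighbourhoods, and your excision statement is correct.

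There is, however, one genuine gap. In the second half you assert that ``$I_{S^n}$ and $I_U$ are evaluation functors, so they preserve fibres and filtered colimits'' and you use this to push $I_{S^n}$ and $I_U$ (for $U$ an arbitrary open neighbourhood of $1$) through the filtered colimit defining $I_{\overline{S^m}}F$. Preservation of fibres is fine (limits of sheaves are objectwise), but preservation of filtered colimits by $I_N$ is exactly what Lemma \ref{thointer} establishes \emph{only for compact} $N$, and the paper explicitly warns that it ``might be wrong for non-compact $N$ since there is a sheafification involved'': colimits of sheaves are not computed objectwise, and evaluating a sheafified colimit on the non-compact manifold $U\times M$ need not agree with the colimit of the evaluations. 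This is precisely why the paper's proof first invokes Remark \ref{ufihweifewf}, part 3, to replace the open neighbourhoods by closed codimension-zero (hence compact) submanifold neighbourhoods $K$, $L$ before commuting anything, so that Lemma \ref{thointer} applies to every insertion functor that has to pass a colimit. Your argument can be repaired in the same way---restrict both indexing posets to the cofinal family of compact codimension-zero neighbourhoods before merging the colimits, and only afterwards pass back to open complements---but as written the key commutation step is unjustified.
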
 
\begin{proof}
For the first equivalence we use the fact that $U$ commutes with colimits and finite limits to get the equivalence
$$ U(I_{\overline{S^n}}) \simeq \colim_{1 \in U\subset S^n}\fib\Big(U(I_{S^n}F) \to U(I_U F)\Big) $$
We can of course restrict this colimit to a colimit over contractible open subsets $U$. Then, using the equivalences
$$\fib\big(U(I_{S^n}F) \to U(I_U F)\big) \simeq \fib\big( U(F)^{S^n_{top}} \to U(F)^{U_{top}}\big)
\simeq U(F)^{(S^n_{top},1)}$$
where we have used Lemma \ref{thop} for the first equivalence, we can identify our colimit with a colimit over a constant diagram. The claim now follows.
\medskip

We now prove the second assertion. 
We start with inserting the definitions and use  Remark \ref{ufihweifewf}, 3. 
\begin{eqnarray*}\lefteqn{
I_{\overline{S^m}}I_{\overline{S^n}} F \simeq}&&\\&&
\colim_{K \subset S^m} \Fib(I_{S^{m}}(\colim_{L\subset S^{n}} \Fib(I_{S^{n}}F\to I_{L}F))\to I_{K}(\colim_{L\subset S^{n}} \Fib(I_{S^{n}}F\to I_{L}F)))\ ,\end{eqnarray*} where
$L$ and $K$ run over the closed submanifolds of $S^{n}$ and $S^{m}$, respectively, which contain the base point in their interior. We now apply Lemma \ref{thointer} in order to commute the inner colimits with the insertions. We further combine the resulting two colimits in the argument of the outer $\Fib$ to one colimit and 
{use} the  stability of $\bC$  in order commute $\Fib$ with this colimit. We get the equivalence
 $$I_{\overline{S^m}}I_{\overline{S^n}} F \simeq \colim_{K \subset S^m,L\subset S^{n}} 
 \Fib(I_{S^{m}}\Fib(I_{S^{n}}F\to I_{L}F)\to I_{K}\Fib(I_{S^{n}}F\to I_{L}F))\ .$$ The usual identification
 $\R^{n}\cong S^{n}\setminus \{1\}$ induces an excision equivalence
 $$\Fib(I_{S^{n}}F\to I_{L}F)\cong \Fib(I_{\R^{n}}F\to I_{L^{\prime}}F)\ ,$$ where
 $L^{\prime}:=L\setminus \{1\}$ is a closed submanifold of $\R^{n}$.
 Using excision
 we therefore get the equivalence
  $$I_{\overline{S^m}}I_{\overline{S^n}} F \simeq \colim_{K^{\prime} \subset \R^{m},L^{\prime}\subset \R^{n}} 
 \Fib(I_{\R^{m}}\Fib(I_{\R^{n}}F\to I_{L^{\prime}}F)\to I_{K^{\prime}}\Fib(I_{S^{n}}F\to I_{L^{\prime}}F))\ ,$$
 where now the colimit runs over closed submanifolds of the euclidean spaces which are neighbourhoods of $\infty$. Since insertion commutes with $\Fib$ we  get
 \begin{equation*}
I_{\overline{S^m}}I_{\overline{S^n}} F \simeq
 \colim_{K^{\prime} \subset \R^{m},L^{\prime}\subset \R^{n}} 
\fib\Big( \fib\big(I_{\R^m \times \R^n}F \to I_{\R^m \times L^\prime}F\big) \to \fib\big(I_{K^\prime \times \R^n}F \to I_{K^\prime\times L^\prime}F\big)\Big)
\end{equation*}
We now observe that we can equivalently take the colimit over all open neighbourhoods of $\infty$ for which we use the notation $K^{c}$ and $L^{c}$.
To analyse the diagram over which the colimit is taken we denote it by 
$$ 
P := \fib\Big( \fib\big(I_{\R^m \times \R^n}F \to I_{\R^m \times L^c} F\big) \to \fib\big(I_{K^c\times \R^n}F \to I_{K^c\times L^c}F\big)\Big)
$$
We can describe $P$ equivalently as the fibre of the morphism
$$ I_{\R^m \times \R^n}F \to \big(I_{K^c \times \R^n}F\big) \times_{(I_{K^c \times L^c}F)} \big(I_{\R^m \times L^c}F\big)$$
Using the descent property of $F$ we see that the right hand side is equivalent to 
the sheaf $ I_{(K^c \times \R^n) \cup (\R^m \times L^c)} F.$
Putting everything together we get an equivalence
\begin{align*}
I_{\overline{S^m}}I_{\overline{S^n}} F &\simeq 
\colim_{K^{c} \subset \R^m, L^{c} \subset \R^n}
\fib\Big(I_{\R^m \times \R^n}F \to  I_{(K^c \times \R^n) \cup (\R^m \times L^c)} F\Big)\\
&\simeq 
\colim_{M^{c} \subset \R^{m+n}} \fib\Big(I_{\R^{m+n} }F \to  I_{M^{c} } F\Big) 
\simeq I_{\overline{S^{n+m}}} F\ ,
\end{align*}
where the last colimit runs over  of the partially ordered set of open neighbourhoods of $\infty$ of $\R^{n+m}$. In the second equivalence we have used  that the intersections $(K^c \times \R^n) \cup (\R^m \times L^c)$ are cofinal among those neighborhoods.
\end{proof}

Now assume that $F \in \Fun^{desc}(\Mfo, \Sp)$ has a refinement $F^r \in \Fun\big(\Mfo,\CAlg(\Sp)\big)$ to a sheaf of commutative ring spectra, and that we have {chosen} a class 
in $x \in \pi_m (\overline F(S^n))$.
If we choose a representative of $x$, then the multiplication induces a 
morphism
$$ F \to \Sigma^{-m}I_{\overline{S^n}}F\ , $$
and therefore, by iteration, a tower
\begin{equation}\label{cotower}
F \to \Sigma^{-m}I_{\overline{S^n}}F \to \Sigma^{-2m}I_{\overline{S^{2n}}}F \to \Sigma^{-3m}I_{\overline{S^{3n}}}F \to ... \ .
\end{equation}

\begin{ddd}\label{uin}
The sheaf $F[x^{-1}] \in \Fun^{desc}(\Mfo, \Sp)$ is defined as the colimit over the diagram \eqref{cotower}.
\end{ddd}
The diagram \eqref{cotower} depends on the choice of the representative of $x$. However different choices lead to equivalent diagrams and therefore to equivalent colimits.  

\begin{rem}{\rm
There is a variant of the construction where $x$ is not in $\pi_m (\overline F(S^n))$ but in 
$ \pi_m (\widetilde F(S^n))$. Then the tower takes the form
$$F \to \Sigma^{-m}I_{\widetilde{S^n}}F \to \Sigma^{-2m}I_{\widetilde{S^{n}}}^2 F \to \Sigma^{-3m}I_{\widetilde{S^{n}}}^3 F \to ...$$
We denote this variant by $F[x^{-1}]_0$. For $x \in \pi_m (\overline F(S^n))$ it is understood that we first restrict $x$ to $ \pi_m (\widetilde F(S^n))$ if we write $F[x^{-1}]_0$. eIn this case we have a canonical map $F[x^{-1}]\to F[x^{-1}]_{0}$.}
\end{rem}

\begin{lem}\label{leminversion}
In the situation of Definition \ref{uin} we have
\begin{enumerate}
\item
The spectrum $U(F)$ admits the structure of a commutative ring spectrum coming from the ring structure on $F^r$.
\item
The element $x \in \pi_m(\overline F(S^n))$ induces an element 
$U(x) \in \pi_{m+n} (U(F))$.
\item
We have equivalences of spectra $U(F[x^{-1}]) 
\simeq U(F[x^{-1}]_0)
 \simeq U(F)[U(x)^{-1}]$. \end{enumerate}
\end{lem}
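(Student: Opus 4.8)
The plan is to first upgrade the functor $U=\ev_{*}\circ\cH$ to a symmetric monoidal functor for the objectwise smash product, and then to deduce parts 2 and 3 as essentially formal consequences of Lemma~\ref{underlzing} together with the fact that $U$ preserves colimits. For part 1 I would equip $\Fun^{desc}(\Mfo,\Sp)$ with the (sheafified) objectwise smash product, for which a sheaf of commutative ring spectra is precisely a commutative algebra object, and check that $U$ is symmetric monoidal for this structure. Indeed $\ev_{*}$ is symmetric monoidal because $\ev_{*}\circ L\simeq\ev_{*}$, so that $\ev_{*}(F\otimes G)\simeq(F\otimes G)(*)\simeq F(*)\wedge G(*)$; and $\cH$ is symmetric monoidal because, as noted in the final remark of Section~\ref{suspension}, the homotopy invariant sheaves are the local objects for the localization at the maps $y(\Delta^{1})\otimes F\to F$, a class that is stable under $\otimes G$ for arbitrary $G$, so that this is a monoidal localization. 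Consequently $U$ carries the commutative algebra object $F^{r}$ to a commutative algebra object $U(F^{r})$ in $\Sp$ whose underlying spectrum is $U(F)$, which is the required ring structure.

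For part 2 I would use the unit $\id\to\cH$ of the homotopification, which after applying $\ev_{*}$ yields a natural transformation $\ev_{*}\to U$. Evaluating it on $I_{\overline{S^{n}}}F$ and combining with the identification $U(I_{\overline{S^{n}}}F)\simeq U(F)^{(S^{n}_{top},1)}\simeq\Sigma^{-n}U(F)$ from Lemma~\ref{underlzing} produces a map $\overline F(S^{n})=\ev_{*}(I_{\overline{S^{n}}}F)\to\Sigma^{-n}U(F)$. I then define $U(x)\in\pi_{m+n}(U(F))=\pi_{m}(\Sigma^{-n}U(F))$ to be the image of $x$ under the induced map on $\pi_{m}$.

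For part 3 I would apply $U$ to the tower \eqref{cotower}. Since $U$ preserves colimits and $I_{\overline{S^{n}}}^{\,k}\simeq I_{\overline{S^{kn}}}$ by Lemma~\ref{underlzing}, the $k$-th term becomes $\Sigma^{-km}U(I_{\overline{S^{kn}}}F)\simeq\Sigma^{-k(m+n)}U(F)$, so $U(F[x^{-1}])\simeq\colim_{k}\Sigma^{-k(m+n)}U(F)$. The core of the argument is to identify the transition maps: the tower map $F\to\Sigma^{-m}I_{\overline{S^{n}}}F$ is multiplication by a representative of $x$, and monoidality of $U$ (part 1) together with the definition of $U(x)$ (part 2) show that $U$ sends it to multiplication by $U(x)$ in the ring $U(F)$. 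The resulting telescope is exactly the one computing $U(F)[U(x)^{-1}]$. The same argument applied to the variant tower defining $F[x^{-1}]_{0}$, now using $U(I_{\widetilde{S^{n}}}G)\simeq\Sigma^{-n}U(G)$ (valid for all sheaves $G$ by evaluating \eqref{t19okt}) iterated $k$ times, gives $U(F[x^{-1}]_{0})\simeq U(F)[U(x)^{-1}]$, yielding both equivalences.

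The step I expect to be the main obstacle is the coherent identification in part 3 of the transition maps with multiplication by $U(x)$: one must track the $F$-module structure on $I_{\overline{S^{n}}}F$ through the monoidal functor $U$ and through the equivalences $U(I_{\overline{S^{kn}}}F)\simeq\Sigma^{-k(m+n)}U(F)$, verifying compatibility with the ring multiplication on $U(F)$ rather than merely on homotopy groups. Establishing the monoidality of $\cH$ in part 1 is the enabling technical input, but it is clean given the monoidal-localization description of homotopy invariant sheaves.
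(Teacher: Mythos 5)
Your proposal is correct, and on parts 2 and 3 it follows essentially the paper's route: obtain $U(x)$ from $x$ via the unit map $\ev_{*}\to U$ together with the identification $U(I_{\overline{S^{n}}}F)\simeq \Sigma^{-n}U(F)$ of Lemma \ref{underlzing}, then apply the colimit-preserving functor $U$ to the tower \eqref{cotower}. You are in fact more explicit than the paper, whose entire treatment of part 3 is the sentence that it ``follows since $U$ commutes with colimits''; the identification of the transition maps with multiplication by $U(x)$, which you correctly single out as the delicate point, is left implicit there. Where you genuinely diverge is part 1. The paper does not establish any monoidality of $U$: it writes $U(F)\simeq \colim_{\Delta^{op}}F(\Delta^{\bullet})$, notes that this is a sifted colimit, and invokes the fact that the forgetful functor $\CAlg(\Sp)\to\Sp$ preserves sifted colimits, so that $U$ applied to a sheaf of commutative ring spectra is canonically a commutative ring spectrum. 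You instead upgrade $U=\ev_{*}\circ\cH$ to a symmetric monoidal functor, using that $\ev_{*}\circ L\simeq \ev_{*}$ and that $\cH$ is a monoidal localization because the generating local equivalences $y(\Delta^{1})\otimes F\to F$ are stable under tensoring with any object. Your route is heavier to set up but buys more: the monoidality of $U$ is precisely what one needs to track the $F^{r}$-module structure on $I_{\overline{S^{n}}}F$ through $U$ and justify that the transition maps in the image tower are multiplication by $U(x)$ in the ring $U(F)$, so your part 1 does work that the paper's terse part 3 quietly presupposes. The paper's argument for part 1 is lighter and suffices for the statement as given, at the cost of leaving that compatibility to the reader. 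Both arguments are sound.
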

\begin{proof}
For the first assertion we compare the two functors
$$U_{\Sp}: \Fun^{desc}(\Mfo, \Sp) \to \Sp$$
and 
$$
\qquad U_{\CAlg(\Sp)}:\Fun^{desc}(\Mfo, \CAlg(\Sp)) \to \CAlg(\Sp)$$ 
where $\CAlg(\Sp)$ denotes the presentable $(\infty,1)$ category of commutative ring spectra. We have the general formula which represents the  functor $U$ as $U(F) \simeq \colim_{\Delta^{op}} F(\Delta^n)$. Thus we can use the fact that the forgetful functor from $E: \CAlg(\Sp) \to \Sp$ commutes with sifted colimits in order to deduce  the equivalence 
$U_{\Sp}(F) \simeq E \circ U_{\CAlg(Sp)}(F^r)$. This shows the first claim. 

\medskip

It follows from Lemma \ref{underlzing} that we have 
$U(\Sigma^{-m}I_{\overline{S^n}}F) \simeq \Sigma^{-m-n}U(F)$.
The element $U(x) \in \pi_{n+m}(U(F)) = \pi_0(\Sigma^{-m-n}U(F))$ is then obtained by functoriality of $U$. The third assertion of the lemma follows from this since $U$ commutes with colimits.
\end{proof}

We  now use the abstract construction explained above to give another differential refinement of the periodic complex $K$-theory spectrum $KU$. Recall the definition of the sheaves of spectra 
$$ \Sigma^{\infty}_{+}BU(1)_{\infty} \quad \text{and} \quad \Sigma^{\infty}_{+}BU(1)^\nabla_{\infty}$$
from Section \ref{secfive2}.

\begin{rem}{\rm
Note that for every sheaf $F \in \Fun^{desc}(\Mfo, \sSet[W^{-1}])$ there is a canonical morphism
$ F \to \Omega^\infty L(\Sigma^\infty_+ F)$.
If $F$ is a sheaf of commutative group objects (with unit denoted by 1), then we have a morphism
of pointed objects
$$ (F,1) \longrightarrow (\Omega^\infty L(\Sigma^\infty_+ F),1) \stackrel{-1}{\longrightarrow} 
(\Omega^\infty L(\Sigma^\infty_+ F),0)$$
where $0$ is the canonical basepoint in $\Omega^\infty L(\Sigma^\infty_+ F)$ which comes from the infinite loop space structure. 
This induces for every manifold $M$ a morphism
$$ \overline F(M) \to \overline{\Omega^\infty L(\Sigma^\infty_+ F)}(M) \simeq \Omega^\infty\big(\overline{L(\Sigma^\infty_+ F)}(M)\big)$$
where the last equivalence follows from the definition if we use that $\Omega^\infty$ commutes with filtered colimits and taking fibres.
We get  induced maps
$$ \quad\pi_m(\overline F(S^n) )\to \pi_m(\overline{L(\Sigma^{\infty}_{+}F)}(S^n))\ .$$
which we denote {by abuse of notation as} $[x] \mapsto [x]-1$. Note that these maps are in general not additive. 
}
\end{rem}

Let $L \to S^2$ be the Hopf bundle considered as a smooth principal} $U(1)$-bundle. {It represents   a class in $\pi_0(\bbB U(1)(S^2))$. One can refine 
this class to a class  denoted $[L] \in \pi_0(\overline{\bbB U(1)}(S^2))$} by choosing a section in a neighborhood of $1 \in S^2$. Since any two germs of sections are related by a bundle isomorphism the class $[L]$ is unique. Thus we get a class 
$$b := [L]-1 \in \pi_0(\overline{\Sigma^{\infty}_{+}BU(1)_{\infty}}(S^2)) \ .$$

\newcommand{\KU}{\mathrm{KU}}
\begin{ddd}
We define the sheaves of spectra
$$ \widehat\KU_{Snaith} := \Sigma^{\infty}_{+}BU(1)_{\infty}[b^{-1}] \qquad 
{\text{ and }}
\qquad \widehat\KU_{Snaith,0} := \Sigma^{\infty}_{+}BU(1)_{\infty}[b^{-1}]_0\ . $$
\end{ddd}
\begin{prop}
The sheaves of spectra $\widehat\KU_{Snaith}$ {and $\widehat\KU_{Snaith,0}$} are refinements of periodic $K$-theory $\KU$, i.e. we have equivalences
$U(\widehat\KU_{Snaith}) \simeq \KU$  and $U(\widehat\KU_{Snaith,0}) \simeq \KU$.
\end{prop}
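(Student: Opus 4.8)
The plan is to reduce the computation of the underlying spectrum to the classical Snaith description of periodic complex $K$-theory. First I would apply the functor $U$ to the defining colimit of Definition \ref{uin}. By Lemma \ref{leminversion}, part 3, there are equivalences
\[U(\widehat\KU_{Snaith}) \simeq U(\Sigma^\infty_+ BU(1)_\infty)[U(b)^{-1}] \simeq U(\widehat\KU_{Snaith,0})\ ,\]
so that the two asserted equivalences follow from a single computation. Using \eqref{underlyingBGinfinity} for $G=U(1)$ I would identify $U(\Sigma^\infty_+ BU(1)_\infty)\simeq \Sigma^\infty_+ BU(1)$, and via the commutative ring refinement provided by Lemma \ref{leminversion}, part 1, this is an equivalence of commutative ring spectra, where $BU(1)\simeq\mathbb{CP}^\infty$ carries its standard ring structure induced by the tensor product of line bundles. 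Hence it remains to show that $(\Sigma^\infty_+\mathbb{CP}^\infty)[U(b)^{-1}]\simeq \KU$.

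The second, and main, step is to identify the inverted element $U(b)\in\pi_2(\Sigma^\infty_+\mathbb{CP}^\infty)$ with the Bott class $\beta$. By Lemma \ref{leminversion}, part 2, $U(b)$ is the image of $b=[L]-1$ under the natural map $\pi_0(\overline{\Sigma^\infty_+ BU(1)_\infty}(S^2))\to\pi_2(U(\Sigma^\infty_+ BU(1)_\infty))$ which, after the identification $U(I_{\overline{S^2}}F)\simeq U(F)^{(S^2_{top},1)}$ of Lemma \ref{underlzing} together with $\pi_0\big((UF)^{(S^2_{top},1)}\big)\cong\pi_2(UF)$, is induced by the comparison transformation $S\to U$. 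I would trace the space-level Hopf class $[L]$ through these identifications: by Lemma \ref{loi1} one has $U(\bbB U(1))\simeq BU(1)\simeq\mathbb{CP}^\infty$, and the realization of $[L]$ is the classifying map $S^2=\mathbb{CP}^1\to\mathbb{CP}^\infty$, which represents a generator of $\pi_2(\mathbb{CP}^\infty)\cong\Z$. Applying $\Sigma^\infty_+$ and subtracting the unit (the operation $[x]\mapsto[x]-1$ preceding the definition of $b$, which is defined through the infinite loop structure and is therefore preserved by $U$) yields exactly the Bott generator $\beta\in\pi_2(\Sigma^\infty_+\mathbb{CP}^\infty)$. Thus $U(b)=\beta$.

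Finally I would invoke the classical Snaith theorem, which asserts that localizing $\Sigma^\infty_+\mathbb{CP}^\infty$ at the Bott class yields periodic complex $K$-theory, i.e. $(\Sigma^\infty_+\mathbb{CP}^\infty)[\beta^{-1}]\simeq\KU$. Combining this with the two previous steps gives
\[U(\widehat\KU_{Snaith})\simeq\KU\simeq U(\widehat\KU_{Snaith,0})\ ,\]
as claimed. The hard part will be the bookkeeping in the second step: one must check that the several natural identifications — the underlying-spectrum equivalence of $\Sigma^\infty_+ BU(1)_\infty$, the reduced-power description of Lemma \ref{underlzing}, and the comparison $S\to U$ — are mutually compatible, so that the geometrically defined class $b$ is genuinely carried to the homotopy-theoretic Bott generator. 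Once this identification is pinned down, the remainder is a direct application of Lemma \ref{leminversion} and Snaith's theorem.
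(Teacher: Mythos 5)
Your proposal is correct and follows essentially the same route as the paper's proof: apply $U$ to the localization colimit using Lemma \ref{leminversion}, identify the underlying spectrum via \eqref{underlyingBGinfinity} and Lemma \ref{underlzing}, and conclude by Snaith's theorem. The only difference is that you spell out the identification of $U(b)$ with the Bott class in more detail, a step the paper leaves implicit.
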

\begin{proof} We prove the first equivalence, the second works the same way. Since $U$ is a left adjoint it commutes with colimits. Using Lemma \ref{underlzing} and formula \eqref{underlyingBGinfinity} together with Lemma \ref{leminversion} for the multiplicative structure we get 
$$
U(\widehat\KU_{Snaith}) \simeq \colim \left( \Sigma^{\infty}_{+}BU(1) \stackrel{\cdot b}{\to} \Sigma^{\infty-2}_{+}BU(1) \stackrel{\cdot b}{\to} \Sigma^{\infty-4}_{+}BU(1) \stackrel{\cdot b}{\to} \ldots
\right) \ .
$$
But this colimit is equivalent to $\KU$ by Snaith's theorem \cite[{Theorem 9.1.1}]{snaith}.
\end{proof}

Let us close this chapter discussing a variant of the spectrum $\widehat\KU_{Snaith}$ with connection. Let $L \to S^2$ be the Hopf bundle with its canonical connection $U(2)$-equivariant connection  $\nabla$.   The curvature of this connection is the standard volume form on $S^2$. 
Since this form does not vanish in a neighbourhood of $1\in S^{2}$ we {cannot} lift $[L,\nabla] \in \pi_0(\bbB U(1)^\nabla(S^2))$ to $\pi_0(\overline{\bbB U(1)^{\nabla}}(S^2))$, but we have a lift to  $  \pi_0(\widetilde{\bbB U(1)^{{\nabla}}}(S^2))$. Then we get the class 
$$\hat b := [L,\nabla]-1 \in \pi_0(\widetilde{\Sigma^{\infty}_{+}BU(1)^\nabla_{\infty}}(S^2))$$

\begin{ddd}
We define the sheaf of spectra
$$ \widehat\KU^\nabla_{Snaith,0} := \Sigma^{\infty}_{+}BU(1)^\nabla_{\infty}[\hat b^{-1}] $$
\end{ddd} 
The same proof as above shows the following.
\begin{prop}
The sheaf of spectra $\widehat\KU^\nabla_{Snaith,0}$ is a refinement of periodic $K$-theory $\KU$, i.e. we have an equivalence
$U(\widehat\KU_{Snaith,0}) \simeq \KU$. Furthermore, there is a canonical morphism $\widehat\KU^\nabla_{Snaith,0} \to \widehat\KU_{Snaith,0}$. 
\end{prop}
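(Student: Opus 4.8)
The plan is to run the argument of the two preceding propositions, paying attention to the decoration $\nabla$ and to the fact that $\hat b$ only lives in $\pi_0(\widetilde{\Sigma^{\infty}_{+}BU(1)^\nabla_{\infty}}(S^2))$, so that the relevant inversion is of the $[\cdot]_0$ type. For the underlying equivalence I would invoke Lemma \ref{leminversion}(3). Its hypothesis is phrased for a class in $\pi_m(\overline F(S^n))$, but the half of the statement concerning $F[x^{-1}]_0$ uses only the formula $U(I_{\widetilde{S^n}}F)\simeq U(F)^{(S^n_{top},1)}$ recorded before Lemma \ref{underlzing} together with the fact that $U$ commutes with colimits; hence it applies verbatim to any $x\in\pi_m(\widetilde F(S^n))$, in particular to $F:=\Sigma^{\infty}_{+}BU(1)^\nabla_{\infty}$ and $x=\hat b$ (with $m=0$, $n=2$). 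This yields
$$U(\widehat\KU^\nabla_{Snaith,0})\simeq U(F)[U(\hat b)^{-1}]\ .$$

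It then remains to identify the right-hand side. By \eqref{underlyingBGinfinity} the connection-forgetting morphism $\Sigma^{\infty}_{+}BU(1)^\nabla_{\infty}\to\Sigma^{\infty}_{+}BU(1)_{\infty}$ induces an equivalence of underlying spectra, both equivalent to $\Sigma^{\infty}_{+}BU(1)$; moreover this map is a morphism of sheaves of commutative ring spectra, since forgetting the connection is a symmetric monoidal functor $\Bun(U(1))^\nabla\to\Bun(U(1))$. Under the induced map on $\pi_0(\widetilde{(-)}(S^2))$ the class $\hat b=[L,\nabla]-1$ is sent to $b=[L]-1$, because the Hopf bundle with connection forgets to the Hopf bundle. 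Consequently $U(\hat b)$ corresponds to $U(b)$ under the above equivalence, so $U(F)[U(\hat b)^{-1}]\simeq\Sigma^{\infty}_{+}BU(1)[U(b)^{-1}]$, which is $\KU$ by the preceding proposition (equivalently, directly by Snaith's theorem \cite[Theorem 9.1.1]{snaith}, $U(b)$ being the Bott class).

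For the canonical morphism I would again use the ring map $\Sigma^{\infty}_{+}BU(1)^\nabla_{\infty}\to\Sigma^{\infty}_{+}BU(1)_{\infty}$ coming from the symmetric monoidal forgetful functor, carrying $\hat b$ to $b$ as above. Since the tower defining the $[\cdot]_0$ construction (the $I_{\widetilde{S^n}}$-variant of \eqref{cotower}) and the colimit of Definition \ref{uin} are natural in the pair consisting of a sheaf of commutative ring spectra and a distinguished class in its $\widetilde{(-)}(S^n)$-groups, this data produces a map of towers and hence, after passing to colimits, the desired morphism $\widehat\KU^\nabla_{Snaith,0}\to\widehat\KU_{Snaith,0}$.

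The main obstacle is bookkeeping rather than conceptual: one must check that the formal inversion construction and the functor $U$ really are natural in the inverted class, and that the connection-decorated class $\hat b$ maps to the class $b$ whose inversion produces Bott periodicity. Once the compatibility of \eqref{underlyingBGinfinity} with the multiplicative structures (Lemma \ref{leminversion}(1)) and with the forgetful functor is in place, no geometric input beyond Snaith's theorem is required.
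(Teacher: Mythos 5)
Your proposal is correct and follows essentially the same route as the paper, whose proof is simply the remark that the argument for $\widehat\KU_{Snaith}$ and $\widehat\KU_{Snaith,0}$ carries over: $U$ commutes with colimits, $U(I_{\widetilde{S^2}}F)\simeq U(F)^{(S^2_{top},1)}$, formula \eqref{underlyingBGinfinity} identifies the underlying spectrum with $\Sigma^\infty_+BU(1)$, and Snaith's theorem finishes. Your extra care about $\hat b$ living only in the $\widetilde{(-)}$-variant (so that only the $[\cdot]_0$ half of Lemma \ref{leminversion} is available) and your construction of the canonical morphism from naturality of the tower under the connection-forgetting ring map are exactly the intended details the paper leaves implicit.
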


\begin{rem}{\rm 
It is an interesting problem to define maps
$$\widehat\bku\to \widehat \KU_{Snaith}\ , \qquad \widehat\bku^{\nabla}\to \widehat \KU^{\nabla}_{Snaith,0} \ . $$
Equivalently, we are looking for additive differential Snaith-$K$-theory valued characteristic classes for vector bundles with and without connection. We plan to come back to this problem in future work. 
}\end{rem}

\section{Technical results about sheaves}\label{gfd1}

In this section we assume that
   $\bC$ is a presentable $(\infty,1)$-category. {We first want to provide a criterion to detect equivalences of $\bC$-valued sheaves.} 
   For $n\in \nat$ and $r\in \R^{>{0}}$ we let $K^{n}(0,r)\subseteq \R^{n}$ be the ball of dimension $r$.
\begin{ddd}
For $F\in \Fun(\Mf^{op},\bC)$ and $n\ge 0$ we define the stalk {$F(\R^{n}_{0})\in \bC$ by}
$$F(\R^{n}_{0} ):=\colim_{r\to 0} F(K^{n}(0,r))\ .$$
 \end{ddd}
  
We consider a map $f:A\to B$ between two $\bC$-valued presheaves $A,B\in \Fun(\Mf^{op},\bC)$.

\begin{lem} \label{lll24} 
If  for every $n\in \nat$   the  map {induced by $f$}
$$ A(\R^{n}_{0})\to B(\R^{n}_{0})$$
is an equivalence, then
$L(f):L(A)\to L(B)$ is an equivalence. \end{lem}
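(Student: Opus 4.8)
The plan is to deduce the statement from two facts: that sheafification does not change the stalks $F(\R^n_0)$, and that a morphism of sheaves which is an equivalence on all these stalks is itself an equivalence. Granting these, the argument is immediate. By hypothesis $f$ induces equivalences $A(\R^n_0)\to B(\R^n_0)$ for all $n$; since the stalk functors $F\mapsto F(\R^n_0)$ are unchanged by $L$, the map $L(f)$ is again an equivalence on every stalk $\R^n_0$; and since these stalks form a conservative family on sheaves, $L(f)$ is an equivalence.

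First I would show that $L$ preserves the stalk $F(\R^n_0)$. Here I would use the description of sheafification from \cite[6.2.2.7]{HTT} already invoked in the proof of Proposition \ref{propo}, namely as a transfinite iteration of the plus-construction $F\mapsto F^+$, with $F^+(M)\simeq \colim_{\mathcal U}\lim_\Delta F(U^\bullet)$ the filtered colimit over covering sieves $\mathcal U$ of $M$ of the totalizations of the associated \v{C}ech nerves. The geometric input is that a covering $U\to K^n(0,r)$, being a surjective submersion with discrete fibres between equidimensional manifolds, is a local diffeomorphism, hence admits a section over a sufficiently small ball $K^n(0,r')$ near a preimage of the centre. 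Such a section exhibits the trivial cover as cofinal among covers after passing to the germ, so that the canonical map $F(\R^n_0)\to F^+(\R^n_0)$ induced by the trivial cover is an equivalence. Since the stalk functor commutes with the filtered colimit defining $L$ out of the plus-construction, iterating gives $L(F)(\R^n_0)\simeq F(\R^n_0)$, naturally in $F$; applying this to $A$, $B$ and $f$ shows that $L(f)$ is an equivalence on all $\R^n_0$.

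It then remains to prove conservativity: a morphism $g\colon X\to Y$ of sheaves which is an equivalence on every stalk $\R^n_0$ is an equivalence. By descent it suffices to check $g$ on charts, and restricting $X$ and $Y$ along the inclusions of the open subsets of a fixed chart produces sheaves on an honest topological space, for which equivalences are detected on the ordinary stalks. For an interior (Euclidean) chart these stalks are exactly the germs $\R^n_0$, so $g$ is an equivalence there.

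The main obstacle is the boundary: since $\Mf$ consists of manifolds with corners, a chart around a corner point is modelled on $[0,\infty)^k\times\R^{n-k}$, and the stalk of a sheaf there is a genuine additional point of the site that is not literally of the form $\R^n_0$. The crux is therefore to control these corner germs by the interior ones. I expect this to go through the same local-section argument: a corner chart is still contractible and covers of it still split on smaller corner charts, so sheafification preserves corner germs as well, and one reduces the conservativity on corner charts to the interior stalks, either by a deformation onto the interior or by showing directly that the family $\{F\mapsto F(\R^n_0)\}_n$ is conservative on the sheaves at hand, as carried out in \cite[Sec. 6.5]{2013arXiv1306.0247B} and \cite{skript}. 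Making this reduction precise, rather than the formal colimit manipulations, is the step requiring genuine care.
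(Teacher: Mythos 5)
The paper itself gives no argument for this lemma; it simply defers to \cite[Lemma 4.11]{skript}. Your two-step strategy --- (i) sheafification does not change the germs $F(\R^{n}_{0})$, and (ii) these germs form a jointly conservative family on sheaves --- is exactly the strategy of that reference, and your treatment of (i) is correct and identifies the right geometric input: a covering $U\to K^{n}(0,r)$ is a surjective local diffeomorphism and hence admits a section over a smaller ball, so the plus-construction (and therefore its transfinite iteration $L$) leaves the filtered colimit $\colim_{r\to 0}F(K^{n}(0,r))$ unchanged.

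The gap is in step (ii), and you have in effect named it yourself without closing it. First, since $\Mf$ consists of manifolds \emph{with corners}, the stalks of the restricted sheaf on a chart include the corner germs at $0\in[0,\infty)^{k}\times\R^{n-k}$ for $k\geq 1$; these are genuinely different filtered colimits from the $F(\R^{n}_{0})$ (the interior germs only ever evaluate $F$ on open balls), there is no smooth retraction of $\R^{n}$ onto the corner model, and descent alone does not rule out a nonzero sheaf all of whose sections over corner-free manifolds vanish --- a cover of $[0,\infty)$ always contains a half-open interval, so the \v{C}ech limit never expresses $F([0,\infty))$ in terms of corner-free pieces. Your proposal says you ``expect this to go through'' and then refers the reduction back to \cite{2013arXiv1306.0247B} and \cite{skript}, i.e.\ to the very sources the paper is citing in lieu of a proof; so the substantive content of the lemma is not actually established. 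Second, even granting control of the corner germs, the assertion that equivalences of sheaves on the space underlying a chart are detected on ordinary stalks is a hypercompleteness/Postnikov-convergence statement: it holds for sheaves of spaces on a finite-dimensional paracompact space by \cite[7.2.1.12]{HTT}, but the lemma is stated for an arbitrary presentable $\bC$, and for general $\bC$ (not compactly generated, or not left complete in the stable case) the passage from ``stalkwise equivalence'' to ``equivalence'' requires an argument you do not supply. Both points need to be addressed before the proposal counts as a proof rather than a correct outline.
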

For a proof we refer to \cite[Lemma 4.11]{skript}. 

\bigskip
In the stable case, equivalences between $\bC$-valued sheaves can be tested on sequences of global objects like sequences of spheres or tori as long as the dimensions turn to infinity.    
\begin{lem}\label{test_equi}
Assume {that} $\bC$ is stable. Then a morphism $A \to B$ in $\Fun^{desc}(\Mf^{op},\bC)$ is an equivalence if and only if 
there exists a  sequence of manifolds $\{M_k\}_{k \in \nat}$ with the property  {$\sup\{\dim M_k\:|\:k\in \nat\} = \infty$} such that the evaluations $A(M_k) \to B(M_k)$ are equivalences.
\end{lem}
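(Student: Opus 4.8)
The plan is to prove the nontrivial implication by turning the statement into a stalkwise vanishing via Lemma \ref{lll24}, and then to amplify the sparse high--dimensional vanishing using a covering--dimension bound together with the rigidity of homotopy invariant sheaves.

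First I would dispose of the easy direction: if $A\to B$ is an equivalence, then $A(M)\to B(M)$ is an equivalence for every $M$, so any sequence with $\sup\dim M_k=\infty$ (for instance spheres) witnesses the condition. For the converse I use that $\bC$ is stable to form $C:=\fib(A\to B)$; then $A\to B$ is an equivalence if and only if $C\simeq 0$, and the hypothesis becomes $C(M_k)\simeq 0$. By Lemma \ref{lll24} it suffices to show that all stalks $C(\R^{n}_{0})$ vanish. Here two reductions on the stalks are available. The germ at the origin of the projection $\R^{n+1}\to\R^{n}$ and of the zero section $\R^{n}\to\R^{n+1}$ compose to the identity, so $C(\R^{n}_{0})$ is a retract of $C(\R^{n+1}_{0})$, hence of $C(\R^{n'}_{0})$ for every $n'\ge n$; applied with a point this also gives $C(*)\simeq 0$, since $C(*)$ is a retract of $C(M_k)\simeq 0$. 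Thus it is enough to see $C(\R^{n'}_{0})\simeq 0$ for a cofinal set of dimensions. Moreover $C(\R^{d}_{0})=\colim_{r\to 0}C(K^{d}(0,r))$ is a filtered colimit of copies of $C(\R^{d})$, the balls being diffeomorphic to $\R^{d}$, so a nonvanishing stalk forces $C(\R^{d})\not\simeq 0$. Arguing by contraposition, if $C\not\simeq 0$ then $C(\R^{d})\not\simeq 0$ for all sufficiently large $d$.

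The heart of the argument is then to promote this to the assertion that $C(M)\not\simeq 0$ for \emph{every} manifold $M$ of sufficiently large dimension, which contradicts $C(M_k)\simeq 0$ as soon as $\dim M_k$ is large. For this I would use descent along a good open cover of a $d$--manifold $M$ of order at most $d+1$, for instance the open stars of the vertices of a smooth triangulation. The nerve of such a cover is a simplicial complex of dimension $\le d$, so the associated descent spectral sequence is concentrated in the strip $0\le s\le d$ and $C(M)\simeq\lim_{\Delta}C(U^{\bullet})$ is a finite limit (a bounded totalization) assembled from the chart values $C(\R^{d})$. For $\bC=\Sp$ this is just the statement that $H^{s}(M;\pi_{t}C)\Rightarrow\pi_{t-s}C(M)$ lives in $0\le s\le d$; in general one works with the bounded totalization directly.

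\textbf{Main obstacle.} The genuine difficulty is that $C(M)$ is a \emph{limit} of the nonzero chart values $C(\R^{d})$, not a retract or a colimit, and a finite limit of nonzero objects can vanish (indeed, on a single manifold a nonzero sheaf can have vanishing global sections). To exclude such total cancellation I would feed in the rigidity coming from Proposition \ref{propo}: the homotopy invariant part $\cH(C)\simeq\const(U(C))$ is a \emph{constant} sheaf, so the homotopy invariant information of $C$ carries no monodromy, and the exotic (e.g.\ local--system--type) examples that make global sections vanish while stalks persist cannot occur for sheaves on $\Mf$. Combining this constancy with the decomposition of $C$ into its homotopy invariant part and its pure part $\cZ(C)$ (Proposition \ref{wie4}) and with the germ--retract tower on the stalks, the lowest nonvanishing contribution on the bounded descent tower cannot be cancelled once $\dim M$ exceeds the relevant range, forcing $C(M)\not\simeq 0$. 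I expect precisely this step --- separating the constant homotopy invariant part from the pure part, i.e.\ showing that a sheaf on $\Mf$ which is nonzero on all high--dimensional Euclidean spaces remains nonzero on every high--dimensional manifold --- to be the only essentially nontrivial point of the proof.
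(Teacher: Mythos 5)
Your overall architecture (pass to the fibre $C$, reduce to stalks via Lemma \ref{lll24}, use the retract $C(\R^{n}_{0})\to C(\R^{n+1}_{0})$ to make the dimensions cofinal) is sound, and you have correctly isolated the crux: one must show that a sheaf vanishing on a single manifold of dimension $d$ vanishes on $\R^{d}$ (equivalently, that nonvanishing on all high-dimensional Euclidean spaces propagates to all high-dimensional manifolds). But your proposed proof of that crux is not an argument. You yourself name the obstacle --- a bounded totalization of nonzero objects can perfectly well vanish --- and the fix you offer does not remove it. The constancy of $\cH(C)$ from Proposition \ref{propo} constrains only the homotopy invariant part; the pure part $\cZ(C)$ (differential forms and the like) is exactly where non-constant, cancellable contributions live, and nothing in Proposition \ref{wie4} or the germ-retract tower prevents the descent limit from collapsing. ``The lowest nonvanishing contribution cannot be cancelled'' is asserted, not proved, and I do not see how to complete it along these lines.

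The idea you are missing is the paper's factorization trick, which sidesteps cancellation entirely. Fix nested open balls $B'\subset B$ with $\overline{B'}\subset B$. Any nonempty $n$-manifold $M$ contains an embedded copy of $B$, and the inclusion $B'\hookrightarrow B$ extends to a smooth map $M\to B$ defined on all of $M$; hence the restriction $F(B)\to F(B')$ factors as $F(B)\to F(M)\to F(B')$ and is therefore \emph{zero} whenever $F(M)\simeq 0$. Now cover an arbitrary $n$-manifold $N$ by a good cover $\cB$ of balls together with a shrunken good refinement $\cB'$ with $\overline{B_{i}'}\subset B_{i}$; descent identifies $F(N)$ with both $\lim_{\Delta}F(\cB^{\bullet})$ and $\lim_{\Delta}F(\cB'^{,\bullet})$, and the comparison map between these limits is induced by restrictions that are all zero by the factorization. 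So the identity of $F(N)$ is zero and $F(N)\simeq 0$ for \emph{every} $n$-manifold at once --- no spectral sequence and no cancellation analysis is needed. The reduction to lower dimensions is then the retract $N\hookrightarrow N\times\R^{j}$, as you essentially had. Without the factorization step your proof has a genuine gap at its central point.
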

\proof 

 As a first step
we show  that $F(M)\simeq 0$ for one particular non-empty manifold $M$ with $\dim(M)=n$ implies that $F$  is trivial on  all manifolds of dimension $n$. Let $B^{\prime}\to B$ be an inclusion of open $n$-dimensional balls such that the closure of $B^{\prime}$ is still contained in $B$. Then there exists an embedding
$B\hookrightarrow M$ and
we can extend the inclusion $B^{\prime}\hookrightarrow  B$  to a smooth map $f: M \to B$. In this way we obtain a factorization of the restriction $F(B)\to F(B^{\prime})$  as $$ F(B) \to F(M) \to F(B')\ .$$ This composition is trivial since   $F(M)$ is trivial by assumption.

Let now $N$ be an $n$-dimensional manifold.
 We can find an open covering $\cB^{\prime}=(B_{i}^{\prime})_{i\in I}$ of $N$ such that all iterated intersections
$B_{i_{1}}^{\prime}\cap \dots \cap B_{i_{k}}^{\prime}$ are either empty or balls (such a covering is called good) and a covering
$\cB=(B_{i})_{i\in I}$ by larger balls such that $\overline{B_{i}^{\prime}}\subset B_{i}$ for all $i\in I$ with the same property. Then $\cB^{\prime}$ is a refinement of $\cB$. Using that $F$ is a sheaf  we have a diagram
$$\xymatrix{F(N)\ar[r]^-\simeq\ar@{=}[d]&\lim_{\Delta} F(\cB^{\prime,\bullet})\ar[d]\\
F(N)\ar[r]^-\simeq&\lim_{\Delta}F(\cB^{\bullet})}\ ,$$
where $\cB^{\bullet}$ and $\cB^{\prime,\bullet}$ denote the \v{C}ech nerves.
From the observation made above we conclude that the right vertical map vanishes.

As a second step we show that $F(M)\simeq 0$ implies that $F$  is also trivial on all manifolds $N$ with $\dim(N)\le \dim(M)$. In fact we can write such a manifold $N$ as a retract of $N\times \R^{\dim(M)-\dim(N)}$.
 By the first step we know $F(N\times \R^{\dim(M)-\dim(N)}) \simeq 0$, hence $F(N) \simeq 0$.
 
In order to finish the proof of the Lemma we observe that  a morphism $A \to B$ is an equivalence if and only if its fibre is trivial. But the fibre has the property that it vanishes on the manifolds $M_k$ for all $k\in \nat$, hence on all manifolds.
 \hB

For a manifold $S$ we  define the endofunctor
$I_{S}$ of $\Fun^{desc}(\Mf^{op},\bC)$  in the natural way such that
\begin{equation}\label{heut1000}I_{S}(\hat E)(M):=\hat E(S\times M)\ .\end{equation}
\begin{rem} \label{rem_power} {\rm The functor $I_{S}$ is equivalent to the power operation {with the representable sheaf} $y(S)$.
Indeed, since $\bC$ is presentable the category $\Fun^{desc}(\Mfo,\bC)$ is tensored and powered over the sheaf category
$\Fun^{desc}(\Mfo,\sSet[W^{-1}])$. For a manifold $S\in \Mf$ we have a canonical equivalence of functors
\begin{equation}
\label{natnatnat1}I_{S}(-){\simeq} (-)^{y(S)}:\Fun^{desc}(\Mfo,\bC)\to \Fun^{desc}(\Mfo,\bC)\ .
\end{equation}
This can most easily be checked by evaluating both sides on a manifold $M$. Let $G\in \bC$ and $F\in \Fun^{desc}(\Mfo,\bC)$  arbitrary. Then we have the chain of equivalences
\begin{eqnarray*}
\Map(G,F^{y(S)}(M))&\simeq& \Map(\const(G)\otimes y(M),F^{y(S)}) \simeq \Map(\const(G)\otimes y(M)\otimes y(S),F)\\& \simeq & \Map(\const(G)\otimes y(M\times S),F) \simeq \Map(G,F(S\times M))\ ,
\end{eqnarray*}
where the first and last equivalences are given {by} the definition of the tensor operation, the second equivalence 
is the universal property of the power operation, and the third equivalence uses the fact that the Yoneda embedding $y$ preserves products.

Furthermore, for  $X\in \sSet[W^{-1}]$ we have an equivalence
 \begin{equation}\label{zusatz1}(-)^{X}\simeq (-)^{\const(X)}\ .\end{equation}
If $S$ is a manifold, then we have a map $y(S)\to \cH(y(S))\simeq \const(S_{top})$. By the functoriality of the power operation, {\eqref{natnatnat1}} and \eqref{zusatz1} it induces a transformation
\begin{equation}\label{natnatnat}(-)^{S_{top}}\to {I_S}\end{equation}
 between functors on $\Fun^{desc}(\Mfo,\bC)$.}
\end{rem}

\bigskip

Our next task is to provide a formula for the homotopification $\cH$.
We let $\Delta^{n}\in \Mf$ be the standard simplex. The standard simplices combine to a give a cosimplicial manifold
$\Delta^{\bullet}:\Delta\to \Mf$.
{Recall that $\bC$ is a presentable $(\infty,1)$-category}.  
We define an endofunctor \begin{equation}\label{poi1}\bs:={\colim_{\Delta^{op}}\circ I_{\Delta^{\bullet}}}:\Fun(\Mf^{op},\bC)\to \Fun(\Mf^{op},\bC)\ .\end{equation} 
{More concretely we have ${{\bs}(F)(M)} {\simeq} \colim_{\Delta^{op}}F(\Delta^{op}\times M).$}
Recall the definition \eqref{hdef123} of the homotopficiation functor
$$ \cH^{pre} :\Fun(\Mf^{op},\bC) \to \Fun^h(\Mf^{op},\bC)\ .$$
{The following Lemma provides a formula for $\cH^{pre}$.}

\begin{lem}\label{lll25}$\ $ 
\begin{enumerate}
\item
If $F\in \Fun(\Mf^{op},\bC)$, then $\bs(F)$ is homotopy invariant.
\item
We have an equivalence  of functors $\cH^{pre}\simeq \bs$. 
\end{enumerate}
\end{lem}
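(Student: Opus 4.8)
The plan is to prove the two assertions in turn: I deduce the homotopy invariance of $\bs(F)$ (part 1) from a single combinatorial input, and then identify $\bs$ with $\cH^{pre}$ (part 2) by formal localization theory. Throughout I write $c\colon \id\to\bs$ for the natural transformation induced by the vertex $\Delta^{0}\hookrightarrow\Delta^{\bullet}$, i.e. by the map $F(M)=F(\Delta^{0}\times M)\to\colim_{[n]\in\Delta^{op}}F(\Delta^{n}\times M)=\bs(F)(M)$.

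For part 1, I first reduce to the following \emph{Sublemma}: for every $A\in\Mf$ the two endpoint restrictions $i_{0}^{*},i_{1}^{*}\colon \bs(F)(\Delta^{1}\times A)\to\bs(F)(A)$, induced by the inclusions $i_{0},i_{1}\colon A\to\Delta^{1}\times A$, are equivalent. Granting this, any smooth homotopy $H\colon\Delta^{1}\times A\to B$ satisfies $\bs(F)(H\circ i_{0})\simeq\bs(F)(H\circ i_{1})$, since these equal $i_{0}^{*}\circ\bs(F)(H)$ and $i_{1}^{*}\circ\bs(F)(H)$; thus smoothly homotopic maps induce equivalent maps on $\bs(F)$. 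Applying this to the smooth homotopy $H\colon\Delta^{1}\times(\Delta^{1}\times M)\to\Delta^{1}\times M$, $H(s,t,m)=(st,m)$ (under $\Delta^{1}\cong[0,1]$), which interpolates between $i_{0}\circ\pr$ at $s=0$ and $\id$ at $s=1$, gives $\pr^{*}\circ i_{0}^{*}\simeq\id$ on $\bs(F)(\Delta^{1}\times M)$; as $i_{0}^{*}\circ\pr^{*}=\id$ holds strictly because $\pr\circ i_{0}=\id_{M}$, the projection $\pr\colon\Delta^{1}\times M\to M$ induces an equivalence, i.e. $\bs(F)$ is homotopy invariant.

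The Sublemma is the crux and the only genuinely non-formal step. Here $\bs(F)(\Delta^{1}\times A)=\colim_{[n]}F(\Delta^{n}\times\Delta^{1}\times A)$ and the two maps are induced by $\id_{\Delta^{n}}\times i_{0}$ and $\id_{\Delta^{n}}\times i_{1}$. The two vertices $i_{0},i_{1}\colon[0]\to[1]$ are joined by the unique nondegenerate $1$-simplex of $\Delta[1]$, and I would promote this to an explicit simplicial homotopy between the two maps of simplicial objects $F(\Delta^{\bullet}\times\Delta^{1}\times A)\rightrightarrows F(\Delta^{\bullet}\times A)$ built from the standard prism (shuffle) decomposition of $\Delta^{n}\times\Delta^{1}$ into $(n+1)$-simplices. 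Since $\colim_{\Delta^{op}}$ carries simplicially homotopic maps of simplicial objects to equivalent maps, the Sublemma follows; this is simply the internalization of the classical prism argument for the homotopy invariance of singular chains.

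For part 2, I exhibit $c\colon\id\to\bs$ as the reflection onto $\Fun^{h}(\Mf^{op},\bC)$. By part 1, $\bs$ takes values in homotopy invariant presheaves. Next, for $F$ \emph{already} homotopy invariant, $c_{F}$ is an equivalence: the linear contraction of $\Delta^{n}$ onto a vertex is a smooth homotopy, so exactly as in part 1 each projection $\Delta^{n}\times M\to M$ induces an equivalence $F(M)\xrightarrow{\sim}F(\Delta^{n}\times M)$, compatibly with the cosimplicial maps; hence $[n]\mapsto F(\Delta^{n}\times M)$ is equivalent to the constant simplicial object on $F(M)$, and since the nerve $N(\Delta^{op})$ is weakly contractible (indeed $\Delta^{op}$ is sifted) its colimit is $F(M)$, proving $c_{F}\colon F\to\bs(F)$ is an equivalence. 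In particular the essential image of $\bs$ is exactly $\Fun^{h}(\Mf^{op},\bC)$. Finally, $c\bs=c_{\bs(-)}$ is an equivalence by the previous sentence applied to the homotopy invariant $\bs(F)$, and $\bs c=\bs(c_{(-)})$ is an equivalence by the same computation applied to the symmetric bisimplicial object $F(\Delta^{\bullet}\times\Delta^{\bullet}\times M)$, whose two degeneracy insertions are interchanged by the swap diffeomorphism $\Delta^{m}\times\Delta^{n}\cong\Delta^{n}\times\Delta^{m}$. By the idempotent-monad criterion \cite[5.2.7.4]{HTT}, $\bs$ is therefore a localization functor onto its essential image $\Fun^{h}(\Mf^{op},\bC)$. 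Since a reflective localization is determined by its essential image, $\bs\simeq\cH^{pre}$, with $c$ corresponding to the unit of the adjunction \eqref{hdef123}.
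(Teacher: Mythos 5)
Your proof is correct, but part 1 takes a genuinely different route from the paper's. The paper reduces to representable generators: it notes that $\bs$ preserves colimits, that homotopy invariant presheaves are closed under colimits, and that $\Fun(\Mf^{op},\bC)$ is generated under colimits by the objects $y^{pre}(M)\otimes C$ (this is where presentability of $\bC$ enters in an essential way); it then only has to treat $\bs(y^{pre}(M))$, whose values are honest simplicial sets, and runs the contraction $(s,t)\mapsto st$ through the product-preserving functor $\Sing^{s}$ together with the contractibility of $\Sing^{s}(Y(\Delta^{1}))$. You instead work directly with the simplicial object $F(\Delta^{\bullet}\times\Delta^{1}\times A)$ in $\bC$ and internalize the classical prism argument. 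This is legitimate: the shuffle maps $\Delta^{n+1}\to\Delta^{n}\times\Delta^{1}$ are affine, hence smooth maps of manifolds with corners in the sense used here, and they satisfy the simplicial-homotopy identities \emph{strictly} in the $1$-category $\Mf$; consequently, applying $F$ and forming levelwise coproducts yields an actual map of simplicial objects $F(\Delta^{\bullet}\times\Delta^{1}\times A)\otimes\Delta[1]\to F(\Delta^{\bullet}\times A)$ restricting to $i_{0}^{*}$ and $i_{1}^{*}$ at the two vertices, and $\colim_{\Delta^{op}}(X\otimes\Delta[1])\simeq\colim_{\Delta^{op}}X$ because $(\Delta_{/\Delta[1]})^{op}\to\Delta^{op}$ is cofinal. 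The one step you should make explicit is precisely this strictification: in a general $(\infty,1)$-category a combinatorial "simplicial homotopy" does not automatically assemble into a map out of $X\otimes\Delta[1]$, but here it does because all the data arises from a strictly commuting diagram in $\Mf$. Your approach buys a self-contained, more elementary argument avoiding the colimit-generation step; the paper's buys a one-line reduction to the familiar homotopy invariance of the smooth singular complex. Your part 2 is the same localization argument as the paper's ($\bs$ is idempotent, hence the reflection onto its essential image $\Fun^{h}(\Mf^{op},\bC)$), but you usefully supply two details the paper leaves implicit: the proof that $c_{F}$ is an equivalence for homotopy invariant $F$ (via siftedness of $\Delta^{op}$), and the verification that \emph{both} maps $\bs\to\bs^{2}$ demanded by \cite[5.2.7.4]{HTT} are equivalences, the second via the swap symmetry of the bisimplicial object $F(\Delta^{\bullet}\times\Delta^{\bullet}\times M)$.
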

\begin{proof}
We first show that  $\bs(F)$ is homotopy invariant. We use the following observations:
\begin{itemize}
\item
The functor $\bs: \Fun(\Mf^{op},\bC) \to \Fun(\Mf^{op},\bC)$ preserves colimits. 
\item
The category of homotopy invariant presheaves is closed under colimits.
\item
The category $\Fun(\Mf^{op},\bC)$ is generated under colimits by objects of the form
$y^{pre}(M) \otimes C \in \Fun(\Mf^{op},\bC)$ 
for all manifolds $M$ and objects $C$ of $\bC$,  where the tensor is the objectwise application of the natural tensor 
$\otimes:  \sSet[W^{-1}] \times \bC \to \bC$, and $y^{pre}(M)$ is the  {representable} sheaf $y(M)$ introduced 
in \eqref{sdf1} but considered as a presheaf.
 \end{itemize}
Combining these three observations, we see that it suffices to show that $\bs({y^{pre}(M)} \otimes C)$ is homotopy invariant. One can check by inserting definitions that
$$ \bs(y^{pre}(M) \otimes C) \simeq \bs(y^{pre}(M)) \otimes C\ .$$ 
It therefore suffices to show that $\bs(y^{pre}(M))$ is homotopy invariant. This is basically the statement that the smooth singular complex of a manifold is homotopy invariant, which is clearly true. 
In detail the argument is as follows.
 
\medskip

We
 will show that the projection $\Delta^{1}\times  {N}\to  {N}$ induces an equivalence
$$\bs\big(y^{pre}(M)\big)(N)\to \bs\big(y^{pre}(M)\big)\big(  {\Delta^{1}} \times N  ) = I\bs\big(y^{pre}(M)\big)\big(N )$$
for every manifold $N$. The inverse is given by $\partial_{0}^{*}$. Since
$\partial_{0}^{*}\circ \pr^{*}=\id_{\bs(y^{pre}(M))(N)}$ it suffices to show that
$\pr^{*}\circ \partial_{0}^{*}\simeq \id_{I\bs(y^{pre}(M))(N)}$.
\\
The smooth map $\Delta^{1}\times \Delta^{1}\to \Delta^{1}$, $(s,t)\to st$, induces a map
$Y(\Delta^{1})\to Y(\Delta^{1})\times Y(\Delta^{1})$. We
 use the rule
$I_{S}F=F^{Y(S)}$ for every manifold $S$ and  every sheaf $F\in \Fun(\Mf^{op},\Set)$ in order to construct  an internal homotopy
 \begin{equation}\label{chaos}IF=F^{Y(\Delta^{1})}\to F^{Y(\Delta^{1})\times Y(\Delta^{1})} = (IF)^{Y(\Delta^{1})}\ . \end{equation} We consider the product preserving  functor
$$\Sing^{s}:=\iota\circ \ev_{*}\circ I_{\Delta^{\bullet}} :\Fun(\Mf^{op},\Set)\to \sSet[W^{-1}]
\ .$$
It maps the adjoint $IF\times Y(\Delta^{1})\to IF$ of \eqref{chaos} to the map
$$\Sing^{s}(IF)\times \Sing^{s}(Y(\Delta^{1}))\to  \Sing^{s}(IF)\ .$$
We now specialize $F$ to  $F:=Y(M)^{Y(N)}$ and use the rule   
$$\Sing^{s}(I(Y(M)^{Y(N)})) \simeq  I\bs(y^{pre}(M))(N)$$  {in order to produce  
a map of simplicial sets
$$\Sing^{s}(I(Y(M)^{Y(N)}))\times  \Sing^{s}(Y(\Delta^{1}))\to  \Sing^{s}( I(Y(M)^{Y(N)})) \simeq   (I\bs(y^{pre}(M)))(N)$$
which restricts to  $\pr^{*}\circ \partial_{0}^{*}$ and $\id_{I\bs(y^{pre}(M))(N)}$
along the two inclusions $\Delta[0] \to \Sing^{s}(Y(\Delta))$. Next we use that 
$\Sing^{s}(Y(\Delta^{1}))$ is contractible 
(since it is the usual smooth singular complex of the contractible  manifold $\Delta^{1}$) to conclude that the two maps $\pr^{*}\circ \partial_{0}^{*}$ and $\id_{I\bs(y^{pre}(M))(N)}$ 
agree in the homotopy category of simplicial sets. In particular they are homotopic 
which concludes the proof.
}

\medskip

We now show $2.$
For a homotopy invariant presheaf $F$ the canonical map $F\to \bs(F)$ is an equivalence. Hence $1.$ implies that we have an equivalence 
  $\bs\stackrel{\sim}{\to} \bs^{2}$, i.e. $\bs$ is a localization of the category 
$\Fun(\Mf^{op},\bC)$. Thus there is a full subcategory $\Fun^\bs(\Mf^{op},\bC) \subset \Fun(\Mf^{op},\bC)$ of $\bs$-local objects and we have to show that it agrees with the homotopy invariant sheaves {(which are also a localization)}. The first part of the proof shows that
 $\Fun^\bs(\Mf^{op},\bC) \subseteq \Fun^h(\Mf^{op})$. The equivalence $\bs(F) \simeq F$ for a homotopy invariant presheaf shows {the other inclusion}, namely  $\Fun^h(\Mf^{op},\bC) \subseteq \Fun^\bs(\Mf^{op})$.
\end{proof}

Let us record the following formulas for the smooth singular complex of a manifold $M$:  \begin{equation}\label{dsf12}\sing(M) \simeq \Sing^{s}(Y(M))  \simeq \bs(y^{pre}(M))(*)\simeq \bs(y(M))(*)\ . \end{equation}
In the last term we use the convention also adopted below that if we apply $\bs$  to a sheaf, then we consider the latter as a presheaf   and consider the result   as a presheaf, too.  The functor $\bs$ preserves descent to some extend and we
  refer to Proposition \ref{propcom} for more details.
As a consequence of
 Proposition \ref{propo} the   homotopification functor 
$$ \cH :\Fun^{desc}(\Mf^{op},\bC) \to \Fun^{{desc,h}}(\Mf^{op},\bC)$$
is given by the formula $\cH = L \circ \cH^{pre} \simeq L \circ \bs$.
 In particular, {for every sheaf $F\in \Fun^{desc}(\Mf^{op},\bC)$ we have a canonical morphism $\bs(F) \to \cH (F)$.}
\begin{prop}\label{propcom}
{We a}ssume that the $(\infty,1)$-category $\bC$ is stable {and consider a  sheaf
 $F\in \Fun^{desc}(\Mf^{op},\bC)$}. 
 \begin{enumerate}
 \item {The presheaf $\bs(F)$} satisfies descent for finite open covers of a manifold $M$. 
 \item {If the manifold $M$ is compact,} then  the canonical morphism
$\bs(F)(M) \to \cH(F)(M)$ is an equivalence. 
\end{enumerate}
\end{prop}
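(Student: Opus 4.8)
The plan is to prove the two assertions separately, with stability entering crucially in the first and compactness in the second.

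\emph{Part 1.} First I would reduce descent for finite open covers to the Mayer--Vietoris (excision) property for two-element covers: a presheaf valued in the stable category $\bC$ satisfies \v{C}ech descent for every finite open cover as soon as it sends every two-element open cover $\{U,V\}$ of every manifold to a cartesian square. Indeed, given a cover $\{U_1,\dots,U_k\}$ one writes $M=(U_1\cup\dots\cup U_{k-1})\cup U_k$, applies the two-element case, and inducts on $k$ using excision on $V:=U_1\cup\dots\cup U_{k-1}$ and on $V\cap U_k$; this is the standard argument that excision generates finite \v{C}ech descent. So it suffices to prove the Mayer--Vietoris condition for $\bs(F)$. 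Fix a two-element cover $\{U,V\}$ of $M$. Since $F$ is a sheaf and $\{U\times\Delta^{n},V\times\Delta^{n}\}$ is an open cover of $M\times\Delta^{n}$ with intersection $(U\cap V)\times\Delta^{n}$, for each $n$ the square
$$\xymatrix{F(\Delta^{n}\times M)\ar[r]\ar[d]&F(\Delta^{n}\times V)\ar[d]\\ F(\Delta^{n}\times U)\ar[r]&F(\Delta^{n}\times (U\cap V))}$$
is cartesian. Because $\bC$ is stable, this square is also \emph{cocartesian}. Applying $\colim_{\Delta^{op}}$ --- which commutes with pushouts, these being colimits --- and using $\bs(F)(-)\simeq\colim_{\Delta^{op}}F(\Delta^{\bullet}\times -)$, I conclude that the corresponding square for $\bs(F)$ is cocartesian, hence (stability again) cartesian. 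This is exactly the Mayer--Vietoris condition, which proves Part 1.

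\emph{Part 2.} Let $M$ be compact. Then $M$ admits a \emph{finite} good cover $\cU=\{U_i\}_{i=1}^{k}$, i.e. all nonempty finite intersections are diffeomorphic to some $\R^{n}$. Both sides satisfy descent for this cover: $\cH(F)$ because it is a sheaf, and $\bs(F)$ by Part 1 since the cover is finite. Hence the canonical morphism $\bs(F)\to\cH(F)=L\circ\bs(F)$ induces
$$\bs(F)(M)\simeq\lim_{\Delta}\bs(F)(\cU^{\bullet})\longrightarrow\lim_{\Delta}\cH(F)(\cU^{\bullet})\simeq\cH(F)(M),$$
so it is enough to show that $\bs(F)\to\cH(F)$ is an equivalence on each term $\cU^{\bullet}([n])$. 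Each such term is a finite disjoint union of nonempty intersections, each contractible. Both functors send finite disjoint unions to finite products --- $\cH(F)$ as a sheaf, and $\bs(F)$ because in the additive category $\bC$ finite products agree with finite coproducts and the sifted colimit $\colim_{\Delta^{op}}$ commutes with them --- so I may assume the term is a single contractible manifold $C\cong\R^{n}$.

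On $C$ both presheaves are homotopy invariant: $\bs(F)$ by the first part of Lemma \ref{lll25}, and $\cH(F)$ by construction. A homotopy invariant presheaf sends smooth homotopy equivalences to equivalences, and $C$ is smoothly contractible, so $\bs(F)(C)\simeq\bs(F)(*)$ and $\cH(F)(C)\simeq\cH(F)(*)$ compatibly with the canonical map. Finally the canonical map is an equivalence at the point, since $\ev_{*}(L(G))\simeq\ev_{*}(G)$ for any presheaf $G$ (the value at $*$ is unchanged by sheafification, as already used in the proof of Lemma \ref{wie2}). Chaining these equivalences gives $\bs(F)(C)\simeq\cH(F)(C)$, completing Part 2. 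I expect the essential point --- and the place where the hypotheses are genuinely used --- to be the interchange of $\colim_{\Delta^{op}}$ with the Mayer--Vietoris pullback in Part 1, which works only because $\bC$ is stable, so that the pullback square is a pushout and hence commutes with colimits; for a non-stable target this interchange, and with it finite descent for $\bs(F)$, would fail. In Part 2 the only real input beyond Part 1 is compactness, used precisely to obtain a \emph{finite} good cover so that Part 1 applies.
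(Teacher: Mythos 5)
Your proof is correct, but it follows a different path than the paper's in both parts. For Part 1 the paper does not reduce to Mayer--Vietoris: it rewrites the \v{C}ech limit $\lim_{\Delta}F(U^{\bullet})$ of a finite cover as a limit over a \emph{finite} subcategory of $\Delta$ (via a right Kan extension from $\Delta^{inj}$ of the reduced diagram, which is eventually terminal) and then invokes the fact that in a stable presentable category arbitrary colimits commute with finite limits to move $\colim_{\Delta^{op}}$ past this finite limit in one step. Your route instead uses the standard induction reducing finite descent to two-element covers and then the observation that a cartesian square in $\bC$ is cocartesian and hence survives $\colim_{\Delta^{op}}$; both arguments use stability in an essential and equivalent way, yours trading the Kan-extension analysis of the \v{C}ech diagram for the (standard, but left as a citation) excision-implies-finite-descent induction. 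For Part 2 the difference is more substantial: the paper unwinds the sheafification morphism $\bs\to L\circ\bs\simeq\cH$ on $M$ as an iterated colimit over open covers of \v{C}ech limits, uses compactness to restrict to the cofinal subcategory of finite covers, and concludes from Part 1 that the resulting diagram is constant with value $\bs(F)(M)$. You instead pick a finite good cover, apply descent on both sides, and reduce to contractible pieces where both presheaves are homotopy invariant (Lemma \ref{lll25}) and agree at the point because $L$ preserves the value at $*$. Your version needs the extra geometric input of good covers but sidesteps the transfinite iteration implicit in the plus construction, which the paper's phrase ``iteration of colimits of limits'' quietly absorbs; the paper's version needs no good covers but does require tracking cofinality and constancy through that iteration. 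Both are complete; just note that for manifolds with corners the good-cover pieces are of the form $[0,\infty)^{k}\times\R^{n-k}$ rather than $\R^{n}$, which is harmless since they are still smoothly contractible.
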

\begin{proof}
We will use the fact that in a stable and presentable $(\infty,1)$-category
arbitrary colimits commute with finite limits. {This is because the colimit functor
$\Fun(K,\bC) \to \bC$ for every (small) index-category $K$ is a functor between stable $(\infty,1)$-categories. Thus the fact that it preserves colimits implies that it also preserves finite limits.}

For a finite open covering $\{U_i\}_{i \in I}$ of $M$ the limit $\lim_{\Delta} F(U^\bullet)$ can be  rewritten as a limit over a finite subcategory of $\Delta$. 
More precisely we consider the inclusion $j: \Delta^{inj} \to \Delta$ of the subcategory of $\Delta$ with injective maps. Then we consider the diagram
$F(U^\bullet)^{red}: \Delta^{inj} \to \bC$ given by 
$$ F(U^\bullet)^{red}_n = \prod F(U_{i_1} \cap U_{i_2} \cap ... \cap U_{i_n})$$
where the $i_k$ are pairwise different. Now we observe that the original diagram $F(U^\bullet)$ is the right Kan extension of this reduced diagram along the inclusion $j$ which can be shown using the pointwise formulas for right Kan extensions. Thus we conclude that the limit of the reduced diagram is the same as the limit over the original diagram. Finally we note that the reduced diagram has the property that for almost all indices $n$ the object $F(U^\bullet)^{red}_n$ is terminal. Therefore the limit over this diagram is the same as the limit over the finite subcategory of $\Delta$ where $F(U^\bullet)^{red}$ is not terminal.

We now compute
\begin{eqnarray*}
\lim_{i \in \Delta} \bs(F)(U^{i}) &\simeq& \lim_{i \in \Delta} \colim_{j \in \Delta^{op}} F(U^{i} \times \Delta^j) \simeq \colim_{j \in \Delta^{op}} \lim_{i \in \Delta}  F(U^{i} \times \Delta^j) \\&\simeq& \colim_{j \in \Delta^{op}}  F(M\times \Delta^{j})\simeq \bs(F)(M)\ .
\end{eqnarray*}
This shows {the first assertion}.

The evaluation of the sheafification morphism $\bs \to {L\circ \bs} {\simeq}  \cH$ on a manifold $M$ can be described as an {iteration of colimits of limits}  of the form 
$$ {\colim_{\cU}\lim_{\Delta} ~\bs(F)}(U^\bullet) $$
where the colimit {runs over the category whose objects are open covers $\cU $ of $M$ and morphisms are refinements.} For a compact $M$ every open cover can be refined by a finite open cover, i.e. the finite open {covers} are
cofinal among all {open} covers. Thus we can reduce the colimit to a colimit over {the full subcategory of all} finite open covers.   {Given assertion $1.$ the restriction of the system to this subcategory is equivalent to the constant system with value}
$\bs(F)(M)$. This shows the second assertion.
\end{proof}

\begin{lem}\label{thop} For every $\hat E\in \Fun^{desc}(\Mf^{op},\bC)$ and compact manifold $S$ we have a natural equivalence
$$U(I_{S}(\hat E))\simeq U(\hat E)^{S_{top}}\ .$$ 
The equivalence also holds, if $S$ is only homotopy equivalent
to a compact manifold{, i.e. if $S$ has the homotopy type of a finite CW-complex}.
\end{lem}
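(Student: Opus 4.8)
The plan is to compute $U(I_S\hat E)$ directly from the formula $\cH\simeq L\circ\bs$ for the homotopification (Lemma \ref{lll25}) and then to identify the result with a power of $U(\hat E)$ using the descent property of $\bs$ on compact manifolds (Proposition \ref{propcom}) together with the fact that homotopy invariant sheaves are constant (Proposition \ref{propo}). The whole argument is a chain of equivalences, so the real work is isolating which step uses compactness.

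First I would unwind $U=\ev_*\circ\cH=\ev_*\circ L\circ\bs$. Using the identity $\ev_*(L(G))\simeq G(*)$ for a presheaf $G$, the concrete formula for $\bs$, and the canonical symmetry $S\times\Delta^n\cong\Delta^n\times S$, one obtains
\[
U(I_S\hat E)\simeq\bs(I_S\hat E)(*)\simeq\colim_{\Delta^{op}}\hat E(S\times\Delta^\bullet)\simeq\colim_{\Delta^{op}}\hat E(\Delta^\bullet\times S)\simeq\bs(\hat E)(S).
\]
Thus the statement reduces to the comparison $\bs(\hat E)(S)\simeq U(\hat E)^{S_{top}}$. Next I would invoke Proposition \ref{propcom}: since $S$ is compact, the canonical morphism $\bs(\hat E)(S)\to\cH(\hat E)(S)$ is an equivalence. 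By Proposition \ref{propo}(1) the homotopy invariant sheaf $\cH(\hat E)$ is of the form $\const(\ev_*\cH(\hat E))=\const(U(\hat E))$, and by the Dugger-type computation recorded in the proof of Proposition \ref{propo} one has $\const(C)(S)\simeq C^{S_{top}}$ for every $C\in\bC$. Chaining these gives $\bs(\hat E)(S)\simeq\cH(\hat E)(S)\simeq\const(U(\hat E))(S)\simeq U(\hat E)^{S_{top}}$, and all steps are natural in $\hat E$ since they come from universal properties.

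For the extension to manifolds $S$ that are merely homotopy equivalent to a compact manifold $S'$, I would use that $\bs(\hat E)$ is homotopy invariant as a presheaf (Lemma \ref{lll25}(1)). A smooth homotopy equivalence $S\simeq S'$ (which exists by smoothing any continuous homotopy equivalence of manifolds) then yields $\bs(\hat E)(S)\simeq\bs(\hat E)(S')$, reducing everything to the compact case, while $S_{top}\simeq S'_{top}$ matches the right-hand side.

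The main obstacle is concentrated entirely in Proposition \ref{propcom}: the passage from the presheaf-level formula $\bs(\hat E)$ to the sheafified homotopification $\cH(\hat E)$ requires that $\bs(\hat E)$ already satisfy descent for finite covers of the compact $S$, which rests on commuting the sifted colimit defining $\bs$ with the finite limits computing \v{C}ech descent; this is precisely where compactness (cofinality of finite covers) is used. Everything else is formal manipulation of the adjunction $(\cH,\const,\cS,\cG)$. I would also take care that the power operation $(-)^{S_{top}}$ appearing in the statement is the same cotensor of $\bC$ over spaces used in Proposition \ref{propo}, which is automatic from the definitions recalled in Remark \ref{rem_power}.
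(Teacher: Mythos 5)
Your proof is correct and follows the paper's argument for the main assertion essentially verbatim: one unwinds $U(I_{S}\hat E)\simeq\bs(\hat E)(S)$, uses Proposition \ref{propcom} for compact $S$ to pass to $\cH(\hat E)(S)$, and then identifies this with $U(\hat E)^{S_{top}}$ via Proposition \ref{propo}. For the second assertion you invoke the homotopy invariance of the presheaf $\bs(\hat E)$ from Lemma \ref{lll25} directly (after smoothing the homotopy equivalence), whereas the paper rederives the homotopy invariance of $S\mapsto U(I_{S}\hat E)$ by applying the already-established compact case to $S=\Delta^{1}$ via $U(I_{\Delta^{1}\times M}\hat E)\simeq U(I_{M}\hat E)^{\Delta^{1}_{top}}$; since $U(I_{(-)}\hat E)\simeq\bs(\hat E)(-)$ these two routes coincide.
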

\begin{proof}
We apply the formula for the homotopification and get:
$$ U(I_{S}(\hat E)) \simeq \colim_{\Delta} (I_S(\hat E)(\Delta^\bullet) )\simeq \colim_\Delta {\hat E}(\Delta^\bullet \times S) \simeq \bs(\hat E)(S) \simeq \cH(\hat E)(S)$$
For the last equivalence we use that $S$ is compact and Proposition \ref{propcom}. Finally we use the general formula
$ \cH(\hat E)(S) \simeq  U(\hat E)^{S_{top}}$ 
which follows from the fact that homotopy invariant sheaves are constant as shown in Proposition \ref{propo}.
\medskip

The second assertion follows from the fact that the functor 
$$U\big(I_{(-)}(\hat E)\big): \Mfo \to \bC$$
is homotopy invariant, which can be seen as follows. First we observe that it suffices to check that it maps the projection $  \Delta^{1}\times M \to M$ to an equivalence
$$ U(I_{M}(\hat E)) \to U(I_{ \Delta^{1}\times M}(\hat E))$$
But this follows from the formula 
$$U(I_{  \Delta^{1}\times M}(\hat E)) \simeq U\big(I_{\Delta^{1}}(I_M(\hat E))\big) \simeq 
U(I_{M}(\hat E))^{\Delta^{1}_{top}} \simeq U(I_{M}(\hat E))\ .$$
\end{proof}

\begin{lem}\label{thointer}
Let $M$ be a compact manifold and $\bC$ stable. Then the functor $I_M: \Fun^{desc}(\Mf^{op},\bC) \to \Fun^{desc}(\Mf^{op},\bC)$ interchanges with colimits. 
\end{lem}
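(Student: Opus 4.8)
The plan is to identify $I_M$ with a power operation, deduce exactness from stability, and then handle filtered colimits by a compactness argument modelled on Proposition~\ref{propcom}.

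First I would invoke Remark~\ref{rem_power}, equation \eqref{natnatnat1}, to identify $I_M\simeq(-)^{y(M)}$ with the power by the representable sheaf $y(M)$. As such $I_M$ is the right adjoint of $(-)\otimes y(M)$ and hence preserves all limits, in particular finite limits. Since $\bC$ is stable, so is $\Fun^{desc}(\Mf^{op},\bC)$, and a finite-limit-preserving functor between stable $(\infty,1)$-categories is automatically exact; thus $I_M$ also preserves finite colimits. Because every colimit can be written as a filtered colimit of finite colimits, it remains to show that $I_M$ preserves filtered colimits.

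The key step is the claim that for every compact manifold $P$ the evaluation functor $\ev_P\colon\Fun^{desc}(\Mf^{op},\bC)\to\bC$ preserves filtered colimits, i.e. that filtered colimits of sheaves are computed objectwise on compact manifolds. I would prove this by repeating the two-step argument of Proposition~\ref{propcom}. Let $\{\hat E_\alpha\}$ be a filtered diagram of sheaves, let $F$ denote the objectwise (presheaf) colimit, so that the sheaf colimit is $L(F)$. For a cover of a compact manifold one may pass to a finite subcover, and for a finite cover the \v{C}ech descent limit reduces to a finite limit, exactly as in the proof of Proposition~\ref{propcom}, part $1$. Since in a stable presentable $(\infty,1)$-category filtered (indeed arbitrary) colimits commute with finite limits, and each $\hat E_\alpha$ already satisfies descent, it follows that $F$ satisfies descent for covers of compact manifolds. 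As in the proof of Proposition~\ref{propcom}, part $2$, finite covers are cofinal for compact $P$, so sheafification does not change the value of $F$ on $P$; this yields $\ev_P(\colim_\alpha\hat E_\alpha)\simeq\colim_\alpha\ev_P(\hat E_\alpha)$.

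Finally I would conclude as follows. Given a filtered diagram $\{\hat E_\alpha\}$ with colimit $\hat E$, there is a canonical assembly map $\theta\colon\colim_\alpha I_M\hat E_\alpha\to I_M\hat E$ of sheaves, and by Lemma~\ref{lll24} it suffices to check that $\theta$ induces an equivalence on every stalk $(-)(\R^n_0)$. Such a stalk is the filtered colimit of the values on the shrinking neighbourhoods of $0\in\R^n$, and these may be taken to be compact (for instance closed balls). For such a compact neighbourhood $K$ both $K$ and the product $M\times K$ are compact, since $M$ is compact, so the key step identifies both the source and the target of $\theta$ at $\R^n_0$ with $\colim_{K}\colim_\alpha\hat E_\alpha(M\times K)$; hence $\theta$ is a stalk equivalence and therefore an equivalence of sheaves. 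The \emph{main obstacle} is the key step: establishing that filtered colimits of sheaves may be computed objectwise on compact manifolds. This is exactly where stability is needed, to commute the filtered colimits past the finite descent limits, and where compactness enters, to reduce descent to finite covers and finite diagrams; it mirrors the structure of the proof of Proposition~\ref{propcom}.
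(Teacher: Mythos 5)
Your proof is correct, and its heart is the same as the paper's: the decisive input in both cases is that evaluation at a compact manifold commutes with colimits of sheaves, proved by the Proposition~\ref{propcom} mechanism (finite covers are cofinal for compact manifolds, and stability lets the colimit pass through the finite \v{C}ech limits, so sheafification does not change values on compacts). The differences are in the bookkeeping around this core. First, your preliminary reduction to filtered colimits (via $I_M\simeq(-)^{y(M)}$ being a limit-preserving right adjoint, hence exact by stability) is correct but unnecessary: the key step as you prove it works verbatim for \emph{arbitrary} diagrams, precisely because in a stable presentable category all colimits, not just filtered ones, commute with finite limits; the paper accordingly runs the whole argument for an arbitrary index category in one pass. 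Second, you conclude by checking stalks and invoking Lemma~\ref{lll24}, which requires the small observation that the shrinking neighbourhoods in the stalk colimit may be taken compact (closed balls are cofinal among them, and $M\times K$ is then compact); the paper instead evaluates on spheres and invokes Lemma~\ref{test_equi}. Both detection lemmas do the job, and your route is a perfectly serviceable alternative, just slightly longer.
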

\begin{proof}
As in the proof of Proposition \ref{propcom} we see that for a diagram of sheaves $F\in \Fun^{{desc}}(\Mf^{op},\bC)^{I}$ and $F :=\colim_{i} F_i $ the evaluation on a compact manifold $ {N}$ is given by the colimits of the evaluations
$$ F(N) \simeq \colim_{i }  F_i(N) .$$
Note that this might be wrong for non-compact $N$  since there is a sheafification involved. For compact $N$ we compute:
$$ I_M F(N) = F(M \times N) \simeq \colim_{i }  F_i(N \times M)  \simeq \colim_{i  } {\big(}I_M F_i(N){\big)}   \simeq \big(\colim_{i  }(I_M F_i)\big)(N)
$$
This shows  that the canonical morphism $\colim_{i}  I_M F_i \to I_{M} F$  becomes an equivalence  after evaluation at arbitrary compact manifolds $N$, in particular for all spheres. We finally apply  Lemma \ref{test_equi}. 
\end{proof} 
 
Let $\iota:\Ch\to \Ch[W^{-1}]$ be the localization of the category of chain complexes 
at the quasi-isomorphisms. Then $\Ch[W^{-1}]$ is a stable and presentable $(\infty,1)$-category.
At several {occasions} we need the following fact:
\begin{lem}\label{heut1}
The localization $\iota:\Ch\to \Ch[W^{-1}]$ preserves filtered colimits.
\end{lem}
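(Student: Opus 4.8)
The plan is to detect the desired equivalence on homology. Fix a filtered category $J$ and a diagram $C_\bullet\colon J\to\Ch$, and write $C:=\colim_J^{\Ch}C_\bullet$ for its ordinary colimit. Applying $\iota$ to the colimit cocone produces a cocone on $\iota(C_\bullet)$ in $\Ch[W^{-1}]$ and hence a canonical comparison morphism
$$c\colon \colim_J^{\Ch[W^{-1}]}\iota(C_\bullet)\longrightarrow \iota(C),$$
and the assertion of the Lemma is exactly that $c$ is an equivalence. Since by definition a quasi-isomorphism is a map inducing isomorphisms on all homology groups, the functors $H_n\colon\Ch\to\Ab$ factor through $\iota$ as functors $\bar H_n\colon\Ch[W^{-1}]\to\Ab$. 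With respect to the standard $t$-structure on the stable $(\infty,1)$-category $\Ch[W^{-1}]\simeq D(\Z)$, whose heart is $\Ab$, these $\bar H_n$ are the homology functors of the $t$-structure and are jointly conservative. So it suffices to show that $\bar H_n(c)$ is an isomorphism for every $n\in\Z$.

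Second, I would compute both sides. On the target, $\bar H_n(\iota(C))=H_n(\colim_J^{\Ch}C_\bullet)\cong\colim_J H_n(C_\bullet)$, the last isomorphism being the classical fact that filtered colimits of abelian groups are exact and therefore commute with kernels and images, hence with homology. On the source I claim that $\bar H_n$ itself preserves filtered colimits, so that $\bar H_n(\colim_J^{\Ch[W^{-1}]}\iota(C_\bullet))\cong\colim_J H_n(C_\bullet)$. This holds because $\Ch[W^{-1}]\simeq D(\Z)$ is compactly generated by the shifts $\iota(\Z[n])$ of $\Z$, and each $\iota(\Z[n])$ is a compact object which corepresents $\bar H_n$ via $\bar H_n(X)\cong\pi_0\Map_{\Ch[W^{-1}]}(\iota(\Z[n]),X)$; a functor corepresented by a compact object commutes with filtered colimits. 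Under these two identifications, $\bar H_n(c)$ is induced by the structure maps $H_n(C_i)\to H_n(C)$ of the colimit cocone and hence corresponds to the identity of $\colim_J H_n(C_\bullet)$, so it is an isomorphism. Joint conservativity then forces $c$ to be an equivalence.

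The genuinely non-formal inputs, and therefore the places I would spend care, are the identification $\Ch[W^{-1}]\simeq D(\Z)$ together with the facts that $\Z$ is a compact generator and that $\iota(\Z[n])$ corepresents $\bar H_n$ --- for all of which I would cite the construction of the derived $(\infty,1)$-category in \cite[Chapter 1]{HA} --- and the bookkeeping in the compatibility claim of the previous paragraph, which is a routine diagram chase with the universal property of the colimit. I expect the compactness and corepresentability statement to be the main obstacle, since everything downstream of it is formal. As an alternative route avoiding compact generators, one could instead realize $\Ch[W^{-1}]$ as the underlying $(\infty,1)$-category of the projective model structure on $\Ch$, in which colimits are computed by homotopy colimits; then exactness of filtered colimits of abelian groups shows that the ordinary filtered colimit functor is homotopical and hence already computes the homotopy colimit, giving $\iota(\colim_J^{\Ch}C_\bullet)\simeq\colim_J^{\Ch[W^{-1}]}\iota(C_\bullet)$ directly.
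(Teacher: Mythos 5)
Your argument is correct, but your primary route is genuinely different from the one in the paper. The paper works model-categorically: it invokes a model structure on $\Ch$ (and the induced one on the diagram category) so that the $(\infty,1)$-categorical colimit of $\iota(A)$ is computed by the ordinary colimit of a cofibrant replacement $Q(A)$ of the diagram, and then uses that homology commutes with filtered colimits to see that the levelwise quasi-isomorphism between $Q(A)$ and $A$ stays a quasi-isomorphism after taking colimits. Your main argument instead stays inside the stable $(\infty,1)$-category $\Ch[W^{-1}]\simeq D(\Z)$: you detect the comparison map on the jointly conservative homology functors, computing the left-hand side via compactness of the corepresenting objects $\iota(\Z[n])$ and the right-hand side via exactness of filtered colimits in $\Ab$. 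Both proofs ultimately rest on the same non-formal input (filtered colimits of abelian groups are exact), but yours trades the model-categorical machinery for compact generation of $D(\Z)$, which arguably fits the paper's stated preference for model-independent arguments; the paper's version is shorter once one accepts the cited model-category facts. Note also that the alternative you sketch in your last paragraph --- ordinary filtered colimits are homotopical for the projective model structure, hence compute the homotopy colimit --- is essentially verbatim the paper's proof.
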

\proof
Let $I$ be a filtered category and $A\in \Fun(I,\Ch)$.
We use the fact that there exists a model category structure on $\Ch$ which can be used
in order to construct  models of $\Ch[W^{-1}]$ and $\Fun(I,\Ch[W^{-1}])$, see  \cite{MR1650134}. 
In particular, if $A\to Q(A)$ is a cofibrant replacement of $A$, then  
$\colim_{I}  Q(A)$ is a model for $ \colim_{I} \iota(A)$. Since cohomology commutes with filtered colimits
the quasi-isomorphism
$A\to Q(A)$ induces a weak equivalence $\colim_{I}A\to \colim_{I}Q(A)$.
\hB

For an abelian group $U$ we let $U[-n]$ denote the chain complex obtained by placing $U$ in degree $n$, and we use the same convention for sheaves of abelian groups.
{The following result is well known, a proof in this form can e.g. be found in 
in \cite[Problem 4.3.2]{skript}}.
\begin{lem} \label{lll23} For a cosimplicial chain complex  $A\in \Fun(\Delta,\Ch)$ we have an equivalence 
 $$\lim_{\Delta} \iota(A){\simeq}\iota(\tot(A))$$
\end{lem}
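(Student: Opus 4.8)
The plan is to compare the two functors $\Fun(\Delta,\Ch)\to\Ch[W^{-1}]$ given on an object $A$ by $A\mapsto \iota(\tot(A))$ and by $A\mapsto \lim_\Delta \iota(A)$, where in the second functor $\iota$ is applied levelwise and the limit is taken in the $(\infty,1)$-category $\Ch[W^{-1}]$. Here $\tot(A)$ denotes the product total complex of the double complex attached to $A$, the cosimplicial direction contributing the cochain differential $\sum_i(-1)^i d^i$. The idea is to cut the cosimplicial diagram into its finite truncations, where the associated partial totalizations manifestly compute finite homotopy limits, and then to reassemble these along a tower of fibrations on which $\iota$ and the inverse limit interchange. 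This avoids sheaf-theoretic input entirely; it is purely a statement about derived limits over $\Delta$ in $\Ch[W^{-1}]$.

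First I would reduce to finite stages. Writing $\Delta^{\le n}\subset\Delta$ for the full subcategory on $[0],\dots,[n]$, one has $N\Delta\simeq\colim_n N\Delta^{\le n}$ as a filtered colimit of finite categories, so a cone over $\Delta$ is the same datum as a compatible system of cones over the $\Delta^{\le n}$. This gives an equivalence $\lim_\Delta \iota(A)\simeq \lim_n\lim_{\Delta^{\le n}}\iota(A)$. Each $\Delta^{\le n}$ has finitely many objects and finite hom-sets, so $\lim_{\Delta^{\le n}}$ is a finite limit.

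Next I would identify each finite stage with a partial totalization, $\lim_{\Delta^{\le n}}\iota(A)\simeq \iota(\tot_n(A))$, where $\tot_n(A)$ is the total complex truncated to cosimplicial degrees $\le n$. Since $\Delta$ is a Reedy category, the homotopy limit over $\Delta^{\le n}$ is the ordinary limit of a Reedy-fibrant replacement, which the Bousfield--Kan totalization identifies with the $n$-th partial totalization; and because $\tot_n$ is assembled from the finitely many complexes $A^0,\dots,A^n$ by finite (co)limits, it preserves levelwise quasi-isomorphisms, so $\tot_n$ of $A$ itself already computes this finite homotopy limit. Equivalently, one builds $\tot_n(A)$ from $\tot_{n-1}(A)$ by the short exact sequence with kernel the normalized object $N^nA$ (suitably shifted), and the resulting fibre sequence $\iota(N^nA)\to\iota(\tot_n A)\to\iota(\tot_{n-1}A)$ is preserved by $\iota$ thanks to the stability of $\Ch[W^{-1}]$. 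Passing to the limit, the transition maps $\tot_n(A)\to\tot_{n-1}(A)$ are the projections of a product total complex, hence degreewise split surjections, i.e.\ fibrations in the projective model structure on $\Ch$; as every chain complex is fibrant there, the tower $(\tot_n A)_n$ is a tower of fibrations between fibrant objects, so the ordinary limit $\tot(A)=\lim_n\tot_n(A)$ is already a homotopy limit and is carried by $\iota$ to $\lim_n\iota(\tot_n A)$. Combining the three steps yields $\iota(\tot A)\simeq\lim_n\lim_{\Delta^{\le n}}\iota(A)\simeq\lim_\Delta\iota(A)$.

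The main obstacle is that the last two steps must cooperate: one has to know that the product totalization genuinely computes the inverse limit of the $\tot_n$-tower at the derived level, with no correction term, which is precisely what the fibrancy of the tower guarantees and is the reason the plain total complex, rather than a replacement of $A$, already models the homotopy limit. An independent check of the same point is the complete-convergence (Boardman / Eilenberg--Moore) comparison theorem applied to the filtration of $\tot(A)$ by cosimplicial degree: its associated graded is invariant under levelwise quasi-isomorphism, and since the filtration of a product total complex is complete and Hausdorff, an $E_1$-isomorphism forces a quasi-isomorphism of totalizations.
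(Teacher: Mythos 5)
The paper does not actually prove Lemma \ref{lll23}; it only records that the statement is well known and points to \cite[Problem 4.3.2]{skript}, so there is no in-text argument to compare yours against. Your proposal is the standard proof and is essentially correct: the decomposition $\lim_{\Delta}\simeq\lim_{n}\lim_{\Delta^{\le n}}$ via $N\Delta\simeq\colim_{n}N\Delta^{\le n}$ is valid, every cosimplicial object of $\Ch$ is Reedy fibrant (matching maps in an abelian category are split epimorphisms), so the finite homotopy limits are computed by partial totalizations, and the tower $(\tot_{n}A)_{n}$ consists of degreewise surjections between fibrant objects, so its strict limit $\tot(A)$ is a homotopy limit and $\iota$ sends it to $\lim_{n}\iota(\tot_{n}A)$.

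Two points deserve more care than you give them. First, the sentence ``$\tot_{n}$ preserves levelwise quasi-isomorphisms, so $\tot_{n}$ of $A$ itself already computes this finite homotopy limit'' is a non sequitur as written --- preservation of weak equivalences does not by itself identify a functor with a homotopy limit; what saves you is the Reedy fibrancy of $A$ (which you should state explicitly) or, equivalently, the inductive fibre-sequence argument you sketch right after. Second, the lemma as stated in the paper uses the \emph{unnormalized} product total complex $\tot(A)^{n}=\prod_{p+q=n}A^{p}([q])$, whereas the Bousfield--Kan $\mathrm{Tot}$-tower and your fibre sequences naturally produce the conormalized one: the kernel of $\tot_{n}(A)\to\tot_{n-1}(A)$ for the unnormalized complex is $A^{n}$ (shifted), not $N^{n}A$ (shifted), and these agree only after the cosimplicial Dold--Kan comparison. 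For \emph{product} total complexes that comparison is not purely formal; it is exactly the complete-filtration / $E_{1}$-isomorphism argument you mention in your last paragraph, so you should present that remark as a needed step of the proof rather than as an ``independent check.'' With those two adjustments the argument is complete.
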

Here the total complex  $\tot(A)$ of a cosimplicial chain complex  $A$  {is} given by 
$$\tot(A)^{n}:=\prod_{p+q=n} A^{p}([q])$$ with the differential
$$d (x)=(-1)^{q}d^{A}x+\delta x\ , \quad  \delta(x):=\sum_{i=0}^{q} (-1)^{i}\partial_{i} x\ , \quad x\in A^{p}([q])\ .$$

 A proof of the following Lemma can be found in \cite[Lemma 4.24]{skript}. 
 \begin{lem}\label{pi12} For a simplicial complex $A\in \Fun(\Delta^{op},\Ch)$ we have an equivalence 
 $$\colim_{\Delta^{op}} \iota(A){\simeq} \iota(\tot(A))\ .$$
\end{lem}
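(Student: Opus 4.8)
The statement is the Eckmann--Hilton dual of Lemma \ref{lll23}: it identifies the geometric realization, i.e.\ the colimit over $\Delta^{op}$, of a simplicial chain complex with the total complex of the associated bicomplex. Note that for a \emph{simplicial} object $\tot(A)$ must be formed with direct sums, $\tot(A)_{n}=\bigoplus_{p+q=n}A([q])_{p}$ with differential $\pm d^{A}+\sum_{i}(-1)^{i}\partial_{i}$, whereas in Lemma \ref{lll23} it is a product. One could hope to deduce the present lemma from \ref{lll23} by passing to $(\Ch[W^{-1}])^{op}$, but since $\Ch^{op}$ is not again a category of chain complexes there is no clean self-duality, so the plan is instead to exhibit both constructions as colimit-preserving functors $\Fun(\Delta^{op},\Ch)\to\Ch[W^{-1}]$ that factor through the levelwise localization, and then to compare them on a set of generators.

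First I would check that $\iota\circ\tot$ sends levelwise quasi-isomorphisms to equivalences and preserves all colimits, so that it descends to a functor $\overline{\tot}\colon\Fun(\Delta^{op},\Ch[W^{-1}])\to\Ch[W^{-1}]$ with $\iota\circ\tot\simeq\overline{\tot}\circ(\text{levelwise }\iota)$. The column filtration of the bicomplex by simplicial degree $q\le n$ is exhaustive and bounded below in each total degree, so the associated spectral sequence converges and a levelwise quasi-isomorphism induces a quasi-isomorphism on total complexes; preservation of colimits is clear since the direct-sum total complex is built from levelwise evaluations and direct sums, all of which commute with colimits of abelian groups. On the other side, $\colim_{\Delta^{op}}$ is a left adjoint and the levelwise $\iota$ preserves colimits, so $\colim_{\Delta^{op}}\circ\,\iota$ is colimit-preserving as well. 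Both constructions therefore define colimit-preserving functors out of $\Fun(\Delta^{op},\Ch[W^{-1}])$.

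Now $\Fun(\Delta^{op},\Ch[W^{-1}])$ is generated under colimits by the objects $G_{n}:=\Delta^{op}(-,[n])\otimes\iota(\Z[0])$, that is, the levelwise free abelian group $\Z[\Delta^{n}]$ on the standard simplicial set placed in chain degree $0$, since $\Ch[W^{-1}]$ is generated under colimits (hence shifts) by $\iota(\Z[0])$ and a functor category is generated by representables tensored with generators of the target. On $G_{n}$ both functors compute the homology of the standard simplex: $\colim_{\Delta^{op}}\iota(G_{n})$ is the homotopy colimit of the simplicial abelian group $\Z[\Delta^{n}]$, with homology $H_{*}(\Delta^{n};\Z)\cong\Z[0]$, while $\tot(G_{n})$ is precisely the unnormalized chain complex of $\Delta^{n}$, again with homology $\Z[0]$. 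Thus both restrict, along $[n]\mapsto G_{n}$, to the constant functor with value $\iota(\Z[0])$, the structure maps acting by the homology isomorphisms induced by the maps of simplicial sets $\Delta^{n}\to\Delta^{m}$. Since a colimit-preserving functor is determined up to equivalence by its restriction to the full subcategory spanned by the $G_{n}$, the two functors agree, which is the asserted equivalence.

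The main obstacle is to make this comparison genuinely natural rather than a mere objectwise coincidence of homologies; this is exactly what the reduction to generators plus the determination of a colimit-preserving functor by its restriction to them accomplishes, once the descent of $\tot$ through the localization (the convergence of the column spectral sequence for the infinite direct sums) has been secured. As an alternative implementation one can compare filtrations directly: the $n$-th associated graded of the skeletal filtration of the realization is $\Sigma^{n}$ of the normalized piece $N_{n}(A)$, and that of the column filtration of $\tot(A)$ is $\Sigma^{n}A([n])$, the discrepancy between normalized and unnormalized chains being absorbed by the standard acyclicity of the degenerate part.
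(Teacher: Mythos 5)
The paper does not actually prove this lemma --- it is quoted from \cite[Lemma 4.24]{skript} --- so there is no internal argument to measure you against; I will therefore assess your proposal on its own terms. The overall strategy (show both $\colim_{\Delta^{op}}\circ\,\iota$ and $\iota\circ\tot$ descend to colimit-preserving functors on $\Fun(\Delta^{op},\Ch[W^{-1}])$ and compare them) is reasonable, and your preparatory steps are essentially sound: the column filtration by simplicial degree is exhaustive and bounded below, so a levelwise quasi-isomorphism does induce a quasi-isomorphism on the direct-sum total complex, and $\tot$ does preserve the relevant colimits. The gap is in the final step. The principle you invoke --- ``a colimit-preserving functor is determined up to equivalence by its restriction to the generators, hence the two functors agree'' --- is not valid in the form you use it. What is true is that a \emph{natural transformation} between colimit-preserving functors which is an equivalence on a family of colimit-generators is an equivalence; but you have no natural transformation in hand, only the observation that both functors produce objects with homology $\Z[0]$ when evaluated on each $G_n$. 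An objectwise coincidence of values on generators does not determine a colimit-preserving functor: already $\Fun^{L}(\Ch[W^{-1}],\cE)$ is not recovered by evaluation at $\iota(\Z[0])$ alone (one must remember the $H\Z$-module structure on the value), and on top of that the simplicial functoriality in $[n]$ --- exactly the coherence you flag as ``the main obstacle'' --- is not supplied by a homology computation. Constructing the comparison map is the actual content of the lemma: one must exhibit $\iota(\tot(A))$ as a coherent cocone under the simplicial object $\iota(A(\bullet))$, e.g.\ starting from the column-zero inclusion $A([0])\to\tot(A)$ with the higher columns providing the homotopies, or equivalently via compatible maps $\sk_n$ of the realization to the $n$-th stage of the column filtration. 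A secondary, fixable slip: $\Ch[W^{-1}]$ is not generated under colimits by $\iota(\Z[0])$ alone, since colimits give suspensions but not desuspensions; you need all shifts $\iota(\Z[k])$, $k\in\Z$.

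Once the canonical natural map $\colim_{\Delta^{op}}\iota(A)\to\iota(\tot(A))$ is constructed, your machinery does finish the proof: both sides preserve colimits, so it suffices to check the map is an equivalence on the generators $\Delta^{n}\otimes\iota(\Z[k])$, where your homology computation applies; alternatively the filtration comparison in your last paragraph (skeletal filtration of the realization with graded pieces $\Sigma^{n}N_{n}(A)$ against the column filtration of $\tot(A)$ with graded pieces $\Sigma^{n}A([n])$, the difference killed by acyclicity of the degenerate subcomplex) is the standard complete argument, and I would promote it from an aside to the main line of the proof.
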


Here $\tot(A)$ of a simplicial chain complex is given by 
$$\tot(A)^{n}:=\bigoplus_{p-q=n} A^{p}([q])$$ with the differential
$$d (x)=(-1)^{q}d^{A}x+\delta x\ , \quad  \delta(x):=\sum_{i=0}^{q} (-1)^{i}\partial_{i} x\ , \quad x\in A^{p}([q])\ .$$
The category $\Mf$ has a structure sheaf $\cC^{\infty}$ of rings of smooth functions. We can consider sheaves of $\cC^{\infty}$-modules. 
\begin{lem}\label{hj12}
If $A\in \Fun^{desc}(\Mf^{op},\Ch)$ is a complex of sheaves, whose components have the structure of
$C^{\infty}$-modules, then $\iota(A)$ is also a sheaf, i.e. $\iota(A) \in \Fun^{desc}(\Mf^{op},\Ch[W^{-1}])$.
\end{lem}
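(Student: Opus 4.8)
The plan is to verify the $(\infty,1)$-categorical descent condition for $\iota(A)$ directly. Fix a manifold $M$ and a covering $U\to M$ in the site, with \v{C}ech nerve $U^{\bullet}$. Since $\iota$ is applied componentwise, $\iota(A)(U^{\bullet})$ is the image under $\iota$ of the cosimplicial chain complex $A(U^{\bullet})$, so Lemma \ref{lll23} identifies the homotopy limit $\lim_{\Delta}\iota(A(U^{\bullet}))$ with $\iota(\tot(A(U^{\bullet})))$, the localization of the total complex of the \v{C}ech double complex. The comparison map $\iota(A)(M)\to \lim_{\Delta}\iota(A(U^{\bullet}))$ is then exactly $\iota$ applied to the augmentation $A(M)\to \tot(A(U^{\bullet}))$ in $\Ch$. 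Hence it suffices to prove that this augmentation is a quasi-isomorphism, for then $\iota$ carries it to an equivalence, which is the desired descent.

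Next I would use the $\cC^{\infty}$-module structure on the components. Write $A^{p}$ for the sheaf component of $A$ in internal degree $p$; the $p$-th row of the double complex is the augmented \v{C}ech complex $A^{p}(M)\to A^{p}(U^{\bullet})$ of the sheaf $A^{p}$. The key claim is that each such row is exact, i.e. that sheaves of $\cC^{\infty}$-modules are \v{C}ech-acyclic. This is the classical fineness/partition of unity argument, adapted to submersion covers: since $U\to M$ is a surjective submersion one can choose an open cover $\{V_{\alpha}\}$ of $M$ with local sections $s_{\alpha}\colon V_{\alpha}\to U$ and a subordinate partition of unity $\{\rho_{\alpha}\}$. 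Multiplication by the smooth functions $\rho_{\alpha}$ is defined on the $\cC^{\infty}$-module $A^{p}$, and the usual formula $(h\sigma)(x_{0},\dots)=\sum_{\alpha}\rho_{\alpha}\cdot\sigma(s_{\alpha}(\dots),x_{0},\dots)$ produces a contracting homotopy for the \v{C}ech differential $\delta$, establishing row-exactness with $H^{0}$ equal to $A^{p}(M)$ by the strict sheaf property of $A^{p}$.

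Finally I would assemble the rows via the spectral sequence of the double complex, filtered by the internal degree $p$. Its first page is the \v{C}ech cohomology of the components, $E_{1}^{p,q}=\check{H}^{q}(U/M;A^{p})$, which by the previous step equals $A^{p}(M)$ for $q=0$ and vanishes for $q>0$; thus the sequence collapses at $E_{2}$ onto the column $H^{p}(A(M))$ and identifies $H^{*}(\tot(A(U^{\bullet})))$ with $H^{*}(A(M))$ through the augmentation, giving the quasi-isomorphism. It is worth noting that one cannot simply contract the whole total complex by $h$, since $h$ need not commute with the internal differential $d^{A}$ (the de Rham differential is not $\cC^{\infty}$-linear, yet $\Omega$ falls under this Lemma); this is precisely why the argument is organized as row-exactness plus a spectral sequence. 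The main obstacle is therefore twofold: constructing the contracting homotopy for a general submersion cover, which is exactly where the local sections $s_{\alpha}$ and the $\cC^{\infty}$-linearity of $\delta$ are used; and ensuring convergence of the spectral sequence when $A$ is unbounded, since $\tot$ is formed with products. For the examples of interest $A$ is bounded below in internal degree, whence $\tot$ consists of finite products in each total degree and the filtration is finite in each degree, so convergence is automatic; in general one invokes completeness and exhaustiveness of the filtration.
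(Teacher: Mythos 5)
Your proof follows essentially the same route as the paper's: the paper likewise invokes Lemma \ref{lll23} to identify $\lim_{\Delta}\iota(A(U^{\bullet}))$ with $\iota(\tot(A(U^{\bullet})))$ and then appeals to smooth partitions of unity to get acyclicity in the \v{C}ech ($\delta$-) direction. The only difference is one of detail: the paper's proof is a two-line sketch, whereas you spell out the contracting homotopy for a submersion cover, the need to separate row-exactness from the internal differential, and the convergence of the resulting spectral sequence --- all correct elaborations of the same argument.
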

\proof
Let $U\to M$ be a covering. We must show that
$\iota(A)(M)\to \lim_{\Delta} \iota(A)(U^{\bullet})$ is an equivalence. In view of Lemma \ref{lll23} it suffices to show that $\iota(A)(M)\to  \iota(\tot(A(U^{\bullet})))$ is an equivalence.
We now use the existence of smooth partitions of unity in order to show that $\tot(A)(U^{\bullet})$ is acyclic in the $\delta$-direction. \hB

\begin{lem}\label{zzz2}
 For sheaf $A\in \Fun^{desc}(\Mf^{op},\Ab)$ of $\cC^{\infty}$-modules we have an equivalence $\cH(\iota(A[0])) {\simeq}  0$.
 \end{lem}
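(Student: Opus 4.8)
The plan is to reduce the statement to the acyclicity of one explicit simplicial abelian group and then to produce a contracting homotopy whose only nontrivial ingredient is the $\mathcal{C}^\infty$-module structure. First I would observe that $\iota(A[0])$ is indeed a sheaf by Lemma \ref{hj12}, and that $\cH(\iota(A[0]))$ is homotopy invariant by construction, so by Proposition \ref{propo}.1 it is constant on its value at the point, i.e. $\cH(\iota(A[0]))\simeq\const(U(\iota(A[0])))$ with $U$ as in Definition \ref{ijoisojsvs}. Hence it suffices to show $U(\iota(A[0]))\simeq 0$. Using $\cH\simeq L\circ\bs$ (Lemma \ref{lll25}), the identity $L(F)(*)\simeq F(*)$, and $\bs(F)(*)\simeq\colim_{\Delta^{op}}F(\Delta^\bullet)$, I would identify
\[
U(\iota(A[0]))\simeq\colim_{\Delta^{op}}\iota\big(A(\Delta^\bullet)[0]\big).
\]
By Lemma \ref{pi12} this colimit is $\iota(\tot(A(\Delta^\bullet)[0]))$, and $\tot(A(\Delta^\bullet)[0])$ is exactly the (unnormalized) Moore complex of the simplicial abelian group $G_\bullet:=A(\Delta^\bullet)$, $G_q=A(\Delta^q)$, with differential the alternating sum of face maps. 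Everything thus reduces to showing that $G_\bullet$ is acyclic.

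The essential point, and the only place the hypothesis is used, is that $A$ is a sheaf of $\mathcal{C}^\infty$-modules, so each $A(\Delta^q)$ is a module over $C^\infty(\Delta^q)$ and sections may be multiplied by smooth functions; this is precisely what fails for a locally constant sheaf, whose associated $G_\bullet$ is \emph{not} acyclic. I would therefore build an explicit contracting chain homotopy $h_q\colon G_q\to G_{q+1}$ with $\partial h+h\partial=\id$. The mechanism is already visible in the bottom degree: letting $t\in C^\infty(\Delta^1)$ be the barycentric coordinate with $t(v_0)=0$ and $t(v_1)=1$, $\pi\colon\Delta^1\to *$ the projection, and setting $h_0(a):=t\cdot\pi^*(a)$ for $a\in A(*)=G_0$, one computes $\partial_1 h_0(a)=(d_0-d_1)h_0(a)=t(v_1)\,a-t(v_0)\,a=a$, using that pullback of sections is compatible with the module action. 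This forces $H_0(G_\bullet)=0$ and shows how the coordinate weight $t$ (available only through the module structure) repairs the degenerate face $d_0$.

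For the higher homotopies I would use the weighted cone operators obtained by coning $\Delta^q$ to the vertex $v_0$ of $\Delta^{q+1}$ along the affine (hence smooth) linear contraction of the simplex, multiplying by the appropriate barycentric coordinate functions so as to stay smooth at the cone point and to reproduce the telescoping of the classical prism/cone operator, the weights again being supplied by the $\mathcal{C}^\infty$-module structure. The main obstacle is exactly this bookkeeping: assembling the affine maps and coordinate weights into well-defined $h_q$, checking smoothness at the cone vertex, and getting the signs right in $\partial h+h\partial=\id$. Once $G_\bullet$ is acyclic we get $\iota(\tot(G_\bullet))\simeq 0$, hence $U(\iota(A[0]))\simeq 0$, and therefore $\cH(\iota(A[0]))\simeq 0$.
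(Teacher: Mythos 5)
Your overall route is the paper's route with one extra (valid) simplification: the paper applies Lemma \ref{pi12} to $\bs(\iota(A[0]))(M)$ for arbitrary $M$ and asserts acyclicity of the Moore complex of $A(\Delta^{\bullet}\times M)$, whereas you first use Proposition \ref{propo}.1 to reduce to $U(\iota(A[0]))\simeq 0$ and hence to the single simplicial abelian group $A(\Delta^{\bullet})$. All of those reduction steps are correct, your degree-zero computation is exactly the right use of the $\cC^{\infty}$-module structure, and your remark that the statement fails for locally constant sheaves correctly locates where the hypothesis enters. The paper itself gives no details for the acyclicity, so the only substantive issue is whether your proposed completion works.

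It does not, as described: the higher homotopies via ``weighted cone operators'' have a genuine defect. Pulling back a section along the radial cone projection $\Delta^{q+1}\setminus\{v_{0}\}\to\Delta^{q}$ only produces a section over $\Delta^{q+1}\setminus\{v_{0}\}$ (the projection is not smooth at the apex), and for an abstract sheaf of $\cC^{\infty}$-modules you cannot repair this by multiplying by a barycentric weight vanishing at $v_{0}$: unlike for functions or differential forms there is no notion of ``extension by decay'', so $w\cdot p^{*}a$ need not extend to a section over all of $\Delta^{q+1}$, and the maps $h_{q}$ for $q\ge 1$ are not well defined. The standard repair is to use prism rather than cone operators: the single natural map $H\colon A(N)\to A(\Delta^{1}\times N)$, $a\mapsto t\cdot\pr^{*}a$, satisfies $\partial_{1}^{*}H=\id$ and $\partial_{0}^{*}H=0$, and composing $H$ with the alternating sum of pullbacks along the affine (hence smooth) embeddings $\Delta^{q+1}\hookrightarrow\Delta^{1}\times\Delta^{q}$ of the prism decomposition gives $h_{q}$ with $\partial h+h\partial=\id$ by the usual prism identities; only $H$ uses the module structure. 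Alternatively one can avoid chain homotopies entirely: since $\bs$ takes values in homotopy invariant presheaves, $\bs(\partial_{0}^{*})\simeq\bs(\partial_{1}^{*})$ (both are inverse to $\pr^{*}$), so applying $\bs$ to $H$ yields $\id_{\bs(\iota(A[0]))}\simeq 0$ and hence $\bs(\iota(A[0]))\simeq 0$. With either completion your outline becomes a full proof.
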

 \proof
 
 Using Lemma \ref{pi12} we get an equivalence
 $$\bs \circ \iota(A[0])(M) \simeq \iota\big(  ...   \to A(\Delta^2{\times M}) \to A(\Delta^{1}{\times M}) \to A(\Delta^0{\times M}) \to 0 \to ...\big).$$
 Since $A$ is a $\cC^\infty$-module this complex is acyclic. Thus with Lemma \ref{lll25} we get {the equivalences}
  $\cH(\iota(A[0])) \simeq L \circ \bs(\iota(A[0])) \simeq 0$.
  \hB 

\begin{ddd}\label{heute10} For presheaf of chain complexes $F\in\Fun (\Mf^{op},\Ch)$ we define the stupid truncations
$$\sigma^{\ge m}F\::\: \dots\to 0\to F^{m}\to F^{m+1}\to \dots$$
and
$$\sigma^{\le m-1}F\::\: \dots F^{m-2}\to F^{m-1}\to 0\to \dots\ .$$
\end{ddd} 

\bigskip

 We consider a  sheaf of chain complexes $F\in \Fun^{desc}(\Mf^{op},\Ch)$   whose components are sheaves of $\cC^{\infty}$-modules. Note that by Lemma \ref{hj12} we have
 $\iota (F)\in \Fun^{desc}(\Mf^{op},\Ch[W^{-1}])$
and for   every $k\in \Z$ that
 $\iota(\sigma^{\ge -k} (F))  \in \Fun^{desc}(\Mf^{op},\Ch[W^{-1}])$.
  \begin{lem}\label{uuzzii1} Let $F\in \Fun^{desc}(\Mf^{op},\Ch)$  be a  sheaf of chain complexes   whose components are sheaves of $\cC^{\infty}$-modules. For every $k\in \Z$ the natural map induces an equivalence
 $$\cH(\iota(\sigma^{\ge -k} (F)))\to   \cH(\iota(F))\ .$$
 \end{lem}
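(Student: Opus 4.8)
The plan is to identify the cofibre of the natural map and to show that its homotopification vanishes. First I would record that the stupid truncations assemble into an objectwise short exact sequence of complexes of sheaves
$$0\to \sigma^{\ge -k}F\to F\to \sigma^{\le -k-1}F\to 0\ ,$$
whose third term has components that are again sheaves of $\cC^{\infty}$-modules. Applying $\iota$ objectwise turns this into a cofibre sequence in $\Fun^{desc}(\Mf^{op},\Ch[W^{-1}])$, since $\iota$ sends short exact sequences of chain complexes to cofibre sequences in the stable category $\Ch[W^{-1}]$. Because $\cH$ is left adjoint to the inclusion (Proposition \ref{propo}) it preserves colimits, and as a colimit-preserving functor between stable presentable $(\infty,1)$-categories it is exact; hence it preserves this cofibre sequence. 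Therefore the natural map $\cH(\iota(\sigma^{\ge -k}F))\to \cH(\iota(F))$ is an equivalence if and only if $\cH(\iota(\sigma^{\le -k-1}F))\simeq 0$, and this is what I would reduce everything to.

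Next I would prove the vanishing $\cH(\iota(G))\simeq 0$ for $G:=\sigma^{\le -k-1}F$. The subtlety is that $G$ may be unbounded below, so I cannot directly assemble it from finitely many single-degree pieces. Instead I would present $G$ as the increasing union of its bounded stupid subtruncations $G_{n}:=\sigma^{\ge -k-1-n}G$, so that $G\simeq \colim_{n} G_{n}$ is a sequential, hence filtered, colimit of subcomplexes, each concentrated in the finitely many degrees between $-k-1-n$ and $-k-1$. Since $\iota$ preserves filtered colimits (Lemma \ref{heut1}) and $\cH$ preserves all colimits, it then suffices to establish $\cH(\iota(G_{n}))\simeq 0$ for each $n$.

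Finally, for the bounded complex $G_{n}$ I would run its finite stupid filtration: for each relevant degree $j$ there is a short exact sequence $0\to \sigma^{\ge j+1}G_{n}\to \sigma^{\ge j}G_{n}\to G_{n}^{j}\to 0$ whose cofibre is the single $\cC^{\infty}$-module $G_{n}^{j}$ placed in degree $j$. By Lemma \ref{zzz2} we have $\cH(\iota(G_{n}^{j}[0]))\simeq 0$, and since both $\iota$ and $\cH$ are compatible with the shift $\Sigma^{\pm 1}$ in the stable setting, the same holds for $G_{n}^{j}$ placed in any degree. Applying the exact functor $\cH(\iota(-))$ to the finitely many cofibre sequences of the stupid filtration and inducting then yields $\cH(\iota(G_{n}))\simeq 0$; passing to the colimit over $n$ gives $\cH(\iota(G))\simeq 0$, which completes the argument. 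The only genuine obstacle is the potential unboundedness of $\sigma^{\le -k-1}F$; once it is dissolved by the filtered-colimit reduction, the whole proof rests on Lemma \ref{zzz2} together with the exactness of $\cH\circ\iota$ and the filtered-colimit preservation of Lemma \ref{heut1}.
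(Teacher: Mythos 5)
Your proof is correct, and it rests on exactly the same two inputs as the paper's: the filtered-colimit preservation of $\iota$ (Lemma \ref{heut1}) and the degreewise vanishing $\cH(\iota(A[j]))\simeq 0$ for sheaves of $\cC^{\infty}$-modules (Lemma \ref{zzz2}), combined with exactness of $\cH\circ\iota$. The organization, however, is inverted relative to the paper. You split off the cofibre $\sigma^{\le -k-1}F$ and prove that its homotopification vanishes --- which is precisely the content of the paper's Corollary \ref{rrr4} --- and then deduce the lemma from that vanishing; to kill the possibly unbounded quotient you exhaust it by bounded subcomplexes and run a finite induction on each. The paper instead works directly with the tower $\{\sigma^{\ge -n}F\}_{n}$ exhausting $F$ itself: the successive quotients $F^{-n}[n]$ are single $\cC^{\infty}$-module degrees, so each transition map becomes an equivalence after $\cH\circ\iota$, and passing to the colimit identifies $\cH(\iota(F))$ with $\cH(\iota(\sigma^{\ge -k}F))$; Corollary \ref{rrr4} is then a consequence rather than an ingredient. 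The paper's route is slightly leaner (one filtration instead of a cofibre splitting followed by a second, doubly indexed filtration), while yours makes the logical dependence on the vanishing of the lower truncation explicit and self-contained. One small point you elide, which the paper also elides: the objectwise cofibre sequence obtained from the short exact sequence is a cofibre sequence in the category of \emph{sheaves} because all three terms have $\cC^{\infty}$-module components and are therefore already sheaves after applying $\iota$, by Lemma \ref{hj12}; this is worth a sentence but is not a gap.
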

 \begin{proof}
For every sheaf of chain complexes $F\in \Fun^{desc}(\Mf^{op},\Ch)$ we have an equivalence
$$ \colim_{n\in \Z} \sigma^{\ge -n} (F)\stackrel{\sim}{\to} F$$
induced by the natural inclusions. Since   $\iota$ commutes with filtered colimits 
it induces an equivalence
$$  \colim_{n\in \nat} \iota(\sigma^{\ge -n} (F))\stackrel{\sim}{\to} \iota(F)\ .$$  
The exact sequences
$$0\to \sigma^{\ge -n+1}(F)\to \sigma^{\ge -n} (F)\to F^{-n}[n]\to 0$$
induce fibre sequences after application of $\cH\circ\iota$. Using that
 $\cH(\iota(F^{-n}[n])) \simeq  0$ by Lemma \ref{zzz2} we obtain the equivalences
$$\cH(\iota(\sigma^{\ge -n-1}(F)))\stackrel{\sim}{\to }\cH(\iota(\sigma^{\ge -n}(F)))$$
for all $n\in \Z$. We get a chain of equivalences
\begin{eqnarray*}
   \cH(\iota(F)) \simeq \cH(\colim_{n\in \Z} \iota(\sigma^{\ge -n} (F)))  \simeq  \colim_{n\in \Z} \cH(\iota(\sigma^{\ge -n} (F)))&&\\ \simeq  \colim_{n\in \Z} \cH(\iota(\sigma^{\ge -k} (F))) \simeq  \cH(\iota(\sigma^{\ge -k}(F)))&& \ .
 \end{eqnarray*} 
 \end{proof}

   \begin{kor}\label{rrr4} Let $F\in \Fun^{desc}(\Mf^{op},\Ch)$  be a  sheaf of chain complexes   whose components are sheaves of $\cC^{\infty}$-modules. For every $k\in \Z$ we have
   $$\cH(\iota(\sigma^{\le -k}(F)){)\simeq} 0\ .$$
   \end{kor}
   \proof
 We have an exact sequence
  $$0\to \sigma^{\ge -k+1} (F)\to F\to \sigma^{\le -k}(F)\to 0$$
  which induces a fibre sequence after application of $\cH\circ \iota$.
  We now use that
  $$\cH(\iota( \sigma^{\ge -k+1} (F)))\to \cH(\iota(F))$$ is an equivalence by Lemma \ref{uuzzii1}
  in order to conclude the assertion.
    \hB
    
Let $\rho:A\to B$ be a homomorphism of abelian group. Then $A$ acts on $B$ by
$a\cdot b:=b+\rho(a)$. The abelian group structures  on $A$ and $B$ turn action groupoid
$A\times B{\rightrightarrows} B$ into a symmetric monoidal groupoid, i.e. an object of
$\CommMon(\Groupoids)$. Since this is a Picard groupoid we have
$\iota(\Nerve(A\times B{\rightrightarrows} B))\in \Comm\Grp(\sSet[W^{-1}])$. 
In the  following Lemma we also consider $A\to B$ as a chain complex, where $B$ sits in degree zero.
Recall the functor $\sp$ from \eqref{wie3}.

 \begin{lem}\label{rrr6}
  There is an natural equivalence  
$$\sp(\Nerve(\iota(B\times A{\rightrightarrows} B)))  \simeq H(\iota( A  \to B  )) $$
in $\Sp$. 
\end{lem}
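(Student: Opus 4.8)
The plan is to exploit the fact that the groupoid $B\times A\rightrightarrows B$ is not merely a Picard groupoid but carries an underlying simplicial abelian group, which lets us pass to chain complexes via Dold--Kan. Concretely, I would first observe that the nerve $\Nerve(B\times A\rightrightarrows B)$ has as its set of $n$--simplices the abelian group $B\times A^{n}$ (an $n$--simplex being a chain $b\xrightarrow{a_{1}}b+\rho(a_{1})\xrightarrow{a_{2}}\cdots$), and that all of its face and degeneracy maps are group homomorphisms. Hence $\Nerve(B\times A\rightrightarrows B)$ is a simplicial abelian group, and $\Nerve(\iota(B\times A\rightrightarrows B))\in\Comm\Grp(\sSet[W^{-1}])$ is the underlying grouplike commutative monoid of this simplicial abelian group.

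Next I would use the general principle that $\sp$ applied to the underlying grouplike commutative monoid of a simplicial abelian group $X$ is naturally equivalent to $H(\iota(NX))$, where $NX$ is the normalized Dold--Kan complex; this is the statement that the $E_{\infty}$--space $\Omega^{\infty}H(\iota(C))$ underlying the Eilenberg--MacLane spectrum of a connective complex $C$ is the Dold--Kan space of $C$ with its canonical abelian group structure, so that applying $(\Omega^{\infty})^{-1}=\sp$ recovers $H(\iota(C))$. It then remains to compute the normalized complex of $\Nerve(B\times A\rightrightarrows B)$. A direct calculation in low degrees gives $N_{0}=B$, $N_{1}=\bigcap_{i>0}\ker d_{i}\cong A$ with boundary $d_{0}$ equal to $\rho$, and $N_{n}=0$ for $n\ge 2$; thus the normalized complex is exactly the two--term complex $A\xrightarrow{\rho}B$ with $B$ in degree zero, which yields the claimed equivalence, naturally in $\rho$.

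To make the general principle above rigorous — and this is the step I expect to be the main obstacle, since $\sp$ remembers the entire spectrum and not just its homotopy groups — I would instead argue via a fibre sequence that reduces everything to the already established equivalences \eqref{wie1} and \eqref{pqwe1}. The degreewise split exact sequences $0\to B\to B\times A^{n}\to A^{n}\to 0$ assemble to a short exact sequence of simplicial abelian groups
$$0\to i(B)\to \Nerve(B\times A\rightrightarrows B)\to \Nerve(A\rightrightarrows *)\to 0\ ,$$
whose image under $\sp$ is a fibre sequence of spectra (all three terms being connective). By \eqref{wie1} and \eqref{pqwe1} its outer terms are $H(\iota(B[0]))$ and $H(\iota(A[1]))$. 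The Dold--Kan computation above identifies the connecting map $H(\iota(A[1]))\to \Sigma H(\iota(B[0]))=H(\iota(B[1]))$ with $H(\iota(\rho[1]))$, which is precisely the connecting map obtained by applying the exact functor $H\circ\iota$ to the short exact sequence of complexes $0\to B[0]\to (A\xrightarrow{\rho}B)\to A[1]\to 0$. Since in the stable category $\Sp$ a fibre sequence is determined up to equivalence by its connecting map, the two middle terms $\sp(\Nerve(\iota(B\times A\rightrightarrows B)))$ and $H(\iota(A\to B))$ are equivalent, compatibly with the identifications of the outer terms; naturality in $\rho$ follows since all constructions are functorial.
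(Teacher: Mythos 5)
Your argument is correct in substance, but it takes a different route from the paper's. The paper never uses the simplicial abelian group structure on the nerve: it instead exhibits $\iota(A\times B\rightrightarrows B)$ directly as the pullback $*\times_{\iota(B\rightrightarrows *)}\iota(A\rightrightarrows *)$ in $\CommMon(\Groupoids[W^{-1}])$, with the bottom map visibly equal to $\rho$, and in parallel exhibits $\iota(A\to B)$ as the pullback $*\times_{\iota(B[1])}\iota(A[1])$; after applying $\Nerve$ resp.\ $H$ and invoking \eqref{pqwe1} on the outer corners, the two pullback squares are identified. Your version is the rotated form of the same fibre sequence: you present both sides as extensions of $H(\iota(A[1]))$ by $H(\iota(B[0]))$ and then must \emph{separately} identify the connecting map with $H(\iota(\rho[1]))$ --- precisely the step the paper's formulation makes automatic, since there the map to $\iota(B\rightrightarrows *)$ is $\rho$ by construction. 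That identification is the one soft spot in your write-up: as stated it leans on the Dold--Kan compatibility of $\sp$ with $H\circ\iota\circ N$, i.e.\ on the ``general principle'' you announced you would avoid. It can be patched without that principle: compute the boundary map $\pi_1(\Nerve(A\rightrightarrows *))\cong A\to\pi_0(i(B))\cong B$ of the Kan fibration combinatorially (lift the $1$-simplex $a$ to $(0,a)$, whose endpoint is $\rho(a)$), and use that a map $H(\iota(A[1]))\to H(\iota(B[1]))$ between Eilenberg--MacLane spectra in a single degree is determined by its effect on $\pi_1$. Also note that your parenthetical ``all three terms being connective'' is not quite the right reason the fibre sequence of connective spectra is one in $\Sp$; what is needed is surjectivity of $\pi_0(\sp(\Nerve(\iota(B\times A\rightrightarrows B))))\to\pi_0(H(\iota(A[1])))=0$, which of course holds --- this is the same point the paper disposes of with its remark that the pullback ``is detected in spectra.'' What your approach buys is the explicit computation of the normalized chains of the nerve, which makes the answer $A\xrightarrow{\rho}B$ visible at the level of complexes; what the paper's approach buys is that no connecting map ever needs to be identified.
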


\proof 
We have pull-back diagrams in $\CommMon(\Groupoids[W^{-1}])$ and
$\Ch[W^{-1}]$, respectively:
$$\xymatrix{\iota(A\times B{\rightrightarrows} B)\ar[d]\ar[r]&{*}\ar[d]\\
\iota(A {\rightrightarrows} *)\ar[r]^{\rho}&\iota(B{\rightrightarrows} *)
}\ ,\quad \xymatrix{
\iota(A\to B) \ar[d]\ar[r]&{*}\ar[d]\\
\iota (A[1])  \ar[r]^{\rho[1]}&\iota (B[1]) 
}\ .
$$
If we apply $\Nerve$ or $H$, respectively, then we obtain pull-back diagrams in
$\Comm\Grp(\sSet[W^{-1}])$ or $\Sp$, respectively:
$$\xymatrix{\Nerve(\iota(A\times B\Rightarrow B))\ar[d]\ar[r]&{*}\ar[d]\\
\Nerve(\iota(A\Rightarrow *))\ar[r]&\Nerve(\iota(B\Rightarrow *))}\ ,\quad \xymatrix{H(\iota(A\to B)) \ar[d]\ar[r]&{*}\ar[d]\\
H(\iota (A[1]))  \ar[r]&H(\iota (B[1])) }\ .$$
For the right square we also use that nerves of Picard stacks are grouplike spaces, and that a pull-back of grouplike spaces is detected by the diagram of underlying spaces.
Using the equivalences (see \eqref{pqwe1})
$$\sp(\Nerve(\iota(A{\rightrightarrows} *))) \simeq H(\iota (A[1]))     \ ,\quad \sp(\Nerve(\iota(B{\rightrightarrows} *)))
 \simeq H(\iota (B[1]))  $$
and the fact that the right square is actually pull-back in connective spectra (since again this is detected in spectra) we
conclude the equivalence
$$   \sp(\Nerve(\iota(A\times B{\rightrightarrows} B)))  \simeq H(\iota(A\to B))\ .$$ \hB
\bibliographystyle{alpha}
\bibliography{proje}

\begin{thebibliography}{TWZ12}

\bibitem[BG13]{2013arXiv1306.0247B}
U.~{Bunke} and D.~{Gepner}.
\newblock {Differential function spectra, the differential Becker-Gottlieb
  transfer, and applications to differential algebraic K-theory}.
\newblock Preprint arXiv:1306.0247, June 2013.

\bibitem[Bis85]{MR786574}
J.-M. Bismut.
\newblock {Index theorem and equivariant cohomology on the loop space}.
\newblock {\em Comm. Math. Phys.}, 98(2):213--237, 1985.

\bibitem[BS09]{MR2664467}
U.~Bunke and T.~Schick.
\newblock {Smooth {$K$}-theory}.
\newblock {\em Ast{\'e}risque}, (328):45--135 (2010), 2009.

\bibitem[BS10a]{2010arXiv1011.6663B}
U.~{Bunke} and T.~{Schick}.
\newblock {Differential K-theory. A survey}.
\newblock Preprint arXiv:1011.6663, November 2010.

\bibitem[BS10b]{MR2608479}
U.~Bunke and T.~Schick.
\newblock {Uniqueness of smooth extensions of generalized cohomology theories}.
\newblock {\em J. Topol.}, 3(1):110--156, 2010.

\bibitem[Bun13]{skript}
U.~Bunke.
\newblock {Differential cohomology}.
\newblock Course notes, Universit{\"a}t Regensburg.
  \url{http://arxiv.org/abs/1208.3961}, August 2013.

\bibitem[CS1v]{MR827262}
J.~Cheeger and J.~Simons.
\newblock {Differential characters and geometric invariants}.
\newblock In {\em {Geometry and topology (College Park, Md., 1983/84)}}, volume
  1167 of {\em {Lecture Notes in Math.}}, pages 50--80. Springer, Berlin, 1v.

\bibitem[Dug01]{dugger}
D.~Dugger.
\newblock {Universal homotopy theories}.
\newblock {\em Adv. Math.}, 164(1):144--176, 2001.

\bibitem[GGN13]{GGN}
D.~Gepner, M.~Groth, and T.~Nikolaus.
\newblock {Universality of multiplicative infinite loop space machines}.
\newblock Preprint arXiv:1305.4550, May 2013.

\bibitem[GJP91]{MR1113683}
E.~Getzler, J.~D.~S. Jones, and S.~Petrack.
\newblock {Differential forms on loop spaces and the cyclic bar complex}.
\newblock {\em Topology}, 30(3):339--371, 1991.

\bibitem[Goo85]{MR793184}
T.G. Goodwillie.
\newblock {Cyclic homology, derivations, and the free loopspace}.
\newblock {\em Topology}, 24(2):187--215, 1985.

\bibitem[Han05]{MR2712304}
F.~Han.
\newblock {\em {Supersymmetric {QFT}s, super loop spaces and {B}ismut-{C}hern
  character}}.
\newblock ProQuest LLC, Ann Arbor, MI, 2005.
\newblock Thesis (Ph.D.)--University of California, Berkeley.

\bibitem[Hov99]{MR1650134}
M.~Hovey.
\newblock {\em {Model categories}}, volume~63 of {\em {Mathematical Surveys and
  Monographs}}.
\newblock American Mathematical Society, Providence, RI, 1999.

\bibitem[HS05]{MR2192936}
M.J. Hopkins and I.M. Singer.
\newblock {Quadratic functions in geometry, topology, and {M}-theory}.
\newblock {\em J. Differential Geom.}, 70(3):329--452, 2005.

\bibitem[Lur09]{HTT}
J.~Lurie.
\newblock {\em {Higher topos theory}}, volume 170 of {\em {Annals of
  Mathematics Studies}}.
\newblock Princeton University Press, Princeton, NJ, 2009.

\bibitem[Lur11]{HA}
J.~Lurie.
\newblock {\em {Higher algebra}}.
\newblock Prepublication book draft. Available from
  \url{http://www.math.harvard.edu/~lurie/}, 2011.

\bibitem[Sch13]{ddddddd}
U.~Schreiber.
\newblock {Differential cohomology in a cohesive topos}.
\newblock Preprint arXiv:1310.7930, October 2013.

\bibitem[Sna79]{snaith}
V.P. Snaith.
\newblock {Algebraic cobordism and {$K$}-theory}.
\newblock {\em Mem. Amer. Math. Soc.}, 21(221):vii+152, 1979.

\bibitem[SS08]{MR2365651}
J.~Simons and D.~Sullivan.
\newblock {Axiomatic characterization of ordinary differential cohomology}.
\newblock {\em J. Topol.}, 1(1):45--56, 2008.

\bibitem[Sus84]{MR772065}
A.A. Suslin.
\newblock {On the {$K$}-theory of local fields}.
\newblock In {\em {Proceedings of the {L}uminy conference on algebraic
  {$K$}-theory ({L}uminy, 1983)}}, volume~34, pages 301--318, 1984.

\bibitem[TWZ12]{2012arXiv1201.4593T}
T.~{Tradler}, S.~O. {Wilson}, and M.~{Zeinalian}.
\newblock {Loop Differential K-theory}.
\newblock Preprint arXiv:1201.4593, January 2012.

\bibitem[Wal10]{waltra}
K.~Waldorf.
\newblock {Transgression to loop spaces and its inverse, II: Gerbes and fusion
  bundles with connection}.
\newblock Preprint arXiv:1004.0031, March 2010.

\bibitem[Wei84]{MR750694}
C.A. Weibel.
\newblock {Algebraic {$K$}-theory and the {A}dams {$e$}-invariant}.
\newblock In {\em {Algebraic {$K$}-theory, number theory, geometry and analysis
  ({B}ielefeld, 1982)}}, volume 1046 of {\em {Lecture Notes in Math.}}, pages
  442--450. Springer, Berlin, 1984.

\end{thebibliography}
\end{document}